\theoremstyle{plain}
\newtheorem{proposition}{Proposition}[section]
\newtheorem{theorem}[proposition]{Theorem}
\newtheorem{lemma}[proposition]{Lemma}
\newtheorem{corollary}[proposition]{Corollary}
\newtheorem*{basiclemma}{Basic Lemma}
\theoremstyle{definition}
\newtheorem{definition}[proposition]{\emph{\textbf{Definition}}}
\theoremstyle{remark}
\newtheorem*{remark}{Remark}
\newcommand{\separate}{\medskip}
\newcommand{\tbf}{\textbf}
\newcommand{\tn}{\textnormal}
\newcommand{\noi}{\noindent}
\newcommand{\ph}{\phantom}
\newcommand{\op}{\operatorname}
\newcommand{\sse}{\subseteq}
\newcommand{\Equiv}{\Leftrightarrow}
\newcommand{\wt}{\widetilde}
\newcommand{\wh}{\widehat}
\newcommand{\mh}{^{\,\wh{}}}
\newcommand{\mrm}[1]{\mathrm{#1}}
\newcommand{\bb}[1]{\mathbb{#1}}
\newcommand{\Scr}[1]{\mathscr{#1}}
\newcommand{\re}[1]{#1_\mathrm{R}}
\newcommand{\im}[1]{#1_\mathrm{I}}
\newcommand{\abs}[1]{\vert#1\vert}
\newcommand{\Abs}[1]{\bigg\vert#1\bigg\vert}
\newcommand{\norm}[1]{{\Vert#1\Vert}}
\newcommand{\Norm}[1]{\bigg\Vert#1\bigg\Vert}
\newcommand{\conj}{\overline}
\newcommand{\wpln}{\bb{C}_\omega}
\newcommand{\sZ}{\bb{Z}^*}
\newcommand{\mZ}{\wt{ \bb{Z} }}
\newcommand{\rmd}{\mathrm{d}}
\newcommand{\rme}{\mathrm{e}}
\newcommand{\rmi}{\mathrm{i}}
\DeclareMathOperator{\sgn}{sgn}
\DeclareMathOperator{\imaginary}{Im}
\DeclareMathOperator{\real}{Re}
\DeclareMathOperator{\res}{Res}
\DeclareMathOperator*{\essup}{ess\!\sup}
\renewcommand{\=}{\ensuremath{\mathrel{\mathop:}=}}
\renewenvironment*{abstract}{\small\begin{quotation}{\noi\tbf{Abstract}.}}{\end{quotation}}
\providecommand{\keywords}[1]
{
  \small
  \textit{Keywords:\ }#1
}
\providecommand{\msc}[1]
{
  \small
  \textit{Mathematics Subject Classification:\ }#1
}
\newcommand{\maxnorm}{\@ifstar\@maxnorms\@maxnorm}
\newcommand{\@maxnorms}[1]{\vert\mkern-1.5mu\vert\mkern-1.5mu\vert#1\vert\mkern-1.5mu\vert\mkern-1.5mu\vert}
\newcommand{\@maxnorm}[2][]{\mathopen{#1\vert\mkern-1.5mu#1\vert\mkern-1.5mu#1\vert}
#2\mathclose{#1\vert\mkern-1.5mu#1\vert\mkern-1.5mu#1\vert}}
\renewcommand\section{\@startsection{section}{1}{0pt}{\baselineskip}{.1\baselineskip}{\centering\bfseries}}
\renewcommand{\@seccntformat}[1]{\csname the#1\endcsname\csname adddot@#1\endcsname\ }
\newcommand{\adddot@section}{.}
\renewcommand\subsection{\@startsection{subsection}{1}{0pt}{\baselineskip}{.1\baselineskip}{\centering\bfseries}}
\def\dottednumberline#1{\hb@xt@\@tempdima{#1.\hfill}}
\renewcommand{\l@section}[2]{{\let\numberline\dottednumberline\@dottedtocline{2}{1em}{1em}{#1}{#2}}}
\renewcommand{\l@subsection}{\@dottedtocline{2}{1.5em}{1.8em}}
\numberwithin{equation}{subsection}
\renewcommand*{\theequation}{%
  \ifnum\value{subsection}=0 %
    \thesection
  \else
    \thesubsection
  \fi
  .\arabic{equation}%
}
\begin{document}
\title{A semi-periodic initial-value problem for the Kadomtsev--Petviashvili II equation}
\author[1,\footnote{\href{mailto:kalpet@master.math.upatras.gr}{kalpet@master.math.upatras.gr}}]{P. Kalamvokas}
\author[1]{V.\,G. Papageorgiou}
\author[2]{A.\,S. Fokas}
\author[3]{L.-Y. Sung}
\affil[1]{Department of Mathematics, University of Patras, Greece}
\affil[2]{DAMTP, University of Cambridge, UK and Viterbi School of Engineering, University of Southern California, USA}
\affil[3]{Department of Mathematics, Louisiana Sate University, USA}
\date{March 19, 2023}
\maketitle
\thispagestyle{empty}
\begin{abstract}
\noi We investigate the Cauchy problem on the cylinder, namely the semi-periodic problem where there is periodicity in the $x$-direction and decay in the
$y$-direction, for the Kadomtsev--Petviashvili II equation by the inverse spectral transform method. For initial data with small $L^1$ and $L^2$ norms, 
assuming the zero mass constraint, this initial-value problem is reduced to a Riemann--Hilbert problem on the boundary of certain infinite strips with shift.
\end{abstract}
\keywords{Kadomtsev--Petviashvili equations, integrable PDEs, Lax pair, inverse spectral transform, Riemann--Hilbert problems}\\[5pt]
\msc{35Q53, 35Q15, 35Q35, 58F07}

\tableofcontents
\addtocontents{toc}{~\hfill\protect\textbf{Page}\vspace{5pt}}

\section[Introduction]{Introduction}\label{S:intro}
\medskip

The Kadomtsev--Petviashvili (\textsc{KP}) equation
\begin{equation}\label{Eq:kp}
( u_t + 6u u_x + u_{xxx} )_x = -3\sigma^2 u_{yy},
\end{equation}
where $\sigma^2 = \pm1$, was first introduced in 1970 by B.~Kadomtsev and V.~Petviashvili~\cite{KP70} as a model to study the evolution of long
ion-acoustic waves of small amplitude, propagating in plasma under the effect of long transverse perturbations and later derived as a model for surface
water waves \cite{FD75} and \cite{AS79} (see also \cite{SF85}). It may be thought of as a two spatial dimension analog of the Korteweg--de Vries
(KdV) equation
\begin{equation}\label{Eq:kdv}
u_t + 6u u_x + u_{xxx} = 0.
\end{equation}
The choice of the sign of $\sigma^2$ is critical with respect to the stability of solitons of the KdV equation subject to transverse perturbations
(in the $y$ direction). For $\sigma^2 = -1$, they are unstable, while for $\sigma^2 = 1$ they are stable.
The case $\sigma^2 = 1$ is known as the \textsc{KP}II equation and, in the context of fluid mechanics, appears in the study of long waves in shallow water 
under weak surface tension whereas the case $\sigma^2 = -1$ is called the \textsc{KP}I equation and can be employed to model water waves in thin films, 
where the very high surface tension dominates the gravitational force. The \textsc{KP} equation is one of the most notable integrable nonlinear evolution
PDEs in $2 + 1$ dimensions (i.e., two spatial and one temporal) and is solvable by use of the so called inverse scattering transform.

The Inverse scattering transform method can be viewed, as explained in \cite{AKNS74}, as a nonlinear analog of the Fourier transform and reduces the 
solution of the Cauchy problem to the solution of an inverse scattering problem for an associated linear eigenvalue equation. This method was discovered in 
1967 in the famous article~\cite{GGKM67}, as a way to solve the initial-value problem for the KdV equation with decaying initial data on the real line. The 
possibility of using the inverse scattering transform method for the \textsc{KP} equation follows from the existence of a Lax pair. Such a pair was discovered 
by Dryuma \cite{D74} and Zakharov and Shabat \cite{ZS74} independently. For the \textsc{KP}I equation, the possibility of implementing the inverse 
scattering transform method was suggested by Manakov \cite{M81} and Segur \cite {S82} and was implemented by Fokas and Ablowitz \cite{FA83}. This 
formulation was improved and corrected by several authors, as reviewed in \cite{F09}. The analysis of \textsc{KP}II was implemented by Ablowitz, Bar 
Yaacov and Fokas \cite{ABF83} using the so-called $\conj{\partial}$ formalism. This formalism was introduced earlier by Beals and Coifman 
in \cites{BC81,BC82} for the analysis of evolution PDEs in one space variable where the Riemann--Hilbert problem approach is not only adequate but also 
preferable. Rigorous aspects of the new methodology, often referred to as the inverse spectral transform (\textsc{IST}), were developed by several authors 
including Beals and Coifman \cites{BC84,BC85}. In particular, rigorous treatment of the \textsc{KP}II equation for the decaying in the plane problem was 
given by Wickerhauser in \cite{MV87} and by Fokas and Sung in \cite{FS92}.

The aim of this paper is to establish that another class of initial-value problems in $2 + 1$ dimensions can be incorporated in the above techniques of the 
\textsc{IST} scheme: those with initial data periodic in one spatial direction and decaying in the other. Associated with the \textsc{KP} equation there exist 
four such problems: \textsc{KP}I periodic in $x$, \textsc{KP}I periodic in $y$, \textsc{KP}II periodic in $x$, \textsc{KP}II periodic in $y$. In this work we 
consider the initial-value problem for the \textsc{KP}II equation, assuming that $u$ is a periodic function in the $x$ spatial variable, with period $2\ell > 0$, 
and decaying in the $y$ direction, i.e., we study the following Cauchy problem:
\begin{subequations}\label{Eq:cauchyproblem}\begin{alignat}{2}
( u_t + 6u u_x + u_{xxx} )_x &= -3u_{yy}, \label{Eq:cauchyproblem_a}\\[2pt]
u(x + 2\ell, y, t) &= u(x, y, t), \quad& &(x, y) \in \bb{R}^2, \ t \geq 0, \label{Eq:cauchyproblem_b}\\
u(x, y, 0) &= u_0(x, y), & &x \in [-\ell, \ell\,], \ y \in \bb{R}, \label{Eq:cauchyproblem_c}
\end{alignat}\end{subequations}
where $u(x, y, t) \to 0$ sufficiently rapidly as $\abs{y} \to \infty$ and $u_0(x, y)$ is a known function which belongs to some appropriate functional space, 
satisfying the \emph{zero mass} constraint, i.e.,
\begin{equation}\label{Eq:zeromass}
\int_{\!-\ell}^\ell u_0(x, y) \,\rmd x = 0, \quad \forall \:y \in \bb{R}.
\end{equation}
Modified accordingly, we believe that the method presented here can be used for solving the other three semi-periodic problems mentioned earlier.\newline
The zero mass constraint for the corresponding problem for $(x, y) \in \bb{R}^2$, arises when equation \eqref{Eq:kp} is put in evolution form, namely,
\begin{equation}\label{Eq:evolkp}
u_t + 6u u_x + u_{xxx} = -3\sigma^2 \partial_x^{-1} u_{yy},
\end{equation}
and a meaning of $\partial_x^{-1}$, corresponding to the initial data, has to be properly defined. The implications of this constraint was studied 
in \cite{AV91}. It was realized that if one chooses $\partial_x^{-1} = \int_{-\infty}^x$ or $\partial_x^{-1} = \int_x^\infty$, then the constraint
\begin{equation}\label{Eq:const}
\frac{\partial^2}{\partial y^2}\biggl(\int_{-\infty}^\infty u(x, y, t) \,\rmd x\biggr) = 0
\end{equation}
is required. However, given sufficiently decaying and smooth initial data, then
\begin{equation}\label{Eq:primit}
\partial_x^{-1} = \frac{1}{2}\biggl(\int_{-\infty}^x - \int_x^\infty\biggr),
\end{equation}
and no further constraints on the initial data appear. Interestingly, the authors found that even with the choice~\eqref{Eq:primit}, the condition 
\eqref{Eq:const} is eventually achieved. For more about the zero mass constraint see also \cite{MST07} and \cites{FS99,S99}.

Caudrey in \cite{C86} considered the Lax pair of the \textsc{KP} equation as a certain limit of a suitable $N \times N$ problem in $1 + 1$ dimensions after 
``discretising'' one of the spatial variables. Then, letting $N \to \infty$, Caudrey obtained formal results with initial data periodic in one direction and
decaying in the other. Here we obtain rigorous results by considering the semi-periodic problem ($x$-periodic) for \textsc{KP}II directly using the \textsc{IST}
method. We also mention \cite{GPDS09} for results on the Cauchy problem for a class of \textsc{KP}II equations with a general $x$-dispersion of order
$\geq 2$ including the classical \textsc{KP}II equation on the spatial domain $\bb{T}_x \times \bb{R}_y$ (periodicity in $x$) via PDE techniques. For a 
detailed review of results of the \textsc{KP} equations with \textsc{IST} and PDE methods we refer to the recent monograph \cite{KS22}.

The rigorous analysis carried throughout this paper follows the work of \cite{MV87}. In the rest of this section we give a brief description of our results.
The Lax pair associated with the \textsc{KP}II equation in our case is given by
\begin{subequations}\label{Eq:laxpair}\begin{align}
\op{L} &= -\partial_y + \partial_{xx} + u, \label{Eq:Lpart}\\
\op{M} &= 4\partial_{xxx} + 6u\partial_x + 3u_x + 3\int_{\!-\ell}^x u_y \,\rmd s + \emph{\text{\textalpha}}, \label{Eq:Mpart}
\end{align}\end{subequations}
i.e., the compatibility of the linear problems $\op{L}\psi = \lambda\psi$ and $\psi_t = \op{M}\psi$ with $\lambda_t = 0$, yields \textsc{KP}II. Here
$\emph{\text{\textalpha}}$ is an arbitrary constant, independent of $x$ and $y$. Later on it will acquire dependence on $z \in \bb{C}$. For small smooth 
functions $u \in L^1 \cap L^2( [-\ell, \ell\,] \times \bb{R} )$, the operator $\op{L}$ can be determined by the leading coefficients of asymptotically 
exponential functions in its kernel. More precisely, let $z \in \bb{C}$ such that $2\real z \neq \frac{\pi}{\ell}n$, for every nonzero integer $n$, and let
$\mu(x, y; z)$ be a bounded function such that the function $\psi(x, y; z) = \mu(x, y; z)\rme^{\rmi z x - z^2 y}$ is in the kernel of \eqref{Eq:Lpart}.
If $u(x, y)$ is small in $L^1 \cap L^2( [-\ell, \ell\,] \times \bb{R} )$, there exists a unique such $\mu$ with the asymptotic behaviour $\mu(x, y; z) \to 1$ as
$\abs{y} \to \infty$~(theorem \ref{Th:exist_unique}). Hence, $\psi$ is asymptotic to $\rme^{\rmi z x - z^2 y}$. This function $\mu$ is holomorphic for such 
complex numbers $z$~(theorem \ref{Th:holomorphicity}). If $u$ has first order partial derivatives in $L^1 \cap L^2( [-\ell, \ell\,] \times \bb{R} )$,
then $\mu$ satisfies a Riemann--Hilbert problem with a shift:
\begin{equation}\label{Eq:RHproblem}
\mu^+(x, y; z) - \mu^-(x, y; z) = F(z)\rme^{-\rmi( z + \conj{z} )x + (z^2 - \conj{z}^2)y} \mu^-( x, y; -\conj{z} ) \equiv \Scr{S}\mu,
\end{equation}
with
\begin{equation}
F(z) = \frac{ \sgn(-\real z) }{2\ell}\int_{\!-\infty}^\infty \int_{\!-\ell}^\ell
u(x, y)\mu^+(x, y; z)\rme^{\rmi( z + \conj{z} )x - (z^2 - \conj{z}^2)y} \,\rmd x\rmd y,
\end{equation}
where $\mu^\pm$ are the pointwise limits of $\mu$ from the left and from the right side of the lines $\real z = \frac{\pi}{\ell}n/2$
(theorem \ref{Th:zasympotics} and theorem \ref{Th:departurefromholomorphicity}). The function $F(z)$ determines the departure from holomorphicity of
$\mu$ across these lines. If $u$ has partial derivatives up to second order in $L^1 \cap L^2( [-\ell, \ell\,] \times \bb{R} )$, the spectral data $F(z)$ has 
enough decay to ensure the existence of a unique bounded solution $\mu$ to the Riemann--Hilbert problem~\eqref{Eq:RHproblem} with $\mu(x, y; z) \to 1$ 
as $\abs{\!\imaginary z} \to \infty$ (theorem \ref{Th:forwardscattering}). Hence, both $\mu$ and $\Scr{S}\mu$ are determined by $F(z)$.
Then, $u$ is determined by
\begin{equation}\label{Eq:eigenfunctions}
u(x, y) = \frac{1}{\pi}\partial_x \sum_{ \substack{n \in \bb{Z} \\ n \neq 0} } \int_{\real z = \frac{\pi}{\ell}n/2}
F(z)\rme^{-\rmi( z + \conj{z} )x + (z^2 - \conj{z}^2)y} \mu^-( x, y; -\conj{z} ) \,\rmd \imaginary z
\end{equation}
(theorem \ref{Th:inversescattering}). Therefore, knowledge of the spectral data $F(z)$ suffices to determine $u$.

The maps $u \mapsto F$ and $F \mapsto u$ might be called the forward and inverse spectral transforms, respectively. They behave like the Fourier 
transform and its inverse. If $u(x, y)$ has derivatives up to second order in $L^1 \cap L^2( [-\ell, \ell\,] \times \bb{R} )$, then $F(z)$ decays like
$( 1 + \abs{ (\real z, \real z \imaginary z) } )^{-2}$ (lemma \ref{L:rapiddeacy} and equation \eqref{Eq:datadecay}). On the other hand, if $F(z)$ decays like
$( 1 + \abs{ (\real z, \real z \imaginary z) } )^{-4}$, then $u(x, y)$ has derivatives up to second order in $L^2( [-\ell, \ell\,] \times \bb{R} )$ and a bounded 
Fourier transform (theorem \ref{Th:inversedecay}).

In order to complete the procedure of the \textsc{IST} and establish a solution to our problem, the time evolution of the spectral data $F(z, t)$ needs to be 
determined. Let $t > 0$ be thought of as time. If $u(x, y, t)$ evolves according to the \textsc{KP}II equation, then
$\frac{\rmd}{\rmd t}F(z, t) = -4\rmi(z^3 + \conj{z}^3)F(z, t)$ or $F(z, t) = F(z, 0)\rme^{-4\rmi(z^3 + \conj{z}^3)t}$ (lemma \ref{L:temporalevol}).
Since $z^3 + \conj{z}^3$ is real, one has $\abs{ F(z, t) } = \abs{ F(z, 0) }$ for all $z$ on the lines $\real z = \frac{\pi}{\ell}n/2$ and $t \geq 0$. By the 
forward and inverse spectral transform, there is a solution $u(x, y, t)$ for all time to the initial-value problem \eqref{Eq:cauchyproblem}:
if the initial value $u_0(x, y)$ is sufficiently small and has derivatives up to order eight in $L^1 \cap L^2( [-\ell, \ell\,] \times \bb{R} )$, then the initial
spectral data $F(z, 0)$ is known, hence the spectral data is known for all time. Thus, $\mu(x, y, t; z)$ is known for all time and finally $u(x, y, t)$ can be 
recovered via \eqref{Eq:eigenfunctions} (theorem \ref{Th:solution}).\newline
The following figure depicts the inverse spectral transform method.
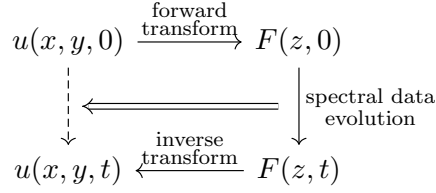
\begin{figure}[H]
\[
\begin{tikzcd}[row sep=large, column sep=large]
u(x, y, 0) \arrow{r}[above]{\substack{\text{forward} \\ \text{transform}}} \arrow[d, dashrightarrow, ""{name=D, right}]
 & F(z, 0) \arrow{d}[right, ""{name=U, left}]{\substack{\text{spectral data} \\ \text{evolution}}}\\
u(x, y, t)
 & \arrow{l}[above]{\substack{\text{inverse} \\ \text{transform}}} F(z, t)
\arrow[Rightarrow, from=U, to=D]
\end{tikzcd}
\]
\caption{Inverse spectral transform scheme\label{fig:istscheme}}
\end{figure}

For clarity of the exposition, let us collect here the notation that will frequently be used throughout this paper.
For a complex number $z$, $\re{z}$ and $\im{z}$ denote its real and imaginary part respectively, thus, $z = \re{z} + \rmi\im{z}$. $\bb{N}$ denotes the
set of natural numbers, i.e., $\bb{N} = \{1, 2, \dotsc\}$, and $\bb{Z}$ the set of integers while $\bb{N}_0 = \bb{N} \cup \{0\}$ and
$\sZ = \bb{Z} \setminus \!\{0\}$. $\Omega = \{ (x, y) \colon \!-\ell \leq x \leq \ell, \ y \in \bb{R} \}$, $\Scr{C} = \bb{Z} \times \bb{R}$ and
$\wpln = \{z \in \bb{C} \colon 2\re{z} \notin \omega\sZ\}$, where $\omega = \frac{\pi}{\ell}$. We write $E$ for the set $\Omega \times \wpln$ and
$\Scr{C}^*$ for the set $\sZ \times \bb{R}$ . For $1 \leq p \leq \infty$, we define the sets $L^p( \Scr{C} )$ by
\begin{equation}
L^p( \Scr{C} ) \= \{f \colon \Scr{C} \to \bb{C} \mid f \text{ is measurable and } \norm{f}_{ L^p( \Scr{C} ) } < \infty\},
\end{equation}
where
\begin{equation}
\norm{f}_{ L^p( \Scr{C} ) } \= \Bigg(\sum_{m = -\infty}^\infty \int_{\!-\infty}^\infty \,\abs{ f(m, \xi) }^p \,\rmd\xi\Bigg)^\frac{1}{p},
\quad 1 \leq p < \infty,
\end{equation}
and
\begin{equation}
\norm{f}_{ L^\infty( \Scr{C} ) } \= \essup_{ (m, \xi) \in \Scr{C} } \abs{ f(m, \xi) }.
\end{equation}
It is easy to establish that $( L^p( \Scr{C} ), \,\norm{\cdot}_{  L^p( \Scr{C} ) } )$ are Banach spaces. For a set $A$, other than
$\Scr{C}$ (or $\Scr{C}^*$), the standard notation for the $p$-norm of the Lebesgue spaces $L^p(A)$, will be used, i.e., $\norm{\cdot}_p$. $C(\Omega)$
is the set of continuous (complex) functions on $\Omega$ and $H(\wpln)$ is the set of holomorphic functions on $\wpln$.

The identity operator on a normed space is denoted by $\op{Id}$. If $\op{A}$ and $\op{B}$ are two operators, we denote with $[ \op{A}, \op{B} ]$ their
commutator, namely
\begin{equation}
[ \op{A}, \op{B} ] = \op{A}\op{B} - \op{B}\op{A},
\end{equation}
whenever this make sense. For a (linear) bounded operator $\op{T}$ between two normed spaces, we write $\norm{ \op{T} }_\mrm{op}$ for its norm.

If $f(x, y)$ is a function which is $2\ell$-periodic in $x$ and decaying in $y$ (i.e., $f$ and $\partial_y^n f$ tend to zero as $\abs{y} \to \infty$, for all 
$n \in \bb{N}$), define its Fourier transform by the integral
\begin{equation}
\wh{f}(m, \xi) \= \frac{1}{2\ell}\int_{\!-\infty}^\infty \int_{\!-\ell}^\ell f(x, y)\rme^{-\rmi\omega m x - \rmi\xi y} \,\rmd x\rmd y, \quad (m, \xi) \in \Scr{C},
\end{equation}
and the inverse transform by
\begin{equation}
f(x, y) = \big[ \wh{f}(m, \xi) \big]\spcheck\!(x, y) \= \frac{1}{2\pi}\sum_{m = -\infty}^\infty \int_{\!-\infty}^\infty
\wh{f}(m, \xi)\rme^{\rmi\omega m x + \rmi\xi y} \,\rmd\xi.
\end{equation}
This definition implies that $f \mapsto \wh{f}$ has norm less than $\omega$ as a map from $L^1(\Omega)$ to $L^\infty( \Scr{C} )$ and that when
$f \in L^1(\Omega) \cap L^2(\Omega)$, $\norm{\wh{f}\,}_{ L^2( \Scr{C} ) } = \sqrt{\omega}\norm{f}_2$. We will also use the notation
$\Scr{F}( f(x) )(k)$ for the Fourier transform of some function $f(x)$ in the real line namely
\[
\Scr{F}( f(x) )(k) \= \frac{1}{2\pi}\int_{\!-\infty}^\infty f(x)\rme^{-\rmi k x} \,\rmd x.
\]

The notation $\partial_v^n$ will be used to denote the $n$-th order partial derivative operator with respect to the variable $v$, thus,
$\partial_v^n = \partial^n\!/\partial v^n$, where $n$ is a \text{non-neg}\-ative integer. If $\alpha = (\alpha_1, \alpha_2)$ is a multi-index, then
\begin{equation}
\partial^\alpha = \partial_x^{\alpha_1} \partial_y^{\alpha_2}
\end{equation}
denotes a differential operator of order $\abs{\alpha} = \alpha_1 + \alpha_2$, with $x \in [-\ell, \ell\,]$ and $y \in \bb{R}$. If $\abs{\alpha} = 0$, then
$\partial^\alpha f(x, y) = f(x, y)$. For a point $s = (s_1, s_2)$ in $\bb{R}^2$ its norm is given by
\begin{equation}
\abs{s} = \sqrt{s_1^2 + s_2^2}.
\end{equation}
For a multi-index $\alpha = (\alpha_1, \alpha_2)$, we denote by $s^\alpha$ the monomial $s_1^{\alpha_1} s_2^{\alpha_2}$, which has degree
$\abs{\alpha}$ .

A sum with a prime next to it will mean summation over all integers except zero, i.e.,
\begin{equation}
\sideset{}{'}\sum_{m = -\infty}^\infty = \sideset{}{'}\sum_{ m \in \bb{Z} } = \sum_{m \in \sZ}.
\end{equation}

The Bachmann--Landau notation will be used in the normal way: given two functions $f$, $g$ defined on a set of real numbers, we write
\begin{equation}
f(x) = O( g(x) ),
\end{equation}
if $\abs{ f(x) } \leq M\abs{ g(x) }$, for some positive number $M$ and
\begin{equation}
f(x) = o( g(x) ),
\end{equation}
if $f(x)/g(x) \to 0$ as $x \to \infty$.

For brevity we will suppress the $t$-dependence for the functions until section \ref{S:time_evolution}.
\section[The Direct Problem]{The Direct Problem}\label{S:direct}
\smallskip
\subsection[Bounded Eigenfunctions of the Perturbed Heat Operator]{Bounded Eigenfunctions of the Perturbed Heat Operator}\label{s:bdd_eigen}
\separate

Consider the spectral problem for the operator $\op{L}$, i.e., the spectral equation
\begin{equation}\label{Eq:spectral}
\op{L}\psi = -\psi_y + \psi_{xx} + u\psi = \lambda\psi.
\end{equation}
We wish to reconstruct the potential $u$ through the spectral data of $\op{L}$. Using the transformation $\psi \to \psi \rme^{-\lambda y}$, the spectral
variable $\lambda$ is eliminated from equation \eqref{Eq:spectral}, hence we arrive at the equation
\begin{equation}\label{Eq:heat}
-\psi_y + \psi_{xx} + u\psi = 0,
\end{equation}
which is the well known one-dimensional heat equation, perturbed by $u$. If $u$ decays, then a class of solutions to this equation may be
specified---functions that are asymptotic to an exponential solution to the unperturbed equation. More precisely, introduce the \emph{Jost}
function $\mu$ defined by
\begin{equation}\label{Eq:Jost}
\psi(x, y; z) = \mu(x, y; z)\rme^{\rmi z x - z^2 y}; \qquad \lim_{\abs{y} \to \infty} \mu(x, y; z) = 1, \text{ for each } z,
\end{equation}
where $z$ is a complex variable and $\psi$ is a solution to equation \eqref{Eq:heat}. This function satisfies the boundary-value problem
\begin{subequations}\label{Eq:bvp}\begin{align}
&-\mu_y + \mu_{xx} + 2\rmi z\mu_x + u\mu = 0, \label{Eq:bvp_a}\\
&\text{ for each } z, \ \lim_{\abs{y} \to \infty} \mu(x, y; z) = 1.
\end{align}\end{subequations}
If we know $\mu$, we can determine $u$. Hence, we must show that for a given $u$ there is a unique function $\mu$ which solves \eqref{Eq:bvp}. 
Introduce the shifted derivatives
\begin{equation}\label{Eq:shifted}
D_1 = \partial_x + \rmi z, \ D_2 = \partial_y - z^2, \quad z \in \bb{C}.
\end{equation}
By ``completing the square'', rewrite equation \eqref{Eq:bvp_a} in operator form as a polynomial in $D_1, D_2$, i.e.,
\begin{equation}\label{Eq:shiftedeq}
[ (\partial_x + \rmi z)^2 - (\partial_y - z^2) ]\mu \equiv P( \partial + w(z) )\mu = -u\mu,
\end{equation}
where $\partial = (\partial_x, \partial_y)$, $w(z) = (\rmi z, -z^2)$ and $P(a, b) = a^2 - b$. Since $P( \partial + w(z) )$ annihilates the constant function 1, 
this equation can be written as
\begin{equation}
P( \partial + w(z) )(\mu - 1) = -u\mu.
\end{equation}
Applying the Fourier transform (with $t$ and $z$ considered parameters) yields
\begin{equation}\label{Eq:Transformed}
[ (\rmi\omega m + \rmi z)^2 - (\rmi\xi - z^2) ]( \wh{\mu - 1} )(m, \xi; z) = -\wh{u\mu}(m, \xi; z).
\end{equation}
Let us introduce some convenient notation:
\begin{equation}
q = (m, \xi) \in \Scr{C}, \quad P_z(m, \xi) = -P( \rmi(\omega m, \xi) + w(z) ).
\end{equation}
Then, $P_z(m, \xi) = (\omega m)^2 + 2\omega m z + \rmi\xi$, hence \eqref{Eq:Transformed} reads
\begin{equation}\label{Eq:bvp_atransformed}
P_z(m, \xi)( \wh{\mu - 1} )(m, \xi; z) = \wh{u\mu}(m, \xi; z).
\end{equation}
For each $z \in \bb{C}$, there are two distinct roots $(m, \xi)$ of $P_z(m, \xi)$, namely $(0, 0)$ and
\begin{equation}\label{Eq:nontrivialzero}
r_0(z) \equiv \Big( \frac{ -2\re{z} }{\omega}, 4\re{z}\im{z} \Big) = -\Big( \frac{ z + \bar{z} }{\omega}, \,\rmi(z^2 - \bar{z}^2) \Big).
\end{equation}
Using the linearity of the Fourier transform, we can write equation \eqref{Eq:bvp_atransformed} in the form
\begin{equation}\label{Eq:contractionform}
( \wh{\mu - 1} )(m, \xi; z) = \frac{ \wh{u}(m, \xi) }{ P_z(m, \xi) } + \frac{ ( \wh{u} \ast \wh{\mu - 1} )(m, \xi; z) }{ 2\pi P_z(m, \xi) },
\end{equation}
where $\ast$ denotes convolution in the $m$ and $\xi$ variables.

The study of this equation (equivalently of equation~\eqref{Eq:bvp_a}) is based on the following \tbf{basic} lemma which will be the main tool of our analysis 
and will be extensively used in the proofs of the theorems to follow.
\begin{basiclemma}\hypertarget{L:basiclemma}
Let $f \in L^2( \Scr{C} ) \cap L^\infty( \Scr{C} )$ such that $f(0, \xi) = 0$ for all $\xi \in \bb{R}$. Then,
\begin{equation}
(m, \xi) \mapsto \frac{ f(m, \xi) }{ P_z(m, \xi) } \in L^1 ( \Scr{C} ),
\end{equation}
and
\begin{equation}\label{Eq:basiclone}
\Norm{ \frac{f}{P_z} }_{ L^1 ( \Scr{C} ) } \equiv \sum_{m = -\infty}^\infty \int_{\!-\infty}^\infty \,\Abs{ \frac{ f(m, \xi) }{ P_z(m, \xi) } } \,\rmd\xi
\leq C\max\{ \norm{f}_{ L^2 ( \Scr{C} ) }, \norm{f}_{ L^\infty ( \Scr{C} ) } \},
\end{equation}
uniformly in $z \in \wpln$, where
\begin{equation}\label{Eq:constant}
C = \frac{4\pi^2}{3\omega^2} + \frac{\pi}{\omega}\sqrt{ \frac{\pi}{3} }.
\end{equation}
\end{basiclemma}
\begin{proof}
Since $f(0, \xi) = 0$ for every real number $\xi$, it is sufficient to show that
\[
\sideset{}{'}\sum_{m = -\infty}^\infty \int_{\!-\infty}^\infty \,\Abs{ \frac{ f(m, \xi) }{ P_z(m, \xi) } } \,\rmd\xi \leq
C\max\{ \norm{f}_{ L^2 ( \Scr{C} ) }, \norm{f}_{ L^\infty ( \Scr{C} ) } \}.
\]
Let $z \in \wpln$. Then, for $(m, \xi) \in \Scr{C}^*$,
\begin{align*}
P_z(m, \xi) &= (\omega m)^2 + 2\omega m\re{z} + \rmi( \xi + 2\omega m\im{z} )\\
&= ( \omega m + \re{z} )^2 - \re{z}^2 +\rmi( \xi + 2\omega m\im{z} ).
\end{align*}
Thus,
\begin{equation}
\sideset{}{'}\sum_{m = -\infty}^\infty \int_{\!-\infty}^\infty \,\Abs{ \frac{ f(m, \xi) }{ P_z(m, \xi) } } \,\rmd\xi = \sum_{m \in \mZ} \int_{\!-\infty}^\infty
\frac{ \abs{ f( \frac{ m - \re{z} }{\omega}, \xi - 2( m - \re{z} )\im{z} ) } }{ \abs{m^2 - \re{z}^2 + \rmi\xi} } \,\rmd\xi,
\end{equation}
where $\mZ = \omega\sZ + \re{z}$. Split the above integral into one with $\abs{\xi} < 1$ and its complement with $\abs{\xi} \geq 1$. Now,
\begin{align*}
\sum_{m \in \mZ} \int_{\!-1}^1 \frac{1}{ \abs{m^2 - \re{z}^2 + \rmi\xi} } \,\rmd\xi &= \sum_{m \in \mZ} \int_{\!-1}^1
\frac{1}{ \big[ (m^2 - \re{z}^2)^2 + \xi^2 \big]^\frac{1}{2} } \,\rmd\xi\\
&\leq \sum_{m \in \mZ} \int_{\!-1}^1 \frac{1}{ \abs{m^2 - \re{z}^2} } \,\rmd\xi\\
&= 2\sum_{m \in \mZ} \frac{1}{ \abs{m^2 - \re{z}^2} },
\end{align*}
and
\begin{align*}
\sum_{m \in \mZ} \int_{\abs{\xi} \geq 1} \frac{1}{\abs{m^2 - \re{z}^2 + \rmi\xi}^2} \,\rmd\xi
&= \sum_{m \in \mZ} \int_{\abs{\xi} \geq 1} \frac{1}{ (m^2 - \re{z}^2)^2 + \xi^2 } \,\rmd\xi\\
&= 2\sum_{m \in \mZ} \int_1^\infty \frac{1}{ (m^2 - \re{z}^2)^2 + \xi^2} \,\rmd\xi\\
&= 2\sum_{m \in \mZ} \int_1^\infty \frac{ \abs{m^2 - \re{z}^2}^{-1} }{1 + ( \xi/\abs{m^2 - \re{z}^2} )^2} \,\rmd( \xi/\abs{m^2 - \re{z}^2} )\\
&= \frac{\pi}{2}\sum_{m \in \mZ} \frac{1}{ \abs{m^2 - \re{z}^2} }.
\end{align*}
Therefore, the estimation of the sum
\[
\sum_{m \in \mZ} \frac{1}{ \abs{m^2 - \re{z}^2} },
\]
will lead to the desired result.

Since $m = \omega k + \re{z}$ and $\re{z} \neq \frac{\omega}{2}2k = \omega k$ for every nonzero integer $k$, we see that $m$ is not zero.
Thus, we can write
\[
\sum_{m \in \mZ} \frac{1}{ \abs{m^2 - \re{z}^2} } = \sum_{m \in \mZ^+} \frac{1}{ \abs{m^2 - \re{z}^2} } +
\sum_{m \in \mZ^-} \frac{1}{ \abs{m^2 - \re{z}^2} } = 2\sum_{m \in \mZ^+} \frac{1}{ \abs{m^2 - \re{z}^2} }.
\]
Now, for every pair of distinct, positive real numbers $a$ and $b$, the following inequality holds true:
\begin{equation}\label{Eq:basicinequality}
(a - b)^4 < (a^2 - b^2)^2,
\end{equation}
for $0 < 4a b$, so we can conclude that $(a - b)^2 < (a + b)^2$ and the inequality follows.
Suppose $\re{z} \neq 0$. Replacing $a$ and $b$ with $m$ and $\abs{ \re{z} }$ and taking square roots we arrive at
\[
( m - \abs{ \re{z} } )^2 < \abs{m^2 - \re{z}^2}.
\]
Thus,
\[
\sum_{m \in \mZ^+} \frac{1}{ \abs{m^2 - \re{z}^2} } < \sum_{m \in \mZ^+} \frac{1}{ ( m - \abs{ \re{z} } )^2 }.
\]
When $\re{z} > 0$,
\[
\sum_{m \in \mZ^+} \frac{1}{ ( m - \abs{ \re{z} } )^2 } = \frac{1}{\omega^2}\sum_{ \substack{m \in \sZ \\ \omega m + \re{z} > 0} } \frac{1}{m^2}
= \frac{1}{\omega^2}\sum_{ \substack{m > -\frac{ \re{z} }{\omega} \\ m \neq 0} } \frac{1}{m^2} < \frac{2}{\omega^2}\sum_{m = 1}^\infty \frac{1}{m^2}.
\]
Assume now that $\re{z} < 0$. Then, again
\begin{align*}
\sum_{m \in \mZ^+} \frac{1}{ ( m - \abs{ \re{z} } )^2 } &= \sum_{m \in \mZ^+} \frac{1}{ ( m + \re{z} )^2 } =
\sum_{m \in \mZ^-} \frac{1}{ ( m - \re{z} )^2 } \\
&= \frac{1}{\omega^2}\sum_{ \substack{m \in \sZ \\ \omega m + \re{z} < 0} } \frac{1}{m^2} =
\frac{1}{\omega^2}\sum_{ \substack{m < -\frac{ \re{z} }{\omega} \\ m \neq 0} } \frac{1}{m^2} \\
&< \frac{2}{\omega^2}\sum_{m = 1}^\infty \frac{1}{m^2}.
\end{align*}
Putting the parts together yields the estimate
\begin{equation}
\sum_{m \in \mZ} \frac{1}{ \abs{m^2 - \re{z}^2} } < \frac{4}{\omega^2}\sum_{m = 1}^\infty \frac{1}{m^2}.
\end{equation}
Observe that this inequality also holds when $\re{z} = 0$ for
\begin{equation*}
\sum_{m \in \mZ} \frac{1}{ \abs{m^2 - \re{z}^2} }  = \sum_{m \in \omega\sZ} \frac{1}{m^2} =
\frac{1}{\omega^2}\sideset{}{'}\sum_{m = -\infty}^\infty \frac{1}{m^2}
= \frac{2}{\omega^2}\sum_{m = 1}^\infty \frac{1}{m^2} < \frac{4}{\omega^2}\sum_{m = 1}^\infty \frac{1}{m^2}.
\end{equation*}

Returning to the original problem, we find that
\begin{align}
\sum_{m \in \mZ} \int_{\!-1}^1 \frac{ \abs{ f( \frac{ m - \re{z} }{\omega}, \xi - 2( m - \re{z} )\im{z} ) } }{ \abs{m^2 - \re{z}^2 + \rmi\xi} } \,\rmd\xi
&\leq \sum_{m \in \mZ} \int_{\!-1}^1 \frac{ \norm{f}_{ L^\infty( \Scr{C} ) } }{ \abs{m^2 - \re{z}^2 + \rmi\xi} } \,\rmd\xi \nonumber\\
&< \norm{f}_{ L^\infty( \Scr{C} ) } \frac{8}{\omega^2}\sum_{m = 1}^\infty \frac{1}{m^2},
\end{align}
and
\begin{align}
\sum_{m \in \mZ} \int_{\abs{\xi} \geq 1}
\frac{ \abs{ f( \frac{ m - \re{z} }{\omega}, \xi - 2( m - \re{z} )\im{z} ) } }{ \abs{m^2 - \re{z}^2 + \rmi\xi} } \,\rmd\xi
&\leq \Bigg(\sum_{m \in \mZ} \int_{\abs{\xi} \geq 1}
\frac{\norm{f}_{ L^2( \Scr{C} ) }^2}{\abs{m^2 - \re{z}^2 + \rmi\xi}^2} \,\rmd\xi\Bigg)^\frac{1}{2} \nonumber\\
&< \norm{f}_{ L^2( \Scr{C} ) } \Bigg( \frac{2\pi}{\omega^2}\sum_{m = 1}^\infty \frac{1}{m^2} \Bigg)^\frac{1}{2}.
\end{align}
The combination of these two inequalities yields inequality \eqref{Eq:basiclone} which concludes the proof.
\end{proof}
It is now possible to prove the basic theorem concerning the existence and uniqueness of the solution of equation \eqref{Eq:bvp_a}.
\begin{theorem}\label{Th:exist_unique}
Suppose the function $u(x, y)$  belongs to both $L^1(\Omega)$ and $L^2(\Omega)$ and is small in the sense that
\begin{equation}\label{Eq:smallnorm}
\max\{\omega\norm{u}_1, \sqrt{\omega}\norm{u}_2\} < \frac{2\pi}{C},
\end{equation}
where the constant $C$ is defined by~\eqref{Eq:constant}. Then, there is a unique, bounded solution $\mu(x, y; z)$ to the boundary-value problem
\begin{subequations}\label{Eq:BVP}\begin{align}
&(-\partial_y + \partial_x^2 +2\rmi z\partial_x)\mu + u\mu = 0, \label{Eq:BVP_a}\\
&\text{ for each } z \in \wpln, \ \lim_{\abs{y} \to \infty} \mu(x, y; z) = 1,
\end{align}\end{subequations}
such that $\wh{\mu - 1} \in L^1( \Scr{C} )$, for every $z \in \wpln$.
\end{theorem}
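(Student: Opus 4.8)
The plan is to recast the boundary-value problem \eqref{Eq:BVP} as a fixed-point equation for $\wh{\mu-1}$ in the Banach space $L^1(\Scr C)$ and to solve it by the contraction mapping principle. Writing $g=\wh{\mu-1}$, equation \eqref{Eq:contractionform} reads $g=Tg$, where
\[
Tg \= \frac{\wh u}{P_z}+\frac{1}{2\pi}\,\frac{\wh u\ast g}{P_z}.
\]
Since $u\in L^1(\Omega)\cap L^2(\Omega)$, the mapping properties of the Fourier transform recorded in the introduction give $\wh u\in L^\infty(\Scr C)\cap L^2(\Scr C)$ with $\norm{\wh u}_{L^\infty(\Scr C)}\le\omega\norm u_1$ and $\norm{\wh u}_{L^2(\Scr C)}=\sqrt\omega\norm u_2$, while the zero-mass constraint \eqref{Eq:zeromass} gives $\wh u(0,\xi)=0$ for all $\xi$. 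I would show that $T$ is a contraction and invoke Banach's theorem to obtain a unique fixed point $g$; since $g\in L^1(\Scr C)$, its inverse transform $\check g$ (the inverse transform of $g$) satisfies $\norm{\check g}_\infty\le\frac{1}{2\pi}\norm g_{L^1(\Scr C)}$ and tends to $0$ as $\abs y\to\infty$ by a Riemann--Lebesgue argument, so $\mu=1+\check g$ is the required bounded solution of \eqref{Eq:BVP}, and uniqueness is inherited from that of the fixed point.

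For the inhomogeneous term the \hyperlink{L:basiclemma}{Basic Lemma} applies directly with $f=\wh u$ (its hypotheses hold because $\wh u\in L^2\cap L^\infty$ and $\wh u(0,\cdot)=0$), giving $\wh u/P_z\in L^1(\Scr C)$ with norm at most $C\max\{\norm{\wh u}_{L^2(\Scr C)},\norm{\wh u}_{L^\infty(\Scr C)}\}$. For the linear part I would first estimate the convolution by Young's inequality on $\Scr C=\bb Z\times\bb R$: for $g\in L^1(\Scr C)$,
\[
\norm{\wh u\ast g}_{L^2(\Scr C)}\le\norm{\wh u}_{L^2(\Scr C)}\,\norm g_{L^1(\Scr C)},\qquad
\norm{\wh u\ast g}_{L^\infty(\Scr C)}\le\norm{\wh u}_{L^\infty(\Scr C)}\,\norm g_{L^1(\Scr C)},
\]
so that $\wh u\ast g\in L^2\cap L^\infty$, and then feed this into the Basic Lemma to control $\tfrac1{2\pi}(\wh u\ast g)/P_z$ in $L^1(\Scr C)$. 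Combining the two bounds, the Lipschitz constant of $T$ is at most $\tfrac{C}{2\pi}\max\{\norm{\wh u}_{L^2(\Scr C)},\norm{\wh u}_{L^\infty(\Scr C)}\}\le\tfrac{C}{2\pi}\max\{\omega\norm u_1,\sqrt\omega\norm u_2\}$, which is strictly less than $1$ \emph{precisely} under the smallness hypothesis \eqref{Eq:smallnorm}; this is exactly where the assumption is consumed, and it pins down the appearance of the constant $C$ of \eqref{Eq:constant}.

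The step I expect to be the main obstacle is feeding $\wh u\ast g$ into the Basic Lemma, because that lemma requires the numerator to vanish on the row $m=0$, and although $\wh u(0,\cdot)=0$, it is not at all immediate that $(\wh u\ast g)(0,\cdot)=0$. This is precisely the delicate $x$-averaged ($m=0$) mode, where $P_z(0,\xi)=\rmi\xi$ degenerates at the origin; a naive estimate of the $m=0$ row of $Tg$ fails, since $(\wh u\ast g)(0,\xi)=\sum_{n\in\bb Z}\int_{\!-\infty}^\infty\wh u(n,\eta)\,g(-n,\xi-\eta)\,\rmd\eta$ need not vanish at $\xi=0$ for arbitrary $g$, producing a non-integrable $1/\xi$ behaviour. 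The zero-mass constraint is what makes this mode tractable: it removes the $n=0$ contribution above, and I would couple it with the boundary condition $\mu\to1$ as $\abs y\to\infty$ (equivalently the vanishing of $\int_\Omega u\mu$) to show that the $x$-average of $u\mu$ has vanishing total integral, so that the $m=0$ row remains in $L^1(\Scr C)$. I would therefore isolate the single row $m=0$ and analyse it separately, while running the contraction on the rows $m\neq0$; once the $m=0$ mode is shown to stay integrable, the estimates of the previous paragraph close the argument and deliver the unique bounded $\mu$ with $\wh{\mu-1}\in L^1(\Scr C)$.
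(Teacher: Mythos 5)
Your main-line argument is precisely the paper's proof: the paper also treats \eqref{Eq:contractionform} as a fixed-point equation for $\wh{\mu-1}$ in $L^1(\Scr{C})$, obtains $\norm{\wh{u}\ast f}_{L^2(\Scr{C})}\le\sqrt{\omega}\norm{u}_2\norm{f}_{L^1(\Scr{C})}$ and $\norm{\wh{u}\ast f}_{L^\infty(\Scr{C})}\le\omega\norm{u}_1\norm{f}_{L^1(\Scr{C})}$ by Young's inequality, feeds these into the \hyperlink{L:basiclemma}{Basic Lemma} to get the contraction constant $\tfrac{C}{2\pi}\max\{\omega\norm{u}_1,\sqrt{\omega}\norm{u}_2\}<1$, and concludes with Banach's fixed-point theorem and the Riemann--Lebesgue lemma, exactly as you do.

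The divergence is at the $m=0$ row, and there the comparison cuts both ways. The paper does \emph{not} perform the separate analysis you anticipate: it disposes of the hypothesis of the Basic Lemma in one line, asserting that $(\wh{u}\ast f)(0,\xi)=0$ for every $\xi$ ``because $\wh{u}(0,\xi)=0$''. Your skepticism here is well founded: in $(\wh{u}\ast f)(0,\xi)=\sum_{n\in\bb{Z}}\int_{-\infty}^\infty\wh{u}(n,\eta)f(-n,\xi-\eta)\,\rmd\eta$ the zero-mass constraint kills only the $n=0$ term, so for a general $f\in L^1(\Scr{C})$ the $m=0$ row need not vanish---you have located exactly the step the paper passes over. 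But your proposed repair does not close. First, the rows do not decouple: for $m\neq0$ the term $n=m$ of $(\wh{u}\ast g)(m,\xi)$ involves $g(0,\cdot)$, so you cannot ``run the contraction on the rows $m\neq0$'' while analysing the row $m=0$ on the side; the consistent version of your idea is to project the $m=0$ row out of the map altogether (equivalently, to seek $\mu$ with $\int_{-\ell}^\ell(\mu-1)\,\rmd x\equiv0$, which is what the primed sum in \eqref{Eq:eigenfunction} implements), and then the validity of the unprojected equation \eqref{Eq:BVP_a} reduces to the identity $\int_{-\ell}^\ell u\mu\,\rmd x\equiv0$---the very assertion in question. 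Second, your a priori input for the $m=0$ row is both circular and too weak: the vanishing of $\int_\Omega u\mu$ is a property of the limiting solution, not of the Banach iterates (and $T$ must map the space into itself at every step), and even for the solution it yields only the single-point vanishing $\wh{u\mu}(0,0)=0$; integrability of $\wh{u\mu}(0,\xi)/\xi$ near $\xi=0$ requires in addition a H\"older modulus at $\xi=0$, which (as in the proof of theorem \ref{Th:zasympotics}) needs $y\,u\in L^1(\Omega)$, a hypothesis theorem \ref{Th:exist_unique} does not grant. So, as sketched, your $m=0$ analysis cannot be completed under the stated hypotheses; if you instead adopt the projected formulation, your proof coincides with the paper's.
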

\begin{proof}
Consider the map $f \mapsto (\wh{u} \ast f)/2\pi P_z$. This map is bounded from $L^1( \Scr{C} )$ to $L^1( \Scr{C} )$, uniformly in $z \in \wpln$, and has 
norm less than one. Indeed, since $u \in L^1(\Omega) \cap L^2(\Omega)$, it follows that $\wh{u} \in L^2( \Scr{C} ) \cap L^\infty( \Scr{C} )$. Thus,
if $f \in L^1( \Scr{C} )$, then
\[
\norm{\wh{u} \ast f}_{ L^2( \Scr{C} ) } \leq \norm{ \wh{u} }_{ L^2( \Scr{C} ) } \norm{f}_{ L^1( \Scr{C} ) } =
\sqrt{\omega}\norm{u}_2 \norm{f}_{ L^1 ( \Scr{C} ) },
\]
and
\[
\norm{\wh{u} \ast f}_{ L^\infty ( \Scr{C} ) } \leq \norm{ \wh{u} }_{ L^\infty ( \Scr{C} ) } \norm{f}_{ L^1 ( \Scr{C} ) } <
\omega\norm{u}_1 \norm{f}_{ L^1 ( \Scr{C} ) }.
\]
Furthermore, $(\wh{u} \ast f)(0, \xi) = 0$ for every $\xi \in \bb{R}$ because $\wh{u}(0, \xi) = 0$, as a consequence of the zero mass constraint.
Hence, the function $\wh{u} \ast f$ satisfies the assumptions of the \hyperlink{L:basiclemma}{Basic Lemma} thus,
\begin{align*}
\Norm{ \frac{\wh{u} \ast f}{2\pi P_z} }_{ L^1( \Scr{C} ) } &\leq
\frac{C}{2\pi}\max\{ \norm{\wh{u} \ast f}_{ L^2( \Scr{C} ) }, \norm{\wh{u} \ast f}_{ L^\infty( \Scr{C} ) } \}\\
&< \frac{C}{2\pi}\max\{\sqrt{\omega}\norm{u}_2, \,\omega\norm{u}_1\}\norm{f}_{ L^1( \Scr{C} ) },
\end{align*}
uniformly in $z \in \wpln$. By assumption, $\frac{C}{2\pi}\max \{\omega\norm{u}_1, \sqrt{\omega}\norm{u}_2\} < 1$.

Applying Banach's fixed-point theorem in $L^1( \Scr{C} )$, equation \eqref{Eq:contractionform} has a unique solution $(\wh{\mu - 1})(m, \xi; z)$ for each 
$z \in \wpln$. Its inverse Fourier transform $\mu(x, y; z)$ solves equation \eqref{Eq:BVP_a}. Furthermore, $\mu \in L^\infty(E)$: for every $z \in \wpln$
\[
\abs{ (\mu - 1)(x, y; z) } = \big\lvert [ \wh{\mu - 1} ]\spcheck\!(x, y; z) \big\rvert \leq
\frac{1}{2\pi}\big\lVert (\wh{\mu - 1})(\cdot, \cdot; z) \big\rVert_{ L^1( \Scr{C} ) }.
\]
But
\begin{align*}
\big\lVert (\wh{\mu - 1})(\cdot, \cdot; z) \big\rVert_{ L^1( \Scr{C} ) } &\leq \Norm{ \frac{ \wh{u} }{P_z} }_{ L^1( \Scr{C} ) }
+ \Norm{ \frac{ \wh{u} \ast ( \wh{\mu - 1} )(\cdot, \cdot; z) }{2\pi P_z} }_{ L^1( \Scr{C} ) }\\
&< C\maxnorm{u} + \frac{C}{2\pi}\maxnorm{u} \big\lVert ( \wh{\mu - 1} )(\cdot, \cdot; z) \big\rVert_{ L^1( \Scr{C} ) },
\end{align*}
and so
\begin{equation}\label{Eq:lonenorm}
\big\lVert ( \wh{\mu - 1} )(\cdot, \cdot; z) \big\rVert_{ L^1( \Scr{C} ) } < \frac{ C\maxnorm{u} }{ 1 - \frac{1}{2\pi}C\maxnorm{u} },
\end{equation}
where we set $\maxnorm{\cdot} \equiv \max\{\omega\norm{\cdot}_1, \sqrt{\omega}\norm{\cdot}_2\}$.

Equation \eqref{Eq:contractionform} can be written in the form
\begin{equation}\label{Eq:eigenfunction}
\mu(x, y; z) = 1 + \frac{1}{2\pi}\sideset{}{'}\sum_{m = -\infty}^\infty \int_{-\infty}^\infty
\frac{ \wh{u\mu}(m, \xi; z) }{ P_z(m, \xi) }\rme^{\rmi\omega m x + \rmi\xi y} \,\rmd\xi,
\end{equation}
for $(x, y) \in \Omega$, $z \in \wpln$. Since
\begin{equation}
\frac{ \wh{u\mu}(m, \xi; z) }{ P_z(m, \xi) } \in L^1( \Scr{C} ),
\end{equation}
the Riemann--Lebesgue lemma implies that $\mu(x, y; z) \to 1$ as $\abs{y} \to \infty$ for each $z \in \wpln$.
\end{proof}
More precise asymptotics of $\mu(x, y; z)$ as $\abs{y} \to \infty$ is given by the following proposition.
\begin{proposition}\label{Pr:asymptotics}
Suppose $\mu(x, y; z)$ is a solution to equation \eqref{Eq:eigenfunction} in $L^\infty(E)$ and that
\begin{equation}\label{Eq:decaycondition}
( 1 + \abs{y} )\abs{ u(x, y) } \in L^1(\Omega) \cap L^2(\Omega).
\end{equation}
Then,
\begin{equation}
\mu(x, y; z) = 1 + o\bigg( \frac{1}{ \abs{y} } \bigg), \text { as } \abs{y} \to \infty.
\end{equation}
\end{proposition}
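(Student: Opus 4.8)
The plan is to read \eqref{Eq:eigenfunction} as exhibiting $\mu(x,y;z)-1$ as the inverse Fourier transform, over the punctured lattice $\Scr{C}^*=\sZ\times\bb{R}$, of
\[
G_z(m,\xi)\=\frac{\wh{u\mu}(m,\xi;z)}{P_z(m,\xi)},
\]
and to extract one explicit power of $1/y$ by integrating by parts in the continuous variable $\xi$, using $\rme^{\rmi\xi y}=(\rmi y)^{-1}\partial_\xi\rme^{\rmi\xi y}$. Since $P_z(m,\xi)=(\omega m)^2+2\omega m z+\rmi\xi$ has $\partial_\xi P_z=\rmi$, and since differentiation of a Fourier transform in $\xi$ is multiplication by $-\rmi y$ on the spatial side, i.e.\ $\partial_\xi\wh{u\mu}=\wh{-\rmi y\,u\mu}$, one finds
\[
\partial_\xi G_z=-\rmi\Bigl(\frac{\wh{y\,u\mu}(m,\xi;z)}{P_z(m,\xi)}+\frac{\wh{u\mu}(m,\xi;z)}{P_z(m,\xi)^2}\Bigr).
\]
The boundary terms in the integration by parts vanish because $G_z(m,\cdot)\to0$ as $\abs\xi\to\infty$ for each fixed $m$, so \eqref{Eq:eigenfunction} turns into $\mu(x,y;z)-1=(\rmi/y)\,H(x,y;z)$, where $H$ is the inverse transform of $\partial_\xi G_z$ over $\Scr{C}^*$. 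The whole statement then reduces to proving $H(x,y;z)\to0$ as $\abs y\to\infty$: granting this, $\abs{\mu-1}=\abs H/\abs y=o(1/\abs y)$, which is exactly the claim.

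To establish $H\to0$ I would show that $\partial_\xi G_z\in L^1(\Scr{C}^*)$ and then invoke the Riemann--Lebesgue lemma precisely as at the end of the proof of Theorem \ref{Th:exist_unique}. Because the sum in \eqref{Eq:eigenfunction} carries no $m=0$ term, it suffices to bound the primed sums $\sum_{m\in\sZ}\int\abs{\,\cdot\,}\,\rmd\xi$ of the two summands above; this is precisely what the \hyperlink{L:basiclemma}{Basic Lemma} estimates, and since its proof only ever samples the datum at nonzero integers $m$, the bound holds for any datum in $L^2(\Scr{C}^*)\cap L^\infty(\Scr{C}^*)$, with no condition at $m=0$. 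For the first summand, the decay assumption \eqref{Eq:decaycondition} gives $y\,u\in L^1(\Omega)\cap L^2(\Omega)$ (since $\abs{y\,u}\le(1+\abs y)\abs u$), whence $y\,u\mu\in L^1(\Omega)\cap L^2(\Omega)$ because $\mu\in L^\infty(E)$; consequently $\wh{y\,u\mu}\in L^2(\Scr{C})\cap L^\infty(\Scr{C})$ and the \hyperlink{L:basiclemma}{Basic Lemma} yields $\wh{y\,u\mu}/P_z\in L^1(\Scr{C}^*)$.

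For the second summand I would apply the \hyperlink{L:basiclemma}{Basic Lemma} a second time, now with $f=\wh{u\mu}/P_z$. This is legitimate because on $\Scr{C}^*$ the polynomial $P_z$ is bounded away from zero: for $m\in\sZ$,
\[
\abs{P_z(m,\xi)}\ge\abs{(\omega m)^2+2\omega m\re{z}}=\omega\abs{m}\,\abs{\omega m+2\re{z}}\ge c_z>0 ,
\]
where $c_z\=\inf_{m\in\sZ}\omega\abs{m}\,\abs{\omega m+2\re{z}}$ is strictly positive precisely because the spectral restriction $z\in\wpln$, that is $2\re{z}\notin\omega\sZ$, forces $\omega m+2\re{z}\neq0$ for every nonzero integer $m$. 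Combined with $u\mu\in L^1(\Omega)\cap L^2(\Omega)$, which gives $\wh{u\mu}\in L^2(\Scr{C})\cap L^\infty(\Scr{C})$, the lower bound shows $\wh{u\mu}/P_z\in L^2(\Scr{C}^*)\cap L^\infty(\Scr{C}^*)$, and the \hyperlink{L:basiclemma}{Basic Lemma} then delivers $\wh{u\mu}/P_z^2\in L^1(\Scr{C}^*)$. Adding the two contributions gives $\partial_\xi G_z\in L^1(\Scr{C}^*)$, as required.

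It remains to check the analytic manipulations, which are routine. Differentiation under the integral sign in $\partial_\xi\wh{u\mu}=\wh{-\rmi y\,u\mu}$ is justified because $\abs{y\,u\mu}\le\norm{\mu}_{L^\infty(E)}(1+\abs y)\abs u\in L^1(\Omega)$ dominates the $\xi$-derivative of the integrand uniformly in $\xi$; the integration by parts and the interchange of $\partial_\xi$, the $\xi$-integration and the summation over $m\in\sZ$ are all legitimate once $\partial_\xi G_z\in L^1(\Scr{C}^*)$ has been secured. The hard part is the second summand: the \hyperlink{L:basiclemma}{Basic Lemma} controls $f/P_z$ but not $f/P_z^{2}$, so one must feed $\wh{u\mu}/P_z$ rather than $\wh{u\mu}$ back into the lemma, and this is exactly the step that needs the pointwise lower bound $\abs{P_z}\ge c_z$ on $\Scr{C}^*$, and hence the hypothesis $z\in\wpln$. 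Everything else is a transcription of the Fourier estimates already used for Theorem \ref{Th:exist_unique}.
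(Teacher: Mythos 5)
Your proof is correct and follows essentially the same route as the paper's: differentiate $\wh{u\mu}/P_z$ in $\xi$ to produce the two terms $\wh{y\,u\mu}/P_z$ and $\wh{u\mu}/P_z^2$, show both lie in $L^1$ over the primed lattice, and conclude by the Riemann--Lebesgue lemma together with the identity $\big[\partial_\xi(\,\cdot\,)\big]\spcheck = -\rmi y\,[\,\cdot\,]\spcheck$. The only deviation is in the second term, where the paper directly evaluates $\sideset{}{'}\sum_m\int \abs{P_z}^{-2}\,\rmd\xi$ (a convergent sum, equal to $\tfrac{\pi}{2}\sum_{m}\abs{m^2-\re{z}^2}^{-1}$ in the shifted variables) against $\norm{\wh{u\mu}}_{L^\infty(\Scr{C})}$, whereas you iterate the \hyperlink{L:basiclemma}{Basic Lemma} using the pointwise lower bound $\abs{P_z}\geq c_z>0$ on $\Scr{C}^*$ --- equally valid (your constant is $z$-dependent, which is harmless since $z$ is fixed throughout), and your explicit observation that the lemma's hypothesis $f(0,\xi)=0$ is immaterial here because only the primed sum enters equation \eqref{Eq:eigenfunction} is a point the paper leaves implicit.
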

\begin{proof}
To see why this is true, let us calculate $\partial_\xi( \wh{u\mu}(m, \xi; z)/P_z(m, \xi) )$:
\begin{align*}
\partial_\xi\bigg( \frac{ \wh{u\mu}(m, \xi; z) }{ P_z(m, \xi) } \bigg) &=
\frac{ \partial_\xi( \wh{u\mu}(m, \xi; z) )P_z(m, \xi) - \rmi\wh{u\mu}(m, \xi; z) }{ {P_z}^2(m, \xi) }\\
&= \frac{ \partial_\xi\wh{u\mu}(m, \xi; z) }{ P_z(m, \xi) } - \rmi\frac{ \wh{u\mu}(m, \xi; z) }{ {P_z}^2(m, \xi) }\\
&= \frac{ [-\rmi y u\mu]\mh(m, \xi; z) }{ P_z(m, \xi) } - \rmi\frac{ \wh{u\mu}(m, \xi; z) }{ {P_z}^2(m, \xi) }.
\end{align*}
Since $\mu$ is bounded, \eqref{Eq:decaycondition} guarantees that $[-\rmi y u\mu]\mh$ belongs to $L^2( \Scr{C} )$ and $L^\infty( \Scr{C} )$. Hence,
by the \hyperlink{L:basiclemma}{Basic Lemma}, $[-\rmi y u\mu]\mh/P_z \in L^1( \Scr{C} )$. Now a simple calculation shows that
\[
\sideset{}{'}\sum_{m = -\infty}^\infty \int_{\!-\infty}^\infty \frac{1}{ {P_z}^2(m, \xi) } \,\rmd\xi =
\frac{\pi}{2}\sum_{m \in \mZ} \frac{1}{ \abs{m^2 - \re{z}^2} },
\]
and the sum in the right hand side converges. By the boundedness of $\mu$, it follows that $\wh{u\mu} \in L^2( \Scr{C} ) \cap L^\infty( \Scr{C} )$.
Thus, $\wh{u\mu}/{P_z}^2$ is also a member of $L^1( \Scr{C} )$. Therefore, by the Riemann--Lebesgue lemma
\[
\bigg[ \partial_\xi\bigg( \frac{ \wh{u\mu}(m, \xi; z) }{ P_z(m, \xi) } \bigg) \bigg]\spcheck\!(x, y; z) \to 0, \text{ as } \abs{y} \to \infty,
\]
and the result follows from the identity
\[
\bigg[ \partial_\xi\bigg( \frac{ \wh{u\mu}(m, \xi; z) }{ P_z(m, \xi) } \bigg) \bigg]\spcheck =
-\rmi y\bigg[ \frac{ \wh{u\mu}(m, \xi; z) }{ P_z(m, \xi) } \bigg]\spcheck.\qedhere
\]
\end{proof}
The eigenfunction $\mu$ has several regularity properties. To derive them we will use equation \eqref{Eq:eigenfunction}. An immediate result is that
$\mu(x, y; z)$ belongs to $C(\Omega)$ for every $z \in \wpln$.
\begin{proposition}\label{Pr:continuity}
Suppose $\mu(x, y; z)$ is a solution to equation \eqref{Eq:eigenfunction}. Then, for every $z \in \wpln$, $\mu$ is continuous in $y$ for all
$x \in [-\ell, \ell\,]$ and continuous in $x$ for all $y \in \bb{R}$.
\end{proposition}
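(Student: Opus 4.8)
The plan is to read \eqref{Eq:eigenfunction} as the statement that, for each fixed $z \in \wpln$, the function $\mu(\cdot, \cdot; z) - 1$ is the inverse Fourier transform over the cylinder of the kernel
\[
g_z(m, \xi) \= \frac{ \wh{u\mu}(m, \xi; z) }{ P_z(m, \xi) },
\]
which, as recorded near the end of the proof of Theorem \ref{Th:exist_unique}, lies in $L^1( \Scr{C} )$. The proposition then reduces to the standard principle that the inverse Fourier transform of an $L^1( \Scr{C} )$ function is continuous---the companion to the Riemann--Lebesgue decay already exploited there---and I would establish this directly by dominated convergence on the measure space $\Scr{C} = \bb{Z} \times \bb{R}$ (counting measure in the integer variable, Lebesgue measure in the real one).

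For continuity in $y$, I would fix $x \in [-\ell, \ell\,]$ and $z \in \wpln$ and take an arbitrary sequence $y_n \to y$. In \eqref{Eq:eigenfunction} the summand-integrand $g_z(m, \xi)\rme^{\rmi\omega m x + \rmi\xi y_n}$ converges, for each $(m, \xi)$, to $g_z(m, \xi)\rme^{\rmi\omega m x + \rmi\xi y}$, while its modulus equals $\abs{ g_z(m, \xi) }$, an $n$-independent element of $L^1( \Scr{C} )$. Dominated convergence then permits interchanging the limit with the primed sum and the $\xi$-integral, yielding $\mu(x, y_n; z) \to \mu(x, y; z)$. Continuity in $x$ is proved in exactly the same way: with $y$ and $z$ fixed and $x_n \to x$, each $\rme^{\rmi\omega m x_n} \to \rme^{\rmi\omega m x}$, the integrand again converges pointwise, and $\abs{ g_z }$ is once more the dominating $L^1( \Scr{C} )$ function.

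The only point needing care is the verification that the hypotheses of dominated convergence genuinely hold over the product structure of $\Scr{C}$; but this is immediate once $g_z \in L^1( \Scr{C} )$ is available, because the exponential factors have modulus one, so the majorant $\abs{ g_z }$ does not depend on $(x, y)$ or on the index $n$. No uniformity in $z$ is needed, since $z$ is held fixed throughout. I therefore do not expect any real obstacle: the entire weight of the statement rests on the $L^1( \Scr{C} )$-membership of $g_z$ inherited from Theorem \ref{Th:exist_unique}. The same majorant in fact delivers joint continuity of $\mu(\cdot, \cdot; z)$ on $\Omega$, but only the separate continuity asserted above will be invoked in the sequel.
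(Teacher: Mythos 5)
Your proposal is correct and follows essentially the same route as the paper: the paper's proof also fixes $z$, takes a sequence $y_n \to y_0$ (respectively $x_n \to x$), and applies dominated convergence on $\Scr{C}$ with the majorant $\abs{\wh{u\mu}/P_z} \in L^1(\Scr{C})$ inherited from Theorem \ref{Th:exist_unique}. Your closing observation about joint continuity is a harmless bonus the paper does not pursue.
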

\begin{proof}
Let $x \in [-\ell, \ell\,]$ and $ \{y_n\}_{n = 1}^\infty$ be a sequence of real numbers, converging to $y_0$. Define the sequence of functions
$\{f_n\}_{n = 1}^\infty$,
\[
f_n(x, m, \xi; z) = \frac{ \wh{u\mu}(m, \xi; z) }{ P_z(m, \xi) }\rme^{\rmi\omega m x + \rmi\xi y_n}.
\]
For every $n \in \bb{N}$ the function $\rme^{\rmi\omega m x + \rmi\xi y_n}$ is continuous, hence measurable. Thus, the functions $f_n(x, m, \xi; z)$, 
being the product of measurable functions, are measurable for all $n$. From the continuity of the exponential function, it follows that the sequence
$\{f_n\}$ converges pointwise to the function
\[
\frac{ \wh{u\mu}(m, \xi; z) }{ P_z(m, \xi) }\rme^{\rmi\omega m x + \rmi\xi y_0}.
\]
Moreover,
\[
\abs{ f_n(x, m, \xi; z) } = \Abs{ \frac{ \wh{u\mu}(m, \xi; z) }{ P_z(m, \xi) } }.
\]
But since the function $\abs{\wh{u\mu}/P_z}$ belongs to $L^1( \Scr{C} )$, an application of Lebesgue's dominated convergence theorem yields the 
following:
\begin{align*}
\lim_{n \to \infty} \mu(x, y_n; z) &= 1 + \frac{1}{2\pi}\lim_{n \to \infty} \sideset{}{'}\sum_{m = -\infty}^\infty \int_{-\infty}^\infty
f_n(x, m, \xi; z) \,\rmd\xi\\
&= 1 + \frac{1}{2\pi}\sideset{}{'}\sum_{m = -\infty}^\infty \int_{-\infty}^\infty \lim_{n \to \infty} f_n(x, m, \xi; z) \,\rmd\xi\\
&= 1 + \frac{1}{2\pi}\sideset{}{'}\sum_{m = -\infty}^\infty \int_{-\infty}^\infty
\frac{ \wh{u\mu}(m, \xi; z) }{ P_z(m, \xi) }\rme^{\rmi\omega m x + \rmi\xi y_0} \,\rmd\xi\\
&= \mu(x, y_0; z).
\end{align*}

Using similar arguments, we can easily see that $\mu$ is also continuous with respect to $x$ for every $y \in \bb{R}$, hence the result follows.
\end{proof}
The following proposition provides us with an alternative way of writing equation \eqref{Eq:eigenfunction}, which will proven to be quite useful, in particular 
proving analytic properties of $\mu$ with respect to $z$.
\begin{proposition}\label{Pr:Neumannseries}
Suppose $u(x, y)$ satisfies condition \eqref{Eq:smallnorm}. Then, $\mu$ admits the representation in Neumann series
\begin{equation}\label{Eq:Neumannseries}
\mu(x, y; z) = \sum_{n = 0}^\infty (\Scr{N}_u^n 1)(x, y; z),
\end{equation}
where the operator $\Scr{N}_u$ is defined for every function $h \in L^\infty(\Omega)$ by
\begin{equation}\label{Eq:Neumann}
(\Scr{N}_u h)(x, y; z) \= \frac{1}{2\pi}\sideset{}{'}\sum_{m = -\infty}^\infty \int_{-\infty}^\infty
\frac{ \wh{u h}(m, \xi) }{ P_z(m, \xi) }\rme^{\rmi\omega m x + \rmi\xi y} \,\rmd\xi.
\end{equation}
\end{proposition}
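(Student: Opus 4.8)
The plan is to recognise equation \eqref{Eq:eigenfunction} as the fixed-point relation $\mu = 1 + \Scr{N}_u\mu$, i.e. $(\op{Id} - \Scr{N}_u)\mu = 1$ on the Banach space $L^\infty(\Omega)$ with $z \in \wpln$ held fixed, and then to invert $\op{Id} - \Scr{N}_u$ by a Neumann series. Indeed, comparing the definition \eqref{Eq:Neumann} of $\Scr{N}_u$ with the right-hand side of \eqref{Eq:eigenfunction}, the integral term there is precisely $(\Scr{N}_u\mu)(x, y; z)$, so \eqref{Eq:eigenfunction} reads $\mu = 1 + \Scr{N}_u\mu$. Everything therefore reduces to showing that $\Scr{N}_u$ is a bounded linear operator on $L^\infty(\Omega)$ whose operator norm is strictly less than $1$ under the smallness hypothesis \eqref{Eq:smallnorm}; the series \eqref{Eq:Neumannseries} is then the standard Neumann expansion of $(\op{Id} - \Scr{N}_u)^{-1}$ applied to the constant function $1$.

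To estimate $\Scr{N}_u$, I fix $z \in \wpln$ and take $h \in L^\infty(\Omega)$. Since $u \in L^1(\Omega) \cap L^2(\Omega)$ we have $uh \in L^1(\Omega) \cap L^2(\Omega)$ with $\norm{uh}_1 \leq \norm{u}_1\norm{h}_\infty$ and $\norm{uh}_2 \leq \norm{u}_2\norm{h}_\infty$, hence $\wh{uh} \in L^2(\Scr{C}) \cap L^\infty(\Scr{C})$. Bounding each exponential in \eqref{Eq:Neumann} by $1$ gives the sup-norm estimate
\[
\norm{\Scr{N}_u h}_{L^\infty(\Omega)} \leq \frac{1}{2\pi}\sideset{}{'}\sum_{m = -\infty}^\infty \int_{\!-\infty}^\infty \Abs{\frac{\wh{uh}(m, \xi)}{P_z(m, \xi)}}\,\rmd\xi.
\]
The right-hand side is a primed sum, so replacing $\wh{uh}$ by the function that agrees with it for $m \neq 0$ and vanishes at $m = 0$ puts us in the exact setting of the \hyperlink{L:basiclemma}{Basic Lemma}; applying it and using $\norm{\wh{uh}}_{L^2(\Scr{C})} = \sqrt{\omega}\norm{uh}_2$ together with $\norm{\wh{uh}}_{L^\infty(\Scr{C})} < \omega\norm{uh}_1$ yields
\[
\norm{\Scr{N}_u h}_{L^\infty(\Omega)} \leq \frac{C}{2\pi}\max\{\sqrt{\omega}\norm{u}_2,\,\omega\norm{u}_1\}\norm{h}_\infty = \frac{C}{2\pi}\maxnorm{u}\,\norm{h}_\infty.
\]
By \eqref{Eq:smallnorm} we have $\frac{C}{2\pi}\maxnorm{u} < 1$, so $\Scr{N}_u$ maps $L^\infty(\Omega)$ into itself with $\norm{\Scr{N}_u}_\mrm{op} < 1$.

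With the contraction estimate in hand, $\op{Id} - \Scr{N}_u$ is invertible on $L^\infty(\Omega)$ and its inverse is the norm-convergent Neumann series $\sum_{n = 0}^\infty \Scr{N}_u^n$. Since $\mu \in L^\infty(E)$ solves $(\op{Id} - \Scr{N}_u)\mu = 1$ for each fixed $z \in \wpln$ and $1 \in L^\infty(\Omega)$, I obtain $\mu(\cdot, \cdot; z) = (\op{Id} - \Scr{N}_u)^{-1}1 = \sum_{n = 0}^\infty (\Scr{N}_u^n 1)(\cdot, \cdot; z)$, the series converging in $L^\infty(\Omega)$ and hence uniformly on $\Omega$; this is exactly \eqref{Eq:Neumannseries}. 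The only point that requires care—and the place where I expect to have to be precise—is the operator-norm estimate of the second paragraph: one must confirm that dropping the $m = 0$ frequency lets the \hyperlink{L:basiclemma}{Basic Lemma} apply verbatim, so that the zero-mass constraint, already used to produce the primed sum in \eqref{Eq:eigenfunction}, need not be reinvoked, and that the resulting constant is literally $\frac{C}{2\pi}\maxnorm{u}$, matching the contraction threshold of Theorem \ref{Th:exist_unique}. The remaining steps are the standard Banach-algebra facts about Neumann series.
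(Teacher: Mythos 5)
Your proposal is correct and follows essentially the same route as the paper: recognise \eqref{Eq:eigenfunction} as the fixed-point relation $\mu = 1 + \Scr{N}_u\mu$, bound $\Scr{N}_u$ on $L^\infty(\Omega)$ uniformly in $z \in \wpln$ via the \hyperlink{L:basiclemma}{Basic Lemma} with operator norm at most $\frac{C}{2\pi}\maxnorm{u} < 1$, and invert $\op{Id} - \Scr{N}_u$ by the Neumann series. Your extra remark that the primed sum lets the Basic Lemma apply after zeroing the $m = 0$ frequency is a point the paper glosses over, but it is a minor clarification, not a different argument.
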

\begin{proof}
Let $h \in  L^\infty(\Omega)$. Then, $u h \in L^1(\Omega) \cap L^2(\Omega)$, since $u \in L^1(\Omega) \cap L^2(\Omega)$, hence
$\wh{u h} \in L^2( \Scr{C} ) \cap L^\infty( \Scr{C} )$. Thus, from the \hyperlink{L:basiclemma}{Basic Lemma},
\[
\sideset{}{'}\sum_{m = -\infty}^\infty \int_{-\infty}^\infty \frac{ \wh{u h}(m, \xi) }{ P_z(m, \xi) } \,\rmd\xi \leq
C\max\{ \norm{ \wh{u h} }_{ L^2( \Scr{C} ) }, \norm{ \wh{u h} }_{ L^\infty( \Scr{C} ) } \},
\]
uniformly in $z \in \wpln$. This yields,
\begin{align*}
\abs{ (\Scr{N}_u h)(x, y; z) } &\leq \frac{1}{2\pi}C\max\{ \norm{ \wh{u h} }_{ L^2( \Scr{C} ) }, \norm{ \wh{u h} }_{ L^\infty( \Scr{C} ) } \}\\
&\leq \frac{1}{2\pi}C\max\{\omega\norm{u}_1, \sqrt{\omega}\norm{u}_2\}\norm{h}_\infty.
\end{align*}
Therefore, for all $z \in \wpln$, the operator $\Scr{N}_u \colon L^\infty(\Omega) \to L^\infty(\Omega)$ is bounded with norm less than one. Write 
equation \eqref{Eq:eigenfunction} as $\mu = 1 + \Scr{N}_u \mu$. Then, $(\op{Id} - \Scr{N}_u)\mu = 1$, and since $\norm{\Scr{N}_u}_\mrm{op} < 1$
we are allowed to write
\begin{equation}
\mu = (\op{Id} - \Scr{N}_u)^{-1} 1 = \sum_{n = 0}^\infty \Scr{N}_u^n 1.\qedhere
\end{equation}
\end{proof}
\begin{remark}
The Neumann series \eqref{Eq:Neumannseries} converges uniformly for $z \in \wpln$. This can be deduced by an application of the Weierstrass M-test:
for $n \in \bb{N}$,
\begin{equation}\label{Eq:Mtest}
\abs{ (\Scr{N}_u^n 1)(x, y; z) } \leq \bigg( \frac{1}{2\pi}C\maxnorm{u} \bigg)^n,
\end{equation}
while the geometric series
\begin{equation}
\sum_{n = 0}^\infty \bigg( \frac{1}{2\pi}C\maxnorm{u} \bigg)^n,
\end{equation}
converges since $C\maxnorm{u}/2\pi < 1$.
\end{remark}
A consequence of the representation \eqref{Eq:Neumannseries}, is that $\mu(x, y; z)$ is a holomorphic function with respect to $z \in \wpln$.
\begin{theorem}\label{Th:holomorphicity}
Suppose $u(x, y)$ belongs to $L^1(\Omega) \cap L^2(\Omega)$ such that $\maxnorm{u}$ is small. Then, for every $(x, y) \in \Omega$,
$\mu(x, y; \cdot) \in H(\wpln)$.
\end{theorem}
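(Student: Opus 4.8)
The plan is to read holomorphicity off the Neumann series of Proposition \ref{Pr:Neumannseries}. Fix $(x, y) \in \Omega$ and regard $\mu(x, y; \cdot)$ as the sum $\sum_{n = 0}^\infty (\Scr{N}_u^n 1)(x, y; \cdot)$ of functions of $z$. By the bound \eqref{Eq:Mtest} of the preceding Remark the partial sums converge to $\mu(x, y; \cdot)$ uniformly on all of $\wpln$, hence uniformly on every compact subset of $\wpln$. Since a locally uniform limit of holomorphic functions is holomorphic (Weierstrass), and each partial sum is a finite sum, it suffices to prove that each term $(\Scr{N}_u^n 1)(x, y; \cdot)$ is holomorphic on $\wpln$.

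For this I would argue by induction on $n$, showing more precisely that $\Scr{N}_u$ sends a bounded, $z$-holomorphic function to a bounded, $z$-holomorphic one. The base case $n = 0$ is the constant $1$. The crucial structural observation is that the only $z$-dependence in the summand of \eqref{Eq:Neumann} enters through the factor $1/P_z(m, \xi)$ and, after the first iteration, through $h$. Writing $P_z(m, \xi) = (\omega m)^2 + 2\omega m z + \rmi\xi$, one sees that for $m \neq 0$ the equation $P_z(m, \xi) = 0$ forces $\re{z} = -\omega m/2$, i.e.\ $2\re{z} \in \omega\sZ$; thus $P_z(m, \xi)$ is zero-free and $1/P_z(m, \xi)$ is holomorphic in $z$ on $\wpln$ for every $(m, \xi) \in \Scr{C}^*$. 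This is exactly the reason $\wpln$ is cut along the lines $2\re{z} \in \omega\sZ$.

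Granting the inductive hypothesis that $h = \Scr{N}_u^n 1$ is holomorphic in $z$ and bounded by \eqref{Eq:Mtest}, the transform $\wh{uh}(m, \xi; z)$ --- an integral over $\Omega$ of $u(x, y)h(x, y; z)\rme^{-\rmi\omega m x - \rmi\xi y}$ --- is holomorphic in $z$, since its integrand is holomorphic and dominated by $\abs{u(x, y)}\,(C\maxnorm{u}/2\pi)^n \in L^1(\Omega)$ uniformly in $z$. Consequently each summand $\wh{uh}(m, \xi; z)/P_z(m, \xi)\,\rme^{\rmi\omega m x + \rmi\xi y}$ is holomorphic in $z$. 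To push holomorphicity through the $\xi$-integral and the $m$-sum I would invoke Morera's theorem: for an arbitrary closed triangle $\gamma \subset \wpln$, interchange $\oint_\gamma$ with $\sum_m \int \rmd\xi$ and note that each resulting contour integral vanishes by Cauchy's theorem, whence $\oint_\gamma (\Scr{N}_u h)\,\rmd z = 0$; continuity of $\Scr{N}_u h$ in $z$, needed for Morera, follows from dominated convergence in the same way. This closes the induction and yields the theorem.

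The interchange of the $z$-contour integral with the infinite sum and the $\xi$-integral is the only genuinely delicate point, and this is precisely where the \hyperlink{L:basiclemma}{Basic Lemma} does the work: its $L^1(\Scr{C})$ estimate is uniform in $z \in \wpln$, so $\sum_m \int \abs{\wh{uh}(m, \xi; z)/P_z(m, \xi)}\,\rmd\xi$ is bounded uniformly along the finite-length contour $\gamma$, which legitimizes the Fubini interchange (equivalently, it justifies differentiation under the integral sign). Together with the zero-freeness of $P_z$ on $\wpln$, this reduces everything else to routine verifications.
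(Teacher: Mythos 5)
Your proposal is correct and follows essentially the same route as the paper: the Neumann series of Proposition \ref{Pr:Neumannseries}, uniform convergence on $\wpln$ via the bound \eqref{Eq:Mtest}, and an induction showing each $(\Scr{N}_u^n 1)(x, y; \cdot)$ is holomorphic by combining continuity from dominated convergence with Morera's theorem, the Fubini interchange being licensed by the uniform-in-$z$ $L^1(\Scr{C})$ estimate of the \hyperlink{L:basiclemma}{Basic Lemma}. Your explicit remark that $P_z(m, \xi)$ is zero-free on $\wpln$ for $(m, \xi) \in \Scr{C}^*$ is a point the paper leaves implicit, but the argument is otherwise the same.
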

\begin{proof}
We will show that each function $(\Scr{N}_u^n 1)(x, y; z)$ in the Neumann series \eqref{Eq:Neumannseries} of $\mu(x, y; z)$ is a holomorphic function 
with respect to $z \in \wpln$. Since the series converges uniformly in $\wpln$, it defines a holomorphic function there.

We will use induction. Let $z_0 \in \wpln$. For $(m, \xi) \in \Scr{C}^*$, the function $1/P_z(m, \xi)$ is holomorphic in $z \in \wpln$. By
the \hyperlink{L:basiclemma}{Basic Lemma},
\[
\sideset{}{'}\sum_{ m \in \bb{Z} } \int_{\!-\infty}^\infty \frac{ \wh{u}(m, \xi) }{ P_z(m, \xi) }\rme^{\rmi\omega m x + \rmi\xi y} \,\rmd\xi
\]
converges absolutely and uniformly in $\wpln$. Hence, dominated convergence yields
\begin{align*}
\lim_{z \to z_0} (\Scr{N}_u 1)(x, y; z) &= \frac{1}{2\pi}\sideset{}{'}\sum_{ m \in \bb{Z} } \int_{\!-\infty}^\infty \lim_{z \to z_0}
\frac{ \wh{u}(m, \xi) }{ P_z(m, \xi) }\rme^{\rmi\omega m x + \rmi\xi y} \,\rmd\xi\\
&= \frac{1}{2\pi}\sideset{}{'}\sum_{ m \in \bb{Z} } \int_{\!-\infty}^\infty
\frac{ \wh{u}(m, \xi) }{ P_{z_0}(m, \xi) }\rme^{\rmi\omega m x + \rmi\xi y} \,\rmd\xi,
\end{align*}
which shows that $\Scr{N}_u 1$ is continuous at $z_0$ and therefore, continuous in $\wpln$. Now let $T$ be a triangle in $\wpln$. By Fubini's theorem,
\[
\int_T (\Scr{N}_u 1)(x, y; z) \,\rmd z = \frac{1}{2\pi}\sideset{}{'}\sum_{ m \in \bb{Z} } \int_{\!-\infty}^\infty \wh{u}(m, \xi)
\bigg(\int_T \frac{1}{ P_z(m, \xi) } \,\rmd z\bigg)\rme^{\rmi\omega m x + \rmi\xi y} \,\rmd\xi = 0.
\]
Applying Morera's theorem, we conclude that $(\Scr{N}_u 1)(x, y; \cdot) \in H(\wpln)$.

Suppose now that $(\Scr{N}_u^k 1)(x, y; \cdot) \in H(\wpln)$ for all $k < n$. Since $\Scr{N}_u^{n -1} 1$ is bounded for every $z \in \wpln$, and $u$ 
belongs to $L^1(\Omega) \cap L^2(\Omega)$, it follows that for every $z \in \wpln$,
$[u\Scr{N}_u^{n - 1} 1]\mh \in L^2( \Scr{C} ) \cap L^\infty( \Scr{C} )$. Therefore, \hyperlink{L:basiclemma}{Basic Lemma} implies that
\[
\sideset{}{'}\sum_{ m \in \bb{Z} } \int_{\!-\infty}^\infty
\frac{ [u\Scr{N}_u^{n - 1} 1]\mh(m, \xi; z) }{ P_z(m, \xi) }\rme^{\rmi\omega m x + \rmi\xi y} \,\rmd\xi
\]
converges absolutely and uniformly for all $z \in \wpln$. For each $(m, \xi) \in \Scr{C}^*$, the function $[u\Scr{N}_u^{n - 1} 1]\mh(m, \xi; z)$ is
holomorphic in $z \in \wpln$: by definition
\[
[u\Scr{N}_u^{n - 1} 1]\mh(m, \xi; z) = \frac{1}{2\ell}\int_{\!-\infty}^\infty \int_{\!-\ell}^\ell
u(x, y)(\Scr{N}_u^{n - 1} 1)(x, y; z)\rme^{-\rmi\omega m x - \rmi\xi y} \,\rmd x\rmd y.
\]
By the induction hypothesis, $\Scr{N}_u^{n - 1} 1$ is holomorphic. Also, as a consequence of inequality \eqref{Eq:Mtest}, for every $z \in \wpln$
\[
\int_{\!-\infty}^\infty \int_{\!-\ell}^\ell \abs{ u(x, y)(\Scr{N}_u^{n - 1} 1)(x, y; z) } \,\rmd x\rmd y \leq
\Big( \frac{ C\maxnorm{u} }{2\pi} \Big)^{n - 1} \norm{u}_1 < \infty.
\]
Thus, from the continuity of $\Scr{N}_u^{n - 1} 1$ and dominated convergence, we get that\\$[u\Scr{N}_u^{n - 1} 1]\mh$ is continuous and, changing the 
order of integration via Fubini's theorem,
\[
\int_T [u\Scr{N}_u^{n - 1} 1]\mh(m, \xi; z) \,\rmd z = 0.
\]

But now, continuity of the function $[u\Scr{N}_u^{n - 1} 1]\mh(m, \xi; z)/P_z(m, \xi)$ and dominated convergence implies the continuity of
$(\Scr{N}_u^n 1)(x, y; z)$ in $\wpln$ and as the function\\
$[u\Scr{N}_u^{n - 1} 1]\mh(m, \xi; z)/P_z(m, \xi)$ is holomorphic in $z \in \wpln$,
yet another application of Fubini's theorem gives
\begin{align*}
\int_T (\Scr{N}^n_u 1)(x, y; z) \,\rmd z &= \int_T \Bigg(\frac{1}{2\pi}\sideset{}{'}\sum_{ m \in \bb{Z} } \int_{\!-\infty}^\infty
\frac{ [u\Scr{N}_u^{n - 1} 1]\mh(m, \xi; z) }{ P_z(m, \xi) }\rme^{\rmi\omega m x + \rmi\xi y} \,\rmd\xi\Bigg) \rmd z\\
&= \frac{1}{2\pi}\sideset{}{'}\sum_{ m \in \bb{Z} } \int_{\!-\infty}^\infty
\bigg(\int_T \frac{ [u\Scr{N}_u^{n - 1} 1]\mh(m, \xi; z) }{ P_z(m, \xi) } \,\rmd z\bigg)\rme^{\rmi\omega m x + \rmi\xi y} \,\rmd\xi\\
&= 0,
\end{align*}
for every triangle $T$ in $\wpln$. A final application of Morera's theorem, shows that $(\Scr{N}_u^n 1)(x, y; \cdot) \in H(\wpln)$, thus,
the theorem is proved.
\end{proof}
\subsection[Sectionally Holomorphicity of the Eigenfunctions and formulation of a Riemann--Hilbert problem]{Sectionally Holomorphicity of the Eigenfunctions and formulation of a
Riemann--Hilbert problem}\label{s:jumps}
\separate

The function $\wh{u\mu}/P_z$ is discontinuous whenever the real part of $z$ belongs to $\frac{\omega}{2}\sZ$. Consider the vertical lines
$\real z = \frac{\omega}{2}n$, $n \in \sZ$. When $z$ lies on such a line, the integral corresponding to $m = -n$ in equation \eqref{Eq:eigenfunction} is 
singular. As it turns out, this singularity is integrable if we assume some smoothness for the potential $u$. If $u$ belongs in some suitable Sobolev space, the 
function $\mu$ will have a limit from the left and from the right of each one of these lines. Call these limits $\mu^-$ and $\mu^+$ respectively, and let
$\op{J}\mu$ be the jump of $\mu$ across such a line, i.e., $\op{J}\mu = \mu^+ - \mu^-$. Now, $\mu$ satisfies the analytic family of differential
equations $P( \partial + w(z) )\mu = -u\mu$ in the parameter $z \in \bb{C}$. Hence,
\begin{equation}
P( \partial + w(z) )\op{J}\mu = -u\op{J}\mu.
\end{equation}
From the existence and uniqueness theorem for this equation, we have $\op{J}\mu = \Scr{S}\mu$ for some linear operator $\Scr{S}$. Call the map
$u \mapsto \Scr{S}$ the forward spectral transform. Knowledge of this map amounts to knowing $\mu$ and consequently $u$. It remains to calculate these 
jumps. We begin by establishing an important lemma.
\begin{lemma}\label{L:rapiddeacy}
Suppose $\maxnorm{u} < 2\pi/C$ and that for some multi-index $\alpha = (\alpha_1, \alpha_2)$, $u$ is smooth to order $\abs{\alpha}$ and
$\abs{\partial^{\alpha'} u} \in L^1(\Omega) \cap L^2(\Omega)$, for all multi-indices $\alpha'$ such that $\abs{\alpha'} \leq \abs{\alpha}$. Then, the 
function $\mu$ is smooth to order $\abs{\alpha}$ and
\begin{equation}\label{Eq:bigo}
\abs{ \wh{u\mu}(m, \xi; z) } = O\bigg( \frac{1}{ 1 + \abs{\omega m}^{\alpha_1} + \abs{\xi}^{\alpha_2} } \bigg),
\end{equation}
for $(m, \xi; z) \in \Scr{C} \times \wpln$.
\end{lemma}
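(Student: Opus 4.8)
The plan is to prove both assertions from the Neumann series $\mu = \sum_{n=0}^\infty(\Scr{N}_u^n 1)(x,y;z)$ of Proposition~\ref{Pr:Neumannseries}, establishing the smoothness of $\mu$ first and then reading off the decay of $\wh{u\mu}$ as a consequence. The engine throughout is the \hyperlink{L:basiclemma}{Basic Lemma}. Writing
\[
(\Scr{T}g)(x, y; z) \= \frac{1}{2\pi}\sideset{}{'}\sum_{m = -\infty}^\infty \int_{\!-\infty}^\infty \frac{ \wh{g}(m, \xi) }{ P_z(m, \xi) }\rme^{\rmi\omega m x + \rmi\xi y} \,\rmd\xi,
\]
so that $\Scr{N}_u h = \Scr{T}(u h)$, the identity $\wh{\partial^\beta g} = (\rmi\omega m)^{\beta_1}(\rmi\xi)^{\beta_2}\wh{g}$ (valid for $2\ell$-periodic, $y$-decaying $g$) shows that one may differentiate $\Scr{T}g$ termwise under the sum and the integral to obtain $\partial^\beta\Scr{T}g = \Scr{T}(\partial^\beta g)$, \emph{provided} $\partial^\beta g \in L^1(\Omega) \cap L^2(\Omega)$: indeed the series of $\partial^\beta$-derivatives has majorant $\sum_m'\int\abs{\wh{\partial^\beta g}/P_z}\,\rmd\xi = \norm{\wh{\partial^\beta g}/P_z}_{L^1(\Scr{C})}$, which the Basic Lemma bounds by $C\max\{\norm{\wh{\partial^\beta g}}_{L^2(\Scr{C})},\norm{\wh{\partial^\beta g}}_{L^\infty(\Scr{C})}\}$, uniformly in $(x,y)$ and in $z \in \wpln$. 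This is exactly the mechanism already used in Theorem~\ref{Th:holomorphicity}.

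First I would establish smoothness by induction, simultaneously on $\abs\beta$ and on $n$, mirroring Theorem~\ref{Th:holomorphicity}. For $\beta = 0$ each $\Scr{N}_u^n 1$ is continuous (as in Proposition~\ref{Pr:continuity}) with $\norm{\Scr{N}_u^n 1}_\infty \le \rho^n$, where $\rho = \frac{1}{2\pi}C\maxnorm{u} < 1$ by \eqref{Eq:Mtest}. For the inductive step, Leibniz's rule gives $\partial^\beta(u\,\Scr{N}_u^{n-1} 1) = \sum_{\gamma\le\beta}\binom{\beta}{\gamma}(\partial^\gamma u)\,\partial^{\beta-\gamma}(\Scr{N}_u^{n-1} 1)$; since $\partial^\gamma u \in L^1(\Omega)\cap L^2(\Omega)$ for all $\abs\gamma\le\abs\alpha$ by hypothesis, and the lower-order factors $\partial^{\beta-\gamma}(\Scr{N}_u^{n-1} 1)$ are bounded by induction, each product lies in $L^1(\Omega)\cap L^2(\Omega)$, so $\partial^\beta(\Scr{N}_u^n 1) = \Scr{T}(\partial^\beta(u\,\Scr{N}_u^{n-1} 1))$ is a well-defined, bounded, continuous function. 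Setting $a_n = \norm{\partial^\beta(\Scr{N}_u^n 1)}_\infty$ and isolating the $\gamma = 0$ term, which equals $\Scr{N}_u(\partial^\beta\Scr{N}_u^{n-1} 1)$ and contributes at most $\rho\,a_{n-1}$, from the remaining terms, which by the induction on $\abs\beta$ are bounded by $P(n)\rho^{n-1}$ for some polynomial $P$, yields the recursion $a_n \le \rho\,a_{n-1} + P(n)\rho^{n-1}$, hence $a_n \le Q(n)\rho^n$ for some polynomial $Q$. \emph{The hard part is precisely this uniform-convergence bookkeeping}: because $\rho < 1$ the series $\sum_n a_n$ converges, so $\sum_n\partial^\beta(\Scr{N}_u^n 1)$ converges uniformly in $(x,y)$ and in $z$, whence $\partial^\beta\mu = \sum_n\partial^\beta(\Scr{N}_u^n 1)$ exists, is continuous, and is bounded uniformly in $z \in \wpln$, for every $\abs\beta\le\abs\alpha$. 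Thus $\mu$ is smooth to order $\abs\alpha$.

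Granting the smoothness, the decay estimate \eqref{Eq:bigo} follows quickly. By Leibniz and the previous step, $\partial^\beta(u\mu) = \sum_{\gamma\le\beta}\binom{\beta}{\gamma}(\partial^\gamma u)\,\partial^{\beta-\gamma}\mu \in L^1(\Omega)$ for every $\abs\beta\le\abs\alpha$, since each $\partial^\gamma u \in L^1(\Omega)$ and each $\partial^{\beta-\gamma}\mu$ is bounded uniformly in $z$. Specialising to $\beta = (\alpha_1, 0)$ and $\beta = (0, \alpha_2)$ and invoking $\wh{\partial_x^{\alpha_1}(u\mu)} = (\rmi\omega m)^{\alpha_1}\wh{u\mu}$, $\wh{\partial_y^{\alpha_2}(u\mu)} = (\rmi\xi)^{\alpha_2}\wh{u\mu}$ together with the elementary bound $\abs{\wh{g}} \le \omega\norm{g}_1$, I obtain
\[
\abs{\omega m}^{\alpha_1}\abs{\wh{u\mu}} \le \omega\norm{\partial_x^{\alpha_1}(u\mu)}_1, \quad \abs{\xi}^{\alpha_2}\abs{\wh{u\mu}} \le \omega\norm{\partial_y^{\alpha_2}(u\mu)}_1, \quad \abs{\wh{u\mu}} \le \omega\norm{u\mu}_1,
\]
all uniformly in $z \in \wpln$. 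Adding the three inequalities bounds $(1 + \abs{\omega m}^{\alpha_1} + \abs{\xi}^{\alpha_2})\abs{\wh{u\mu}(m,\xi;z)}$ by a constant independent of $(m,\xi;z)$, which is exactly the asserted $O\!\bigl(1/(1 + \abs{\omega m}^{\alpha_1} + \abs{\xi}^{\alpha_2})\bigr)$ bound on $\Scr{C}\times\wpln$, completing the proof.
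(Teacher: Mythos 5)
Your proof is correct, and on the decay half it coincides with the paper's argument: both specialize to $\beta = (\alpha_1, 0)$ and $\beta = (0, \alpha_2)$, convert the multipliers $(\rmi\omega m)^{\alpha_1}$ and $(\rmi\xi)^{\alpha_2}$ into derivatives of $u\mu$ via integration by parts (legitimate by periodicity in $x$ and decay in $y$), and bound $\abs{\wh{u\mu}}$ by $L^1(\Omega)$ norms that are finite and uniform in $z$ once the boundedness of $\partial^{\beta}\mu$ on $E$ is in hand. The smoothness half is where you genuinely diverge. The paper differentiates the fixed-point equation once and for all, writing $\partial^{\alpha}\mu = (\op{Id} - \Scr{N}_u)^{-1}[\partial^{\alpha}, \Scr{N}_u]\mu$, expanding the commutator by Leibniz into operators of the form $h \mapsto \Scr{T}((\partial^{\alpha - \alpha'}u)h)$ acting on strictly lower-order derivatives of $\mu$, each bounded on $L^\infty(\Omega)$ by the \hyperlink{L:basiclemma}{Basic Lemma}; the induction runs on $\abs{\alpha}$ alone and there is no bookkeeping in the Neumann index. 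You instead differentiate the Neumann series termwise, run a double induction on $(\abs{\beta}, n)$, and solve the recursion $a_n \leq \rho\,a_{n-1} + P(n)\rho^{n-1}$ to get $a_n \leq Q(n)\rho^n$ and uniform convergence by the M-test. Your route is longer, but it buys a point the paper glosses over: the identity $(\op{Id} - \Scr{N}_u)\partial^{\alpha}\mu = [\partial^{\alpha}, \Scr{N}_u]\mu$ manipulates $\partial^{\alpha}\mu$ before its existence is established, whereas your termwise construction produces the derivative explicitly as a uniformly convergent series; conversely, since $(\op{Id} - \Scr{N}_u)^{-1} = \sum_k \Scr{N}_u^k$, the paper's resolvent identity is precisely the summed form of your recursion, so the two arguments rest on the same underlying estimates. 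One hygiene point you handle correctly and implicitly: your $\Scr{T}$ carries the primed sum of \eqref{Eq:Neumann}, so the Basic Lemma's hypothesis $f(0, \xi) = 0$ is moot (its proof only ever bounds the primed sum), and no zero-mass condition is needed for the products $(\partial^{\gamma}u)\,\partial^{\beta - \gamma}(\Scr{N}_u^{n-1}1)$.
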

\begin{proof}
Let $z \in \wpln$. To establish the smoothness of $\mu$ we use induction. Suppose $\partial^{\alpha'} \mu \in L^\infty(E)$ for all $\alpha' < \alpha$. Then,
\[
\partial^\alpha \mu = \partial^\alpha(\op{Id} - \Scr{N}_ u)^{-1} 1 = (\op{Id} - \Scr{N}_u)^{-1} [\partial^\alpha, \Scr{N}_u]\mu,
\]
since
\begin{align*}
(\op{Id} - \Scr{N}_u)\partial^\alpha \mu &= \partial^\alpha \mu - \Scr{N}_u \partial^\alpha \mu\\
&= \partial^\alpha(1 +  \Scr{N}_u \mu) - \Scr{N}_u \partial^\alpha \mu\\
&= \partial^\alpha \Scr{N}_u \mu - \Scr{N}_u \partial^\alpha \mu.
\end{align*}
By Leibniz' rule, 
\begin{align*}
[\partial^\alpha, \Scr{N}_u]\mu &= \partial^\alpha \Scr{N}_u \mu - \Scr{N}_u \partial^\alpha \mu\\
&= \sum_{\alpha' \leq \alpha} \binom{\alpha}{\alpha'}(\partial^{\alpha - \alpha'} \Scr{N}_u)(\partial^{\alpha'} \mu) - \Scr{N}_u \partial^\alpha \mu\\
&= \sum_{\alpha' < \alpha} \binom{\alpha}{\alpha'}(\partial^{\alpha - \alpha'} \Scr{N}_u)(\partial^{\alpha'} \mu).
\end{align*}
 
Each operator $\partial^{\alpha - \alpha'} \Scr{N}_u$ is bounded from $L^\infty(\Omega)$ to $L^\infty(\Omega)$ for every $z \in \wpln$:
if $h \in L^\infty(\Omega)$, then
\begin{align*}
\partial^{\alpha - \alpha'} \bigg( \frac{ \wh{u h}(m, \xi) }{ P_z(m, \xi) }\rme^{\rmi\omega m x + \rmi\xi y} \bigg) &=
\frac{ \wh{u h}(m, \xi) }{ P_z(m, \xi) }(\rmi\omega m, \rmi\xi)^{\alpha - \alpha'} \rme^{\rmi\omega m x + \rmi\xi y}\\
&= \frac{ [\partial^{\alpha - \alpha'} u h]\mh(m, \xi) }{ P_z(m, \xi) }\rme^{\rmi\omega m x + \rmi\xi y}.
\end{align*}
But because of the smoothness of $u$, $[\partial^{\alpha - \alpha'} u h]\mh \in L^2( \Scr{C} ) \cap L^\infty( \Scr{C} )$. The rest follows by an application 
of dominated convergence and the \hyperlink{L:basiclemma}{Basic Lemma}.

Meanwhile, each term $\partial^{\alpha'} \mu$ belongs to $L^\infty(E)$ by the induction hypothesis, and the operator $(\op{Id} - \Scr{N}_u)^{-1}$ is 
bounded on $L^\infty(\Omega)$ for every $z \in \wpln$ because $\maxnorm{u} < 2\pi/C$. Thus, $\partial^\alpha \mu$ is bounded.

Now,
\begin{align*}
( \abs{\omega m}^{\alpha_1} + \abs{\xi}^{\alpha_2} )\abs{ \wh{u\mu}(m, \xi; z) } &= \abs{ (\rmi\omega m)^{\alpha_1}\wh{u\mu}(m, \xi; z) }
+ \abs{ (\rmi\xi)^{\alpha_2}\wh{u\mu}(m, \xi; z) }\\
&= \abs{ [\partial_x^{\alpha_1} u\mu]\mh(m, \xi; z) } + \abs{ [\partial_y^{\alpha_2} u\mu]\mh(m, \xi; z) }\\
&\leq \norm{\partial_x^{\alpha_1} u\mu}_1 + \norm{\partial_y^{\alpha_2} u\mu}_1.
\end{align*}
Since we also have that $\abs{ \wh{u\mu}(m, \xi; z) } \leq \norm{u\mu}_1$, equation \eqref{Eq:bigo} follows, hence the lemma is proved.
\end{proof}
\begin{theorem}\label{Th:zasympotics}
Suppose $\max\{\omega\norm{u}_1, \sqrt{\omega}\norm{u}_2\}$ is small, so that $\mu(x, y; z)$ is the unique solution to
\[
P( \partial + w(z) )\mu + u\mu = 0, \quad \mu \in L^\infty(E), \ \lim_{\abs{y} \to \infty} \mu(x, y; z) = 1.
\]
If in addition $y u(x, y) \in L^1(\Omega)$ and
\begin{equation}\label{Eq:weakderivatives}
u_x, u_y \in L^1(\Omega) \cap L^2(\Omega),
\end{equation}
then $\mu$ has pointwise one-sided limits at the lines $\real z= \frac{\omega}{2}n$, $n \in \sZ$, and
\begin{equation}
\lim_{ \substack{\abs{ \im{z} } \to \infty \\ \re{z} = const.} } \mu(x, y; z) = 1.
\end{equation}
\end{theorem}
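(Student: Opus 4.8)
The plan is to establish the two assertions separately, working throughout from the series representation~\eqref{Eq:eigenfunction} and using that by Theorem~\ref{Th:holomorphicity} the function $\mu(x,y;\cdot)$ is already holomorphic off the lines $\real z=\frac{\omega}{2}n$, so that only the local behaviour at a single such line is at issue.

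\textbf{One-sided limits.} Fix $n\in\sZ$ and a point $z_0$ on the line $\real z=\frac{\omega}{2}n$. Since $\real P_z(m,\xi)=\omega m(\omega m+2\re{z})$ equals $\omega^2 m(m+n)$ on this line, it vanishes only for $m=0$ (absent from~\eqref{Eq:eigenfunction} by the zero-mass constraint) and for $m=-n$. I therefore split the sum in~\eqref{Eq:eigenfunction} into the term $m=-n$ and the remainder. For $m\neq 0,-n$ the denominator $P_z(m,\xi)$ stays bounded away from $0$ uniformly for $z$ in a small neighbourhood of $z_0$, so by the \hyperlink{L:basiclemma}{Basic Lemma} and dominated convergence---exactly as in Proposition~\ref{Pr:continuity} and Theorem~\ref{Th:holomorphicity}---the remainder extends continuously across the line and has equal one-sided limits. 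Everything reduces to the single term $m=-n$.

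Writing $z=\tfrac{\omega}{2}n+\varepsilon+\rmi\im{z}$ one computes $P_z(-n,\xi)=-2\omega n\varepsilon+\rmi(\xi-2\omega n\im{z})$, so up to constants the $m=-n$ term is the Cauchy-type integral $\int_{-\infty}^{\infty}\frac{\wh{u\mu}(-n,\xi;z)}{(\xi-2\omega n\im{z})+2\rmi\omega n\varepsilon}\,\rme^{\rmi\xi y}\,\rmd\xi$, whose pole lies off the real $\xi$-axis on the side fixed by $\sgn(n\varepsilon)$. Existence of the limits $\varepsilon\to0^\pm$ is then governed by the Sokhotski--Plemelj theory, which applies once the density $\xi\mapsto\wh{u\mu}(-n,\xi;z)$ is H\"older continuous and decaying. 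The decay $\wh{u\mu}(-n,\xi;z)=O\big((1+\abs{\xi})^{-1}\big)$, uniform in $z$, is Lemma~\ref{L:rapiddeacy} with $\alpha=(0,1)$ (here I use $u_x,u_y\in L^1(\Omega)\cap L^2(\Omega)$); the $\xi$-regularity comes from $yu\in L^1(\Omega)$, since $\mu$ bounded gives $yu\mu\in L^1(\Omega)$ and hence $\partial_\xi\wh{u\mu}=[-\rmi yu\mu]\mh$ is bounded and continuous, so the density is Lipschitz, a fortiori H\"older, in $\xi$.

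The main obstacle is that the density $\wh{u\mu}(-n,\cdot;z)$ itself depends on the very parameter $z$ that is being driven to the line, so this is not a fixed-density Cauchy integral. I would resolve this through the Neumann series $\mu=\sum_{k\geq 0}\Scr{N}_u^k 1$ of Proposition~\ref{Pr:Neumannseries}: the terms $k=0,1$ have $z$-independent densities (for $k=1$ the singular-term density is $\wh{u}(-n,\cdot)$), to which Plemelj applies verbatim; inductively, the singular part of $\Scr{N}_u^k 1$ has density $[u\,\Scr{N}_u^{k-1}1]\mh(-n,\cdot;z)$, and the induction hypothesis that $\Scr{N}_u^{k-1}1$ has one-sided limits---together with the uniform H\"older and decay bounds above---lets me pass to the limiting density and invoke Plemelj again. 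Because the series converges uniformly in $z\in\wpln$ (the Remark following Proposition~\ref{Pr:Neumannseries}), the one-sided limits of the terms assemble into one-sided limits of $\mu$. The delicate point to verify carefully is that the boundary densities inherit the H\"older regularity and that the density convergence holds in a norm strong enough for the Plemelj limit to depend continuously on the data.

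\textbf{The limit $\abs{\im{z}}\to\infty$.} Fix $\real z$ and let $\abs{\im{z}}\to\infty$ in~\eqref{Eq:eigenfunction}. In the $m$-th integral substitute $\xi=\eta-2\omega m\im{z}$; the denominator becomes $Q_m(\eta)=(\omega m+\re{z})^2-\re{z}^2+\rmi\eta$, independent of $\im{z}$, while the density becomes $\wh{u\mu}(m,\eta-2\omega m\im{z};z)$, a profile that slides to infinity. For fixed $m\neq 0$ one has $\real Q_m=\omega m(\omega m+2\re{z})\neq 0$ for $z\in\wpln$; substituting back $\eta'=\eta-2\omega m\im{z}$ and using $\abs{\wh{u\mu}(m,\eta';z)}\leq C(1+\abs{\eta'})^{-1}$ (Lemma~\ref{L:rapiddeacy}, uniform in $z$) yields the integrable, $\im{z}$-independent majorant $C\big[(1+\abs{\eta'})\abs{\real Q_m}\big]^{-1}$, while the kernel $Q_m(\eta'+2\omega m\im{z})^{-1}\to 0$ pointwise; hence dominated convergence gives that each term tends to $0$. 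To sum over $m$, I use a head/tail argument: the tail $\sum_{\abs{m}>M}$ is bounded, uniformly in $\im{z}$, by the tail of the convergent series $\sum_m\abs{m^2-\re{z}^2}^{-1}$ underlying the \hyperlink{L:basiclemma}{Basic Lemma} (here $\real z$ is fixed), hence is small for $M$ large, while the finite head $\sum_{\abs{m}\leq M}$ tends to $0$ by the termwise limit just obtained. Therefore $\mu(x,y;z)-1\to 0$, as claimed.
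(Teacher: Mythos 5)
Your architecture is essentially the paper's: the reduction to the Fourier coefficients $\mu_m(y;z)$ of \eqref{Eq:eigenfunction}, the identification of the single singular index on each line, the treatment of that term as a Cauchy-type integral whose density is Lipschitz in $\xi$ (via $y u \in L^1(\Omega)$ and $\norm{\mu}_\infty$) and decays like $O\big((1+\abs{\xi})^{-1}\big)$ by lemma \ref{L:rapiddeacy}, and the Sokhotski--Plemelj theory for the one-sided limits. In fact you are more scrupulous than the paper on one point: the paper applies the Plemelj theory directly to the density $\wh{u\mu}(m,\xi;z)$, whose dependence on the limiting parameter $z$ it does not comment on, whereas you flag this self-reference and propose the Neumann-series induction of proposition \ref{Pr:Neumannseries} to handle it. That device is plausible, though you leave its delicate step (convergence of the boundary densities in a norm controlling the Plemelj limit) admittedly unverified.

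There is, however, a step in your second part that fails as written. Your dominating function for the $\abs{\im{z}} \to \infty$ limit, $C\big[(1+\abs{\eta'})\abs{\real Q_m}\big]^{-1}$, is \emph{not} integrable over $\eta' \in \bb{R}$: $\int_{-\infty}^\infty (1+\abs{\eta'})^{-1}\,\rmd\eta'$ diverges, so dominated convergence does not apply. The difficulty is structural: the only bound on the kernel $1/P_z(m,\xi)$ that is independent of $\im{z}$ is the constant $1/\abs{\real Q_m}$, so the density decay $O\big((1+\abs{\xi})^{-1}\big)$ alone can never produce an integrable $\im{z}$-free majorant. The per-term claim is nonetheless true and can be repaired in two standard ways. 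One is a splitting: on $\{\abs{\xi + 2\omega m\im{z}} > R\}$ use Cauchy--Schwarz with $\wh{u\mu}(m,\cdot;z) \in L^2$ (uniformly in $z$) and $\int_{\abs{\eta}>R} (\abs{\real Q_m}^2 + \eta^2)^{-1}\rmd\eta \to 0$, uniformly in $\im{z}$; on the complementary sliding window of length $2R$, centred at $-2\omega m\im{z}$, one has $\abs{\xi} \geq 2\abs{\omega m}\abs{\im{z}} - R$, so the density bound itself is small there. The other is the paper's own device: write $1/P_z(m,\xi)$ as $\pm\int \rme^{P_z(m,\xi)\tau}\,\rmd\tau$ over $(-\infty,0]$ or $[0,\infty)$ according to the sign of $(\omega m)^2 + 2\omega m\re{z}$, interchange the integrals, and obtain $\mu_m(y;z) = \int f_{m,y,\lambda}(\tau)\rme^{2\rmi\omega m\im{z}\tau}\,\rmd\tau$ with $f_{m,y,\lambda} \in L^1$, to which the Riemann--Lebesgue lemma applies. (Note that the paper's $f_{m,y,\lambda}$ also depends on $\im{z}$ through $\mu$, so Riemann--Lebesgue there is applied with the same looseness your DCT idea was meant to avoid; once you install a correct majorant or splitting, your pointwise kernel-to-zero observation handles that dependence cleanly, since the density is uniformly bounded.) Your head/tail summation over $m$ is sound and is the exact analogue of the paper's Weierstrass M-test via the $\abs{(\omega m + \re{z})^2 - \re{z}^2}^{-2/3}$ estimate.
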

\begin{proof}
Letting
\begin{equation}\label{Eq:Fouriercoefficients}
\mu_m(y; z) = \frac{1}{2\pi}\int_{\!-\infty}^\infty \frac{ \wh{u\mu}(m, \xi; z) }{ P_z(m, \xi) }\rme^{\rmi\xi y} \,\rmd\xi,
\end{equation}
we can convert $\mu(x, y; z)$ in the form of a Fourier series
\begin{equation}\label{Eq:Fourierseries}
\mu(x, y; z) = 1 + \sideset{}{'}\sum_{m = -\infty}^\infty \mu_m(y; z)\rme^{\rmi\omega m x}.
\end{equation}
Since $u_x$, $u_y \in L^1(\Omega) \cap L^2(\Omega)$, lemma \ref{L:rapiddeacy} shows that
\begin{equation}\label{Eq:firstorderbound}
\abs{ \wh{u\mu}(m, \xi; z) } = O\bigg( \frac{1}{ 1 + \abs{\omega m} + \abs{\xi} } \bigg).
\end{equation}
Hence,
\[
\abs{ \mu_m(y; z) } \leq \frac{1}{2\pi}\int_{\!-\infty}^\infty \frac{ \abs{ \wh{u\mu}(m, \xi; z) } }{ \abs{ P_z(m, \xi) } } \,\rmd\xi
\leq \frac{c}{2\pi}\int_{\!-\infty}^\infty \frac{1}{ 1 + \abs{\omega m} + \abs{\xi} }\frac{1}{ \abs{ P_z(m, \xi) } } \,\rmd\xi,
\]
for some positive, real constant $c$. Thus,
{\allowdisplaybreaks
\begin{align*}
\abs{ \mu_m(y; z) } &< \frac{c}{2\pi}\int_{\!-\infty}^\infty \frac{1}{ ( 1 + \abs{\xi} )\abs{ P_z(m, \xi) } } \,\rmd\xi\\
&\leq \frac{c}{2\pi}\bigg(\int_{\!-\infty}^\infty \frac{1}{ ( 1 + \abs{\xi} )^\frac{3}{2} } \,\rmd\xi\bigg)^\frac{2}{3}
\bigg(\int_{\!-\infty}^\infty \frac{1}{\abs{ P_z(m, \xi) }^3} \,\rmd\xi\bigg)^\frac{1}{3}\\
&= \frac{c}{2\pi}\bigg(2\int_0^\infty \frac{1}{ (1 + \xi)^\frac{3}{2} } \,\rmd\xi\bigg)^\frac{2}{3} \bigg(\int_{\!-\infty}^\infty
\frac{1}{\abs{ P_z(m, \xi) }^3} \,\rmd\xi\bigg)^\frac{1}{3}\\
&= \frac{c}{2\pi}4^\frac{2}{3}
\bigg(\int_{\!-\infty}^\infty
\frac{1}{ [ ( ( \omega m + \re{z} )^2 - \re{z}^2 )^2 + ( \xi + 2\omega m \im{z} )^2 ]^\frac{3}{2} } \,\rmd\xi\bigg)^\frac{1}{3}\\
&= \frac{c}{2\pi}4^\frac{2}{3} \bigg(\int_{\!-\infty}^\infty
\frac{1}{ [ ( ( \omega m + \re{z} )^2 - \re{z}^2 )^2 + \xi^2 ]^\frac{3}{2} } \,\rmd\xi\bigg)^\frac{1}{3}\\
&= \frac{c}{2\pi}4^\frac{2}{3} \abs{ ( \omega m + \re{z} )^2 - \re{z}^2 }^{ -\frac{2}{3} } \bigg(\int_{\!-\infty}^\infty
\frac{1}{ (1 + v^2)^\frac{3}{2} } \,\rmd v\bigg)^\frac{1}{3}\\
&= \frac{c}{2\pi}4^\frac{2}{3} 2^\frac{1}{3} \frac{1}{ \abs{ ( \omega m + \re{z} )^2 - \re{z}^2 }^\frac{2}{3} }.
\end{align*}}
Now
\[
\sideset{}{'}\sum_{m = -\infty}^\infty \frac{1}{ \abs{ ( \omega m + \re{z} )^2 - \re{z}^2 }^\frac{2}{3} } =
\sum_{m \in \mZ} \frac{1}{ \abs{m^2 - \re{z}^2}^\frac{2}{3} } = 2\sum_{m \in \mZ^+} \frac{1}{ \abs{m^2 - \re{z}^2}^\frac{2}{3} }.
\]
But inequality \eqref{Eq:basicinequality} yields
\[
\frac{1}{ \abs{m^2 - \re{z}^2}^\frac{2}{3} } < \frac{1}{ ( m - \abs{ \re{z} } )^\frac{4}{3} }.
\]
Using the same arguments as we did in the corresponding part of the proof of the \hyperlink{L:basiclemma}{Basic Lemma},
\[
\sum_{m \in \mZ^+} \frac{1}{ ( m - \abs{ \re{z} } )^\frac{4}{3} } < \frac{2}{ \omega^\frac{4}{3} }\sum_{m = 1}^\infty \frac{1}{ m^\frac{4}{3} },
\]
for every $z \in \wpln$. Therefore, the series
\[
\sideset{}{'}\sum_{m = -\infty}^\infty \mu_m(y; z)\rme^{\rmi\omega m x}
\]
converges uniformly in $\wpln$.

Let $m$ be a nonzero integer. Denote by $\mu^+$ and $\mu^-$ the non-tangential limits of $\mu$ from the right and from the left of the line
\begin{equation}
L_m \= \{ \zeta \in \bb{C} \colon \re{\zeta} = -\tfrac{\omega}{2}m, \ \im{\zeta} \in \bb{R} \}
\end{equation}
respectively. By the uniform convergence of the series in \eqref{Eq:Fourierseries}, in order to establish the existence of the limits $\mu^\pm$, it is enough
to show that these limits exists for the function $\mu_m(y; z)$ for all $y \in \bb{R}$. Write $\mu_m(y; z)$ in the form
\[
\mu_m(y; z) = \frac{1}{2\pi \rmi}\int_{\!-\infty}^\infty \frac{ \wh{u\mu}(m, \xi; z)\rme^{\rmi\xi y} }{ \xi - \rmi\omega m(\omega m + 2z) } \,\rmd\xi =
\frac{1}{2\pi \rmi}\int_{\!-\infty}^\infty \frac{ \wh{u\mu}(m, \xi; z)\rme^{\rmi\xi y} }{ \xi - p_0(z) } \,\rmd\xi,
\]
where $p_0(z) =  i\omega m(\omega m + 2z)$. This is a Cauchy type integral. Hence, to show the existence of the limit of $\mu_m(y; z)$ as $z$
approaches the line $L_m$ from the sides along any non-tangential path, or equivalently as $p_0$ approaches the real axes from the upper and from the 
lower half-planes, it suffices to show that for every $y \in \bb{R}$, the function $\wh{u\mu}(m, \xi; z)\rme^{\rmi\xi y}$ is H\"{o}lder continuous for all finite
$\xi$, tends to a definite limit $\wh{u\mu}(m, \infty; z)\rme^{\rmi\infty y}$ as $\abs{\xi} \to \infty$, and that for large $\xi$, the inequality
\begin{equation}\label{Eq:Holderinfinity}
\wh{u\mu}(m, \xi; z)\rme^{\rmi\xi y} - \wh{u\mu}(m, \infty; z)\rme^{\rmi\infty y} \leq \frac{M}{\abs{\xi}^\kappa},
\end{equation}
holds for some positive, real constants $M$ and $\kappa$.
For $\xi_1$, $\xi_2 \in \bb{R}$ we have the following:
\begin{align*}
&\abs{ \wh{u\mu}(m, \xi_1; z)\rme^{\rmi\xi_1 y} - \wh{u\mu}(m, \xi_2; z)\rme^{\rmi\xi_2 y} }\\
&\leq \frac{1}{2\ell}\int_{\!-\infty}^\infty \int_{\!-\ell}^\ell
\abs{ u(x, y')\mu(x, y'; z) }\abs{ \rme^{ \rmi\xi_1(y - y') } - \rme^{ \rmi\xi_2(y - y') } } \,\rmd x\rmd y'\\
&\leq \frac{1}{2\ell}\int_{\!-\infty}^\infty \int_{\!-\ell}^\ell \abs{ u(x, y') } \,\norm{\mu}_\infty \abs{y - y'}\abs{\xi_1 - \xi_2} \,\rmd x\rmd y'\\
&\leq \frac{1}{2\ell}\norm{\mu}_\infty \bigg(\abs{y}\norm{u}_1 +  \int_{-\infty}^\infty \int_{\!-\ell}^\ell
\abs{ u(x, y') }\abs{y'} \,\rmd x\rmd y'\bigg)\abs{\xi_1 - \xi_2},
\end{align*}
hence $\wh{u\mu}(m, \xi; z)\rme^{\rmi\xi y}$ is indeed H\"{o}lder continuous for all finite $\xi$. Furthermore, from \eqref{Eq:firstorderbound} there exists 
a real number $c > 0$ such that $\abs{ \wh{u\mu}(m, \xi; z) } \leq c/\abs{\xi}$. Hence, the limit $\wh{u\mu}(m, \infty; z)\rme^{\rmi\infty y}$ is definite
(in fact $\wh{u\mu}(m, \infty; z)\rme^{\rmi\infty y} = 0$) and the inequality \eqref{Eq:Holderinfinity} is satisfied (with $M = c$ and $\kappa = 1$).

Finally, fix $\lambda$ and suppose $z = \lambda + \rmi\im{z}$ is a complex number with $\lambda \notin \tfrac{\omega}{2}\sZ$. By symmetry, we can 
assume that $\lambda > 0$. Using once again the uniform convergence of the Fourier series \eqref{Eq:Fourierseries} on $\wpln$,
\[
\lim_{\abs{ \im{z} } \to \infty} (\mu( x, y; \lambda + \rmi\im{z} ) - 1) = \sideset{}{'}\sum_{m = -\infty}^\infty
\lim_{\abs{ \im{z} } \to \infty} \mu_m( y; \lambda + \rmi\im{z} )\rme^{\rmi\omega m x}.
\]
Split the sum as follows:
\begin{align*}
\sideset{}{'}\sum_{m = -\infty}^\infty \lim_{\abs{ \im{z} } \to \infty} \mu_m( y; \lambda + \rmi\im{z} )\rme^{\rmi\omega m x} &=
\sum_{ \substack{ m > 0 \\ \substack{\text{or} \\ \omega m + 2\lambda < 0} } } \lim_{\abs{ \im{z} } \to \infty}
\mu_m( y; \lambda + \rmi\im{z} )\rme^{\rmi\omega m x}\\
&{}+ \sum_{-2\lambda < \omega m < 0} \lim_{\abs{ \im{z} } \to \infty} \mu_m( y; \lambda + \rmi\im{z} )\rme^{\rmi\omega m x}.
\end{align*}
For $m > 0$ or $\omega m + 2\lambda < 0$, we have $(\omega m)^2 + 2\omega m\lambda > 0$. Hence,
\begin{align*}
\mu_m(y; z) &= \frac{1}{2\pi}\int_{\!-\infty}^\infty
\frac{ \wh{u\mu}(m, \xi; z) }{ (\omega m)^2 + 2\omega m\lambda + \rmi( \xi + 2\omega m\im{z} ) }\rme^{\rmi\xi y} \,\rmd\xi\\
&= \frac{1}{2\pi}\int_{\!-\infty}^\infty \wh{u\mu}(m, \xi; z)\rme^{\rmi\xi y}
\bigg(\int_{\!-\infty}^0 \rme^{ ( (\omega m)^2 + 2\omega m\lambda + \rmi( \xi + 2\omega m\im{z} ) )\tau } \,\rmd\tau\bigg)\rmd\xi\\
&= \frac{1}{2\pi}\int_{\!-\infty}^0 \rme^{ ( (\omega m)^2 + 2\omega m\lambda )\tau }
\bigg(\int_{\!-\infty}^\infty \wh{u\mu}(m, \xi; z)\rme^{ \rmi\xi(y + \tau) } \,\rmd\xi\bigg)\rme^{\rmi2\omega m\im{z}\tau} \,\rmd\tau\\
&= \int_{\!-\infty}^0 f_{m, y, \lambda}(\tau)\rme^{\rmi2\omega m\im{z}\tau} \,\rmd\tau,
\end{align*}
where
\begin{align*}
f_{m, y, \lambda}(\tau) &= \rme^{ ( (\omega m)^2 + 2\omega m\lambda )\tau }
\frac{1}{2\pi}\int_{\!-\infty}^\infty \wh{u\mu}(m, \xi; z)\rme^{ \rmi\xi(y + \tau) } \,\rmd\xi\\
&= \rme^{ ( (\omega m)^2 + 2\omega m\lambda )\tau }\frac{1}{2\ell}\int_{\!-\ell}^\ell u(x, y + \tau)\mu(x, y + \tau; z)\rme^{-\rmi\omega m x} \,\rmd x.
\end{align*}
But since $\rme^{ ( (\omega m)^2 + 2\omega m\lambda )\tau } < 1$ for $\tau < 0$,
\begin{align*}
\int_{\!-\infty}^0 \abs{ f_{m, y, \lambda}(\tau) } \,\rmd\tau &< \frac{1}{2\ell}\int_{\!-\infty}^0 \int_{\!-\ell}^\ell
\abs{ u(x, y + \tau) }\abs{ \mu(x, y + \tau; z) } \,\rmd x\rmd\tau\\
&\leq \frac{1}{2\ell}\norm{\mu}_\infty \int_{\!-\infty}^0 \int_{\!-\ell}^\ell \abs{ u(x, y + \tau) } \,\rmd x\rmd\tau\\
&\leq \frac{1}{2\ell}\norm{\mu}_\infty \norm{u}_1.
\end{align*}
Thus,
\[
\mu_m(y; z) = \int_{\!-\infty}^0 f_{m, y, \lambda}(\tau)\rme^{\rmi2\omega m\im{z}\tau} \,\rmd\tau,
\]
with $f_{m, y, \lambda}(\tau) \in L^1(-\infty, 0)$. Hence, from the Riemann--Lebesgue lemma,
\[
\lim_{\abs{ \im{z} } \to \infty} \mu_m(y; z) = 0.
\]

Now, for $-2\lambda < \omega m < 0$, $(\omega m)^2 + 2\omega m\lambda$ is negative, and so
\[
\mu_m(y; z) = \int_0^\infty f_{m, y, \lambda}(\tau)\rme^{\rmi2\omega m\im{z}\tau} \,\rmd\tau,
\]
where this time
\[
f_{m, y, \lambda}(\tau) = -\rme^{ ( (\omega m)^2 + 2\omega m\lambda )\tau } \frac{1}{2\ell}\int_{\!-\ell}^\ell
u(x, y + \tau)\mu(x, y + \tau; z)\rme^{-\rmi\omega m x} \,\rmd x,
\]
and $f_{m, y, \lambda}(\tau) \in L^1(0, \infty)$. Hence, $\lim\limits_{\abs{ \im{z} } \to \infty} \mu_m(y; z) = 0$ in this case as well.
Therefore, $\lim\limits_{\abs{ \im{z} } \to \infty} \mu_m( y; \lambda + i\im{z} ) = 0$ for every $m \in \sZ$ and consequently
\[
\lim_{\abs{ \im{z} } \to \infty} (\mu( x, y; \lambda + i\im{z} ) - 1) = 0,
\]
thus, concluding the proof of the theorem.
\end{proof}
Theorems \ref{Th:holomorphicity} and \ref{Th:zasympotics} show that $\mu(x, y; z)$ is a sectionally holomorphic function (with respect to $z$) in the strips
\begin{gather}
S_n \= \{ z \in \bb{C} \colon \tfrac{\omega}{2}n < \re{z} < \tfrac{\omega}{2}(n + 1), \ \im{z} \in \bb{R} \}, \label{Eq:stripsplus}\\
S_{-n} \= \{ z \in \bb{C} \colon -\tfrac{\omega}{2}(n + 1) < \re{z} < -\tfrac{\omega}{2}n, \ \im{z} \in \bb{R} \}, \label{Eq:stripsminus}\\
\intertext{for $n \in \bb{N}$, and}
S_0 \= \{ z \in \bb{C} \colon \abs{ \re{z} } < \tfrac{\omega}{2}, \ \im{z} \in \bb{R} \}. \label{Eq:zerostrip}
\end{gather}
\begin{figure}[H]
\centering
\begin{tikzpicture}
\draw[->](-5,0) -- (5,0) node[below]{$\real z$};
\draw[->](0,-2) -- (0,2) node[right]{$\imaginary z$};
\node[below] at (-0.2, 0){$0$};
\node[fill=white] at (0, 1){$S_0$};
\draw[dashed,red] (1,2)--(1,-2);
\node at (1.5, 1){$S_1$};
\node[below] at (0.8,0){$\frac{\omega}{2}$};
\draw[fill,red](1,0) circle[radius=1pt];
\draw[dashed,red] (2,2)--(2,-2);
\node at (2.5, 1){$S_2$};
\node[below] at (1.8,0) {$\omega$};
\draw[fill,red](2,0) circle[radius=1pt];
\draw[dashed,red] (3,2)--(3,-2);
\node[below] at (2.8,0) {$\frac{3\omega}{2}$};
\draw[fill,red](3,0) circle[radius=1pt];
\draw[dashed,red] (-1,2)--(-1,-2);
\node at (-1.5, 1){$S_{-1}$};
\node[below] at (-1.3,0) {$-\frac{\omega}{2}$};
\draw[fill,red](-1,0) circle[radius=1pt];
\draw[dashed,red] (-2,2)--(-2,-2);
\node at (-2.5, 1){$S_{-2}$};
\node[below] at (-2.3,0) {$-\omega$};
\draw[fill,red](-2,0) circle[radius=1pt];
\draw[dashed,red] (-3,2)--(-3,-2);
\node[below] at (-3.4,0) {$-\frac{3\omega}{2}$};
\draw[fill,red](-3,0) circle[radius=1pt];
\draw[dotted,very thick] (3.5,1)--(3.8,1);
\draw[dotted,very thick] (-3.8,1)--(-3.5,1);
\end{tikzpicture}
\caption{Strips of holomorphicity\label{fig:strips}}
\end{figure}
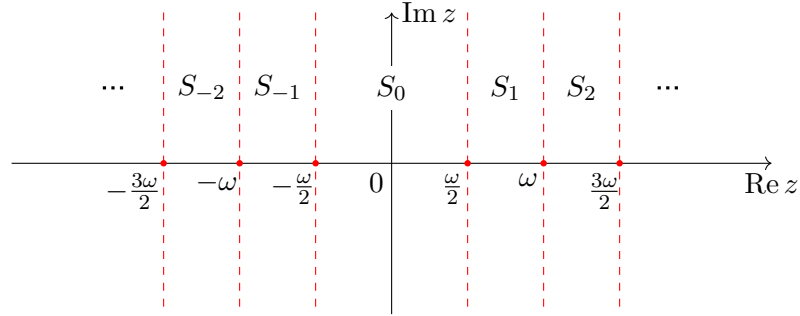
To calculate the jump of $\mu$ across the lines
\begin{equation}\label{Eq:jumpcontours}
L_n \= \{ z \in \bb{C} \colon \re{z} = -\tfrac{\omega}{2}n, \ \im{z} \in \bb{R} \}, \quad n \in \sZ,
\end{equation}
it is convenient to calculate the integrals in the equation \eqref{Eq:eigenfunction} for each integer $m \in \sZ$.
\begin{proposition}
Suppose $\maxnorm{u} < 2\pi/C$. Then, for $z \in \wpln$, the function $\mu(x, y; z)$ satisfies the integral equation
\begin{equation}\label{Eq:classicrepresentation}
\mu(x, y; z) = 
\begin{cases}
1 + [\op{g}_{ u, \mrm{r} } \mu](x, y; z), & \re{z} > 0\\[2pt]
1 + [\op{g}_{ u, \mrm{l} } \mu](x, y; z), & \re{z} < 0,
\end{cases}
\end{equation}
where
\begin{equation}\label{Eq:groperator}
\begin{split}
[\op{g}_{ u, \mrm{r} } h](x, y; z) &\= \frac{1}{2\ell}\Bigg(\sum_{ \substack{ m > 0 \\ \substack{ \tn{or} \\ \omega m < -2\re{z} } } }
\int_{\!-\infty}^y \int_{\!-\ell}^\ell - \sum_{-2\re{z} < \omega m < 0} \int_y^\infty \int_{\!-\ell}^\ell\Bigg)\\
&\quad\ph{1 + \frac{1}{2\ell}} u(x', y')h(x', y')\rme^{ \rmi\omega m(x - x') - \omega m(\omega m + 2z)(y - y') } \,\rmd x'\rmd y',
\end{split}
\end{equation}
and
\begin{equation}\label{Eq:gloperator}
\begin{split}
[\op{g}_{ u, \mrm{l} } h](x, y; z) &\= \frac{1}{2\ell}\Bigg(\sum_{ \substack{ m < 0 \\ \substack{ \tn{or} \\ \omega m > -2\re{z} } } }
\int_{\!-\infty}^y \int_{\!-\ell}^\ell - \sum_{ 0 < \omega m < -2\re{z} } \int_y^\infty \int_{\!-\ell}^\ell\Bigg)\\
&\quad\ph{1 + \frac{1}{2\ell}} u(x', y')h(x', y')\rme^{ \rmi\omega m(x - x') - \omega m(\omega m + 2z)(y - y') } \,\rmd x'\rmd y',
\end{split}
\end{equation}
for every function $h \in L^\infty(\Omega)$.
\end{proposition}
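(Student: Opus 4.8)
The plan is to start from the Fourier-series representation of $\mu$ already set up in the proof of Theorem~\ref{Th:zasympotics}, namely $\mu(x,y;z) = 1 + \sideset{}{'}\sum_{m} \mu_m(y;z)\rme^{\rmi\omega m x}$ with $\mu_m$ given by \eqref{Eq:Fouriercoefficients}, and to evaluate each Fourier coefficient $\mu_m$ in closed form as a one-dimensional Green's-function integral in the $y$-variable. The decisive observation is that, for fixed $m \in \sZ$, the denominator factors as $P_z(m,\xi) = \omega m(\omega m + 2z) + \rmi\xi$, so that $1/P_z(m,\cdot)$ is, as a function of $\xi$, the Fourier transform of a one-sided exponential in a dual variable $s$; convolving this against the $m$-th $x$-Fourier coefficient of $u\mu$ produces exactly the spatial integral appearing in $\op{g}_{u,\mrm{r}}$ and $\op{g}_{u,\mrm{l}}$.

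Concretely, write $h_m(y') = \frac{1}{2\ell}\int_{\!-\ell}^\ell u(x',y')\mu(x',y';z)\rme^{-\rmi\omega m x'}\,\rmd x'$ for the $m$-th Fourier coefficient of $u\mu$, so that $\wh{u\mu}(m,\cdot;z)$ is its Fourier transform in $y'$. The sign of $\real P_z(m,\xi) = \omega m(\omega m + 2\re{z})$ is independent of $\xi$ and, because $z \in \wpln$ forces $\omega m + 2\re{z} \neq 0$ for every $m \in \sZ$, it is never zero. When this sign is positive I would use $\int_0^\infty \rme^{-\omega m(\omega m + 2z)s}\rme^{-\rmi\xi s}\,\rmd s = 1/P_z(m,\xi)$, the exponential being integrable precisely because $\real P_z > 0$; when it is negative I would use the companion identity with the one-sided exponential $s \mapsto -\rme^{-\omega m(\omega m+2z)s}$ supported on $s < 0$. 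Since the \hyperlink{L:basiclemma}{Basic Lemma} guarantees $\wh{u\mu}(m,\cdot;z)/P_z(m,\cdot) \in L^1$, this product of two $L^1$-Fourier transforms is the transform of the convolution of $h_m$ with that exponential, and Fourier inversion (legitimate as the transform lies in $L^1$) yields $\mu_m(y;z)$ as that convolution. Unwinding it and restoring the factor $\rme^{\rmi\omega m x}$ turns the $m$-th term into $\frac{1}{2\ell}\int\!\!\int u(x',y')\mu(x',y';z)\rme^{\rmi\omega m(x-x') - \omega m(\omega m + 2z)(y-y')}$, integrated over $y' < y$ with a plus sign when $\real P_z > 0$ and over $y' > y$ with a minus sign when $\real P_z < 0$. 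This is the same Laplace-representation device used for $f_{m,y,\lambda}$ in Theorem~\ref{Th:zasympotics}, so the interchange of integrations is justified exactly as there; equivalently, one may read $\mu_m$ as the Cauchy-type integral already displayed in that proof and close the contour in the appropriate half-plane.

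It then remains to sort the integers $m \in \sZ$ by the sign of $\omega m(\omega m + 2\re{z})$. For $\re{z} > 0$ this sign is positive exactly when $m > 0$ or $\omega m < -2\re{z}$, and negative exactly when $-2\re{z} < \omega m < 0$; for $\re{z} < 0$ it is positive exactly when $m < 0$ or $\omega m > -2\re{z}$, and negative exactly when $0 < \omega m < -2\re{z}$. These are precisely the index sets in the two sums defining $\op{g}_{u,\mrm{r}}$ and $\op{g}_{u,\mrm{l}}$. Summing over $m$ and using $\sideset{}{'}\sum_m \abs{\mu_m(y;z)} \le \frac{1}{2\pi}\Norm{\wh{u\mu}/P_z}_{L^1(\Scr{C})} < \infty$ from the \hyperlink{L:basiclemma}{Basic Lemma}, so that the series converges absolutely and uniformly in $x$, reassembles $\mu - 1$ into $[\op{g}_{u,\mrm{r}}\mu](x,y;z)$ when $\re{z}>0$ and into $[\op{g}_{u,\mrm{l}}\mu](x,y;z)$ when $\re{z}<0$, which is \eqref{Eq:classicrepresentation}.

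The main obstacle is the rigorous evaluation of the $\xi$-integral defining $\mu_m$: one must pass from $1/P_z$ to a one-sided exponential and interchange the $\xi$- and $s$-integrations. The clean way around the fact that $\wh{u\mu}(m,\cdot;z)$ itself need not be integrable in $\xi$ is to route everything through the convolution theorem, invoking the \hyperlink{L:basiclemma}{Basic Lemma} to place $\wh{u\mu}/P_z$ in $L^1$ so that Fourier inversion applies and the resulting convolution is continuous (hence equal to the continuous $\mu_m$ everywhere). The hypothesis $z \in \wpln$ is equally essential but purely bookkeeping: it keeps $\real P_z(m,\cdot)$ off zero, so that every $m$ is unambiguously assigned to one of the two sums and no pole of the integrand meets the real $\xi$-axis.
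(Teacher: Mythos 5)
Your proof is correct, and it reaches \eqref{Eq:classicrepresentation} by a route that is the Fourier-analytic dual of the paper's rather than a reproduction of it. The paper first splits the $y'$-integral in \eqref{Eq:eigenfunction} into the two half-lines $y' < y$ and $y' > y$, views each resulting $\xi$-integral as a Cauchy-type integral with a possible simple pole at $p_0 = \rmi\omega m(\omega m + 2z)$, and evaluates it by the residue theorem, closing semicircular contours in the upper or lower half-plane and killing the arc contributions by a dominated-convergence estimate; the dichotomy deciding which piece survives is $\sgn \imaginary p_0 = \sgn( \omega m(\omega m + 2\re{z}) )$. You instead keep the $\xi$-integral \eqref{Eq:Fouriercoefficients} intact, represent $1/P_z(m, \cdot)$ as the Fourier transform of a one-sided exponential --- the sign of $\real P_z(m, \xi) = \omega m(\omega m + 2\re{z})$, nonzero precisely because $z \in \wpln$, selecting the supporting half-line --- and then invoke the $L^1$ convolution theorem together with Fourier inversion, legitimated by the \hyperlink{L:basiclemma}{Basic Lemma} (which places $\wh{u\mu}(m, \cdot; z)/P_z(m, \cdot)$ in $L^1$ for each $m \in \sZ$) and by the continuity of the $L^1 * L^\infty$ convolution, so that the half-line $y'$-integrals emerge directly without ever splitting the spatial integral in advance. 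Since $\imaginary p_0 = \real P_z$, the governing dichotomy is identical, and your sorting of the index sets for $\re{z} > 0$ and $\re{z} < 0$, as well as the use of the Basic Lemma for absolute and uniform summation over $m \in \sZ$, matches the paper's bookkeeping exactly. What your route buys is the elimination of the contour-closing machinery --- no arc estimates, no case-by-case residue computation --- at the modest price of verifying that your $h_m$ lies in $L^1(\bb{R})$ (true, since $u \in L^1(\Omega)$ and $\mu$ is bounded) and that inversion holds pointwise rather than merely almost everywhere; it is also fully consonant with the paper's own toolkit, since the identical one-sided-exponential device is deployed inside the proof of theorem~\ref{Th:zasympotics} for the functions $f_{m, y, \lambda}$, as you observe.
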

\begin{proof}
Fix a number $z$ in $\wpln$ and suppose $\re{z} > 0$. We have
\begin{align*}
\mu(x, y; z) - 1 &= \frac{1}{2\pi}\sideset{}{'}\sum_{m = -\infty}^\infty \int_{\!-\infty}^\infty \frac{1}{ P_z(m, \xi) }\\
\begin{split}
&\qquad \bigg(\frac{1}{2\ell}\int_{\!-\infty}^\infty \int_{\!-\ell}^\ell
u(x', y')\mu(x', y'; z)\rme^{ \rmi\omega m(x - x') + \rmi\xi(y - y') } \,\rmd x'\rmd y'\bigg)\rmd\xi
\end{split}\\ 
&= \frac{1}{2\ell}\sideset{}{'}\sum_{m = -\infty}^\infty \frac{1}{2\pi i}\int_{\!-\infty}^\infty \frac{1}{ \xi - \rmi\omega m(\omega m + 2z) }\\
\begin{split}
&\qquad \bigg(\int_{\!-\infty}^y \int_{\!-\ell}^\ell u(x', y')\mu(x', y'; z)\rme^{ \rmi\omega m(x - x') + \rmi\xi(y - y') } \,\rmd x'\rmd y'\bigg)\rmd\xi
\end{split}\\
&\qquad + \frac{1}{2\ell}\sideset{}{'}\sum_{m = -\infty}^\infty \frac{1}{2\pi i}\int_{\!-\infty}^\infty \frac{1}{\xi - \rmi\omega m(\omega m + 2z) }\\
\begin{split}
&\qquad \bigg(\int_y^\infty \int_{\!-\ell}^\ell u(x', y')\mu(x', y'; z)\rme^{ \rmi\omega m(x - x') + \rmi\xi(y - y') } \,\rmd x'\rmd y'\bigg)\rmd\xi.
\end{split}
\end{align*}

Let $p_0 = \rmi\omega m(\omega m + 2z)$ and $s = \xi + \rmi\tau$ be a complex number in the upper-half plane. The integral
\[
\int_{\!-\infty}^y \int_{\!-\ell}^\ell u(x', y')\mu(x', y'; z)\rme^{ \rmi\omega m(x - x') + \rmi s(y - y') } \,\rmd x'\rmd y'
\]
converges absolutely since
\begin{align*}
\int_{\!-\infty}^y \int_{\!-\ell}^\ell \abs{ u(x', y') }\abs{ \mu(x', y'; z) }\rme^{ -\tau(y - y') } \,\rmd x'\rmd y' &<
\norm{\mu}_\infty \int_{\!-\infty}^y \int_{\!-\ell}^\ell \abs{ u(x', y') } \,\rmd x'\rmd y'\\
&\leq \norm{\mu}_\infty \norm{u}_1.
\end{align*}
Thus, it defines a holomorphic function with respect to $s$; apply Fubini's and Morera's theorems. Hence, the function
\begin{align*}
f(s; m, x, y) &= \frac{1}{s - p_0}g(s; m, x, y)\\ 
&\equiv \frac{1}{s - p_0}\int_{\!-\infty}^y \int_{\!-\ell}^\ell u(x', y')\mu(x', y'; z)\rme^{ \rmi\omega m(x - x') + \rmi s(y - y') } \,\rmd x'\rmd y'
\end{align*}
is holomorphic, if $\imaginary p_0 < 0$, and meromorphic with a simple pole at the point $p_0$, if $\imaginary p_0 > 0$. Therefore, by the residue theorem
\[
\frac{1}{2\pi i}\int_\gamma f(s; m, x, y) \,ds = 
\begin{cases}
0, & \imaginary p_0 < 0\\
\res(f, s = p_0), & \imaginary p_0 > 0,
\end{cases}
\]
where the curve $\gamma = [-R, R] + C_R$, and $C_R$ is the semi-circle in the upper-half plane, centred at the origin with radius $R$, such that
$R > \abs{p_0}$. Now,
\begin{align*}
\Abs{\int_{C_R} f(s; m, x, y) \,\rmd s} &\leq \frac{\pi R}{ R - \abs{p_0} }\max_{s \in C_R} \abs{ g(s; m, x, y) }\\
&= \frac{\pi R}{ R - \abs{p_0} }\max_{ \theta \in [0, \pi] } \abs{ g(R\rme^{\rmi\theta}; m, x, y) }.
\end{align*}
But
\[
\abs{ g(R\rme^{\rmi\theta}; m, x, y) } \leq \norm{\mu}_\infty \int_{\!-\infty}^y \int_{\!-\ell}^\ell
\abs{ u(x', y') }\rme^{ -R\sin\theta(y - y') } \,\rmd x'\rmd y',
\]
and since $\sin\theta > 0$, absolute and dominated convergence implies
\[
\lim_{R \to \infty} \int_{\!-\infty}^y \int_{\!-\ell}^\ell \abs{ u(x', y') }\rme^{ -R\sin\theta(y - y') } \,\rmd x'\rmd y' = 0,
\]
hence $\max_{ \theta \in [0, \pi] } \abs{ g(R\rme^{\rmi\theta}; m, x, y) } \to 0$ as $R$ tends to $\infty$. Thus, writing
\[
\frac{1}{2\pi \rmi}\int_\gamma f(s; m, x, y) \,\rmd s = \frac{1}{2\pi \rmi}\int_{\!-R}^R f(\xi; m, x, y) \,\rmd\xi +
\frac{1}{2\pi \rmi}\int_{C_R} f(s; m, x, y) \,\rmd s
\]
and taking the limit as $R \to \infty$, yields
\[
\frac{1}{2\pi \rmi}\int_{\!-\infty}^\infty f(\xi; m, x, y) \,\rmd\xi = 
\begin{cases}
0, & \imaginary p_0 < 0 \\
g(p_0; m, x, y), & \imaginary p_0 > 0.
\end{cases}
\]
Similarly, choosing $\tilde{\gamma} = \wt{C}_R + [-R, R]$ with positive orientation, where $\wt{C}_R$ is the semi-circle, situated in the lower-half plane,
centred at the origin with radius $R$ such that $R > \abs{p_0}$, we find
\[
\frac{1}{2\pi i}\int_{\!-\infty}^\infty \tilde{f}(\xi; m, x, y) \,\rmd\xi = 
\begin{cases}
0, & \imaginary p_0 > 0 \\
-\tilde{g}(p_0; m, x, y), & \imaginary p_0 < 0,
\end{cases}
\]
where
\begin{align*}
\tilde{f}(s; m, x, y) &= \frac{1}{s - p_0}\tilde{g}(s; m, x, y) \\ 
&\equiv \frac{1}{s - p_0}\int_y^\infty \int_{\!-\ell}^\ell u(x', y')\mu(x', y'; z)\rme^{ \rmi\omega m(x - x') + \rmi s(y - y') } \,\rmd x'\rmd y'.
\end{align*}
Now, $\imaginary p_0 = \imaginary( \rmi\omega m(\omega m + 2z) ) = \real( \omega m(\omega m + 2z) )$. Thus,
\[
\imaginary p_0 > 0 \Equiv m > 0 \text{ or } m < -\tfrac{ 2\re{z} }{\omega},
\]
and
\[
\imaginary p_0 < 0 \Equiv -\tfrac{ 2\re{z} }{\omega} < m < 0.
\]
Putting all this together,
\begin{multline*}
\mu(x, y; z) = 1 + \frac{1}{2\ell}\Bigg(\sum_{ \substack{ m > 0 \\ \substack{\text{or} \\ m < -2\re{z}/\omega} } }
\int_{\!-\infty}^y \int_{\!-\ell}^\ell - \sum_{-2\re{z}/\omega < m < 0} \int_y^\infty \int_{\!-\ell}^\ell\Bigg)\\
u(x', y')\mu(x', y'; z)\rme^{ \rmi\omega m(x - x') - \omega m(\omega m + 2z)(y - y') } \,\rmd x'\rmd y'.
\end{multline*}

In the case where $\re{z} < 0$, we have
\[
\imaginary p_0 > 0 \Equiv m < 0 \text{ or } m > -\tfrac{ 2\re{z} }{\omega},
\]
and
\[
\imaginary p_0 < 0 \Equiv 0 < m < -\tfrac{ 2\re{z} }{\omega}.
\]
Thus, repeating the previous arguments,
\begin{equation*}
\begin{split}
\mu(x, y; z) &= 1 + \frac{1}{2\ell}\Bigg(\sum_{ \substack{ m < 0 \\ \substack{\text{or} \\ m > -2\re{z}/\omega} } }
\int_{\!-\infty}^y \int_{\!-\ell}^\ell - \sum_{0 < m < -2\re{z}/\omega} \int_y^\infty \int_{\!-\ell}^\ell\Bigg)\\
&\qquad\ph{1 + \frac{1}{2\ell}(} u(x', y')\mu(x', y'; z)\rme^{ \rmi\omega m(x - x') - \omega m(\omega m + 2z)(y - y') } \,\rmd x'\rmd y'.\qedhere
\end{split}
\end{equation*}
\end{proof}

Since $\mu = 1 + \Scr{N}_u\mu$, equation \eqref{Eq:classicrepresentation} implies that, for all $z \in \wpln$ and $h \in L^\infty(\Omega)$,
\[
(\Scr{N}_u h)(x, y; z) = 
\begin{cases}
[\op{g}_{ u, \mrm{r} } h](x, y; z), & \re{z} > 0\\[2pt]
[\op{g}_{ u, \mrm{l} } h](x, y; z), & \re{z} < 0.
\end{cases}
\]
Furthermore, condition \eqref{Eq:zeromass} aloows us to rewrite equations \eqref{Eq:groperator} and \eqref{Eq:gloperator} as
\begin{equation}
\begin{split}
[\op{g}_{ u, \mrm{r} } h](x, y; z) &\= \frac{1}{2\ell}\Bigg(\sum_{ \substack{ m > 0 \\ \substack{ \tn{or} \\ \omega m < -2\re{z} } } }
\int_{\!-\infty}^y \int_{\!-\ell}^\ell - \sum_{-2\re{z} < \omega m \leq 0} \int_y^\infty \int_{\!-\ell}^\ell\Bigg)\\
&\quad\ph{1 + \frac{1}{2\ell}} u(x', y')h(x', y')\rme^{ \rmi\omega m(x - x') - \omega m(\omega m + 2z)(y - y') } \,\rmd x'\rmd y',
\end{split}
\end{equation}
and
\begin{equation}
\begin{split}
[\op{g}_{ u, \mrm{l} } h](x, y; z) &\= \frac{1}{2\ell}\Bigg( \sum_{ \substack{ m \leq 0 \\ \substack{ \tn{or} \\ \omega m > -2\re{z} } } }
\int_{\!-\infty}^y \int_{\!-\ell}^\ell - \sum_{ 0 < \omega m < -2\re{z} } \int_y^\infty \int_{\!-\ell}^\ell \Bigg)\\
&\quad\ph{1 + \frac{1}{2\ell}}u(x', y')h(x', y')\rme^{ \rmi\omega m(x - x') - \omega m(\omega m + 2z)(y - y') } \,\rmd x'\rmd y'.
\end{split}
\end{equation}
We are now ready to calculate $\op{J}\mu$ across the contours $L_n$ and derive the spectral data.
\begin{theorem}\label{Th:departurefromholomorphicity}
Suppose the potential $u(x, y)$ is small and regular, such that $\maxnorm{u} < 2\pi/C, \,y u(x, y) \in L^1(\Omega)$ and $u_x$, $u_y \in L^1(\Omega) \cap L^2(\Omega)$. Then,
\begin{equation}\label{Eq:RiemannHilbert}
\op{J}\mu(x, y; z) \equiv \mu^+(x, y; z) - \mu^-(x, y; z) = F(z)\rme^{-\rmi( z + \bar{z} )x + (z^2 - \bar{z}^2)y} \mu^-( x, y; -\bar{z} ),
\end{equation}
for $z$ on the contour $L_n$, $n \in \sZ$, where
\begin{equation}\label{Eq:scatteringdata}
F(z) \= -\frac{ \sgn( \re{z} ) }{2\ell}\int_{\!-\infty}^\infty \int_{\!-\ell}^\ell
u(x, y)\mu^+(x, y; z)\rme^{ \rmi( z + \bar{z} )x - (z^2 - \bar{z}^2)y } \,\rmd x\rmd y,
\end{equation}
defines the \emph{spectral data}.
\end{theorem}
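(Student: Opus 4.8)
The plan is to read the jump directly off the two pointwise representations of $\mu$ in \eqref{Eq:classicrepresentation}, exploiting that as $z$ crosses the line $L_n$ only a single Fourier mode becomes discontinuous. Fix $n\in\sZ$ and $z\in L_n$, so that $2\re{z}=-\omega n$. In the Fourier series \eqref{Eq:Fourierseries} the coefficient $\mu_m(y;z)$ is the Cauchy-type integral \eqref{Eq:Fouriercoefficients} with pole at $p_0=\rmi\omega m(\omega m+2z)$, and $\imaginary p_0=\omega m(\omega m+2\re{z})$ vanishes only on $L_m$; hence every mode with $m\neq n$ extends holomorphically across $L_n$ and only the mode $m=n$ jumps. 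Equivalently, in \eqref{Eq:groperator}--\eqref{Eq:gloperator} the term $m=n$ is exactly the one that migrates between the two summation ranges as $\re{z}$ passes through $-\tfrac{\omega}{2}n$: for $\re{z}>0$ (the case $n<0$) it sits in the $\int_{-\infty}^y$ sum on the left of $L_n$ and in the $\int_y^\infty$ sum on the right, and symmetrically for $\re{z}<0$.

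First I would subtract the representations of $\mu^+$ and $\mu^-$. Writing $\op{g}'$ for the common part of $\op{g}_{u,\mrm{r}}$ (all modes $m\neq n$, which is continuous across $L_n$), the difference becomes $\op{J}\mu=\op{g}'(\op{J}\mu)$ plus the two critical $m=n$ contributions, one carrying $\mu^+$ over $\int_y^\infty$ and one carrying $\mu^-$ over $\int_{-\infty}^y$. Substituting $\mu^-=\mu^+-\op{J}\mu$ in the second of these recombines the two half-line integrals into one integral over all of $\bb{R}$ in $y'$, and this full integral factorises as $F(z)\,f_0(x,y)$, where $f_0(x,y)=\rme^{-\rmi(z+\conj{z})x+(z^2-\conj{z}^2)y}$ and $F(z)$ is precisely \eqref{Eq:scatteringdata}; here one uses the on-line identities $z+\conj{z}=-\omega n$ and $\omega n(\omega n+2z)=-(z^2-\conj{z}^2)$ to convert the kernel exponential into the exponentials appearing in \eqref{Eq:RiemannHilbert} and \eqref{Eq:scatteringdata}, while $\sgn(\re{z})$ records the two cases $n\lessgtr 0$. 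The remaining $m=n$ piece reassembles with $\op{g}'$ into the full operator $\Scr{N}_u$ (with the critical mode placed as on the left of $L_n$), so that $\op{J}\mu$ solves the single linear integral equation $(\op{Id}-\Scr{N}_u)\op{J}\mu=F(z)\,f_0$.

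It then remains to invert $\op{Id}-\Scr{N}_u$, which is legitimate since $\maxnorm{u}<2\pi/C$ gives $\norm{\Scr{N}_u}_\mrm{op}<1$. The key observation is that $f_0$ solves the free equation $P(\partial+w(z))f_0=0$, so $(\op{Id}-\Scr{N}_u)^{-1}f_0$ is the $u$-dressing of this plane wave. I would identify that dressing explicitly: inserting $f_0\,g$ into the integral equation and performing the index shift $m\mapsto m-n$ turns $\Scr{N}_u$, via the single identity $z+\omega n=-\conj{z}$ valid on $L_n$, into exactly the operator $\op{g}_{u,\mrm{l}}$ evaluated at the reflected parameter $-\conj{z}\in L_{-n}$. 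Hence $g$ satisfies the defining integral equation for the Jost function at $-\conj{z}$, i.e. $g=\mu(\cdot;-\conj{z})$, and tracking the one surviving critical index $m-n=-n$ pins down the correct one-sided boundary value $\mu^-(\cdot;-\conj{z})$. Combining, $\op{J}\mu=F(z)\,f_0\,\mu^-(\cdot;-\conj{z})$, which is \eqref{Eq:RiemannHilbert}.

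The delicate point---and the step I expect to cost the most care---is the bookkeeping of the summation ranges through the reflection $z\mapsto-\conj{z}$: one must check that, after the shift $m\mapsto m-n$, the critical mode $m-n=-n$ lands in the half-line integral corresponding to $\mu^-$ rather than $\mu^+$, and that the two cases $\re{z}\gtrless 0$ produce the single sign $\sgn(\re{z})$ in \eqref{Eq:scatteringdata}. It is exactly here that the zero-mass constraint \eqref{Eq:zeromass} is indispensable: it renders the $m=0$ contributions harmless and lets the boundary indices $\le 0$ be absorbed as in the rewritten forms of $\op{g}_{u,\mrm{r}}$ and $\op{g}_{u,\mrm{l}}$, so that the reflected operator matches $\op{g}_{u,\mrm{l}}$ at $-\conj{z}$ on the nose and the advertised boundary value $\mu^-(\cdot;-\conj{z})$ emerges.
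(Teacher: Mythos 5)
Your proposal is correct and follows essentially the same route as the paper's proof: you isolate the single discontinuous mode $m=n$, reduce the jump to the integral equation $(\op{Id}-\Scr{N}_u^-)\op{J}\mu=F(z)\rme^{-\rmi(z+\conj{z})x+(z^2-\conj{z}^2)y}$, invert by the contraction bound $\maxnorm{u}<2\pi/C$, and identify the dressed plane wave with $\mu^-(\cdot\,;-\conj{z})$ via the on-line identity $z+\omega n=-\conj{z}$ together with uniqueness of the Jost solution. Your index shift $m\mapsto m-n$ is just the paper's conjugation $\wt{\nu}=\nu\rme^{-d}$ in different clothing, and your bookkeeping of the critical mode's placement (including the role of the zero-mass constraint for the $m=0$ term and the origin of $\sgn(\re{z})$) matches the paper's explicit verification with the sets $U_{z,\mrm{r}}^{\pm}$, $V_{z,\mrm{r}}^{\pm}$.
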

\begin{proof}
Fix a positive integer $n$ and let $z = \tfrac{\omega}{2}n + \rmi\im{z}$ (for $n$ negative integer the analysis is similar and thus, omitted). Then,
{\allowdisplaybreaks
\begin{align}\label{Eq:rightlimit}
\mu^+(x, y; z) &= 1 +  [\op{g}_{ u, \mrm{r} } \mu]^+(x, y; z)\nonumber\\ 
&= 1 + \frac{1}{2\ell}\Bigg(\sum_{ \substack{ m > 0 \\ \substack{\tn{or} \\ \omega m < -\omega n} } } \int_{\!-\infty}^y \int_{\!-\ell}^\ell -
\sum_{-\omega n \leq \omega m \leq 0} \int_y^\infty \int_{\!-\ell}^\ell\Bigg)\nonumber\\
&\qquad\ph{1 + \frac{1}{2\ell}} u(x', y')\mu^+(x', y'; z)\rme^{ \rmi\omega m(x - x') - ( (\omega m + z)^2 - z^2 )(y - y') } \,\rmd x'\rmd y'\nonumber\\
&= 1 + \frac{1}{2\ell}\Bigg(\sum_{ \substack{ k > z \\ \substack{ \tn{or} \\ k < -\bar{z} } } } \int_{\!-\infty}^y \int_{\!-\ell}^\ell -
\sum_{-\bar{z} \leq k \leq z} \int_y^\infty \int_{\!-\ell}^\ell\Bigg)\nonumber\\
&\qquad\ph{1 + \frac{1}{2\ell}} u(x', y')\mu^+(x', y'; z)\rme^{ \rmi(k - z)(x - x') - (k^{2} - z^2)(y - y') } \,\rmd x'\rmd y'\nonumber\\
&= 1 + \frac{1}{2\ell}\Bigg(\sum_{k \in U_{ z, \mrm{r} }^+}
\int_{\!-\infty}^y \int_{\!-\ell}^\ell - \sum_{k \in V_{ z, \mrm{r} }^+} \int_y^\infty \int_{\!-\ell}^\ell\Bigg)\nonumber\\
&\qquad\ph{1 + \frac{1}{2\ell}} u(x', y')\mu^+(x', y'; z)\rme^{ \rmi(k - z)(x - x') - (k^{2} - z^2)(y - y') } \,\rmd x'\rmd y'\nonumber\\
&\equiv 1 +  [\op{g}_{ u, \mrm{r} }^+ \mu^+](x, y; z),
\end{align}}
where
\begin{subequations}\label{Eq:+sets}\begin{align}
U_{ z, \mrm{r} }^+ &= (\omega\bb{N} + z) \cup ( -\omega\bb{N} - \bar{z} ) \label{Eq:U+},\\
V_{ z, \mrm{r} }^+ &= \{-\bar{z}, -\bar{z} + \omega, \dotsc, z - \omega, z\} \label{Eq:V+},
\end{align}\end{subequations}
and
\begin{align}\label{Eq:leftlimit}
\mu_-(x, y; z) &= 1 + [\op{g}_{ u, \mrm{r} } \mu]^-(x, y; z)\nonumber\\
&= 1 + \frac{1}{2\ell}\Bigg(\sum_{ \substack{ m > 0 \\ \substack{\tn{or} \\ \omega m \leq -\omega n} } } \int_{\!-\infty}^y \int_{\!-\ell}^\ell -
\sum_{-\omega n < \omega m \leq 0} \int_y^\infty \int_{\!-\ell}^\ell\Bigg)\nonumber\\
&\qquad\ph{1 + \frac{1}{2\ell}} u(x', y')\mu^-(x', y'; z)\rme^{ \rmi\omega m(x - x') - ( (\omega m + z)^2 - z^2 )(y - y') } \,\rmd x'\rmd y'\nonumber\\
&= 1 + \frac{1}{2\ell}\Bigg(\sum_{ \substack{ k > z \\ \substack{ \tn{or} \\ k \leq -\bar{z} } } } \int_{\!-\infty}^y \int_{\!-\ell}^\ell -
\sum_{-\bar{z} < k \leq z} \int_y^\infty \int_{\!-\ell}^\ell\Bigg)\nonumber\\
&\qquad\ph{1 + \frac{1}{2\ell}} u(x', y')\mu^-(x', y'; z)\rme^{ \rmi(k - z)(x - x') - (k^{2} - z^2)(y - y') } \,\rmd x'\rmd y'\nonumber\\
&= 1 + \frac{1}{2\ell}\Bigg(\sum_{k \in U_{ z, \mrm{r} }^-}
\int_{\!-\infty}^y \int_{\!-\ell}^\ell - \sum_{k \in V_{ z, \mrm{r} }^-} \int_y^\infty \int_{\!-\ell}^\ell\Bigg)\nonumber\\
&\qquad\ph{1 + \frac{1}{2\ell}} u(x', y')\mu^-(x', y'; z)\rme^{ \rmi(k - z)(x - x') - (k^{2} - z^2)(y - y') } \,\rmd x'\rmd y'\nonumber\\
&\equiv 1 +  [\op{g}_{ u, \mrm{r} }^- \mu^-](x, y; z),
\end{align}
with
\begin{subequations}\label{Eq:-sets}\begin{align}
U_{ z, \mrm{r} }^- &= (\omega\bb{N} + z) \cup ( -\omega\bb{N}_0 - \bar{z} ), \label{Eq:U-}\\
V_{ z, \mrm{r} }^- &= \{-\bar{z} + \omega, \dotsc, z - \omega, z\} \label{Eq:V-}.
\end{align}\end{subequations}
Thus,
\begin{align*}
\op{J}\mu &= \op{g}_{ u, \mrm{r} }^+ \mu^+ - \op{g}_{ u, \mrm{r} }^- \mu^-\\
&= \op{g}_{ u, \mrm{r} }^+ \mu^+ - \op{g}_{ u, \mrm{r} }^- \mu^+ + \op{g}_{ u, \mrm{r} }^- \mu^+ - \op{g}_{ u, \mrm{r} }^- \mu^-\\
&= (\op{g}_{ u, \mrm{r} }^+ - \op{g}_{ u, \mrm{r} }^-)\mu^+ + \op{g}_{ u, \mrm{r} }^- \op{J}\mu.
\end{align*}
But from equations \eqref{Eq:rightlimit}--\eqref{Eq:-sets},
\[
\begin{split}
(\op{g}_{ u, \mrm{r} }^+ - \op{g}_{ u, \mrm{r} }^-)\mu^+ = -\frac{1}{2\ell}\int_{\!-\infty}^\infty \int_{\!-\ell}^\ell &u(x', y')\mu^+(x', y'; z)\\
&\, \rme^{ \rmi(-\bar{z} - z)(x - x') - (\bar{z}^2 - z^2)(y - y') } \,\rmd x'\rmd y'.
\end{split}
\]
Recognizing $F(z)$ from its definition and setting
\[
d(x, y; z) = -\rmi( z + \bar{z} )x + (z^2 - \bar{z}^2)y,
\]
we obtain
\[
\op{J}\mu(x, y; z) = F(z)\rme^{ d(x, y; z) } + \op{g}_{ u, \mrm{r} }^- \op{J}\mu(x, y; z),
\]
or equivalently,
\[
(\op{Id} - \op{g}_{ u, \mrm{r} }^-)\op{J}\mu(x, y; z) = F(z)\rme^{ d(x, y; z) }.
\]
Now, $\op{g}_{ u, \mrm{r} }^-$ is the limit of the operator $\op{g}_{ u, \mrm{r} }$ as $\zeta$ approaches $z$ from the left of the line $L_n$ where
$\zeta$ is situated in the strip $S_{n - 1}$. But $\op{g}_{ u, \mrm{r} } = \Scr{N}_u$. Since $\Scr{N}_u$ is bounded with norm less than one, so is its limit. 
Thus, $\op{g}_{ u, \mrm{r} }^-$ is bounded and has norm less than one. Hence,
\[
\op{J}\mu(x, y; z) = (\op{Id} - \op{g}_{ u, \mrm{r} }^-)^{-1} F(z)\rme^{ d(x, y; z) } = F(z)(\op{Id} - \op{g}_{ u, \mrm{r} }^-)^{-1} \rme^{ d(x, y; z) },
\]
since $\op{g}_{ u, \mrm{r} }^-$ commutes with multiplication by functions of $z$ alone. It remains to calculate
$(\op{Id} - \op{g}_{ u, \mrm{r} }^-)^{-1} \rme^{ d(x, y; z) }$. Call this function $\nu(x, y; z)$. It is bounded because the exponential is bounded and
$\op{Id} - \op{g}_{ u, \mrm{r} }^-$ is invertible on $L^\infty(\Omega)$. It satisfies the equation
\[
\nu(x, y; z) = \rme^{ d(x, y; z) } + \op{g}_{ u, \mrm{r} }^- \nu(x, y; z).
\]
But then, $\wt{\nu}(x, y; z) = \nu(x, y; z)\rme^{ -d(x, y; z) }$ satisfies
\begin{align*}
\wt{\nu}(x, y; z) &= 1 + \rme^{ -d(x, y; z) } \op{g}_{ u, \mrm{r} }^- \nu(x, y; z)\\
&= 1 + \frac{1}{2\ell}\Bigg(\sum_{k \in U_{ z, \mrm{r} }^-}
\int_{\!-\infty}^y \int_{\!-\ell}^\ell - \sum_{k \in V_{ z, \mrm{r} }^-} \int_y^\infty \int_{\!-\ell}^\ell\Bigg)\\
&\qquad\ph{1 + \frac{1}{2\ell}} u(x', y')\wt{\nu}(x', y'; z)\rme^{ \rmi( k + \bar{z} )(x - x') - (k^{2} - \bar{z}^2)(y - y') } \,\rmd x'\rmd y'.
\end{align*}
This equation has a unique solution, which has already been named $\mu^-( x, y; -\bar{z} )$. Indeed, the number $-\bar{z}$ is located on the line
$L_{-n}$ since
\[
\real( -\bar{z} ) = \real(-z) = -\re{z} = -\frac{\omega}{2}n.
\]
Therefore,
\begin{align}
\mu_-( x, y; -\bar{z} ) &= 1 + [\op{g}_{ u, \mrm{l} } \mu]^-( x, y; -\bar{z} )\nonumber\\
&= 1 + \frac{1}{2\ell}\Bigg(\sum_{ \substack{ m \leq 0 \\ \substack{\tn{or} \\ \omega m > \omega n} } } \int_{\!-\infty}^y \int_{\!-\ell}^\ell -
\sum_{0 < \omega m \leq \omega n} \int_y^\infty \int_{\!-\ell}^\ell\Bigg)\nonumber\\
&\quad\ph{1 + \frac{1}{2\ell}} u(x', y')\mu^-( x', y'; -\bar{z} )
\rme^{ \rmi\omega m(x - x') - ( ( \omega m  -\bar{z} )^2 - \bar{z}^2 )(y - y') } \,\rmd x'\rmd y'\nonumber\\
&= 1 + \frac{1}{2\ell}\Bigg(\sum_{ \substack{ k \leq -\bar{z} \\ \substack{\tn{or} \\ k > z} } } \int_{\!-\infty}^y \int_{\!-\ell}^\ell -
\sum_{-\bar{z} < k \leq z} \int_y^\infty \int_{\!-\ell}^\ell\Bigg)\nonumber\\
&\quad\ph{1 + \frac{1}{2\ell}} u(x', y')\mu^-( x', y'; -\bar{z} )\rme^{ \rmi( k + \bar{z} )(x - x') - (k^{2} - \bar{z}^2)(y - y') } \,\rmd x'\rmd y'\nonumber\\
&= 1 + \frac{1}{2\ell}\Bigg(\sum_{k \in U_{ z, \mrm{r} }^-}
\int_{\!-\infty}^y \int_{\!-\ell}^\ell - \sum_{k \in V_{ z, \mrm{r} }^-} \int_y^\infty \int_{\!-\ell}^\ell\Bigg)\nonumber\\
&\quad\ph{1 + \frac{1}{2\ell}} u(x', y')\mu^-( x, y; -\bar{z} )\rme^{ \rmi( k + \bar{z} )(x - x') - (k^{2} - \bar{z}^2)(y - y') } \,\rmd x'\rmd y'.
\end{align}
Thus, $\wt{\nu}(x, y; z) = \mu^-( x, y; -\bar{z} )$ and so $\nu(x, y; z) = \mu^-( x, y; -\bar{z} )\rme^{ d(x, y; z) }$, which yields
\[
\op{J}\mu(x, y; z) = F(z)\mu^-( x, y; -\bar{z} )\rme^{ d(x, y; z) }.\qedhere
\]
\end{proof}
One easy consequence of the definition of the spectral data is the following estimate on $F(z)$.
\begin{proposition}
The function $F(z)$ is bounded. More precisely,
\begin{equation}
\abs{ F(z) } < \frac{\omega\norm{u}_1}{ 1 - \tfrac{C}{2\pi}\maxnorm{u} }, \quad \forall z \in \bb{C} \setminus \wpln.
\end{equation}
\end{proposition}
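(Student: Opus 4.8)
The plan is to estimate $F(z)$ directly from its defining integral \eqref{Eq:scatteringdata}; the only two ingredients needed are the modulus of the exponential kernel on the jump contours and a uniform sup-norm bound on the boundary value $\mu^+$.

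First I would note that every $z \in \bb{C}\setminus\wpln$ lies on one of the lines $L_n$, so that $2\re{z} = \omega n$ for some $n \in \sZ$; in particular $\re{z} \neq 0$, hence $\abs{\sgn(\re{z})} = 1$. On such a line $z + \bar{z} = 2\re{z}$ is real while $z^2 - \bar{z}^2 = (z - \bar{z})(z + \bar{z}) = 4\rmi\re{z}\im{z}$ is purely imaginary, so the exponent $\rmi(z + \bar{z})x - (z^2 - \bar{z}^2)y$ is purely imaginary and therefore
\[
\abs{\rme^{\rmi(z + \bar{z})x - (z^2 - \bar{z}^2)y}} = 1.
\]
Taking absolute values under the integral sign in \eqref{Eq:scatteringdata} then gives
\[
\abs{F(z)} \leq \frac{1}{2\ell}\int_{\!-\infty}^\infty \int_{\!-\ell}^\ell \abs{u(x, y)}\,\abs{\mu^+(x, y; z)} \,\rmd x\rmd y \leq \frac{1}{2\ell}\norm{\mu^+(\cdot, \cdot; z)}_\infty \norm{u}_1.
\]

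The crux is a uniform bound on $\norm{\mu^+}_\infty$. For $z \in \wpln$, the inverse-transform estimate $\abs{(\mu - 1)(x, y; z)} \leq \frac{1}{2\pi}\norm{(\wh{\mu - 1})(\cdot, \cdot; z)}_{L^1(\Scr{C})}$ combined with \eqref{Eq:lonenorm} yields
\[
\abs{\mu(x, y; z)} \leq 1 + \frac{1}{2\pi}\frac{C\maxnorm{u}}{1 - \frac{C}{2\pi}\maxnorm{u}} = \frac{1}{1 - \frac{C}{2\pi}\maxnorm{u}},
\]
uniformly in $z \in \wpln$ and $(x, y) \in \Omega$. By theorem \ref{Th:zasympotics}, $\mu^+(x, y; z)$ is the non-tangential pointwise limit of $\mu(x, y; \zeta)$ as $\zeta \to z$ from within $\wpln$, so this uniform bound is inherited by the boundary value: $\norm{\mu^+(\cdot, \cdot; z)}_\infty \leq \bigl( 1 - \frac{C}{2\pi}\maxnorm{u} \bigr)^{-1}$ for every $z$ on the contours $L_n$.

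Finally I would combine the two displays and use the identity $\frac{1}{2\ell} = \frac{\omega}{2\pi} < \omega$ to conclude
\[
\abs{F(z)} \leq \frac{1}{2\ell}\frac{\norm{u}_1}{1 - \frac{C}{2\pi}\maxnorm{u}} < \frac{\omega\norm{u}_1}{1 - \frac{C}{2\pi}\maxnorm{u}}.
\]
No serious obstacle arises: the argument is a one-line modulus estimate once the kernel is seen to be unimodular on the contours. The only point needing care is the transfer of the uniform bound on $\mu$ to its boundary value $\mu^+$, which is legitimate precisely because the estimate from \eqref{Eq:lonenorm} is uniform in $z \in \wpln$ and the existence of the one-sided limits is guaranteed by theorem \ref{Th:zasympotics}.
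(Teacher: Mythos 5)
Your proposal is correct and follows essentially the same route as the paper: a modulus estimate on the defining integral (the paper phrases the unimodularity of the kernel by recognizing $F(z) = -\sgn(\re{z})\,\wh{u\mu^+}(r_0(z); z)$ and invoking the $L^1(\Omega) \to L^\infty(\Scr{C})$ bound of the Fourier transform), combined with the same uniform bound $\norm{\mu}_\infty \leq (1 - \tfrac{C}{2\pi}\maxnorm{u})^{-1}$, which you derive from \eqref{Eq:lonenorm} while the paper uses the Neumann series $\mu = (\op{Id} - \Scr{N}_u)^{-1}1$ — two equivalent formulations yielding the identical constant. Your explicit justification of transferring the sup bound to the boundary value $\mu^+$ via theorem \ref{Th:zasympotics} is a point the paper passes over silently, and is a welcome addition rather than a deviation.
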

\begin{proof}
Let $z \in \bb{C} \setminus \wpln$. Then,
\begin{align*}
F(z) &= -\frac{ \sgn( \re{z} ) }{2\ell}\int_{\!-\infty}^\infty \int_{\!-\ell}^\ell
u(x, y)\mu^+(x, y; z)\rme^{ \rmi( z + \bar{z} )x - (z^2 - \bar{z}^2)y } \,\rmd x\rmd y\\
&= -\frac{ \sgn( \re{z} ) }{2\ell}\int_{\!-\infty}^\infty \int_{\!-\ell}^\ell u(x, y)\mu^+(x, y; z)\rme^{\rmi2\re{z}x - \rmi4\re{z}\im{z}y} \,\rmd x\rmd y \\
&= -\sgn( \re{z} )\wh{u\mu^+}(-\tfrac{ 2\re{z} }{\omega}, 4\re{z}\im{z}; z)\\
&= -\sgn( \re{z} )\wh{u\mu^+}(r_0(z); z).
\end{align*}
Consequently,
\[
\abs{ F(z) } = \abs{ \wh{u\mu^+}(r_0(z); z) } \leq \norm{ \wh{u\mu^+} }_\infty < \omega\norm{u\mu^+}_1 \leq
\omega\norm{u}_1 \norm{\mu^+}_\infty \leq \omega\norm{u}_1 \norm{\mu}_\infty.
\]
But $\mu = (\op{Id} - \Scr{N}_u)^{-1} 1$ and $\norm{\Scr{N}_u 1}_\infty \leq \tfrac{C}{2\pi}\maxnorm{u}$. Thus,
$\norm{\mu}_\infty \leq ( 1 - \tfrac{C}{2\pi}\maxnorm{u} )^{-1}$ and the proposition is proved.
\end{proof}
\begin{definition}
The \emph{spectral data} associated to a small potential $u(x, y)$ of the perturbed heat operator is the function defined by
\begin{equation}
F(z) = -\sgn( \re{z} )\wh{u\mu^+}(r_0(z); z), \quad z \in \bb{C} \setminus \wpln.
\end{equation}
Abusing notation, the bounded linear map $\Scr{S}$ determined by $F(z)$ shall also be called spectral data. Here
\begin{equation}\label{Eq:scatteringoperator}
(\Scr{S}\mu)(x, y; z) = F(z)\rme^{ \rmi r_0(z) \cdot (\omega x, y) } \mu^-( x, y; -\bar{z} ).
\end{equation}
\end{definition}
The function $F(z)$ behaves much like the Fourier transform of $u$. If $u$ is smooth, then $F$ has rapid decay in some directions. In particular, if $u$
is small, $y u \in L^1(\Omega)$ and
\[
\abs{\partial^{\alpha'} u} \in L^1(\Omega) \cap L^2(\Omega), \;\forall \:\abs{\alpha'} \leq \abs{\alpha},
\]
for some multi-index $\alpha = (\alpha_1, \alpha_2)$, then
\begin{equation}\label{Eq:datadecay}
\abs{ F(\tfrac{\omega}{2}n, \tau) } = O\bigg( \frac{1}{ 1 + \abs{\omega n}^{\alpha_1} + \abs{2\omega n\tau}^{\alpha_2} } \bigg),
\end{equation}
for $n \in \sZ$ and $\tau \in \bb{R}$. This follows immediately from the definition of $F$ and lemma~\ref{L:rapiddeacy}.
\begin{corollary}\label{C:squareintegrability}
Suppose that $\maxnorm{u}$ is small, $y u \in L^1(\Omega)$ and that $u(x, y)$ has two continuous derivatives in $L^1(\Omega) \cap L^2(\Omega)$. 
Then,
\begin{equation}\label{Eq:infinitybound}
\sup_{n \in \sZ} \abs{n}\sup_{ \tau \in \bb{R} } \abs{ F(\tfrac{\omega}{2}n, \tau) }< \infty,
\end{equation}
and
\begin{equation}\label{Eq:infinitytwobound}
\int_{\!-\infty}^\infty \abs{ F(\tfrac{\omega}{2}n, \tau) }^2 \,\rmd\tau = O\bigg( \frac{1}{n^4} \bigg),
\end{equation}
for all $n \in \sZ$.
\end{corollary}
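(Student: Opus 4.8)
The plan is to treat~\eqref{Eq:infinitybound} and~\eqref{Eq:infinitytwobound} as immediate consequences of the spectral-data decay estimate~\eqref{Eq:datadecay}, fed with well-chosen multi-indices. The hypotheses here (smallness, $y u \in L^1(\Omega)$, and two continuous derivatives in $L^1(\Omega) \cap L^2(\Omega)$) are exactly those guaranteeing both that $F(z)$ is defined (theorem~\ref{Th:departurefromholomorphicity}) and that~\eqref{Eq:datadecay} holds for \emph{every} multi-index $\alpha = (\alpha_1, \alpha_2)$ with $\abs{\alpha} \leq 2$, via lemma~\ref{L:rapiddeacy}. Throughout I write $z = \tfrac{\omega}{2}n + \rmi\tau$ for a point on the line $\real z = \tfrac{\omega}{2}n$, and let $c$ be a positive constant independent of $n$ and $\tau$ whose value may change between occurrences.

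For the first estimate I would apply~\eqref{Eq:datadecay} with $\alpha = (1,0)$, which needs only one $x$-derivative of $u$ in $L^1(\Omega) \cap L^2(\Omega)$ and is thus covered. This gives, uniformly in $\tau$,
\[
\abs{ F(\tfrac{\omega}{2}n, \tau) } \leq \frac{c}{ 1 + \abs{\omega n} }.
\]
Multiplying by $\abs{n}$ and using $\abs{n}/(1 + \omega\abs{n}) \leq 1/\omega$ yields $\abs{n}\,\abs{ F(\tfrac{\omega}{2}n, \tau) } \leq c/\omega$ for all $n \in \sZ$ and all $\tau \in \bb{R}$, which is~\eqref{Eq:infinitybound}.

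The substantive part is~\eqref{Eq:infinitytwobound}, and the main obstacle is that no single choice of $\alpha$ suffices: the $\alpha = (1,1)$ bound, after the change of variables $\xi = 2\omega n\tau$, integrates in $\tau$ to only $O(n^{-2})$, and even the product of the $\alpha=(2,0)$ and $\alpha=(0,2)$ bounds integrates to just $O(n^{-3})$ — both short of the claimed $n^{-4}$. The remedy I would use is to split the $\tau$-axis and deploy a different pure derivative on each piece. Record the two bounds
\[
\abs{ F(\tfrac{\omega}{2}n, \tau) } \leq \frac{c}{ 1 + \omega^2 n^2 } \ \ (\alpha = (2,0)), \qquad
\abs{ F(\tfrac{\omega}{2}n, \tau) } \leq \frac{c}{ 1 + 4\omega^2 n^2\tau^2 } \ \ (\alpha = (0,2)),
\]
the first carrying the full $n^{-2}$ decay but no control in $\tau$, the second decaying in $\tau$ and crucially carrying an extra factor $n^2$ produced by the scaling $\xi = 2\omega n\tau$. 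Writing $\int_{-\infty}^\infty = \int_{\abs{\tau} \leq 1} + \int_{\abs{\tau} > 1}$ and using the $(2,0)$ bound on the bounded interval and the $(0,2)$ bound on the tail (where $1 + 4\omega^2 n^2\tau^2 \geq 4\omega^2 n^2\tau^2$), I obtain
\[
\int_{\abs{\tau} \leq 1} \abs{ F(\tfrac{\omega}{2}n, \tau) }^2 \,\rmd\tau \leq \frac{2c^2}{ (1 + \omega^2 n^2)^2 } = O\Big( \frac{1}{n^4} \Big),
\]
and
\[
\int_{\abs{\tau} > 1} \abs{ F(\tfrac{\omega}{2}n, \tau) }^2 \,\rmd\tau \leq \frac{c^2}{ 16\omega^4 n^4 }\int_{\abs{\tau} > 1} \frac{\rmd\tau}{ \tau^4 } = \frac{c^2}{ 24\omega^4 n^4 } = O\Big( \frac{1}{n^4} \Big).
\]
Summing the two contributions gives~\eqref{Eq:infinitytwobound}. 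Beyond recognizing that the central portion of the $\tau$-integral must be controlled by $x$-derivatives while the tail is controlled by $y$-derivatives (which alone supply the missing power of $n^2$), everything reduces to the change of variables $\xi = 2\omega n\tau$ and elementary integrals, so I expect the only genuine difficulty to be this bookkeeping.
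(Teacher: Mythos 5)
Your proof is correct, and it rests on the same key input as the paper's own argument---the decay estimate \eqref{Eq:datadecay}, i.e.\ lemma~\ref{L:rapiddeacy} applied to $F(z) = -\sgn(\re{z})\,\wh{u\mu^+}(r_0(z); z)$---but the execution differs in an instructive way. The paper derives both claims from the single mixed bound $\abs{F(\tfrac{\omega}{2}n, \tau)} \leq c\,(1 + \abs{\omega n}^2 + \abs{2\omega n\tau}^2)^{-1}$: dropping the $1$ gives $\abs{F} < c\,(\omega n)^{-2}(1 + 4\tau^2)^{-1}$, so that \eqref{Eq:infinitytwobound} follows from the one integral $\int_{-\infty}^{\infty}(1 + 4\tau^2)^{-2}\,\rmd\tau$, and \eqref{Eq:infinitybound} follows with the stronger conclusion $\abs{n}\sup_{\tau}\abs{F(\tfrac{\omega}{2}n, \tau)} = O(\abs{n}^{-1})$, whereas your $\alpha = (1,0)$ bound yields only uniform boundedness (which is all the corollary asserts). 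You instead use exclusively the pure indices $(1,0)$, $(2,0)$, $(0,2)$ with $\abs{\alpha} \leq 2$ and split the $\tau$-axis at $\abs{\tau} = 1$; your preliminary diagnostics are accurate (the $(1,1)$ bound indeed integrates to only $O(n^{-2})$ and the product of the $(2,0)$ and $(0,2)$ bounds to $O(n^{-3})$), and your two estimates $2c^2(1 + \omega^2 n^2)^{-2}$ and $c^2/(24\,\omega^4 n^4)$ are correct. Interestingly, your caution is better aligned with the literal statement of lemma~\ref{L:rapiddeacy}: invoking \eqref{Eq:datadecay} with $\alpha = (2,2)$, as the paper in effect does, formally demands $\abs{\alpha'} \leq 4$ derivatives, while the corollary assumes only two; the paper's step is nonetheless legitimate because only the pure derivatives $\partial_x^{\alpha_1}(u\mu)$ and $\partial_y^{\alpha_2}(u\mu)$ enter the lemma's proof---equivalently, because $\min\{a^{-1}, b^{-1}\} \leq 2(a + b)^{-1}$ merges your two order-two bounds into the mixed one. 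Had you taken that minimum instead of splitting the region, you would have recovered the paper's one-integral computation verbatim; as written, your region-splitting is a correct, slightly more conservative implementation of the same idea.
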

\begin{proof}
Let $n \in \sZ$. From equation \eqref{Eq:datadecay},
\[
\abs{ F(\tfrac{\omega}{2}n, \tau) } \leq \frac{c}{1 + \abs{\omega n}^2 + \abs{2\omega n\tau}^2} < \frac{c}{\omega^2 n^2},
\]
for some positive, real constant $c$. Hence, $\abs{n}\sup_{ \tau \in \bb{R} } \abs{ F(\tfrac{\omega}{2}n, \tau) } < c \,\omega^{-2} \abs{n}^{-1}$
which yields
$\sup_{n \in \sZ} \abs{n}\sup_{ \tau \in \bb{R} } \abs{ F(\tfrac{\omega}{2}n, \tau) } < c \,\omega^{-2}$. Furthermore,
\[
\abs{ F(\tfrac{\omega}{2}n, \tau) } < \frac{c}{\abs{\omega n}^2 + \abs{2\omega n\tau}^2} = \frac{c}{ (\omega n)^2 (1 + (2\tau)^2) },
\]
thus,
\begin{align*}
\int_{\!-\infty}^\infty \abs{ F(\tfrac{\omega}{2}n, \tau) }^2 \,\rmd\tau &< \int_{\!-\infty}^\infty \frac{c^2}{ (\omega n)^4 (1 + (2\tau)^2)^2 } \,\rmd\tau\\
&= \frac{1}{n^4}\frac{c^2}{2\omega^4}\int_{\!-\infty}^\infty \frac{1}{ (1 + v^2)^2 } \,\rmd v < \infty.\qedhere
\end{align*}
\end{proof}
Suppose that $u(x, y)$ is a small potential in the sense of the preceding corollary. Then, the heat operator perturbed by $u$ has the associated spectral 
operator $\Scr{S}$, and its unique solution $\mu$ satisfies $\mu^+ - \mu^- = \Scr{S}\mu$. This is a Riemann--Hilbert problem on the infinite contours 
$L_n$, $n \in \sZ$. Since $\mu$ has the asymptotic behaviour $\mu \to 1$ as $\abs{ \im{z} } \to \infty$, the general solution to this problem is given by the 
Fredholm integral equation
\begin{equation}\label{Eq:Cauchyintegral}
\mu(x, y; z) = 1 + \frac{1}{2\pi \rmi}\sideset{}{'}\sum_{n = -\infty}^\infty \int_{L_n} \frac{ (\Scr{S}\mu)(x, y; \zeta) }{\zeta - z} \,\rmd\zeta.
\end{equation}
This equation is understood as the limit of the solution of the same Riemann--Hilbert problem on the contour $L = \sum_{\abs{n} \leq k} L_n$, when
$k \to \infty$ (see \cite{G66}). It can be shown that equation \eqref{Eq:Cauchyintegral} has a unique solution. Let $L^2( \abs{\!\real z} )$ denote the space 
of all measurable functions $f(z)$ on the jump contours such that
\begin{equation}
\norm{f}_{ L^2( \abs{\!\real z} ) }^2 \= \sideset{}{'}\sum_{n = -\infty}^\infty \int_{L_n} \,\abs{ f(z) }^2 \,\abs{\!\real z} \,\rmd z < \infty,
\end{equation}
and $\Lambda = \bigcup\limits_{n \in \sZ} L_n$. We have the following important result.
\begin{proposition}\label{Pr:important}
Suppose $\Scr{S} \colon L^\infty(E) \to L^2(\Scr{C}^*) \cap L^\infty(\Scr{C}^*)$ with
\[
(\Scr{S}f)(x, y; z) = F(z)\rme^{ \rmi r_0(z) \cdot (\omega x, y) } f^-( x, y; -\bar{z} ).
\]
If the function $F(z)$ is sufficiently small in $L^2( \abs{\!\real z} ) \cap L^\infty(\Lambda)$, then $\mathcal{C}\Scr{S}$ is a contraction of $L^\infty(E)$. 
Especially equation \eqref{Eq:Cauchyintegral} has a unique solution.
\end{proposition}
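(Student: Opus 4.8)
The plan is to read \eqref{Eq:Cauchyintegral} as the fixed-point equation $\mu = 1 + \mathcal{C}\Scr{S}\mu$, where $\mathcal{C}$ is the Cauchy operator
\[
(\mathcal{C}g)(x, y; z) \= \frac{1}{2\pi\rmi}\sideset{}{'}\sum_{n = -\infty}^\infty \int_{L_n} \frac{ g(x, y; \zeta) }{ \zeta - z } \,\rmd\zeta, \quad z \in \wpln,
\]
and to prove that $\mathcal{C}\Scr{S}$ contracts $L^\infty(E)$ once $\norm{F}_{ L^2(\abs{\!\real z}) }$ and $\norm{F}_{ L^\infty(\Lambda) }$ are small. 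Granting this, the affine map $\mu \mapsto 1 + \mathcal{C}\Scr{S}\mu$ is a contraction of the Banach space $L^\infty(E)$, and Banach's fixed-point theorem yields a unique $\mu$ solving \eqref{Eq:Cauchyintegral}. I would carry out the contraction estimate in two steps that match the factorization of the operator: first bound $\Scr{S}$ as a map into the weighted contour space, then bound $\mathcal{C}$ as a map out of it.

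First I would bound $\Scr{S}$. Fix $(x, y) \in \Omega$ and $f \in L^\infty(E)$. The factor $\rme^{\rmi r_0(z)\cdot(\omega x, y)}$ has modulus one (as $r_0(z)$ is real), while $\abs{ f^-(x, y; -\bar z) } \leq \norm{f}_{ L^\infty(E) }$, so $\abs{ (\Scr{S}f)(x, y; z) } \leq \abs{ F(z) }\norm{f}_{ L^\infty(E) }$ pointwise on $\Lambda$. Hence, identifying the contours $L_n$ with $\Scr{C}^*$,
\[
\norm{ (\Scr{S}f)(x, y; \cdot) }_{ L^2(\abs{\!\real z}) } \leq \norm{F}_{ L^2(\abs{\!\real z}) }\norm{f}_{ L^\infty(E) }, \qquad \norm{ (\Scr{S}f)(x, y; \cdot) }_{ L^\infty(\Lambda) } \leq \norm{F}_{ L^\infty(\Lambda) }\norm{f}_{ L^\infty(E) },
\]
uniformly in $(x, y)$. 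Thus $\Scr{S}$ maps $L^\infty(E)$ into $L^2(\abs{\!\real z}) \cap L^\infty(\Lambda)$ (in the spectral variable, with $x, y$ as parameters) with norm controlled by $\max\{ \norm{F}_{ L^2(\abs{\!\real z}) }, \norm{F}_{ L^\infty(\Lambda) } \}$.

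The heart of the argument is the second step: the Cauchy operator $\mathcal{C}$ maps $L^2(\abs{\!\real z}) \cap L^\infty(\Lambda)$ boundedly into $L^\infty(E)$, with $\norm{ \mathcal{C}g }_{ L^\infty(E) } \leq C'\max\{ \norm{g}_{ L^2(\abs{\!\real z}) }, \norm{g}_{ L^\infty(\Lambda) } \}$ for an absolute constant $C'$. I would fix $z \in \wpln$, parametrize $L_n$ by $\zeta = -\tfrac{\omega}{2}n + \rmi\eta$, and split the integral over each contour into the part with $\abs{ \eta - \im{z} } < 1$ and its complement, exactly as in the \hyperlink{L:basiclemma}{Basic Lemma}. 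On the complement one applies Cauchy--Schwarz in $\eta$ against the kernel, generating the weight $\abs{\!\real\zeta}^{-1}$ and the factor $\int_{L_n} \abs{\zeta - z}^{-2}\abs{\!\real\zeta}^{-1}\,\abs{\rmd\zeta} = \tfrac{2\pi}{\omega\abs{n}\,\mrm{dist}(z, L_n)}$; a further Cauchy--Schwarz over $n$ bounds this contribution by $\norm{g}_{ L^2(\abs{\!\real z}) }$ times $\bigl( \tfrac{2\pi}{\omega}\sideset{}{'}\sum_{n} \abs{n}^{-1}\,\mrm{dist}(z, L_n)^{-1} \bigr)^{1/2}$. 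On the near part $\abs{ \eta - \im{z} } < 1$ one uses instead the pointwise bound $\abs{g} \leq \norm{g}_{ L^\infty(\Lambda) }$. The hard part will be controlling these quantities \emph{uniformly} in $z$: the geometric sum $\sideset{}{'}\sum_{n} \abs{n}^{-1}\,\mrm{dist}(z, L_n)^{-1}$ must converge uniformly over all $z$ in the open strips, and the local singularity coming from the contour nearest to $z$ must remain integrable. Here the weight $\abs{\!\real z}$ is essential, since it supplies the decay in $\abs{n}$ that renders the series summable, while the $L^\infty(\Lambda)$ norm absorbs the local contribution---the same two-norm mechanism, tail versus local part, that underlies the \hyperlink{L:basiclemma}{Basic Lemma}.

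Combining the two steps gives $\norm{ \mathcal{C}\Scr{S}f }_{ L^\infty(E) } \leq C'\max\{ \norm{F}_{ L^2(\abs{\!\real z}) }, \norm{F}_{ L^\infty(\Lambda) } \}\norm{f}_{ L^\infty(E) }$, so $\mathcal{C}\Scr{S}$ is a contraction as soon as $F$ is small enough in $L^2(\abs{\!\real z}) \cap L^\infty(\Lambda)$, and the fixed-point argument above then delivers the unique solution of \eqref{Eq:Cauchyintegral}. I expect the delicate point to be entirely in the second step---the uniform convergence of the contour sum and the near-contour singularity---whereas the reformulation, the bound on $\Scr{S}$, and the final contraction are routine.
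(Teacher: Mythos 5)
Your skeleton (fixed point for $\mu = 1 + \mathcal{C}\Scr{S}\mu$, trivial modulus-one bound on $\Scr{S}$, a two-norm near/far estimate on the Cauchy operator) is the right shape, and your first step is correct; but the proof has a genuine gap exactly where you flag ``the hard part'': the uniform boundedness of $\mathcal{C}$ from $L^2( \abs{\!\real z} ) \cap L^\infty(\Lambda)$ into $L^\infty(E)$ is never proved, and the decomposition you sketch would not close even if pursued. With the arclength parametrization $\zeta = -\tfrac{\omega}{2}n + \rmi\eta$ and a window of \emph{fixed} width in $\eta$, the near-part bound you propose for the contour $L_n$ is, after the triangle inequality,
\[
\frac{ \norm{g}_{ L^\infty(\Lambda) } }{2\pi}\int_{\abs{\eta - \im{z}} < 1} \frac{\rmd\eta}{ \sqrt{d_n^2 + (\eta - \im{z})^2} }
\;\geq\; \frac{ \norm{g}_{ L^\infty(\Lambda) } }{ \pi\sqrt{d_n^2 + 1} }, \qquad d_n = \abs{ \re{z} + \tfrac{\omega}{2}n },
\]
and since $d_n \sim \tfrac{\omega}{2}\abs{n}$, these terms are of order $1/\abs{n}$: summed over $n \in \sZ$ your near-part estimate diverges like the harmonic series. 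A plain $L^\infty(\Lambda)$ bound cannot absorb a fixed-width local piece on \emph{every} contour; the window must shrink like $1/\abs{n}$. (Your far-part factor $\tfrac{2\pi}{\omega\abs{n}\,d_n}$ also blows up on the contour nearest $z$, where one should instead cap $\int_{\abs{s} \geq 1} (d_n^2 + s^2)^{-1}\,\rmd s \leq 2$; that is fixable, but the near part is the fatal step.)

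The paper's proof requires no new estimate at all, and the device it uses is precisely what your sketch is missing. It parametrizes $L_n$ by $\zeta(n, \tau) = -\tfrac{\omega}{2}n - \rmi\tfrac{\tau}{2\omega n}$ --- the inverse of the map $z \mapsto r_0(z)$ --- under which the Cauchy kernel turns exactly into the resolvent of the direct problem, $\rmd\zeta/(\zeta - z) = \rmi\,\rmd\tau/P_z(n, \tau)$, so that
\[
\abs{ \mathcal{C}\Scr{S}\mu(x, y; z) } \leq \norm{\mu}_\infty \frac{1}{2\pi}\sideset{}{'}\sum_{n = -\infty}^\infty \int_{\!-\infty}^\infty
\Abs{ \frac{ F \circ \zeta(n, \tau) }{ P_z(n, \tau) } } \,\rmd\tau,
\]
and the \hyperlink{L:basiclemma}{Basic Lemma} applies verbatim to $f = F \circ \zeta$ (which vanishes at $n = 0$), with the norm identifications $\norm{F \circ \zeta}_{ L^\infty(\Scr{C}^*) } = \norm{F}_{ L^\infty(\Lambda) }$ and $\norm{F \circ \zeta}_{ L^2(\Scr{C}^*) }^2 = 4\norm{F}_{ L^2( \abs{\!\real z} ) }^2$, the weight $\abs{\!\real\zeta} = \tfrac{\omega}{2}\abs{n}$ arising from the Jacobian $\rmd\tau = 2\omega\abs{n}\,\abs{\rmd\im{\zeta}}$. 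This rescaling $\tau = -2\omega n \im{\zeta}$ does simultaneously the two jobs your argument leaves undone: the ``unit window'' in $\tau$ is a window of length $\sim 1/(\omega\abs{n})$ in $\im{\zeta}$, so the local contributions become $O(1/n^2)$ and summable uniformly in $z \in \wpln$ (this is the $\abs{m^2 - \re{z}^2}$ bookkeeping inside the Basic Lemma), while the same Jacobian converts the unweighted Cauchy--Schwarz pairing into the weighted norm $\norm{F}_{ L^2( \abs{\!\real z} ) }$. The contraction constant $\tfrac{C}{2\pi}\max\{ 2\norm{F}_{ L^2( \abs{\!\real z} ) }, \norm{F}_{ L^\infty(\Lambda) } \} < 1$ then comes out of an already-proved lemma; without this change of variables, the uniform-in-$z$ control you defer is not a routine remainder but the entire content of the proposition, and as organized in your proposal it cannot be completed.
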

\begin{proof}Let $\mathcal{C}\Scr{S}\mu(x, y; z)$ denote the second term in the right hand side of the equation~\eqref{Eq:Cauchyintegral} where
$\mathcal{C}$ is the operator
\begin{equation}
\mathcal{C}f(z) = \frac{1}{2\pi \rmi}\sideset{}{'}\sum_{n = -\infty}^\infty \int_{L_n} \frac{ f(\zeta) }{\zeta - z} \,\rmd\zeta,
\end{equation}
which is defined at least for Schwartz functions on $\Scr{C}^*$. Choosing the param\-etrization
$\zeta = -\tfrac{\omega}{2}n - \rmi\tfrac{\tau}{2\omega n} \equiv \zeta(n, \tau)$ for the contour $L_n$, equation \eqref{Eq:Cauchyintegral} is
transformed into
\begin{equation}
\mu(x, y; z) = 1 + \frac{1}{2\pi}\sideset{}{'}\sum_{n = -\infty}^\infty \int_{\!-\infty}^\infty
\frac{ F \circ \zeta(n, \tau)\mu^-( x, y; -\bar{\zeta}(n, \tau) ) }{ P_z(n, \tau) }\rme^{\rmi\omega n x + \rmi\tau y} \,\rmd\tau.
\end{equation}
Taking absolute values, immediately shows
\[
\abs{ \mathcal{C}\Scr{S}\mu(x, y; z) } \leq \norm{\mu}_\infty \frac{1}{2\pi}\sideset{}{'}\sum_{n = -\infty}^\infty \int_{\!-\infty}^\infty
\Abs{ \frac{ F \circ \zeta(n, \tau) }{ P_z(n, \tau) } } \,\rmd\tau.
\]
Now, observe that the function $z \mapsto r_0(z)$ (on $\bb{C} \setminus \wpln$), defined in equation \eqref{Eq:nontrivialzero}, and
$(n, \tau) \mapsto \zeta(n, \tau)$ are inverses of each other. Thus,
\[
\abs{ F \circ \zeta(n, \tau) } = \abs{ \wh{u\mu_+}( n, \tau; \zeta(n, \tau) ) } < \omega\norm{u}_1 \norm{\mu}_\infty,
\]
which shows that $F \circ \zeta \in L^\infty(\Scr{C}^*)$. Also, from \eqref{Eq:datadecay}
\[
\abs{ F \circ \zeta(n, \tau) }^2 = \abs{ F( -\tfrac{\omega}{2}n, -\tfrac{\tau}{2\omega n} ) }^2 = O\bigg( \frac{1}{ (\omega^2 n^2 + \tau^2)^2 } \bigg).
\]
A simple calculation yields $1/(\omega^2 n^2 + \tau^2)^2 \in L^1(\Scr{C}^*)$, thus, $F \circ \zeta \in L^2(\Scr{C}^*)$. Hence, as in 
the \hyperlink{L:basiclemma}{Basic Lemma}
\[
\sideset{}{'}\sum_{n = -\infty}^\infty \int_{\!-\infty}^\infty \Abs{ \frac{ F \circ \zeta(n, \tau) }{ P_z(n, \tau) } } \,\rmd\tau \leq C\max
\{ \norm{F \circ \zeta}_{ L^2(\Scr{C}^*) }, \,\norm{F \circ \zeta}_{ L^\infty(\Scr{C}^*) } \}.
\]
It is easy to see that $\norm{F \circ \zeta}_{ L^\infty(\Scr{C}^*) } = \norm{F}_{ L^\infty(\Lambda) }$ and
$\norm{F \circ \zeta}_{ L^2(\Scr{C}^*) }^2 = 4\norm{F}_{ L^2( \abs{\!\real z} ) }^2$. Therefore, if we have
$C\max\{ \norm{F \circ \zeta}_{ L^2(\Scr{C}^*) }, \,\norm{F \circ \zeta}_{ L^\infty(\Scr{C}^*) } \} < 2\pi$ or equivalently
$C\max\{ 2\norm{F}_{ L^2( \abs{\!\real z} ) }, \,\norm{F}_{ L^\infty(\Lambda) } \} < 2\pi$, then $\mathcal{C}\Scr{S}$ is a contraction of $L^\infty(E)$, 
thus, equation \eqref{Eq:Cauchyintegral} has a unique bounded solution.
\end{proof}

We consider again equation \eqref{Eq:Cauchyintegral}, i.e., $\mu = 1 + \mathcal{C}\Scr{S}\mu$, and apply the operator $P(\partial + w)$. Then, 
\[
-u\mu = P(\partial + w)\mu = P(\partial + w)1 + P(\partial + w)\mathcal{C}\Scr{S}\mu = P(\partial + w)\mathcal{C}\Scr{S}\mu.
\]
Since the expression in the definition of $\mathcal{C}\Scr{S}\mu$ converges absolutely, $\mathcal{C}\Scr{S}\mu$ may be differentiated
in the parameters $x$ and $y$ and the following formulas are easily derived.
\begin{lemma}\label{L:commutators}\hfill
\begin{enumerate}
\item $[ P(\partial + w), \Scr{S} ] = 0$,
\item $\displaystyle[ P(\partial + w), \mathcal{C} ]f(x, y; z) = -\frac{1}{\pi}\partial_x \sideset{}{'}\sum_{n = -\infty}^\infty \int_{L_n}
f(x, y; \zeta) \,\rmd\zeta$. This is independent of $z$.
\end{enumerate}
\end{lemma}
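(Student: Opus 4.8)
The plan is to reduce both identities to short, direct computations that exploit the specific structure of $P(\partial + w(z))$. The crucial preliminary observation is that, upon expanding, the $z^2$ contributions cancel: with $w(z) = (\rmi z, -z^2)$ and $P(a, b) = a^2 - b$ one finds
\[
P(\partial + w(z)) = (\partial_x + \rmi z)^2 - (\partial_y - z^2) = \partial_x^2 - \partial_y + 2\rmi z\partial_x,
\]
so the entire $z$-dependence of $P(\partial + w(z))$ sits in the single linear term $2\rmi z\partial_x$. I will also use that the phase appearing in $\Scr{S}$ is $d(x, y; z) \= \rmi r_0(z)\cdot(\omega x, y) = -\rmi(z + \bar{z})x + (z^2 - \bar{z}^2)y$, exactly the exponent from the proof of Theorem~\ref{Th:departurefromholomorphicity}.

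For part (i) the key is the conjugation identity
\[
P(\partial + w(z))\bigl(\rme^{d(x, y; z)} g\bigr) = \rme^{d(x, y; z)} P(\partial + w(-\bar{z}))g,
\]
valid for every $g = g(x, y)$. I would establish it from the elementary facts $\partial_x \rme^d = -\rmi(z + \bar{z})\rme^d$ and $\partial_y \rme^d = (z^2 - \bar{z}^2)\rme^d$, which give $(\partial_x + \rmi z)(\rme^d g) = \rme^d(\partial_x - \rmi\bar{z})g$ and $(\partial_y - z^2)(\rme^d g) = \rme^d(\partial_y - \bar{z}^2)g$; iterating the first recovers the full second-order operator, and since $w(-\bar{z}) = (-\rmi\bar{z}, -\bar{z}^2)$ the right-hand side is precisely $P(\partial + w(-\bar{z}))$. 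Applying this with $g = f^-(x, y; -\bar{z})$ and recalling $(\Scr{S}f)(x, y; z) = F(z)\rme^{d(x, y; z)} f^-(x, y; -\bar{z})$ yields $P(\partial + w(z))(\Scr{S}f) = F(z)\rme^{d} P(\partial + w(-\bar{z}))f^-(\cdot; -\bar{z})$. On the other side, $\Scr{S}(P(\partial + w)f)$ first applies $P(\partial + w(\zeta))$ fibrewise and then takes the one-sided limit at $\zeta = -\bar{z}$; because the $x, y$-derivatives act in variables disjoint from the spectral parameter and $w$ depends continuously on $\zeta$, this limit commutes with the derivatives, producing the same expression $F(z)\rme^{d} P(\partial + w(-\bar{z}))f^-(\cdot; -\bar{z})$. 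The two coincide, so the commutator vanishes.

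For part (ii) I would first move $P(\partial + w(z))$ inside the sum and the Cauchy integral — legitimate since, as noted just before the lemma, the series defining $\mathcal{C}f$ converges absolutely and may be differentiated in $x$ and $y$. The derivatives act only on the $x, y$-dependence of $f(x, y; \zeta)$ and treat the kernel $(\zeta - z)^{-1}$ as a constant, whereas $\mathcal{C}P(\partial + w)f$ carries the operator $P(\partial + w(\zeta))$ with the integration variable; hence
\[
[P(\partial + w), \mathcal{C}]f = \frac{1}{2\pi\rmi}\sideset{}{'}\sum_{n = -\infty}^\infty \int_{L_n}
\frac{\bigl[P(\partial + w(z)) - P(\partial + w(\zeta))\bigr]f(x, y; \zeta)}{\zeta - z} \,\rmd\zeta.
\]
By the cancellation in the first paragraph, $P(\partial + w(z)) - P(\partial + w(\zeta)) = 2\rmi(z - \zeta)\partial_x = -2\rmi(\zeta - z)\partial_x$, so the factor $\zeta - z$ cancels the denominator, the $z$-dependence disappears, and the constant $\tfrac{-2\rmi}{2\pi\rmi} = -\tfrac{1}{\pi}$ emerges, leaving exactly $-\tfrac{1}{\pi}\partial_x \sideset{}{'}\sum_{n = -\infty}^\infty \int_{L_n} f(x, y; \zeta)\,\rmd\zeta$, as claimed.

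The computations themselves are short; the only points requiring care — and the main obstacle in a fully rigorous write-up — are the two interchange steps: differentiating under the infinite sum and Cauchy integral in (ii), and commuting the $x, y$-derivatives with the one-sided spectral limit defining $\Scr{S}$ in (i). Both are justified by the absolute and uniform convergence already in hand (the former granted explicitly in the text preceding the lemma, the latter because the line $L_n$ is approached in the $z$-variable alone, independently of the differentiation in $x$ and $y$).
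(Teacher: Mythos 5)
Your proof is correct and is essentially the paper's own (largely implicit) argument: part (i) is exactly the conjugation identity behind Lemma \ref{L:dressing} (equivalently $[D_1, \Scr{S}] = [D_2, \Scr{S}] = 0$ together with $P(\partial + w) = D_1^2 - D_2$), and part (ii) is the $m \leq 2$ case of Lemma \ref{L:commutatorss}, where the $z^2$ contributions cancel and the telescoping factor $z - \zeta$ annihilates the Cauchy kernel, with differentiation under the sum justified by the absolute convergence noted before the lemma. As a bonus, your direct computation produces the correct sign $-\tfrac{1}{\pi}$ under the paper's convention $[\op{A}, \op{B}] = \op{A}\op{B} - \op{B}\op{A}$, whereas the first identity of Lemma \ref{L:commutatorss} as printed appears to carry a spurious plus sign.
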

As a consequence of this lemma,
\begin{align*}
-u\mu &= \mathcal{C}\Scr{S}(P(\partial + w)\mu) + [ P(\partial + w), \mathcal{C} ]\Scr{S}\mu\\
&= -u\mathcal{C}\Scr{S}\mu - \frac{1}{\pi}\partial_x \sideset{}{'}\sum_{n = -\infty}^\infty \int_{L_n} \Scr{S}\mu \,\rmd\zeta\\
&= -u(\mu - 1) - \frac{1}{\pi}\partial_x \sideset{}{'}\sum_{n = -\infty}^\infty \int_{L_n} \Scr{S}\mu \,\rmd\zeta,
\end{align*}
hence
\begin{equation}\label{Eq:potenial}
u(x, y) = \frac{1}{\pi}\partial_x \sideset{}{'}\sum_{n = -\infty}^\infty \int_{L_n} (\Scr{S}\mu)(x, y; \zeta) \,\rmd\zeta.
\end{equation}
The proof of the main result of this section is obtained by combining the results of theorems~\ref{Th:exist_unique}, \ref{Th:holomorphicity}, 
\ref{Th:zasympotics}, \ref{Th:departurefromholomorphicity} and proposition \ref{Pr:important}.
\begin{theorem}[The Forward Spectral Theorem]\label{Th:forwardscattering}
Suppose that $u(x, y)$ is small in $L^1(\Omega) \cap L^2(\Omega)$ with $y u(x, y) \in L^1(\Omega)$ and that
$\partial^\alpha u \in L^1(\Omega) \cap L^2(\Omega)$ for all $\abs{\alpha} \leq 2$. Then, the unique solution $\mu(x, y; z)$ to the equation 
\[
(-\partial_y + \partial_x^2 + 2i z\partial_x + u)\mu(x, y; z) = 0, \qquad \lim_{\abs{y} \to \infty} \mu(x, y; z) = 1,
\]
is also the unique solution to the equation
\[
\mu(x, y; z) = 1 + \frac{1}{2\pi \rmi}\sideset{}{'}\sum_{n = -\infty}^\infty \int_{L_n} \frac{ (\Scr{S}\mu)(x, y; \zeta) }{\zeta - z} \,\rmd\zeta,
\]
where $\Scr{S}$ is the spectral data associated to u by the heat operator, defined in equation \eqref{Eq:scatteringoperator} and $u$ can be found via 
equation \eqref{Eq:potenial}.
\end{theorem}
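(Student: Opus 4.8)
The plan is to assemble this theorem as the culmination of the machinery built up in this section, by combining Theorems \ref{Th:exist_unique}, \ref{Th:holomorphicity}, \ref{Th:zasympotics}, \ref{Th:departurefromholomorphicity}, Corollary \ref{C:squareintegrability} and Proposition \ref{Pr:important}. First I would note that the stated hypotheses are exactly those demanded by the preceding results: the smallness of $\maxnorm{u}$ invokes Theorem \ref{Th:exist_unique} to produce a unique bounded $\mu$ solving the boundary-value problem with $\wh{\mu-1}\in L^1(\Scr{C})$; the conditions $yu\in L^1(\Omega)$ together with $u_x,u_y\in L^1(\Omega)\cap L^2(\Omega)$ (contained in the assumption $\partial^\alpha u\in L^1(\Omega)\cap L^2(\Omega)$ for $\abs{\alpha}\le 2$) are precisely what Theorems \ref{Th:zasympotics} and \ref{Th:departurefromholomorphicity} require. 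Combining Theorem \ref{Th:holomorphicity} (holomorphicity in each strip $S_n$) with Theorem \ref{Th:zasympotics} (existence of the one-sided limits $\mu^\pm$ on the lines $L_n$ and $\mu\to 1$ as $\abs{\im{z}}\to\infty$) shows that $\mu(x,y;\cdot)$ is sectionally holomorphic with the prescribed normalization, and Theorem \ref{Th:departurefromholomorphicity} identifies its jump across each $L_n$ as $\op{J}\mu=\mu^+-\mu^-=\Scr{S}\mu$.

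Next I would convert this sectionally holomorphic datum into the integral equation \eqref{Eq:Cauchyintegral}. The two-derivative hypothesis feeds Corollary \ref{C:squareintegrability}, which through \eqref{Eq:datadecay} supplies enough decay of $F$ (and of its pullback $F\circ\zeta$) to place the spectral data in $L^2(\Scr{C}^*)\cap L^\infty(\Scr{C}^*)$; this guarantees that the Cauchy integral $\mathcal{C}\Scr{S}\mu(x,y;z)$ converges absolutely and defines a bounded, sectionally holomorphic function that decays to $0$ as $\abs{\im{z}}\to\infty$ and, by the Plemelj--Sokhotski formulas, carries exactly the jump $\Scr{S}\mu$ across each $L_n$. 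Setting $g(z)=\mu(x,y;z)-1-\mathcal{C}\Scr{S}\mu(x,y;z)$, the jumps then cancel across every $L_n$, so $g$ extends holomorphically across $\Lambda$ and is entire in $z$; being bounded and tending to $0$ at infinity, it vanishes identically by Liouville. This gives $\mu=1+\mathcal{C}\Scr{S}\mu$, that is, \eqref{Eq:Cauchyintegral}.

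Uniqueness then follows directly from Proposition \ref{Pr:important}: the smallness of $u$ forces $F$ to be small in $L^2(\abs{\!\real z})\cap L^\infty(\Lambda)$ (again via Corollary \ref{C:squareintegrability}), so $\mathcal{C}\Scr{S}$ is a contraction of $L^\infty(E)$ and \eqref{Eq:Cauchyintegral} possesses a unique bounded solution; the $\mu$ just constructed is therefore that solution. Finally, the recovery of the potential is already established in the computation preceding the theorem: applying $P(\partial+w)$ to \eqref{Eq:Cauchyintegral} and using the commutator identities of Lemma \ref{L:commutators} yields \eqref{Eq:potenial}, so $u$ is determined by $\mu$ and $\Scr{S}$ as claimed.

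The main obstacle is the Plemelj--Liouville step that upgrades ``$\mu$ is sectionally holomorphic with jump $\Scr{S}\mu$ and $\mu\to 1$'' into the explicit Cauchy-integral representation \eqref{Eq:Cauchyintegral}. The delicacy is twofold: the jump locus $\Lambda=\bigcup_{n\in\sZ}L_n$ is an infinite union of unbounded lines, so \eqref{Eq:Cauchyintegral} must be interpreted as the limit of the finite-contour problems on $\sum_{\abs{n}\le k}L_n$ (cf.\ \cite{G66}), and the convergence of both the sum over $n$ and the integrals along each $L_n$ has to be controlled uniformly --- which is exactly where the decay rates for $F$ from Corollary \ref{C:squareintegrability} are indispensable. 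One must also verify with care that $\mathcal{C}\Scr{S}\mu$ reproduces the correct one-sided boundary values and genuinely vanishes at infinity, so that the difference $g$ is entire, bounded, and vanishing, and the Liouville conclusion truly applies.
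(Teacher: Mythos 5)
Your proposal matches the paper's own proof, which is exactly the one-line assembly you describe: the paper states that Theorem \ref{Th:forwardscattering} ``is obtained by combining the results of theorems \ref{Th:exist_unique}, \ref{Th:holomorphicity}, \ref{Th:zasympotics}, \ref{Th:departurefromholomorphicity} and proposition \ref{Pr:important}'', with equation \eqref{Eq:potenial} derived beforehand via Lemma \ref{L:commutators} just as you do. Your Plemelj--Sokhotski/Liouville justification of \eqref{Eq:Cauchyintegral} and your attention to the infinite contour $\Lambda$ merely make explicit the step the paper delegates to the citation of Gakhov \cite{G66}, so the argument is correct and essentially identical in route.
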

We finish this section proving a property of the Jost function $\mu$, crucial to the inverse problem.
\begin{theorem}\label{Th:squareintegrability}
Suppose $\maxnorm{u} < 2\pi/C$ and that $\abs{\partial^\alpha u} \in L^1(\Omega) \cap L^2(\Omega)$ for all $\abs{\alpha} \leq 3$. Then, the function
$\mu(x, y; z) - 1$ is holomorphic with respect to $z \in \wpln$ and
\begin{equation}\label{Eq:meanintegrability}
\sup_{\re{z} \notin \frac{\omega}{2}\sZ} \bigg(\int_{\!-\infty}^\infty \abs{\mu( x, y; \re{z} + \rmi\im{z} ) - 1}^2 \,\rmd\im{z}\bigg)^\frac{1}{2} < \infty,
\end{equation}
for all $(x, y) \in \Omega$.
\end{theorem}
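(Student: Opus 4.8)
The holomorphicity of $\mu(x,y;z)-1$ in $z\in\wpln$ is already contained in Theorem~\ref{Th:holomorphicity}, since $\maxnorm{u}<2\pi/C$ and $u\in L^1(\Omega)\cap L^2(\Omega)$; so the real content is the uniform $L^2$-bound along vertical lines. The plan is to exploit the Fourier series representation of Theorem~\ref{Th:zasympotics},
\[
\mu(x,y;z)-1=\sideset{}{'}\sum_{m=-\infty}^{\infty}\mu_m(y;z)\rme^{\rmi\omega m x},\qquad
\mu_m(y;z)=\frac{1}{2\pi}\int_{-\infty}^{\infty}\frac{\wh{u\mu}(m,\xi;z)}{P_z(m,\xi)}\rme^{\rmi\xi y}\,\rmd\xi,
\]
and to reduce the claim to a summable family of $L^2(\rmd\im{z})$-estimates on the coefficients $\mu_m$. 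Since $\abs{\rme^{\rmi\omega m x}}=1$ and the coefficient bounds will not see $y$, the estimate obtained is automatically uniform in $(x,y)\in\Omega$.

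Fix $\re{z}=a\notin\tfrac{\omega}{2}\sZ$ and write $P_z(m,\xi)=A_m+\rmi(\xi+2\omega m\im{z})$ with $A_m=(\omega m+\re{z})^2-\re{z}^2$ independent of $\im{z}$. First I would produce a $z$-uniform majorant for the numerator: applying Lemma~\ref{L:rapiddeacy} (available because $\abs{\partial^\alpha u}\in L^1(\Omega)\cap L^2(\Omega)$ for all $\abs{\alpha}\le 3$) with $\alpha=(3,0)$ and $\alpha=(0,3)$ and taking the smaller of the two bounds gives $\abs{\wh{u\mu}(m,\xi;z)}\le c\,(1+\abs{\omega m}^3+\abs{\xi}^3)^{-1}\equiv\Phi(m,\xi)$, uniformly in $z\in\wpln$. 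Bounding $\abs{\mu_m(y;z)}\le\frac{1}{2\pi}\int\Phi(m,\xi)\,\abs{P_z(m,\xi)}^{-1}\,\rmd\xi$ and applying Minkowski's integral inequality in $b=\im{z}$, the whole matter rests on the elementary kernel integral
\[
\int_{-\infty}^{\infty}\frac{\rmd b}{A_m^2+(\xi+2\omega m b)^2}=\frac{\pi}{2\omega\abs{m}\,\abs{A_m}},
\]
which is finite ($A_m\neq0$ because $a\notin\tfrac{\omega}{2}\sZ$) and, crucially, independent of both $\xi$ and $y$. Together with $\int_{-\infty}^{\infty}\Phi(m,\xi)\,\rmd\xi=O\big((1+\abs{\omega m}^3)^{-2/3}\big)$ this yields a bound of the shape
\[
\int_{-\infty}^{\infty}\abs{\mu_m(y;\re{z}+\rmi\im{z})}^2\,\rmd\im{z}\le \frac{C_1}{\abs{m}\,\abs{A_m}\,(1+\abs{\omega m}^3)^{4/3}}.
\]

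Finally I would assemble the pieces. From $\abs{\mu-1}\le\sideset{}{'}\sum_m\abs{\mu_m}$ and the Cauchy--Schwarz inequality with weights $\abs{A_m}$ one gets $\abs{\mu-1}^2\le\big(\sideset{}{'}\sum_m\abs{A_m}^{-1}\big)\big(\sideset{}{'}\sum_m\abs{A_m}\,\abs{\mu_m}^2\big)$, so that, integrating in $\im{z}$ by Tonelli,
\[
\int_{-\infty}^{\infty}\abs{\mu-1}^2\,\rmd\im{z}\le\Big(\sideset{}{'}\sum_m\frac{1}{\abs{A_m}}\Big)\sideset{}{'}\sum_m\abs{A_m}\int_{-\infty}^{\infty}\abs{\mu_m}^2\,\rmd\im{z}.
\]
The second sum converges because $\abs{A_m}\int\abs{\mu_m}^2\,\rmd\im{z}\le C_1\,\omega^{-4}\abs{m}^{-5}$, while the first is exactly the sum estimated in the \hyperlink{L:basiclemma}{Basic Lemma}: with $M=\omega m+\re{z}$ one has $A_m=M^2-\re{z}^2$ and $M$ ranges over $\mZ=\omega\sZ+\re{z}$, whence $\sideset{}{'}\sum_m\abs{A_m}^{-1}=\sum_{M\in\mZ}\abs{M^2-\re{z}^2}^{-1}<\frac{4}{\omega^2}\sum_{k=1}^{\infty}k^{-2}$, uniformly in $z\in\wpln$. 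This delivers a finite bound independent of $\re{z}\notin\tfrac{\omega}{2}\sZ$ and of $(x,y)$, which is~\eqref{Eq:meanintegrability}.

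The main obstacle is precisely this uniformity in $\re{z}$: the factor $A_m$ degenerates whenever $a=\re{z}$ approaches $-\tfrac{\omega}{2}m$, so any estimate leaving $1/\abs{A_m}$ outside a convergent sum would blow up as $z$ nears the excluded lines. The device that rescues the argument is to load exactly one power of $\abs{A_m}$ into the Cauchy--Schwarz weight, so that $\sum_m\abs{A_m}^{-1}$—already shown uniformly bounded in the Basic Lemma—absorbs the singular behaviour, while the strong $m$-decay coming from three derivatives keeps the weighted sum summable. The only other points needing care are the $z$-uniformity of the majorant $\Phi$ (resting on the uniform $L^\infty(E)$-bounds for $\partial^\alpha\mu$ established inside Lemma~\ref{L:rapiddeacy}) and the legitimacy of the Minkowski interchange.
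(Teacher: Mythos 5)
Your proof is correct (granting the \hyperlink{L:basiclemma}{Basic Lemma} and lemma~\ref{L:rapiddeacy} exactly as the paper itself does), but it takes a genuinely different route from the paper's. The paper does not argue coefficientwise in $\im{z}$: it uses the algebraic splitting $\tfrac{1}{P_z(m,\xi)}=\tfrac{1}{2z}\bigl(\tfrac{1}{\omega m}-\tfrac{\omega m}{P_z(m,\xi)}-\tfrac{\rmi\xi}{\omega m P_z(m,\xi)}\bigr)$, rewrites the last two terms via $\wh{\partial_x u\mu}$ and $\wh{\partial_y u\mu}$, and shows with second-order decay from lemma~\ref{L:rapiddeacy} that the bracket has $L^1(\Scr{C}^*)$ norm bounded uniformly in $z$; this gives the pointwise bound $\abs{\mu-1}\leq c/\abs{z}$, hence $\int_{-\infty}^{\infty}\abs{\mu-1}^2\,\rmd\im{z}\leq c_3\pi/\abs{\re{z}}$, with the case $\re{z}=0$ then patched by a Fatou argument. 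You instead keep $1/P_z$ intact, compute $\int\abs{P_z}^{-2}\,\rmd\im{z}$ exactly (your kernel value $\pi/(2\omega\abs{m}\,\abs{A_m})$ is right), and redistribute the small denominators by a weighted Cauchy--Schwarz in $m$, so that $\sideset{}{'}\sum_m\abs{A_m}^{-1}$ is literally the Basic Lemma's sum $\sum_{M\in\mZ}\abs{M^2-\re{z}^2}^{-1}$ while the three-derivative decay makes the weighted sum $O(\abs{m}^{-5})$. Comparing what each buys: the paper's splitting yields as a by-product the decay $\mu-1=O(1/\abs{z})$, useful elsewhere, but its $L^2$ estimate degenerates like $1/\abs{\re{z}}$ as $\re{z}\to0$, and the printed Fatou patch is vacuous (its displayed chain ends in ``$=\infty$'', which proves nothing); your bound is uniform across all $\re{z}\notin\tfrac{\omega}{2}\sZ$, including near $0$, so your route delivers the stated supremum more directly and arguably repairs that weak spot of the original write-up. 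Both arguments stand or fall together on the Basic Lemma's assertion that $\sum_{M\in\mZ}\abs{M^2-\re{z}^2}^{-1}$ is bounded uniformly in $z\in\wpln$ (the paper invokes it through $\norm{1/P_z}_{L^2(\Scr{C}^*)}$ and the Basic Lemma itself), so you are no worse off on that point. Two small matters you already flagged correctly: the majorant $\Phi$ comes from lemma~\ref{L:rapiddeacy} with $\alpha=(3,0)$ and $\alpha=(0,3)$ combined via $\min\{(1+a)^{-1},(1+b)^{-1}\}\leq 2(1+a+b)^{-1}$, and the Minkowski interchange is legitimate precisely because the $z$-uniform majorant leaves $\im{z}$-dependence only in $\abs{P_z}^{-1}$.
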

\begin{proof}
Fix a $z$ in $\wpln$. A straightforward algebraic manipulation yields
\begin{equation*}
\frac{1}{ P_z(m, \xi) } = \frac{1}{2\omega m z}\bigg( 1 - \frac{ (\omega m)^2 + \rmi\xi }{ P_z(m, \xi) } \bigg) =
\frac{1}{2z}\bigg( \frac{1}{\omega m} - \frac{\omega m}{ P_z(m, \xi) } - \frac{\rmi\xi}{ \omega m P_z(m, \xi) } \bigg).
\end{equation*}
Hence,
\begin{align*}
\frac{ \wh{u\mu}(m, \xi; z) }{ P_z(m, \xi) } &=
\frac{1}{2z}\bigg( \frac{ \wh{u\mu}(m, \xi; z) }{\omega m} - \frac{ \omega m\wh{u\mu}(m, \xi; z) }{ P_z(m, \xi) }
- \frac{ \rmi\xi\wh{u\mu}(m, \xi; z) }{ \omega m P_z(m, \xi) } \bigg)\\
&= \frac{1}{2z}\bigg( \frac{ \wh{u\mu}(m, \xi; z) }{\omega m} + \rmi\frac{ \wh{\partial_x u\mu}(m, \xi; z) }{ P_z(m, \xi) }
- \frac{ \wh{\partial_y u\mu}(m, \xi; z) }{ \omega m P_z(m, \xi) } \bigg).
\end{align*}
From lemma \ref{L:rapiddeacy} we have
\[
\abs{ \wh{u\mu}(m, \xi; z) } \leq \frac{c_1}{1 + \abs{\omega m}^2 + \abs{\xi}^2}
< \frac{c_1}{ (\omega m)^2 \big(1 + \big( \frac{\xi}{ \omega\abs{m} } \big)^2\big) },
\]
for some positive, real constant $c_1$. Thus,
\[
\frac{ \abs{ \wh{u\mu}(m, \xi; z) } }{ \omega\abs{m} } < c_1\frac{1}{ (\omega m)^2 }
\frac{1}{ \omega\abs{m}\big(1 + \big( \frac{\xi}{ \omega\abs{m} } \big)^2\big) } \in L^1(\Scr{C}^*).
\]
Another application of lemma \ref{L:rapiddeacy} shows that
\[
\abs{ \wh{\partial_x u\mu}(m, \xi; z) } = O\bigg( \frac{1}{1 + \abs{\omega m}^2 + \abs{\xi}^2} \bigg),
\]
from which it follows that $\wh{\partial_x u\mu}(m, \xi; z) \in L^2(\Scr{C}^*) \cap L^\infty(\Scr{C}^*)$. Thus, as a consequence of the 
\hyperlink{L:basiclemma}{Basic Lemma}, $\wh{\partial_x u\mu}(m, \xi; z)/P_z(m, \xi) \in L^1(\Scr{C}^*)$. Finally, by lemma \ref{L:rapiddeacy} again, 
there exists a positive, real constant $c_2$ such that
\[
\abs{ \wh{\partial_y u\mu}(m, \xi; z) } \leq \frac{c_2}{1 + \abs{\omega m}^2 + \abs{\xi}^2}.
\]
But then, an application of H\"{o}lder's inequality gives
\begin{align*}
\Norm{ \frac{\wh{\partial_y u\mu}(m, \xi; z) }{ \omega m P_z(m, \xi) } }_{ L^1(\Scr{C}^*) } &\leq
\Norm{ \frac{c_2}{1 + \abs{\omega m}^2 + \abs{\xi}^2}\frac{1}{ \omega m P_z(m, \xi) } }_{ L^1(\Scr{C}^*) }\\
&\leq \Norm{ \frac{c_2}{ \omega m(1 + \abs{\omega m}^2 + \abs{\xi}^2) } }_{ L^2(\Scr{C}^*) } \Norm{ \frac{1}{ P_z(m, \xi) } }_{ L^2(\Scr{C}^*) }\\
&< \infty.
\end{align*}
Therefore,
\[
I \equiv \sideset{}{'}\sum_{m = -\infty}^\infty \int_{\!-\infty}^\infty
\,\Abs{ \frac{ \wh{u\mu}(m, \xi; z) }{\omega m} + \rmi\frac{ \wh{\partial_x u\mu}(m, \xi; z) }{ P_z(m, \xi) }
- \frac{ \wh{\partial_y u\mu}(m, \xi; z) }{ \omega m P_z(m, \xi) } }\,\rmd\xi < \infty.
\]

Hence,
\[
\abs{\mu(x, y ; z) - 1}^2 \leq \frac{1}{ (2\pi)^2 }\Bigg(\sideset{}{'}\sum_{m = -\infty}^\infty \int_{\!-\infty}^\infty
\,\Abs{ \frac{ \wh{u\mu}(m, \xi; z) }{ P_z(m, \xi) } } \,\rmd\xi\Bigg)^2 = \frac{1}{ (2\pi)^2 }\frac{1}{4\abs{z}^2}I^2 = \frac{c_3}{\abs{z}^2},
\]
with $c_3$ a real constant. For $\re{z} \neq 0$, $1/\abs{z}^2$ is integrable over the real line (with respect to $\imaginary z$) and
\[
\int_{\!-\infty}^\infty \frac{1}{\abs{z}^2} \,\rmd\imaginary z = \int_{\!-\infty}^\infty \frac{1}{\abs{ \re{z} + \rmi\im{z} }^2} \,\rmd\im{z} =
\int_{\!-\infty}^\infty \frac{1}{\re{z}^2 + \im{z}^2} \,\rmd\im{z} = \frac{\pi}{ \abs{ \re{z} } }.
\]
Thus,
\[
\int_{\!-\infty}^\infty \abs{\mu( x, y; \re{z} + \rmi\im{z} ) - 1}^2 \,\rmd\im{z} < \frac{c_3 \pi}{ \abs{ \re{z} } } < \infty.
\]
When $\re{z} = 0$ the conclusion follows from Fatou's lemma: let $\{ { \re{z} }_n \}$ be a sequence of real numbers not in $\frac{\omega}{2}\bb{Z}$ such 
that ${ \re{z} }_n \to 0$. Then,
\begin{align*}
\int_{\!-\infty}^\infty \abs{\mu( x, y; \rmi\im{z} ) - 1}^2 \,\rmd\im{z} &\leq \liminf_{n \to \infty} \int_{\!-\infty}^\infty
\abs{\mu( x, y; { \re{z} }_n + \rmi\im{z} ) - 1}^2 \,\rmd\im{z}\\
&< \liminf_{n \to \infty} \frac{c_3 \pi}{ \abs{ { \re{z} }_n } } = \infty.\qedhere
\end{align*}
\end{proof}
\section[The Inverse Problem]{The Inverse Problem}\label{S:inverse}
\smallskip
\subsection[An appropriate Space for the Inverse Problem]{An appropriate Space for the Inverse Problem}\label{s:Hspace}
\separate

Any small spectral operator $\Scr{S}$ of the form in proposition \ref{Pr:important} determines a unique solution $\mu(x, y; z)$ to $\mu = 1 + \mathcal{C}\Scr{S}\mu$ which also solves the 
equation~\eqref{Eq:bvp} with some potential $u(x, y)$. The operator $\Scr{S}$ is defined for $L^\infty(E)$ functions, holomorphic in $\wpln$ and for which the one-sided limits at the
lines $L_n, n \in \sZ$ exist. An appropriate space for the class of functions having the properties as in proposition \ref{Pr:important} is the Hardy-like space defined as follows:
\begin{equation}
\Scr{H}_\omega \= \{f \in H(\wpln) \colon \norm{f}_\omega < \infty\},
\end{equation}
where
\begin{equation}
\norm{f}_\omega \= \sup_{\re{z} \notin \frac{\omega}{2}\sZ} \bigg(\int_{\!-\infty}^\infty \abs{ f( \re{z} + \rmi\im{z} ) }^2 \,\rmd\im{z}\bigg)^\frac{1}{2} =
\sup_{\re{z} \notin \frac{\omega}{2}\sZ} \norm{ f_{ \re{z} } }_2,
\end{equation}
with $f_{ \re{z} }( \im{z} ) = f( \re{z} + \rmi\im{z} )$. The pair $(\Scr{H}_\omega, \,\norm{\cdot}_\omega)$ forms a normed vector space: let $f$ and $g$ in $\Scr{H}_\omega$ and
$\lambda$ a complex number. Evidently, the function $f + \lambda g$ is holomorphic in $\wpln$. Furthermore, if $\re{z} \notin \frac{\omega}{2}\sZ$, by Minkowski's inequality
\begin{align*}
\int_{\!-\infty}^\infty \abs{ (f + \lambda g)( \re{z} + \rmi\im{z} ) }^2 \,\rmd\im{z} &= \int_{\!-\infty}^\infty \abs{ f( \re{z} + \rmi\im{z} ) + \lambda g( \re{z} + \rmi\im{z} ) }^2 \,\rmd\im{z}\\
&= \norm{ f_{ \re{z} } + \lambda g_{ \re{z} } }_2^2\\
&\leq (\norm{ f_{ \re{z} } }_2 + \abs{\lambda}\norm{ g_{ \re{z} } }_2)^2.
\end{align*}
Thus,
\[
\bigg(\int_{\!-\infty}^\infty \abs{ (f + \lambda g)( \re{z} + \rmi\im{z} ) }^2 \,\rmd\im{z}\bigg)^\frac{1}{2} \leq \norm{f}_\omega + \abs{\lambda}\norm{g}_\omega,
\]
which shows that $f + \lambda g$ belongs to $\Scr{H}_\omega$ and $\norm{f + \lambda g}_\omega \leq \norm{f}_\omega + \abs{\lambda}\norm{g}_\omega$. Moreover
$\norm{f}_\omega = 0$ if and only if $f = 0$.
\begin{lemma}
Let $f \in \Scr{H}_\omega$ and $S_{a, b} = \{z \in \wpln \mid a < \re{z} < b, \,\im{z} \in \bb{R}\}$ where either $a = \tfrac{\omega}{2}n, \,b = \tfrac{\omega}{2}(n + 1)$ or
$a = -\tfrac{\omega}{2}(n + 1), \,b = -\tfrac{\omega}{2}n$ or $a = -b = -\tfrac{\omega}{2}$ with $n \in \bb{N}$, i.e., $S_{a, b}$ a strip in $\wpln$. Then, for $z \in S_{a, b}$
\[
\abs{ f( \re{z} + \rmi\im{z} ) } \leq \sqrt{ \frac{2}{\pi} }\norm{f}_\omega ( \min\{ \abs{\re{z} - a}, \,\abs{\re{z} - b} \} )^{ -\frac{1}{2} }.
\]
Furthermore, if $K$ is a compact subset of $S_{a, b}$ then, $f$ is bounded in $K$.
\end{lemma}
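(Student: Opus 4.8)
The plan is to exploit that $\abs{f}^2$ is subharmonic on $\wpln$ (being the squared modulus of a holomorphic function) and to estimate $f$ at an interior point of the strip by the planar average of $\abs{f}^2$ over a disk, then to control that average by the $\Scr{H}_\omega$-norm via Fubini, slicing the disk into vertical columns.

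First I would fix a point $z \in S_{a,b}$ and set $\delta = \min\{\abs{\re{z} - a}, \abs{\re{z} - b}\}$. Since the boundary of $S_{a,b}$ consists precisely of the two vertical lines $\real\zeta = a$ and $\real\zeta = b$, the number $\delta$ is exactly the Euclidean distance from $z$ to $\partial S_{a,b}$; hence for every radius $0 < r < \delta$ the closed disk about $z$ of radius $r$ is contained in the open strip $S_{a,b} \subseteq \wpln$, where $f$ is holomorphic. By the sub-mean-value property of the subharmonic function $\abs{f}^2$,
\[
\abs{f( \re{z} + \rmi\im{z} )}^2 \leq \frac{1}{\pi r^2}\iint_{\abs{\zeta - z} < r} \abs{f(\xi + \rmi\eta)}^2 \,\rmd\xi\,\rmd\eta, \qquad \zeta = \xi + \rmi\eta.
\]
Then, applying Fubini and extending the inner (vertical) integral to the whole line, and noting that every $\xi \in (\re{z} - r, \re{z} + r) \subseteq (a, b)$ avoids $\tfrac{\omega}{2}\sZ$ so that the slice $\xi + \rmi\eta$ lies in $\wpln$, I would estimate each column by the full norm:
\[
\iint_{\abs{\zeta - z} < r} \abs{f(\xi + \rmi\eta)}^2 \,\rmd\xi\,\rmd\eta \leq \int_{\re{z} - r}^{\re{z} + r}\bigg(\int_{\!-\infty}^\infty \abs{f(\xi + \rmi\eta)}^2 \,\rmd\eta\bigg)\rmd\xi \leq 2r\,\norm{f}_\omega^2.
\]
Combining the two displays gives $\abs{f( \re{z} + \rmi\im{z} )}^2 \leq \tfrac{2}{\pi r}\norm{f}_\omega^2$ for every $r < \delta$; letting $r \uparrow \delta$ yields the asserted pointwise bound $\abs{f( \re{z} + \rmi\im{z} )} \leq \sqrt{2/\pi}\,\norm{f}_\omega\,\delta^{-1/2}$.

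For the final assertion, if $K$ is a compact subset of $S_{a,b}$ then the continuous map $\zeta \mapsto \min\{\abs{\real\zeta - a}, \abs{\real\zeta - b}\}$ attains a minimum $\delta_0$ on $K$, and $\delta_0 > 0$ because $K$ is disjoint from the two boundary lines. The pointwise estimate then gives $\abs{f} \leq \sqrt{2/\pi}\,\norm{f}_\omega\,\delta_0^{-1/2}$ throughout $K$, so $f$ is bounded on $K$.

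The only step requiring any care is the geometric observation that $\delta$ equals the distance from $z$ to $\partial S_{a,b}$, which is what guarantees the disk stays inside the strip and keeps each vertical slice in $\wpln$; once this is in place the estimate is a routine combination of subharmonicity and Fubini, and I do not anticipate a genuine obstacle.
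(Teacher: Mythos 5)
Your proof is correct and follows essentially the same route as the paper's: subharmonicity of $\abs{f}^2$ gives the area mean-value inequality over a disk, which you then slice by Fubini into vertical columns each dominated by $\norm{f}_\omega^2$. Your only deviation is a small technical refinement—taking $r < \delta$ and letting $r \uparrow \delta$, rather than using $r = \delta$ outright as the paper does—which in fact handles more cleanly the case where the disk of radius $\delta$ touches the boundary lines excluded from $\wpln$.
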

\begin{proof}
Let $z \in S_{a, b}$ and consider the closed disc $D(z, r)$ centred at $z$ with radius $r = \min\{ \abs{\re{z} - a}, \,\abs{\re{z} - b} \}$. Then, $D(z, r) \sse
\overline{S}_{\re{z} - r, \re{z} + r} \sse S_{a, b}$: if $\sigma + \rmi\tau \in D(z, r)$, then $\abs{\re{z} - \sigma} \leq r$ and $\abs{\im{z} - \tau} \leq r$, thus
$z \in \overline{S}_{\re{z} - r, \re{z} + r}$ and because $a < \re{z} - r$ and $\re{z} + r < b$, we also have $\overline{S}_{\re{z} - r, \re{z} + r} \sse S_{a, b}$. Since the function
$\abs{ f(z) }^2$ is subharmonic in $S_{a, b}$,
\begin{align*}
\abs{ f( \re{z} + \rmi\im{z} ) }^2 &\leq \frac{1}{\pi r^2}\iint_{ D(z, r) } \abs{ f(z) }^2 \,\rmd z\\
&\leq \frac{1}{\pi r^2}\iint_{ \overline{S}_{\re{z} - r, \re{z} + r} } \abs{ f(z) }^2 \,\rmd z\\
&= \frac{1}{\pi r^2}\int_{\re{z} - r}^{\re{z} + r} \int_{\!-\infty}^\infty \abs{ f( \re{z} + \rmi\im{z} ) }^2 \,\rmd\im{z}\rmd\re{z}\\
&\leq \frac{1}{\pi r^2}\int_{\re{z} - r}^{\re{z} + r} \norm{f}_\omega^2 \,\rmd\re{z} = \frac{2}{\pi}\norm{f}_\omega^2 \frac{1}{r}.
\end{align*}

If $z \in K \sse S_{a, b}$, and $K$ is a compact set, there exist some real numbers $M_1$, $M_2$ such that $a < M_1 \leq \re{z} \leq M_2 < b$, hence
$\re{z} - a \geq M_1 - a$ and $b - \re{z} \geq b - M_2$. Thus,
\[
\abs{ f(z) } \leq \sqrt{ \frac{2}{\pi} }\norm{f}_\omega ( \min\{ \abs{M_1 - a}, \,\abs{M_2 - b} \} )^{ -\frac{1}{2} },
\]
showing that $f$ is bounded in $K$.
\end{proof}
\begin{proposition}
$(\Scr{H}_\omega, \,\norm{\cdot}_\omega)$ is a Banach space.
\end{proposition}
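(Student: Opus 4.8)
The plan is to verify completeness by the standard argument for Hardy-type spaces, since the normed vector space structure has already been established above. Let $\{f_k\}_{k = 1}^\infty$ be a Cauchy sequence in $\Scr{H}_\omega$. First I would use the pointwise estimate from the preceding lemma to promote $\norm{\cdot}_\omega$-convergence to uniform convergence on compact subsets of $\wpln$. Indeed, any compact $K \sse \wpln$ meets only finitely many of the strips $S_{a, b}$, and on each such strip the lemma gives
\[
\abs{ (f_k - f_j)(z) } \leq \sqrt{ \tfrac{2}{\pi} }\,\norm{f_k - f_j}_\omega\,( \min\{ \abs{\re{z} - a}, \abs{\re{z} - b} \} )^{ -\frac{1}{2} },
\]
where the distances to the bounding lines $\re{z} = a$ and $\re{z} = b$ are bounded below by a positive constant on $K$. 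Hence $\{f_k\}$ is uniformly Cauchy on $K$. Since each $f_k$ is holomorphic in $\wpln$, the pointwise (and locally uniform) limit $f \= \lim_k f_k$ is holomorphic in $\wpln$ by Weierstrass's theorem on uniform limits of holomorphic functions.

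Next I would show that $f \in \Scr{H}_\omega$. A Cauchy sequence is bounded, so $M \= \sup_k \norm{f_k}_\omega < \infty$. Fixing any $\re{z} \notin \frac{\omega}{2}\sZ$ and using that $f_k \to f$ pointwise, Fatou's lemma yields
\[
\int_{\!-\infty}^\infty \abs{ f( \re{z} + \rmi\im{z} ) }^2 \,\rmd\im{z} \leq \liminf_{k \to \infty}
\int_{\!-\infty}^\infty \abs{ f_k( \re{z} + \rmi\im{z} ) }^2 \,\rmd\im{z} \leq M^2.
\]
Taking the supremum over admissible $\re{z}$ gives $\norm{f}_\omega \leq M < \infty$, so $f \in \Scr{H}_\omega$.

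Finally, to prove $\norm{f_k - f}_\omega \to 0$, fix $\varepsilon > 0$ and choose $N$ with $\norm{f_k - f_j}_\omega < \varepsilon$ for all $k, j \geq N$. For a fixed $k \geq N$ and $\re{z} \notin \frac{\omega}{2}\sZ$, the functions $f_k - f_j$ converge pointwise in $\im{z}$ to $f_k - f$, so another application of Fatou's lemma gives
\[
\int_{\!-\infty}^\infty \abs{ (f_k - f)( \re{z} + \rmi\im{z} ) }^2 \,\rmd\im{z} \leq \liminf_{j \to \infty}
\int_{\!-\infty}^\infty \abs{ (f_k - f_j)( \re{z} + \rmi\im{z} ) }^2 \,\rmd\im{z} \leq \varepsilon^2.
\]
Taking the supremum over $\re{z}$ shows $\norm{f_k - f}_\omega \leq \varepsilon$ for every $k \geq N$, hence $f_k \to f$ in $\Scr{H}_\omega$, completing the proof.

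The only genuinely delicate step is the first one, extracting a holomorphic limit. The subtlety is that $\norm{\cdot}_\omega$ is an $L^2$-in-$\im{z}$, supremum-in-$\re{z}$ norm which a priori controls only the mean-square size along vertical lines, not pointwise values; the preceding lemma is precisely what bridges this gap, trading the $\norm{\cdot}_\omega$ bound for a pointwise bound that degenerates near the lines $\re{z} \in \frac{\omega}{2}\sZ$ but remains harmless on compact subsets of $\wpln$. Once local uniform convergence is secured, holomorphy of the limit and the two Fatou arguments are routine.
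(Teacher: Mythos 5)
Your proof is correct and follows essentially the same route as the paper's: the preceding pointwise lemma upgrades $\norm{\cdot}_\omega$-Cauchyness to uniform Cauchyness on compact subsets of $\wpln$, yielding a holomorphic limit $f$, and Fatou's lemma then shows both $f \in \Scr{H}_\omega$ and $\norm{f_k - f}_\omega \to 0$. The only cosmetic difference is that you establish $\norm{f}_\omega < \infty$ via boundedness of the Cauchy sequence as a separate step, whereas the paper extracts both conclusions from a single Fatou estimate on $\norm{f - f_N}_\omega$; the substance is identical.
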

\begin{proof}
Suppose $\{f_m\}_{m = 1}^\infty$ is a Cauchy sequence in $\Scr{H}_\omega$ and $K$ is a compact subset of $\wpln$. If $z \in K$, then $M_1 \leq \re{z} \leq M_2$ for some real
numbers $M_1$ and $M_2$. Since $K \sse \wpln$, there exist nonzero integers $n_1$, $n_2$ such that $\tfrac{\omega}{2}n_1 < M_1$ and $M_2 < \tfrac{\omega}{2}n_2$. Thus,
there exist numbers $a, \,b$ such that $a < M_1 \leq \re{z} \leq M_2 < b$. But then, by the previous lemma
\[
\abs{ f_m(z) - f_l(z) } \leq \sqrt{ \frac{2}{\pi} }\norm{f_m - f_l}_\omega ( \min\{ \abs{M_1 - a}, \,\abs{M_2 - b} \} )^{ -\frac{1}{2} },
\]
which shows that $\{f_m\}$ is uniformly Cauchy in compact subsets of $\wpln$ and thus, converges uniformly on compact subsets of $\wpln$ to some function $f \in H(\wpln)$.
Now, given $\epsilon > 0$, there exists $N \in \bb{N}$ such that $\norm{f_m - f_N}_\omega < \tfrac{\epsilon}{2}$ for all $m \geq N$. Then, by Fatou's lemma
\begin{align*}
\int_{\!-\infty}^\infty \abs{ (f - f_N)( \re{z} + \rmi\im{z} ) }^2 \,\rmd\im{z} &= \int_{\!-\infty}^\infty \lim_{m \to \infty} \abs{ (f_m - f_N)( \re{z} + \rmi\im{z} ) }^2 \,\rmd\im{z}\\
&\leq \lim_{m \to \infty} \int_{\!-\infty}^\infty \abs{ (f_m - f_N)( \re{z} + \rmi\im{z} ) }^2 \,\rmd\im{z}\\
&\leq \lim_{m \to \infty} \norm{f_m - f_N}_\omega^2 < \Big(\frac{\epsilon}{2}\Big)^2.
\end{align*}
Hence, $\norm{f - f_N}_\omega < \tfrac{\epsilon}{2}$ and so it follows that $\norm{f}_\omega < \infty$ and $\norm{f_m - f}_\omega \to 0$ as $m \to \infty$. Thus, the space
$\Scr{H}_\omega$ is complete under the norm $\norm{\cdot}_\omega$.
\end{proof}
Now let $\ell^\infty( L^2( \bb{R} ) ) \equiv \ell^\infty( \sZ, L^2( \bb{R} ) )$, that is
\begin{equation}
\ell^\infty( L^2( \bb{R} ) ) = \{g = \{g_m\} \colon g_m \in L^2( \bb{R} ), \ \forall \:m \in \sZ \text{ and } \norm{g}_{2, \infty} < \infty\},
\end{equation}
where
\begin{equation}
\norm{g}_{2, \infty} \= \sup_{m \in \sZ} \norm{g_m}_2 = \sup_{m \in \sZ} \bigg(\int_{\!-\infty}^\infty \abs{ g_m(x) }^2 \,\rmd x\bigg)^\frac{1}{2}.
\end{equation}
It is easily seen that $\norm{\cdot}_{2, \infty}$ defines a norm on $\ell^\infty( L^2( \bb{R} ) )$ under which it becomes a Banach space.

The following lemma is a Paley--Wiener type theorem for functions holomorphic in strips \cite {PW34} .
\begin{lemma}\label{L:Paley-Wiener}
Let $f \in \Scr{H}_\omega$. Then, there exists a measurable function $G$ such that
\[
\int_{\!-\infty}^\infty \abs{ G(\xi) }^2 \rme^{2\re{z}\xi} \,\rmd\xi < \infty,
\]
and
\begin{equation}
f(z) = \frac{1}{2\pi}\int_{\!-\infty}^\infty G(\xi)\rme^{z\xi} \,\rmd\xi,
\end{equation}
in the sense of $L^2$ convergence, for $z = \re{z} + \rmi\im{z} \in K \sse S_{a, b} \sse \bb{C}_\omega$ where $K$ is a compact set.
\end{lemma}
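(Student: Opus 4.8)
The plan is to reconstruct $G$ as the (common) weighted Fourier transform of $f$ restricted to the vertical lines of the strip, the heart of the matter being that this weighted transform does not depend on which line we use. Fix a compact $K \sse S_{a,b}$ and choose $\sigma_1, \sigma_2$ with $a < \sigma_1 \le \re{z} \le \sigma_2 < b$ for all $z \in K$. For $\sigma \in (a,b)$ write $g_\sigma(\tau) \= f(\sigma + \rmi\tau)$; by the definition of $\norm{\cdot}_\omega$ we have $g_\sigma \in L^2(\bb{R})$ with $\norm{g_\sigma}_2 \le \norm{f}_\omega$, and by the preceding lemma (whose pointwise bound is uniform in $\im{z}$) $f$ is bounded on the closed substrip $\{\sigma_1 \le \re{z} \le \sigma_2\}$. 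Set $\wh{g}_\sigma \= \Scr{F}(g_\sigma) \in L^2(\bb{R})$ and define the candidate kernel by
\[
G(\xi) \= 2\pi \rme^{-\sigma\xi}\wh{g}_\sigma(\xi).
\]
Everything reduces to showing that the right-hand side is independent of $\sigma \in (\sigma_1, \sigma_2)$.

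First I would establish this independence by contour shifting. For fixed real $\xi$ the function $z \mapsto f(z)\rme^{-z\xi}$ is holomorphic in $S_{a,b}$, so its integral around the rectangle with vertical sides $\re{z} = \sigma_1, \sigma_2$ and horizontal sides $\im{z} = \pm R$ vanishes by Cauchy's theorem. The vertical sides give $\int_{-R}^{R} f(\sigma_j + \rmi\tau)\rme^{-(\sigma_j + \rmi\tau)\xi}\,\rmd\tau$, which tends to $2\pi\rme^{-\sigma_j\xi}\wh{g}_{\sigma_j}(\xi)$, while the horizontal contribution at height $\pm R$ is $\int_{\sigma_1}^{\sigma_2} f(\sigma \pm \rmi R)\rme^{-(\sigma \pm \rmi R)\xi}\,\rmd\sigma$, whose modulus is at most $\big(\int_{\sigma_1}^{\sigma_2}\abs{f(\sigma \pm \rmi R)}^2\,\rmd\sigma\big)^{1/2}\big(\int_{\sigma_1}^{\sigma_2}\rme^{-2\sigma\xi}\,\rmd\sigma\big)^{1/2}$. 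Since $\int_{\sigma_1}^{\sigma_2}\int_{\bb{R}}\abs{f(\sigma + \rmi\tau)}^2\,\rmd\tau\,\rmd\sigma \le (\sigma_2 - \sigma_1)\norm{f}_\omega^2 < \infty$, there is a sequence $R_n \to \infty$ along which both horizontal integrals vanish, so equating the two vertical limits yields $\rme^{-\sigma_1\xi}\wh{g}_{\sigma_1}(\xi) = \rme^{-\sigma_2\xi}\wh{g}_{\sigma_2}(\xi)$, and the same argument applied to any intermediate pair gives independence of $\sigma$.

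The main obstacle is that the vertical integrals above are $L^2$-Fourier transforms, defined only almost everywhere in $\xi$ rather than as absolutely convergent integrals for each fixed $\xi$ (because $f(\sigma + \rmi\cdot)$ need only lie in $L^2$, not $L^1$), so the pointwise-in-$\xi$ Cauchy computation is not literally valid as stated. I would circumvent this by first regularizing: replace $f$ by $f_\varepsilon(z) = f(z)\rme^{\varepsilon z^2}$, whose restriction to each vertical line carries the Gaussian factor $\rme^{-\varepsilon\tau^2}$ and is therefore in $L^1 \cap L^2$, making all transforms absolutely convergent and the contour identity genuinely pointwise; the uniform bound on $f$ over the closed substrip forces the horizontal sides to decay outright, and letting $\varepsilon \to 0$ (in $L^2$, via Plancherel) transfers the identity to $f$ itself. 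Alternatively one tests the claimed identity against $\varphi \in C_c^\infty(\bb{R})$ and invokes Parseval to avoid evaluating transforms pointwise, then removes $\varphi$.

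Finally, with $G$ well defined as this common value, the two assertions are immediate. Plancherel gives $\int_{\bb{R}}\abs{G(\xi)}^2\rme^{2\re{z}\xi}\,\rmd\xi = (2\pi)^2\int_{\bb{R}}\abs{\wh{g}_{\re{z}}(\xi)}^2\,\rmd\xi = 2\pi\norm{g_{\re{z}}}_2^2 \le 2\pi\norm{f}_\omega^2 < \infty$, which is the required integrability (and is uniform over $K$). For the representation, Fourier inversion in the $\im{z}$ variable yields $f(z) = g_{\re{z}}(\im{z}) = \int_{\bb{R}}\wh{g}_{\re{z}}(\xi)\rme^{\rmi\im{z}\,\xi}\,\rmd\xi = \frac{1}{2\pi}\int_{\bb{R}} G(\xi)\rme^{\re{z}\xi}\rme^{\rmi\im{z}\,\xi}\,\rmd\xi = \frac{1}{2\pi}\int_{\bb{R}} G(\xi)\rme^{z\xi}\,\rmd\xi$, the integral understood in the $L^2$ sense, which is exactly the claimed formula.
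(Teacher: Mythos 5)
Your proof is correct, but it is worth noting that the paper itself offers no proof of this lemma at all: it is stated as a known Paley--Wiener type theorem for functions holomorphic in strips and simply cited to the Paley--Wiener monograph \cite{PW34}. What you have done is reconstruct, essentially, the classical argument that the paper outsources: define $G(\xi) = 2\pi\rme^{-\sigma\xi}\Scr{F}(f(\sigma + \rmi\cdot))(\xi)$ on a vertical line, prove $\sigma$-independence by integrating $f(z)\rme^{-z\xi}$ around rectangles, and recover the two assertions from Plancherel and $L^2$ inversion (your constants are consistent with the paper's convention $\Scr{F}(g)(k) = \frac{1}{2\pi}\int g\,\rme^{-\rmi k x}\,\rmd x$, for which $\norm{\Scr{F}g}_2^2 = \frac{1}{2\pi}\norm{g}_2^2$). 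You correctly identified the one genuine technical obstruction --- that the boundary restrictions lie only in $L^2$, so the vertical Fourier integrals are not pointwise in $\xi$ --- and your Gaussian regularization $f_\varepsilon(z) = f(z)\rme^{\varepsilon z^2}$ handles it properly: the modulus $\rme^{\varepsilon(\re{z}^2 - \im{z}^2)}$ puts the line restrictions in $L^1 \cap L^2$ and kills the horizontal sides outright (using the uniform bound on closed substrips from the preceding subharmonicity lemma, as you note), and the passage $\varepsilon \to 0$ goes through by dominated convergence in $L^2$ plus Plancherel, modulo the routine extraction of an a.e.-convergent subsequence to transfer the pointwise identity, which you gloss but which is standard. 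Two cosmetic remarks: the vertical sides carry a factor $\rmi$ from $\rmd z = \rmi\,\rmd\tau$, which you drop, but it appears on both sides and cancels; and your argument in fact yields slightly more than the statement, namely the uniform bound $\int_{-\infty}^\infty \abs{G(\xi)}^2\rme^{2\re{z}\xi}\,\rmd\xi \leq 2\pi\norm{f}_\omega^2$ over the whole strip, not merely finiteness on the compact $K$. Where the paper's citation is the economical choice, your version has the virtue of being self-contained and of making explicit exactly which ingredients of the ambient setup (the uniform-in-$\sigma$ $L^2$ bounds built into $\norm{\cdot}_\omega$ and the local boundedness lemma) the strip Paley--Wiener theorem actually consumes.
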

Let $f \in \Scr{H}_\omega$ and $n \in \sZ$. Suppose $x_k$ is a sequence such that $x_k \to \frac{\omega}{2}n^+$ (without loss of generality, let n be positive). Then, if $\delta > 0$,
there exists a positive integer $k_0$ such that $\frac{\omega}{2}n < x_k \leq \frac{\omega}{2}n + \delta < \frac{\omega}{2}(n + 1)$, for $k \geq k_0$. By lemma \ref{L:Paley-Wiener},
\[
f( x_k + \rmi\im{z} ) = \frac{1}{2\pi}\int_{\!-\infty}^\infty G(\xi)\rme^{ ( x_k + \rmi\im{z} )\xi } \,\rmd\xi,
\]
for some measurable function $G$ such that
\[
\int_{\!-\infty}^\infty \abs{ G(\xi) }^2 \rme^{2x_k \xi} \,\rmd\xi < \infty.
\]
Now, $G(\xi)\rme^{ ( x_k + \rmi\im{z} )\xi } \to G(\xi)\rme^{ ( \frac{\omega}{2}n + \rmi\im{z} )\xi }$ and $\abs{ G(\xi)\rme^{ ( x_k + \rmi\im{z} )\xi } } = \abs{ G(\xi) }\rme^{x_k \xi}\! \in \!L^1$ since
\begin{align*}
\int_{\!-\infty}^\infty \abs{ G(\xi) }\rme^{x_k \xi} \,\rmd\xi &= \int_{\!-\infty}^0 \abs{ G(\xi) }\rme^{ (x_k - \epsilon)\xi }\rme^{\epsilon\xi} \,\rmd\xi
+ \int_0^\infty \abs{ G(\xi) }\rme^{ (x_k + \epsilon)\xi }\rme^{-\epsilon\xi} \,\rmd\xi\\
&\leq \bigg(\int_{\!-\infty}^\infty \abs{ G(\xi) }^2 \rme^{2(x_k - \epsilon)\xi} \,\rmd\xi \int_{\!-\infty}^0 \rme^{2\epsilon\xi} \,\rmd\xi\bigg)^\frac{1}{2}\\
&{}+ \bigg(\int_{\!-\infty}^\infty \abs{ G(\xi) }^2 \rme^{2(x_k + \epsilon)\xi} \,\rmd\xi \int_0^\infty \rme^{-2\epsilon\xi} \,\rmd\xi\bigg)^\frac{1}{2} < \infty,
\end{align*}
for $\epsilon < \min\{x_k - \frac{\omega}{2}n, \,\frac{\omega}{2}n + \delta - x_k\}$. Thus, by dominated convergence, the limit $\lim\limits_{k \to \infty} f( x_k + \rmi\im{z} )$, i.e., the
limit $\lim\limits_{\re{z} \to \frac{\omega}{2}n^+} f( \re{z} + \rmi\im{z} ) \equiv f^+( \frac{\omega}{2}n + \rmi\im{z} )$ exists and also
\begin{equation}
f^+( \tfrac{\omega}{2}n + \rmi\im{z} ) = \frac{1}{2\pi}\int_{\!-\infty}^\infty G(\xi)\rme^{ ( \frac{\omega}{2}n + \rmi\im{z} )\xi } \,\rmd\xi,
\end{equation}
in the sense of $L^2$ convergence and pointwise almost everywhere convergence, since $f( \re{z} + \rmi\im{z} ) = \Scr{F}( G(\xi)\rme^{ \re{z}\xi } )( -\im{z} )$ and by Plancherel's theorem
$\Scr{F}( G(\xi)\rme^{ \re{z}\xi } )$ is square integrable. Similarly, we can establish the existence of the limit
\begin{equation}
f^-( \tfrac{\omega}{2}n + \rmi\im{z} ) \equiv \lim_{\re{z} \to \tfrac{\omega}{2}n^-} f( \re{z} + \rmi\im{z} )
\end{equation}
in the sense of $L^2$ and pointwise almost everywhere convergence.

From the above discussion, it is natural to define the bounded linear operator
$\op{J} \colon (\Scr{H}_\omega, \,\norm{\cdot}_\omega) \to ( \ell^\infty( L^2( \bb{R} ) ), \,\norm{\cdot}_{2, \infty} )$ by
\begin{equation}
(\op{J}f)_n(y) = \op{J}f(n, y) \= f^+(\tfrac{\omega}{2}n + \rmi y) - f^-(\tfrac{\omega}{2}n + \rmi y).
\end{equation}
We also define the linear operator $\op{l} \colon (\Scr{H}_\omega, \,\norm{\cdot}_\omega) \to ( \ell^\infty( L^2( \bb{R} ) ), \,\norm{\cdot}_{2, \infty} )$ with
\begin{equation}
(\op{l}f)_n(y) = \op{l}f(n, y) \= f^-(\!-\tfrac{\omega}{2}n + \rmi y).
\end{equation}
$\op{l}$ is bounded and $\norm{\op{l}f}_{2, \infty} \leq \norm{f}_\omega$.

Let us now state the following lemma, which expresses a simple identity.
\begin{lemma}\label{L:expintegral}
For $\tau \in \bb{R}$
\begin{equation}
\frac{1}{\zeta + \rmi\tau} = \begin{cases}
\displaystyle\int_{\!-\infty}^0 \rme^{ (\zeta + \rmi\tau)\xi } \,\rmd\xi, & \real \zeta > 0\\[15pt]
\displaystyle-\int_0^\infty \rme^{ (\zeta + \rmi\tau)\xi } \,\rmd\xi, & \real \zeta < 0.
\end{cases}
\end{equation}
\end{lemma}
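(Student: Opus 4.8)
The plan is to verify each case by directly antidifferentiating the exponential and tracking the boundary term at the infinite endpoint; the sign hypothesis on $\real\zeta$ is precisely what makes the improper integral converge. Writing $w = \zeta + \rmi\tau$, so that $\real w = \real\zeta$, I would compute in the first case ($\real\zeta > 0$)
\[
\int_{\!-\infty}^0 \rme^{w\xi} \,\rmd\xi = \lim_{R \to \infty} \int_{\!-R}^0 \rme^{w\xi} \,\rmd\xi
= \lim_{R \to \infty} \frac{1 - \rme^{-wR}}{w}.
\]
Since $\abs{\rme^{-wR}} = \rme^{-(\real\zeta)R} \to 0$ as $R \to \infty$ when $\real\zeta > 0$, the boundary term vanishes and the integral equals $1/w = 1/(\zeta + \rmi\tau)$, as claimed.

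For the second case ($\real\zeta < 0$) the argument is symmetric: I would evaluate
\[
\int_0^\infty \rme^{w\xi} \,\rmd\xi = \lim_{R \to \infty} \frac{\rme^{wR} - 1}{w},
\]
and now $\abs{\rme^{wR}} = \rme^{(\real\zeta)R} \to 0$ as $R \to \infty$ because $\real\zeta < 0$, so the integral equals $-1/w$. Multiplying by the sign $-1$ in the statement recovers $1/(\zeta + \rmi\tau)$.

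There is no genuine obstacle here; this is the standard Laplace-transform computation for the exponential, and the only point requiring care is confirming that the endpoint at $\pm\infty$ contributes nothing, which follows directly from the hypothesis on the sign of $\real\zeta$. The identity will be used subsequently to convert the Cauchy kernel $1/(\zeta - z)$ appearing in \eqref{Eq:Cauchyintegral} into an exponential representation, so I would record it in the clean two-case form displayed in the statement.
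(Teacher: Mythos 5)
Your computation is correct: the paper states this lemma without proof, presenting it as a ``simple identity,'' and your direct antidifferentiation with the boundary term at $\pm\infty$ killed by the sign hypothesis on $\real\zeta$ is exactly the standard argument the authors leave implicit. Nothing further is needed.
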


Let $g = g_n(\tau) = g(\frac{\omega}{2}n + \rmi\tau) \in \ell^\infty( L^2( \bb{R} ) )$. Fix a positive integer $n$ (for $n$ negative the analysis is similar) and define the function
\begin{equation}
h_n(z) = \frac{1}{2\pi}\int_{\!-\infty}^\infty \frac{ g(\frac{\omega}{2}n + \rmi\tau) }{\frac{\omega}{2}n + \rmi\tau - z} \,\rmd\tau, \quad z \in S_{n - 1} \cup S_n.
\end{equation}
An application of H\"{o}lder's inequality shows that the function $h_n(z)$ is holomorphic. Assume that $z \in S_n$, i.e., $\frac{\omega}{2}n < \re{z}$.~Using lemma \ref{L:expintegral},
we can rewrite $h_n$ as
\begin{equation}
h_n(z) = \int_{\!-\infty}^\infty G_n(\xi)\Psi_c(\xi)\rme^{-\rmi\im{z}\xi} \,\rmd\xi = \Scr{F}( G_n(\xi)\Psi_c(\xi) )( \im{z} ),
\end{equation}
where $G_n(\xi) = \Scr{F}^{-1}( g(\frac{\omega}{2}n + \rmi\tau) )(\xi), \,c = \re{z} - \frac{\omega}{2}n$ and $\Psi_c(\xi) = -\rme^{-c\xi}$ for $\xi > 0$, $\Psi_c(\xi) = 0$ for $\xi < 0$.
Since $g_n \in L^2( \bb{R} )$, Plancherel's theorem yields
\begin{align*}
\int_{\!-\infty}^\infty \abs{ h_n( \re{z} + \rmi\im{z} ) }^2 \,\rmd\im{z} &= 2\pi\int_{\!-\infty}^\infty \abs{ G_n(\xi)\Psi_c(\xi) }^2 \,\rmd\xi\\
&< 2\pi\int_{\!-\infty}^\infty \abs{ G_n(\xi) }^2 \,\rmd\xi\\
&= \int_{\!-\infty}^\infty \abs{ g(\tfrac{\omega}{2}n + \rmi\tau) }^2 \,\rmd\tau = \norm{g_n}_2^2.
\end{align*}
Similarly, when $z \in S_{n - 1}$, i.e., $\frac{\omega}{2}n > \re{z}$,
\begin{equation}
h_n(z) = \Scr{F}( G_n(\xi)\Psi_c(\xi) )( \im{z} ),
\end{equation}
where this time $\Psi_c(\xi) = \rme^{-c\xi}$ for $\xi < 0$ and $\Psi_c(\xi) = 0$ for $\xi > 0$. Hence, once again by Plancherel's theorem
\[
\int_{\!-\infty}^\infty \abs{ h_n( \re{z} + \rmi\im{z} ) }^2 \,\rmd\im{z} < \norm{g_n}_2^2.
\]
\noi Therefore, since $n$ was arbitrary, we have that $h_n \in H(\wpln)$ for every nonzero integer $n$ and
\begin{equation}
\sup_{\re{z} \notin \frac{\omega}{2}\sZ} \bigg(\int_{\!-\infty}^\infty \abs{ h_n( \re{z} + \rmi\im{z} ) }^2 \,\rmd\im{z}\bigg)^\frac{1}{2} < \norm{g}_{2, \infty},
\end{equation}
in particular $h_n \in \Scr{H}_\omega$ for all $n \in \sZ$. A straightforward change of variables shows that
\begin{equation}
h_n(z) = \frac{1}{2\pi \rmi}\int_{\frac{\omega}{2}n + \rmi\infty}^{\frac{\omega}{2}n - \rmi\infty} \frac{ g(\zeta) }{\zeta - z} \,\rmd\zeta =
\frac{1}{2\pi \rmi}\int_{L_n} \frac{ g(\zeta) }{\zeta - z} \,\rmd\zeta.
\end{equation}
Applying the operator $\op{J}$ to the sequence $h_n$ and using the Plemelj--Sokhotski formulas for $L^2$ potentials, we have
\begin{equation}
(\op{J}h_n)_n(y) = h_n^+(\tfrac{\omega}{2}n + \rmi y) - h_n^-(\tfrac{\omega}{2}n + \rmi y) = g(\tfrac{\omega}{2}n + \rmi y).
\end{equation}
\begin{proposition}\label{Pr:lemma}
Let $a_n(\tau) = a(\frac{\omega}{2}n + \rmi\tau)$ be a sequence of complex functions such that
\begin{equation}
\sup_{ \tau \in \bb{R} } \abs{ a_n(\tau) } \leq \frac{c}{n^2}, \quad n \in \sZ,
\end{equation}
where $c$ is some constant, and let $g = g_n(\tau) = g(\frac{\omega}{2}n + \rmi\tau) \in \ell^\infty( L^2( \bb{R} ) )$. Then, the series
\[
\sideset{}{'}\sum_{n = -\infty}^\infty \frac{1}{2\pi}\int_{\!-\infty}^\infty \frac{ a(\frac{\omega}{2}n + \rmi\tau)g(\frac{\omega}{2}n + \rmi\tau) }{\frac{\omega}{2}n + \rmi\tau - z} \,\rmd\tau
\]
converges uniformly on compact subsets of $\wpln$ to a function $f \in \Scr{H}_\omega$ and
\begin{equation}
\norm{f}_\omega \leq c\frac{\pi^2}{3}\norm{g}_{2, \infty} \equiv \Gamma_c \norm{g}_{2, \infty},
\end{equation}
and
\begin{equation}
(\op{J}f)_n(y) = a(\tfrac{\omega}{2}n + \rmi y)g(\tfrac{\omega}{2}n + \rmi y).
\end{equation}
\end{proposition}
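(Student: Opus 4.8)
The plan is to recognize the $n$-th summand as exactly one of the Cauchy-type functions analysed in the discussion immediately preceding this proposition, only with the $L^2$ datum $g_n$ replaced by $a_n g_n$. Write
\[
h_n^{(a)}(z) \= \frac{1}{2\pi}\int_{\!-\infty}^\infty
\frac{ a(\tfrac{\omega}{2}n + \rmi\tau)\,g(\tfrac{\omega}{2}n + \rmi\tau) }{ \tfrac{\omega}{2}n + \rmi\tau - z } \,\rmd\tau .
\]
Since $\sup_\tau \abs{ a_n(\tau) } \leq c/n^2$, the product $a_n g_n$ again lies in $L^2(\bb{R})$, with $\norm{ a_n g_n }_2 \leq \tfrac{c}{n^2}\norm{ g_n }_2 \leq \tfrac{c}{n^2}\norm{g}_{2, \infty}$. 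Hence the earlier construction applies verbatim (for both signs of $n$ by the noted symmetry): each $h_n^{(a)}$ belongs to $H(\wpln)$---being a Cauchy integral over the single line through $\tfrac{\omega}{2}n$, it is holomorphic off that line---and the Plancherel estimate used there yields $\norm{ h_n^{(a)} }_\omega \leq \norm{ a_n g_n }_2 \leq \tfrac{c}{n^2}\norm{g}_{2, \infty}$.

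First I would obtain convergence in $\Scr{H}_\omega$. As $(\Scr{H}_\omega, \norm{\cdot}_\omega)$ is a Banach space and
\[
\sideset{}{'}\sum_{n = -\infty}^\infty \norm{ h_n^{(a)} }_\omega
\leq \norm{g}_{2, \infty}\sideset{}{'}\sum_{n = -\infty}^\infty \frac{c}{n^2}
= 2c\,\norm{g}_{2, \infty}\sum_{n = 1}^\infty \frac{1}{n^2}
= c\,\frac{\pi^2}{3}\norm{g}_{2, \infty},
\]
the series is absolutely summable, hence norm-convergent, to some $f \in \Scr{H}_\omega$ obeying $\norm{f}_\omega \leq c\,\tfrac{\pi^2}{3}\norm{g}_{2, \infty} = \Gamma_c\norm{g}_{2, \infty}$, which is the claimed bound. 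For uniform convergence on a compact $K \sse \wpln$ I would invoke the pointwise estimate of the preceding lemma: $K$ sits at a positive distance $d_K$ from every jump line, so each $z \in K$ lies in a strip whose walls are at distance $\geq d_K$, whence $\abs{ h_n^{(a)}(z) } \leq \sqrt{2/\pi}\,d_K^{-1/2}\norm{ h_n^{(a)} }_\omega \leq \sqrt{2/\pi}\,d_K^{-1/2}\tfrac{c}{n^2}\norm{g}_{2, \infty}$. The Weierstrass $M$-test then gives uniform convergence on $K$, reconfirming that the limit is holomorphic there.

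It remains to identify the jump. The one observation not already recorded is that for $m \neq n$ the function $h_m^{(a)}$ is holomorphic across the line through $\tfrac{\omega}{2}n$, so $(\op{J}h_m^{(a)})_n = 0$; meanwhile the Plemelj--Sokhotski computation carried out just before the proposition, now read with datum $a_n g_n$, gives $(\op{J}h_n^{(a)})_n(y) = a(\tfrac{\omega}{2}n + \rmi y)\,g(\tfrac{\omega}{2}n + \rmi y)$. Since $\op{J} \colon \Scr{H}_\omega \to \ell^\infty(L^2(\bb{R}))$ is bounded and the partial sums converge to $f$ in $\Scr{H}_\omega$, their images converge to $\op{J}f$ in $\ell^\infty(L^2(\bb{R}))$, hence componentwise in $L^2$; for each fixed $n$ the $n$-th component of the partial sums is eventually constant and equal to $a_n g_n$, so $(\op{J}f)_n(y) = a(\tfrac{\omega}{2}n + \rmi y)\,g(\tfrac{\omega}{2}n + \rmi y)$, as desired.

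The hard part is this last step: it rests on $\op{J}$ being a genuinely bounded operator $\Scr{H}_\omega \to \ell^\infty(L^2(\bb{R}))$ and on each boundary value $h_n^{(a),\pm}$ existing in the $L^2$ sense, both of which are supplied by the Paley--Wiener lemma and the limit computation preceding the statement. Once these are in hand, the termwise passage of $\op{J}$ through the sum and the collapse of all off-diagonal jumps are routine, and the quantitative claims reduce entirely to the convergence of $\sideset{}{'}\sum_{n = -\infty}^\infty n^{-2}$ built into the hypothesis on $a_n$.
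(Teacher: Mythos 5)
Your proposal is correct and follows essentially the same route as the paper: decompose the series into the single-line Cauchy integrals $h_n$ with data $a_n g_n$, invoke the Plancherel estimate from the preceding discussion to get $\norm{h_n}_\omega \leq \tfrac{c}{n^2}\norm{g}_{2,\infty}$, sum via absolute convergence and a Weierstrass $M$-test, and identify the jump from the diagonal term alone. The only cosmetic differences are that the paper obtains the pointwise bound for the $M$-test by a direct H\"older computation rather than the subharmonicity lemma, and passes $\op{J}$ through the sum by termwise holomorphy of the off-diagonal terms where you instead (legitimately, and somewhat more carefully) use the boundedness of $\op{J}$ on $\Scr{H}_\omega$.
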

\begin{proof}
For every $n \in \sZ$
\[
\int_{\!-\infty}^\infty \abs{ a(\tfrac{\omega}{2}n + \rmi\tau)g(\tfrac{\omega}{2}n + \rmi\tau) }^2 \,\rmd\tau \leq
\frac{c^2}{n^4}\int_{\!-\infty}^\infty \abs{ g(\tfrac{\omega}{2}n + \rmi\tau) }^2 \,\rmd\tau = \frac{c^2}{n^4}\norm{g_n}_2^2,
\]
hence,
\[
\sup_{n \in \sZ} \norm{a_n g_n}_2 \leq c\norm{g}_{2, \infty} \sup_{n \in \sZ} \frac{1}{n^2} = c\norm{g}_{2, \infty} < \infty.
\]
Therefore,
\[
h_n(z) = \frac{1}{2\pi}\int_{\!-\infty}^\infty \frac{ a(\frac{\omega}{2}n + \rmi\tau)g(\frac{\omega}{2}n + \rmi\tau) }{\frac{\omega}{2}n + \rmi\tau - z} \,\rmd\tau \in \Scr{H}_\omega,
\]
and $\norm{h_n}_\omega \leq \frac{c}{n^2}\norm{g}_{2, \infty}$. Now, for $z \in \wpln$
\begin{align*}
\abs{ h_n(z) } &\leq \frac{1}{2\pi}\bigg(\int_{\!-\infty}^\infty \abs{ a(\tfrac{\omega}{2}n + \rmi\tau)g(\tfrac{\omega}{2}n + \rmi\tau) }^2 \,\rmd\tau\bigg)^\frac{1}{2}
\bigg( \int_{\!-\infty}^\infty \frac{\rmd\tau}{ \abs{\frac{\omega}{2}n + \rmi\tau - z}^2 } \bigg)^\frac{1}{2}\\
&\leq \frac{c}{2\pi n^2}\norm{g}_{2, \infty}\bigg( \int_{\!-\infty}^\infty \frac{\rmd\tau}{ \abs{\frac{\omega}{2}n + \rmi\tau - z}^2 } \bigg)^\frac{1}{2}\\
&= \frac{c}{ 2\sqrt{\pi} }\norm{g}_{2, \infty}\frac{1}{ n^2 \sqrt{ \abs{ \frac{\omega}{2}n - \re{z} } } }.
\end{align*}
An application of Weierstrass M-test shows that the series $\sideset{}{'}\sum\limits_{n = -\infty}^\infty h_n(z)$ converges uniformly on compact subsets of $\wpln$ to a function $f$
which is holomorphic in $\wpln$ since $h_n$ are also holomorphic. Moreover, since $h_n \in \Scr{H}_\omega$ we have
\[
\norm{f}_\omega \leq \sideset{}{'}\sum_{n = -\infty}^\infty \norm{ h_n(z) }_\omega \leq c\norm{g}_{2, \infty}\sideset{}{'}\sum_{n = -\infty}^\infty \frac{1}{n^2} < \infty,
\]
thus $f$ belongs to $\Scr{H}_\omega$.

Finally, fix a non-zero integer $m$. For $z \in S_{m - 1} \cup S_m$, the functions $h_n(z)$ are holomorphic when $n \neq m$. Also,
$(\op{J}h_m)_m(y) = a(\frac{\omega}{2}m + \rmi y)g(\frac{\omega}{2}m + \rmi y)$. Therefore,
\begin{align*}
(\op{J}f)_m(y) &= \bigg( \op{J}\sideset{}{'}\sum_{n = -\infty}^\infty h_n(z) \bigg)(m, y)\\
&= \op{J}\bigg( \sideset{}{'}\sum_{n \neq m} h_n(z) + h_m(z) \bigg)(m, y)\\
&= (\op{J}h_m)(m, y) = a(\tfrac{\omega}{2}m + \rmi y)g(\tfrac{\omega}{2}m + \rmi y).\qedhere
\end{align*}
\end{proof}
Introduce the operator $\op{S} \colon ( \ell^\infty( L^2( \bb{R} ) ), \,\norm{\cdot}_{2, \infty} ) \to (\Scr{H}_\omega, \,\norm{\cdot}_\omega)$ defined by
\begin{equation}
(\op{S}g)(x, y; z) \= \sideset{}{'}\sum_{n = -\infty}^\infty \frac{1}{2\pi}\int_{\!-\infty}^\infty
\frac{F(\frac{\omega}{2}n, \tau)g_n(\tau) }{\frac{\omega}{2}n + \rmi\tau - z}\rme^{ -\rmi\omega n(x - 2\tau y) } \,\rmd\tau,
\end{equation}
for all $(x, y) \in \Omega$ where the function $F$ has the following properties:
\begin{equation}\label{Eq:scatteringbound1}
\abs{ F(\tfrac{\omega}{2}n, \tau) } \leq \frac{c}{n^2}, \quad \forall \:\tau \in \bb{R}, \,n \in \sZ,
\end{equation}
for some constant $c$ and
\begin{equation}\label{Eq:scatteringbound2}
\int_{\!-\infty}^\infty \abs{ F(\tfrac{\omega}{2}n, \tau) }^2 \,\rmd\tau = O\bigg( \frac{1}{n^4} \bigg), \quad \forall \:n \in \sZ.
\end{equation}
By proposition \ref{Pr:lemma}, we see that $\op{S}$ is bounded linear, $\norm{\op{S}g}_\omega \leq \Gamma_c \norm{g}_{2, \infty}$ and that
$(\op{J}\op{S}g)_n(x, y; \tau) = F(\tfrac{\omega}{2}n, \tau)\rme^{ -\rmi\omega n(x - 2\tau y) }g_n(\tau)$.
\subsection[Recovering the Potential from the Spectral Data]{Recovering the Potential from the Spectral Data}\label{s:recover}
\separate

Returning now to the discussion of the inverse problem, consider the equation
\begin{equation}
\mu(x, y; z) = 1 + \frac{1}{2\pi \rmi}\sideset{}{'}\sum_{n = -\infty}^\infty \int_{L_n} \frac{ (\Scr{S}\mu)(x, y; \zeta) }{\zeta - z} \,\rmd\zeta,
\end{equation}
where $\Scr{S}$ is the spectral operator defined in \eqref{Eq:scatteringoperator} with the function $F$ satisfying \eqref{Eq:scatteringbound1} and 
\eqref{Eq:scatteringbound2}. This equation can be rewritten in the form
\begin{align}\label{Eq:inverse}
(\mu - 1)(x, y; z) &= \sideset{}{'}\sum_{n = -\infty}^\infty \frac{1}{2\pi}\int_{\!-\infty}^\infty
\frac{F(\frac{\omega}{2}n, \tau)\rme^{ -\rmi\omega n(x - 2\tau y) } }{\frac{\omega}{2}n + \rmi\tau - z} \,\rmd\tau\notag\\
&{}+ \op{S}\op{l}(\mu - 1)(x, y; z).
\end{align}
If the constant $c$ is such that $\Gamma_{c} < 1$, then the composition $\op{S}\op{l}$ is a contraction on $\Scr{H}_\omega$ since for
$f \in \Scr{H}_\omega$, $\norm{\op{S}\op{l}f}_\omega \leq \Gamma_{c} \norm{\op{l}f}_{2, \infty} \leq \Gamma_{c} \norm{f}_\omega$. Thus, Banach's 
fixed-point theorem implies that the above equation has a unique solution $\mu$ such that $\mu - 1 \in \Scr{H}_\omega$.

Recall equation~\eqref{Eq:shiftedeq}, $[ (\partial_x + \rmi z)^2 - (\partial_y - z^2) ]\mu = -u\mu$ which is written with the shifted derivatives
$D_1, D_2$, defined in~\eqref{Eq:shifted}, as
\begin{equation}
[D_1^2 - D_2 + u]\mu(x, y; z) = 0.
\end{equation}
The following two lemmas are useful for the proof of the Inverse Spectral theorem that follows.
\begin{lemma}\label{L:dressing}
If $\Scr{S}$ has the form
\begin{equation}
(\Scr{S}f)(x, y; z) = F(z)\rme^{ \rmi r_0(z) \cdot (\omega x, y) } f^-( x, y; -\bar{z} ),
\end{equation}
and the function $f(x, y; z)$ has one continuous derivative (in the $x$, $y$ variables), then $[D_1, \Scr{S}]f = [D_2, \Scr{S}]f = 0$ for $z$ on
$\bb{C} \setminus \wpln$.
\end{lemma}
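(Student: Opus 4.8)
The plan is to verify the two identities $[D_1,\Scr{S}]f=0$ and $[D_2,\Scr{S}]f=0$ by a direct computation, since once the structure of $\Scr{S}$ is unpacked the vanishing is forced by an exact cancellation between the $z$-dependent shift carried by $D_i$ and the logarithmic derivative of the exponential factor. First I would record that the exponential appearing in $\Scr{S}$ is $\rme^{\rmi r_0(z)\cdot(\omega x,y)}=\rme^{d(x,y;z)}$ with $d(x,y;z)=-\rmi(z+\bar z)x+(z^2-\bar z^2)y$, so that $\partial_x d=-\rmi(z+\bar z)$ and $\partial_y d=z^2-\bar z^2$. These two quantities are exactly what will absorb the shifts. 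Throughout, $F(z)$ is a common factor depending only on $z$, hence constant under $\partial_x,\partial_y$, and so plays no role beyond being carried along.

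For $D_1=\partial_x+\rmi z$ I would apply the product rule to $(\Scr{S}f)(x,y;z)=F(z)\rme^{d}f^-(x,y;-\bar z)$. Differentiating the exponential contributes $\partial_x d=-\rmi(z+\bar z)$, which together with the $\rmi z$ coming from $D_1$ collapses to $-\rmi\bar z$, giving
\[
D_1(\Scr{S}f)=F(z)\rme^{d}(\partial_x-\rmi\bar z)f^-(x,y;-\bar z).
\]
On the other side, $\Scr{S}(D_1 f)$ evaluates the function $D_1 f$ at the spectral parameter $-\bar z$; since the multiplication part of $D_1$ is tied to the spectral variable, at $-\bar z$ it becomes $\rmi(-\bar z)=-\rmi\bar z$, whence
\[
\Scr{S}(D_1 f)=F(z)\rme^{d}(\partial_x-\rmi\bar z)f^-(x,y;-\bar z),
\]
and the two expressions coincide. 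The computation for $D_2=\partial_y-z^2$ is entirely parallel: $\partial_y d=z^2-\bar z^2$ combines with the $-z^2$ from $D_2$ to leave $-\bar z^2=-(-\bar z)^2$, while inside $\Scr{S}$ the operator $D_2$ evaluated at $-\bar z$ produces precisely the shift $-(-\bar z)^2$. Both sides then equal $F(z)\rme^{d}(\partial_y-\bar z^2)f^-(x,y;-\bar z)$, so $[D_2,\Scr{S}]f=0$.

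The step I expect to require the most care is the bookkeeping of which spectral parameter each shift refers to: the identities hinge on reading $D_1,D_2$ as $z$-dependent operators whose multiplication parts follow the argument through $\Scr{S}$, so that inside $\Scr{S}(D_i f)$ they are evaluated at $-\bar z$ rather than at $z$; with the naive reading (using the fixed outer $z$) the commutators would instead pick up the nonzero factor $-\rmi(z+\bar z)$, respectively $z^2-\bar z^2$. The only genuine analytic point to justify is that the one-sided boundary limit defining $f^-$ commutes with $\partial_x$ and $\partial_y$. This is exactly where the hypothesis that $f$ has a continuous first derivative in $(x,y)$ enters, ensuring $\partial_x f^-(\cdot;-\bar z)=(\partial_x f)^-(\cdot;-\bar z)$ and likewise in $y$, which legitimizes the formal manipulations above.
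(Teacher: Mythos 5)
Your proposal is correct and is exactly the ``straightforward calculation'' the paper invokes without writing out: the shift $\partial_x d=-\rmi(z+\bar z)$, $\partial_y d=z^2-\bar z^2$ from the exponential $\rme^{\rmi r_0(z)\cdot(\omega x,y)}=\rme^{d(x,y;z)}$ cancels against the multiplication parts of $D_1,D_2$ read at the shifted parameter $-\bar z$, and your bookkeeping of this point (together with using the $C^1$ hypothesis to commute $\partial_x,\partial_y$ with the one-sided limit $f^-$) is precisely what makes the commutators vanish. Nothing further is needed.
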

\begin{proof}
A straightforward calculation.
\end{proof}
It is also useful to calculate the commutators of the translated derivatives $D_1, \,D_2$ with the operator $\mathcal{C}$. Since both $\partial_x$ and
$\partial_y$ commute with $\mathcal{C}$, it suffices to calculate $[z^m, \mathcal{C}]$. The following result holds.
\begin{lemma}\label{L:commutatorss}
Suppose $f(x, y; z)$ is a Schwartz function on $E$. Then,
\begin{gather}
[z^m, \mathcal{C}]f(x, y; z) = \frac{1}{2\pi \rmi}\sum_{k = 0}^{m - 1} z^k \sideset{}{'}\sum_{n = -\infty}^\infty \int_{L_n}
\zeta^{m - k - 1} f(x, y; \zeta) \,\rmd\zeta,\\
[D_1^m, \mathcal{C}]f(x, y; z) = \sum_{k = 0}^m \binom{m}{k}\rmi^k \partial_x^{m - k}[z^k, \mathcal{C}]f(x, y; z),
\end{gather}
for all $m \in \bb{N}$ and $z \in \wpln$.
\end{lemma}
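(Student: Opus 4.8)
The plan is to handle the two formulas separately: the first by a direct computation that exploits a polynomial factorization of the Cauchy kernel, and the second by reducing it to the first through a binomial expansion of $D_1^m$.

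For the first identity I would begin from the definition of $\mathcal{C}$ and write the commutator as a single contour expression. Multiplication by $z^m$ acts on the output variable, whereas inside $\mathcal{C}z^m$ the monomial is evaluated at the integration variable $\zeta$; subtracting the two gives
\[
[z^m, \mathcal{C}]f(x, y; z) = \frac{1}{2\pi \rmi}\sideset{}{'}\sum_{n = -\infty}^\infty \int_{L_n} \frac{ (z^m - \zeta^m) f(x, y; \zeta) }{\zeta - z} \,\rmd\zeta.
\]
The crucial observation is that the singularity of the Cauchy kernel cancels: the elementary factorization $z^m - \zeta^m = (z - \zeta)\sum_{k = 0}^{m - 1} z^k \zeta^{m - 1 - k}$ turns $(z^m - \zeta^m)/(\zeta - z)$ into a genuine polynomial in $z$ and $\zeta$, so no principal value is needed. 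Pulling the finite sum over $k$ and the powers $z^k$ outside the integral then collects the expression into $\sum_{k} z^k \int_{L_n} \zeta^{m - k - 1} f \,\rmd\zeta$, which is exactly the asserted form.

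The one point requiring care is the interchange of the finite sum over $k$ with the sum over $n$ and the integration along each $L_n$; this is where the hypothesis that $f$ is a Schwartz function on $E$ enters, since it guarantees that each $\int_{L_n} \zeta^{m - k - 1} f(x, y; \zeta) \,\rmd\zeta$ converges and that the resulting double series converges absolutely (along each vertical line $f$ decays faster than any power of $\im{\zeta}$, and the contour contributions decay faster than any power of $n$). I expect this convergence bookkeeping to be the main, if essentially routine, obstacle; the algebraic heart of the argument — the cancellation of the kernel — is immediate.

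For the second identity I would expand $D_1^m = (\partial_x + \rmi z)^m$. Since $z$ is independent of $x$, the operators $\partial_x$ and multiplication by $\rmi z$ commute, so the binomial theorem gives $D_1^m = \sum_{k = 0}^{m} \binom{m}{k} \rmi^k z^k \partial_x^{m - k}$. Taking the commutator with $\mathcal{C}$ and using linearity, together with the facts recorded earlier that $\partial_x$ commutes with $\mathcal{C}$ and (trivially) with multiplication by $z^k$, each term collapses via $[z^k \partial_x^{m - k}, \mathcal{C}] = \partial_x^{m - k}[z^k, \mathcal{C}]$, because $\partial_x^{m-k}$ may be slid past both $\mathcal{C}$ and $z^k$. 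Summing over $k$ yields precisely the stated expression, with the inner commutators $[z^k, \mathcal{C}]$ supplied by the first part; note that the degenerate term $k = 0$ contributes nothing since $[z^0, \mathcal{C}] = [\op{Id}, \mathcal{C}] = 0$, consistent with the summation convention.
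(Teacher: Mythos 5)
Your argument is exactly the paper's: it disposes of the first identity via the factorization $z^m - \zeta^m = (z - \zeta)\sum_{k=0}^{m-1} z^k \zeta^{m-k-1}$, which cancels the Cauchy kernel, and of the second via the binomial expansion of $(\partial_x + \rmi z)^m$ together with $[\partial_x, \mathcal{C}] = 0$; your convergence bookkeeping merely makes explicit what the paper leaves implicit (it even remarks afterwards that Schwartz regularity is more than is needed). One caution you share with the printed statement: since the kernel of $\mathcal{C}$ is $1/(\zeta - z)$ while the factorization carries the factor $(z - \zeta)$, a careful computation gives $[z^m, \mathcal{C}]f(x, y; z) = -\frac{1}{2\pi\rmi}\sum_{k=0}^{m-1} z^k \sum_{n \in \sZ} \int_{L_n} \zeta^{m-k-1} f(x, y; \zeta)\,\rmd\zeta$, and this minus sign --- elided when you say the sum collects into ``exactly the asserted form'' --- is in fact the one needed for consistency with lemma \ref{L:commutators}, whose formula $[P(\partial + w), \mathcal{C}]f = -\frac{1}{\pi}\partial_x \sum_{n \in \sZ} \int_{L_n} f(x, y; \zeta)\,\rmd\zeta$ (the identity that ultimately yields \eqref{Eq:potenial}) follows from the $k = 1$ term only with the minus sign in place.
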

\begin{proof}
The second identity follows immediately from the binomial theorem. The first, is a consequence of the identity
\[
z^m - \zeta^m = (z - \zeta)\sum_{k = 0}^{m - 1} z^k \zeta^{m - k - 1}.\qedhere
\]
\end{proof}
Notice that these lemmas imply lemma \ref{L:commutators}. Also Schwartz regularity is not necessary for the proof but makes the calculations easier. The 
results to follow holds with much weaker regularity conditions cf.~\eqref{Eq:scatteringoperator}.

We can now state and prove the following theorem.
\begin{theorem}[The Inverse Spectral Theorem]\label{Th:inversescattering}
Let $\Scr{S}$ be a spectral operator of the form \eqref{Eq:scatteringoperator} defined by a function $F(z)$ that is small in the sense that
\begin{equation}
\sup_{n \in \sZ} n^2 \sup_{ \tau \in \bb{R} } \abs{ F(\tfrac{\omega}{2}n, \tau) } < 1, \quad F(0, \tau) = 0, \,\forall \:\tau \in \bb{R},
\end{equation}
and
\begin{equation}
\bigg(\int_{\!-\infty}^\infty \abs{ F(\tfrac{\omega}{2}n, \tau) }^2 \,\rmd\tau\bigg)^\frac{1}{2} = O\bigg( \frac{1}{n^2} \bigg), \quad \forall \:n \in \sZ.
\end{equation}
Then, the equation
\begin{equation}\label{Eq:CSm}
\mu = 1 + \mathcal{C}\Scr{S}\mu,
\end{equation}
has a unique solution $\mu$ in $L^\infty(E)$, holomorphic in $\wpln$ with jump
\begin{equation}
\mu^+(x, y; z) - \mu^-(x, y; z) = F(z)\rme^{-\rmi( z + \bar{z} )x + (z^2 - \bar{z}^2)y} \mu^-( x, y; -\bar{z} )
\end{equation}
across the contours $L_n, \,n\in \sZ$ for all $(x, y) \in \Omega$. Moreover this function $\mu$ solves the perturbed heat equation
$P(\partial + w)\mu = -u\mu$ for a potential $u(x, y)$ which may be represented by the formula
\begin{equation}\label{Eq:solution}
u(x, y) = \frac{1}{\pi}\partial_x \sideset{}{'}\sum_{n = -\infty}^\infty \int_{L_n} (\Scr{S}\mu)(x, y; z) \,\rmd z.
\end{equation}
\end{theorem}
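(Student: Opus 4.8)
The plan is to dispatch the three assertions of the theorem---existence and uniqueness of $\mu$, the prescribed jump across the lines $L_n$, and the reconstruction formula \eqref{Eq:solution}---one at a time, each reducing to machinery already assembled. For existence and uniqueness I would first check that the hypotheses on $F$ are exactly the smallness demanded by proposition \ref{Pr:important}: the pointwise bound places $F$ in $L^\infty(\Lambda)$, while $\int_{-\infty}^\infty\abs{F(\tfrac{\omega}{2}n,\tau)}^2\,\rmd\tau = O(n^{-4})$ together with $\abs{\!\real z} = \tfrac{\omega}{2}\abs{n}$ on $L_n$ makes $\norm{F}_{L^2(\abs{\!\real z})}^2$ a convergent multiple of $\sideset{}{'}\sum_n \abs{n}^{-3}$, so that $F$ is (after adjusting constants) small in $L^2(\abs{\!\real z})\cap L^\infty(\Lambda)$. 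Proposition \ref{Pr:important} then furnishes a unique $\mu \in L^\infty(E)$ solving \eqref{Eq:CSm}, and rewriting \eqref{Eq:CSm} as \eqref{Eq:inverse} together with the fact that $\op{S}\op{l}$ contracts $\Scr{H}_\omega$ (the discussion preceding the theorem, built on proposition \ref{Pr:lemma}) shows $\mu - 1 \in \Scr{H}_\omega$; in particular $\mu$ is holomorphic on $\wpln$ and has one-sided $L^2$ limits $\mu^\pm$ on each $L_n$.

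Next I would extract the jump. Applying $\op{J}$ to \eqref{Eq:CSm} and noting that the constant $1$ continues holomorphically across every $L_n$, we get $\op{J}\mu = \op{J}(\mathcal{C}\Scr{S}\mu)$. Now $\mathcal{C}\Scr{S}\mu$ is precisely $\op{S}$ evaluated on the boundary data $\op{l}\mu$, and the computation recorded just before the theorem (via the Plemelj--Sokhotski formulas) gives $(\op{J}\op{S}g)_n = F(\tfrac{\omega}{2}n,\tau)\rme^{-\rmi\omega n(x - 2\tau y)}g_n$. Specializing to $g = \op{l}\mu$, whose $n$-th component is $\mu^-(x,y;-\bar z)$ along $L_n$, reproduces $\op{J}\mu = F(z)\rme^{-\rmi(z+\bar z)x + (z^2-\bar z^2)y}\mu^-(x,y;-\bar z) = \Scr{S}\mu$, the asserted jump relation.

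The core of the theorem, and what I expect to be the main obstacle, is the reconstruction formula. I would apply $P(\partial + w)$ to \eqref{Eq:CSm} term by term---legitimate because the series defining $\mathcal{C}\Scr{S}\mu$ converges absolutely and, under the assumed decay of $F$, may be differentiated in $x$ and $y$---and use $P(\partial + w)1 = 0$ to get $P(\partial+w)\mu = P(\partial+w)\mathcal{C}\Scr{S}\mu$. Lemma \ref{L:commutators} supplies $[P(\partial+w),\Scr{S}] = 0$ and $[P(\partial+w),\mathcal{C}]\Scr{S}\mu = -\tfrac{1}{\pi}\partial_x\sideset{}{'}\sum_n\int_{L_n}\Scr{S}\mu\,\rmd\zeta$. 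Writing $u(x,y)$ for the right-hand side of \eqref{Eq:solution} and $v = P(\partial+w)\mu$, these identities collapse to
\[
v = \mathcal{C}\Scr{S}v - u, \qquad\text{equivalently}\qquad (\op{Id} - \mathcal{C}\Scr{S})v = -u.
\]
The decisive algebraic point is that $u$ depends only on $(x,y)$, not on $z$, so $\Scr{S}(u h) = u\,\Scr{S}h$ and hence $\mathcal{C}\Scr{S}(u h) = u\,\mathcal{C}\Scr{S}h$; consequently $v = -u\mu$ satisfies the same equation, since $\mathcal{C}\Scr{S}(-u\mu) - u = -u(\mu-1) - u = -u\mu$. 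As $\mathcal{C}\Scr{S}$ is a contraction, its fixed-point equation $(\op{Id}-\mathcal{C}\Scr{S})v = -u$ has a unique solution, forcing $P(\partial+w)\mu = -u\mu$; that is, $\mu$ solves the perturbed heat equation with the potential $u$ given by \eqref{Eq:solution}.

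The genuinely delicate steps are concentrated in this last argument: I must justify the term-by-term differentiation of $\mathcal{C}\Scr{S}\mu$ and confirm that both $v = P(\partial+w)\mu$ and $u\mu$ lie in the space on which $\mathcal{C}\Scr{S}$ is a contraction, so that the uniqueness used to conclude $v = -u\mu$ is legitimate. Both reduce to showing that the decay hypotheses on $F$ propagate to sufficient regularity of $\mu$ in $(x,y)$ and to boundedness on $E$; this is where the $O(n^{-2})$ decay in \eqref{Eq:scatteringbound2} enters, paralleling the role played by the Sobolev hypotheses on $u$ in the forward direction.
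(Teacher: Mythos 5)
Your proposal is correct and takes essentially the same route as the paper's proof: the smallness hypotheses on $F$ feed proposition \ref{Pr:important} and proposition \ref{Pr:lemma} (via the contraction $\op{S}\op{l}$ on $\Scr{H}_\omega$) for existence, holomorphy and the jump, and the commutator identities of lemma \ref{L:commutators} together with the $z$-independence of $u$ yield $P(\partial + w)\mu = -u\mu$. Your fixed-point-uniqueness verification that $v = -u\mu$ is merely an unwound form of the paper's one-line inversion $P(\partial + w)\mu = -u\,(\op{Id} - \mathcal{C}\Scr{S})^{-1}1 = -u\mu$, resting on the same observation that $\mathcal{C}\Scr{S}$ commutes with multiplication by functions of $(x, y)$ alone.
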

\begin{proof}
Proposition \ref{Pr:lemma} and the contraction mapping theorem guarantee a unique solution to~\eqref{Eq:inverse} in $\Scr{H}_\omega$ and hence, to
$\mu = 1 + \mathcal{C}\Scr{S}\mu$. The conditions on $F$ imply that $F$ is small in $L^2( \abs{\!\real z} ) \cap L^\infty(\Lambda)$. Thus, proposition 
\ref{Pr:important} yields that $\mu \in L^\infty(E)$. Also by the smallness assumption, $F \in \ell^\infty( L^2( \bb{R} ) )$. Hence, the term
\[
\sideset{}{'}\sum_{n = -\infty}^\infty \frac{1}{2\pi}\int_{\!-\infty}^\infty
\frac{F(\frac{\omega}{2}n, \tau)\rme^{ -\rmi\omega n(x - 2\tau y) } }{\frac{\omega}{2}n + \rmi\tau - z} \,\rmd\tau
\]
in~\eqref{Eq:inverse} represents a function holomorphic in $\wpln$ with respect to $z$. Thus, again by proposition \ref{Pr:lemma} we see that
\[
\op{J}(\mu - 1)_n(x, y; \tau) = F(\tfrac{\omega}{2}n, \tau)\rme^{ -\rmi\omega n(x - 2\tau y) } \mu^-(x, y; -\tfrac{\omega}{2}n + \rmi\tau),
\]
which implies that across the contour $\real z = \frac{\omega}{2}n, \,\imaginary z = \tau \in \bb{R}$,
\[
\mu^+(x, y; z) - \mu^-(x, y; z) = F(z)\rme^{-\rmi( z + \bar{z} )x + (z^2 - \bar{z}^2)y} \mu^-( x, y; -\bar{z} ).
\]

Now applying $P(\partial + w)$ to both sides of the equation $\mu = 1 + \mathcal{C}\Scr{S}\mu$ and using lemma \ref{L:commutatorss} yields
\[
P(\partial + w)\mu = \mathcal{C}\Scr{S}P(\partial + w)\mu - \frac{1}{\pi}\partial_x \sideset{}{'}\sum_{n = -\infty}^\infty \int_{L_n}
\Scr{S}\mu \,\rmd\zeta.
\]
This last term has no $z$-dependence. Call it $u(x, y)$. Then,
\[
( \op{Id} - \mathcal{C}\Scr{S} )P(\partial + w)\mu = -u(x, y),
\]
hence,
\[
P(\partial + w)\mu = -u(x, y)( \op{Id} - \mathcal{C}\Scr{S} )^{-1} 1 = -u(x, y)\mu.\qedhere
\]
\end{proof}
To successfully complete the inverse spectral transform, we should show that the spectral data associated to the potential $u$ defined in \eqref{Eq:solution} 
coincide with the function $F$ of theorem \ref{Th:inversescattering}. Thus, based on the analysis done in the direct problem, it is of great importance to 
show that $\wh{u}$ is small in $L^2 \cap L^\infty( \Scr{C} )$. A sufficient condition for the right hand side of \eqref{Eq:solution} to make sense is a $F$ to 
be a member of the Schwartz class. However, milder conditions can be found.

Taking the Fourier transform (in $(x, y)$) of equation \eqref{Eq:CSm} we obtain
\[
[\mu - 1]\mh(m, \xi; z) = [\mathcal{C}\Scr{S}1]\mh(m, \xi; z) + [ \mathcal{C}\Scr{S}(\mu - 1) ]\mh(m, \xi; z).
\]
First, we will show that this equation has a unique solution in $L^1 (\Scr{C} )$. 

Now, from theorem \ref{Th:inversescattering}, $\mu$ satisfies equation
$P(\partial + w)\mu = -u\mu$, and taking the Fourier transform once again and splitting the right hand side, we have (recall \eqref{Eq:contractionform})
\begin{equation}
P_z(m, \xi)( \wh{\mu - 1} )(m, \xi; z) = \wh{u\mu}(m, \xi; z) = \wh{u}(m, \xi) + \frac{1}{2\pi}\wh{u} \ast \wh{\mu - 1}(m, \xi; z),
\end{equation}
arriving at the following Fredholm integral equation for $\wh{u}$:
\begin{equation}\label{Eq:fouriertransofu}
\wh{u}(m, \xi) = -\frac{1}{2\pi}( \op{K}_{ [\mu - 1]\mh } \wh{u} )(m, \xi; z) + P_z(m, \xi)( \wh{\mu - 1} )(m, \xi; z),
\end{equation}
where $\op{K}_{ [\mu - 1]\mh }$ denotes the operator ``convolution in $(m, \xi)$ by $\wh{\mu - 1}(m, \xi; z)$'' on $L^p( \Scr{C} )$ for
$1 \leq p \leq \infty$. Since $\wh{\mu - 1}(m, \xi; z)$ is in $L^1( \Scr{C} )$, the first term of equation \eqref{Eq:fouriertransofu} has finite norm on
$L^2 \cap L^\infty( \Scr{C} )$. It will be shown that the other term satisfies the equation
\begin{equation}\label{Eq:RFm}
P_z(m, \xi)( \wh{\mu - 1} )(m, \xi; z) = F( \zeta(m, \xi) ) + P_z(m, \xi)\op{R}_F \wh{\mu - 1}(m, \xi; z),
\end{equation}
for some appropriate operator $\op{R}_F$ depending on the spectral data $F$. This dependence, will indicate appropriate conditions on $F$ for the
smallness of $P_z( \wh{\mu - 1} )$ on $L^2 \cap L^\infty( \Scr{C} )$ and the analysis of the inverse problem will be concluded.
\begin{proposition}
Under the assumptions of theorem \ref{Th:inversescattering}, the equation
\begin{equation}\label{Eq:fouriertransequation}
[\mu - 1]\mh(m, \xi; z) = [\mathcal{C}\Scr{S}1]\mh(m, \xi; z) + [ \mathcal{C}\Scr{S}(\mu - 1) ]\mh(m, \xi; z),
\end{equation}
has a unique solution in $L^1 (\Scr{C} )$, uniformly in $z \in \wpln$. Moreover we have the explicit estimate
\begin{equation}\label{Eq:Loneestimate}
\norm{ \wh{\mu - 1}(\cdot, \cdot; z) }_{L^1 ( \Scr{C} ) } \leq \frac{ \norm{F}_{\Lambda} }{ 1 - \norm{F}_{\Lambda} },
\end{equation}
for all $z \in \wpln$, where
\begin{equation}\label{Eq:scatteringnorm}
\norm{F}_{\Lambda} \= C\max\{ 2\norm{F}_{ L^2( \abs{\!\real z} ) }, \,\norm{F}_{ L^\infty(\Lambda) } \}.
\end{equation}
\end{proposition}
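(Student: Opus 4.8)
The plan is to read \eqref{Eq:fouriertransequation} as an affine fixed-point equation $v = v_0 + \op{T}v$ for $v(m,\xi;z) \= \wh{\mu - 1}(m,\xi;z)$ in the Banach space $X$ of functions with norm $\norm{v}_X \= \sup_{z\in\wpln}\norm{v(\cdot,\cdot;z)}_{L^1(\Scr{C})}$, where $v_0 \= [\mathcal{C}\Scr{S}1]\mh$ is the inhomogeneous term and $\op{T}v \= [\mathcal{C}\Scr{S}(\mu-1)]\mh$ is linear in $v$. I would then invoke Banach's fixed-point theorem: once $v_0 \in X$ and $\op{T}$ is shown to be a contraction of $X$ uniformly in $z$, the equation has a unique solution, and the geometric-series bound yields $\norm{v}_X \leq \norm{v_0}_X/(1-\norm{\op{T}}_\mrm{op})$, from which \eqref{Eq:Loneestimate} follows by monotonicity.

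For the inhomogeneous term I would Fourier transform the first term of \eqref{Eq:inverse} explicitly. Carrying out the $x$- and $y$-integrations (the $x$-integral produces a Kronecker delta pinning the summation index, the $y$-integral a Dirac delta pinning the spectral integration variable), the contour parametrization $\zeta(m,\xi)$ of proposition \ref{Pr:important} reappears and one finds the clean identity $[\mathcal{C}\Scr{S}1]\mh(m,\xi;z) = -\sgn(m)F(\zeta(m,\xi))/P_z(m,\xi)$. Since $F\circ\zeta$ lives on $\Scr{C}^*$ (the index $m=0$ being absent) and belongs to $L^2(\Scr{C}^*)\cap L^\infty(\Scr{C}^*)$, the \hyperlink{L:basiclemma}{Basic Lemma}, together with the norm identities $\norm{F\circ\zeta}_{L^\infty(\Scr{C}^*)} = \norm{F}_{L^\infty(\Lambda)}$ and $\norm{F\circ\zeta}_{L^2(\Scr{C}^*)} = 2\norm{F}_{L^2(\abs{\!\real z})}$ already established in proposition \ref{Pr:important}, gives $\norm{v_0}_X \leq C\max\{2\norm{F}_{L^2(\abs{\!\real z})}, \norm{F}_{L^\infty(\Lambda)}\} = \norm{F}_\Lambda$, uniformly in $z$.

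The core of the argument is the bound on $\op{T}$. Commuting the Fourier transform past $\mathcal{C}$ (which acts only in the spectral variable) and past $\Scr{S}$, the exponential $\rme^{\rmi r_0(\zeta)\cdot(\omega x,y)}$ in $\Scr{S}$ turns into a shift of the $(m,\xi)$ frequencies, while the factor $(\mu-1)^-(x,y;-\bar{\zeta})$ becomes a boundary value $v^-(\cdot,\cdot;-\bar{\zeta})$ of $v$ itself. One then estimates $\norm{\op{T}v(\cdot,\cdot;z)}_{L^1(\Scr{C})}$ by moving absolute values inside; the translation invariance of the $L^1(\Scr{C})$ norm lets one integrate out the shifted frequencies, factoring each contour contribution as $\norm{v^-(\cdot,\cdot;w)}_{L^1(\Scr{C})}$ times $\abs{F(w)}/\abs{w-z}$. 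Bounding $\norm{v^-(\cdot,\cdot;w)}_{L^1(\Scr{C})} \leq \norm{v}_X$ (by Fatou, the boundary values being controlled by the interior supremum, in the spirit of the operator $\op{l}$) and reusing the very same change of variables to $\zeta(n,\tau)$ and the \hyperlink{L:basiclemma}{Basic Lemma} estimate as for $v_0$ yields $\norm{\op{T}v}_X \leq \norm{F}_\Lambda\,\norm{v}_X$. I expect the main obstacle to be exactly this step: correctly tracking the frequency shifts and the reflected boundary-value term $(\mu-1)^-(x,y;-\bar{\zeta})$ when the Fourier transform is pushed through $\Scr{S}$, and then verifying that the resulting kernel meets the hypotheses of the Basic Lemma uniformly in $z \in \wpln$.

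Finally, the smallness hypotheses of theorem \ref{Th:inversescattering} guarantee $\norm{F}_\Lambda < 1$, so $\op{T}$ is a genuine contraction of $X$. Banach's fixed-point theorem then provides a unique $v \in X$, i.e.\ a unique $\wh{\mu-1}(\cdot,\cdot;z) \in L^1(\Scr{C})$ for every $z\in\wpln$, and the geometric-series estimate together with $\norm{v_0}_X \leq \norm{F}_\Lambda$ and $\norm{\op{T}}_\mrm{op} \leq \norm{F}_\Lambda$ gives $\norm{v}_X \leq \norm{F}_\Lambda/(1-\norm{F}_\Lambda)$, which is precisely \eqref{Eq:Loneestimate}.
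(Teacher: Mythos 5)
Your proposal is correct and follows essentially the same route as the paper's own proof: you compute the inhomogeneous term as $[\mathcal{C}\Scr{S}1]\mh = (F \circ \zeta)/P_z$ (the paper's formula carries no $-\sgn(m)$ factor, since the contour orientation is absorbed by the parametrization $\zeta(n,\tau)$, but your spurious sign is harmless because only $\abs{F}$ enters the estimates), you identify $[\mathcal{C}\Scr{S}(\mu - 1)]\mh$ as exactly the shifted-convolution operator the paper names $\op{R}_F$ acting on the boundary values $\wh{f^-}(m - n, \xi - \tau; -\bar{\zeta}(n, \tau))$, and you bound both via the \hyperlink{L:basiclemma}{Basic Lemma} together with the norm identities $\norm{F \circ \zeta}_{L^\infty(\Scr{C}^*)} = \norm{F}_{L^\infty(\Lambda)}$ and $\norm{F \circ \zeta}_{L^2(\Scr{C}^*)} = 2\norm{F}_{L^2(\abs{\!\real z})}$ from proposition \ref{Pr:important}, arriving at the same contraction constant $\norm{F}_{\Lambda}$ and the same geometric-series estimate \eqref{Eq:Loneestimate}. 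The only cosmetic difference is that you run Banach's fixed-point theorem once in the sup-over-$z$ space $X$, whereas the paper contracts in $L^1(\Scr{C})$ for each fixed $z \in \wpln$ and then observes the bound is uniform in $z$.
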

\begin{proof}
First, observe that
\begin{align*}
\mathcal{C}\Scr{S}1(x, y; z) &= \frac{1}{2\pi \rmi}\sideset{}{'}\sum_{n = -\infty}^\infty \int_{L_n} \frac{ \Scr{S}1(x, y; \zeta) }{\zeta - z} \,\rmd\zeta\\
&= \frac{1}{2\pi \rmi}\sideset{}{'}\sum_{n = -\infty}^\infty \int_{L_n} \frac{ F(\zeta)\rme^{ \rmi r_0(\zeta) \cdot (\omega x, y) } }{\zeta - z} \,\rmd\zeta\\
&= \frac{1}{2\pi \rmi}\sideset{}{'}\sum_{n = -\infty}^\infty \int_{\!-\infty}^\infty
\frac{ F( \zeta(n, \tau) )\rme^{ \rmi r_0( \zeta(n, \tau) ) \cdot (\omega x, y) } }{-\frac{\omega}{2}n - \rmi\frac{\tau}{2\omega n}- z}
\Big( \frac{-\rmi}{2\omega n} \Big) \,\rmd\tau\\
&= \frac{1}{2\pi}\sideset{}{'}\sum_{n = -\infty}^\infty \int_{\!-\infty}^\infty
\frac{ F( \zeta(n, \tau) )\rme^{ \rmi(n, \tau) \cdot (\omega x, y) } }{ (\omega n)^2 + 2\omega n z + \rmi\tau } \,\rmd\tau\\
&= \frac{1}{2\pi}\sideset{}{'}\sum_{n = -\infty}^\infty \int_{\!-\infty}^\infty
\frac{ F( \zeta(n, \tau) ) }{ P_z(n, \tau) }\rme^{\rmi\omega n x} \rme^{\rmi\tau y} \,\rmd\tau
= \bigg[ \frac{ F( \zeta(n, \tau) ) }{ P_z(n, \tau) } \bigg]\spcheck\!(x, y; z).
\end{align*}
Thus,
\begin{equation}\label{Eq:fouriertransCS1}
[\mathcal{C}\Scr{S}1]\mh(n, \tau; z) = \frac{ (F \circ \zeta)(n, \tau) }{ P_z(n, \tau) }.
\end{equation}
Furthermore, for a function $f$ in the Schwartz class such that its limits as $z$ approaches $L_n$ from the stripes $S_n$ exist we have
\begin{align}\label{Eq:fouriertransCSf}
[\mathcal{C}\Scr{S}f]\mh(m, \xi; z) &= \bigg[\frac{1}{2\pi \rmi}\sideset{}{'}\sum_{n = -\infty}^\infty \int_{L_n}
\frac{ F(\zeta)\rme^{ \rmi r_0(\zeta) \cdot (\omega x, y) } f^-( x, y; -\bar{\zeta} ) }{\zeta - z} \,\rmd\zeta\bigg]\mh(m, \xi; z) \notag\\
&= \frac{1}{2\ell}\int_{\!-\infty}^\infty \int_{\!-\ell}^\ell \bigg[\frac{1}{2\pi \rmi}\sideset{}{'}\sum_{n = -\infty}^\infty \int_{L_n}
\frac{ F(\zeta)\rme^{ \rmi r_0(\zeta) \cdot (\omega x, y) } f^-( x, y; -\bar{\zeta} ) }{\zeta - z} \,\rmd\zeta\bigg] \notag\\
&\ph{=x} \qquad \times \rme^{-\rmi\omega m x - \rmi\xi y} \,\rmd x\rmd y \notag\\
&= \frac{1}{2\pi \rmi}\sideset{}{'}\sum_{n = -\infty}^\infty \int_{L_n}
\frac{ F(\zeta) }{\zeta - z} \,\wh{f^-}( (m, \xi) - r_0(\zeta); -\bar{\zeta} ) \,\rmd\zeta\notag\\
&= \frac{1}{2\pi}\sideset{}{'}\sum_{n = -\infty}^\infty \int_{\!-\infty}^\infty
\frac{ F( \zeta(n,\tau) ) }{ P_z(n, \tau) } \,\wh{f^-}( m - n, \xi - \tau; -\bar{\zeta}(n, \tau) ) \,\rmd\tau \notag\\
&\equiv \op{R}_F \wh{f}(m, \xi; z).
\end{align}

By ``integrating''~\eqref{Eq:fouriertransCSf} first in $(m, \xi)$ and using Fubini's theorem and the \hyperlink{L:basiclemma}{Basic Lemma} we obtain that 
if the $L^1( \Scr{C} )$ norm of $\wh{f}(m, \xi; z)$ is a bounded function of $z$, then so is the $L^1( \Scr{C} )$ norm of $\op{R}_F \wh{f}(m, \xi; z)$. 
Assuming $F(z)$ to be small in $L^2( \abs{\!\real z} ) \cap L^\infty(\Lambda)$, the map $\wh{f} \mapsto \op{R}_F \wh{f}$ is a contraction of
$L^1( \Scr{C} )$ for every $z \in \wpln$. Likewise, $(F \circ \zeta)(m, \xi)/P_z(m, \xi)$ is in $L^1 (\Scr{C} )$ uniformly in $z$ by the 
\hyperlink{L:basiclemma}{Basic Lemma}. Thus, equation~\eqref{Eq:fouriertransequation} has a unique solution $\wh{\mu - 1}(m, \xi; z)$ in
$L^1 (\Scr{C} )$ uniformly in $z \in \wpln$.

Using equations \eqref{Eq:fouriertransCS1} and \eqref{Eq:fouriertransCSf}, one can see that the function $\wh{\mu - 1}$ satisfies the Fredholm integral
equation
\begin{equation}\label{Eq:newfredholm}
\wh{\mu - 1}(m, \xi; z) = \frac{ (F \circ \zeta)(m, \xi) }{ P_z(m, \xi) } + \op{R}_F \wh{\mu - 1}(m, \xi; z).
\end{equation}
Then, the \hyperlink{L:basiclemma}{Basic Lemma} yields
\begin{equation}
\Norm{ \frac{F \circ \zeta}{P_z} }_{L^1( \Scr{C} ) } \leq \norm{F}_{\Lambda}.
\end{equation}
Also,
\begin{equation}
\norm{ \op{R}_F \wh{\mu - 1} }_{L^1( \Scr{C} ) } \leq \norm{ \wh{\mu - 1} }_{L^1( \Scr{C} ) } \norm{F}_{\Lambda}.
\end{equation}
Thus, we get
\[
\norm{ \wh{\mu - 1}(\cdot, \cdot; z) }_{L^1 ( \Scr{C} ) } \leq \frac{ \norm{F}_{\Lambda} }{ 1 - \norm{F}_{\Lambda} },
\]
which is independent of $z$.
\end{proof}
\begin{remark}
The inverse Fourier transform $\mu(x, y; z)$ of the unique solution of equation \eqref{Eq:fouriertransequation} must be the unique solution to
$\mu = 1 +\mathcal{C}\Scr{S}\mu$. From this we get that for each $z \in \wpln$, $\mu(x, y; z) \to 1$ as
$\abs{y} \to \infty$, which is a direct consequence of the Riemann--Lebesgue lemma. 
\end{remark}

By Young's inequality and the estimate~\eqref{Eq:Loneestimate}, we have the estimate for the norm of the operator $\op{K}_{ [\mu - 1]\mh }$ :
\begin{equation}\label{Eq:young}
\norm{ \op{K}_{ [\mu - 1]\mh } }_\mrm{op} \leq \frac{ \norm{F}_{L_n} }{ 1 - \norm{F}_{L_n} },
\end{equation}
for $1 \leq p \leq \infty$, uniformly in $z \in \wpln$. Therefore, an estimation of the $L^2$ and $L^\infty$ norms of $\op{K}_{ [\mu - 1]\mh } \wh{u}$ is 
immediately provided. Now, multiplying equation \eqref{Eq:newfredholm} by $P_z$ we arrive at equation \eqref{Eq:RFm}. The following lemma which allows 
us to ``commute'' $P_z$ and $\op{R}_F$ shows that \eqref{Eq:RFm} can be written as a Fredholm integral equation:
\begin{equation}\label{Eq:Mfredholm}
P_z \wh{\mu - 1} = F \circ \zeta + \op{A}_F (P_z \wh{\mu - 1} ).
\end{equation}
\begin{lemma}
Suppose $f(m, \xi; z)$ is in the Schwartz class over $\Scr{C} \times \wpln$ and the limits $f^\pm( m, \xi; \zeta )$ as $z$ approaches $L_n$ from the strips 
$S_n$ exist. Then,
\begin{gather}
\op{R}_F f = \op{R}_{ (F/P_z) }(P_z f), \label{Eq:AFidentityone}\\[2pt]
P_z(m, \xi)\op{R}_{ (F/P_z) } f(m, \xi; z) = \op{R}_F f(m, \xi; z) - \op{R}_F f( m, \xi; \zeta(m, \xi) ) \equiv 
\op{A}_F f(m, \xi; z). \label{Eq:AFidentitytwo}
\end{gather}
\end{lemma}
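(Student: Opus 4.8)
The plan is to argue directly from the explicit series form \eqref{Eq:fouriertransCSf} of $\op{R}_F$, which for an input $f(m, \xi; z)$ on $\Scr{C}\times\wpln$ reads
\[
\op{R}_F f(m, \xi; z) = \frac{1}{2\pi}\sum_{n \in \sZ} \int_{-\infty}^\infty \frac{ F( \zeta(n, \tau) ) }{ P_z(n, \tau) } \, f^-( m - n, \xi - \tau; -\bar{\zeta}(n, \tau) ) \,\rmd\tau,
\]
and to reduce both identities to two elementary algebraic facts about the quadratic $P_z(m, \xi) = (\omega m)^2 + 2\omega m z + \rmi\xi$ and the parametrization $\zeta(n, \tau) = -\tfrac{\omega}{2}n - \rmi\tfrac{\tau}{2\omega n}$. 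The first is the shift identity
\[
P_{ -\bar{\zeta}(n, \tau) }(m - n, \xi - \tau) = P_{ \zeta(n, \tau) }(m, \xi),
\]
obtained by expanding both sides. The second is the vanishing $P_{ \zeta(m, \xi) }(m, \xi) = 0$, checked by substitution and reflecting that $(n, \tau) \mapsto \zeta(n, \tau)$ inverts the map $z \mapsto r_0(z)$ of \eqref{Eq:nontrivialzero}, together with the cross-relation $m \, P_{ \zeta(m, \xi) }(n, \tau) = -n \, P_{ \zeta(n, \tau) }(m, \xi)$.

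For \eqref{Eq:AFidentityone} I would read $\op{R}_{ (F/P_z) }$ as $\op{R}_F$ precomposed with division of its argument by the multiplier $P_z$, so that $\op{R}_F f = \op{R}_{ (F/P_z) }(P_z f)$ amounts to cancelling $P_z$ against $1/P_z$ inside the argument. Making this precise uses the shift identity: since the argument of $\op{R}_{ (F/P_z) }$ is evaluated at the shifted spectral point $-\bar{\zeta}(n, \tau)$, the multiplication by $P_z$ there contributes the factor $P_{ -\bar{\zeta}(n, \tau) }(m - n, \xi - \tau) = P_{ \zeta(n, \tau) }(m, \xi)$. Carrying this through yields the closed form
\[
\op{R}_{ (F/P_z) } f(m, \xi; z) = \frac{1}{2\pi}\sum_{n \in \sZ} \int_{-\infty}^\infty \frac{ F( \zeta(n, \tau) ) }{ P_z(n, \tau) \, P_{ \zeta(n, \tau) }(m, \xi) } \, f^-( m - n, \xi - \tau; -\bar{\zeta}(n, \tau) ) \,\rmd\tau,
\]
which I would feed into the second identity.

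For \eqref{Eq:AFidentitytwo} I would multiply the closed form by $P_z(m, \xi)$ and establish the partial-fraction identity
\[
\frac{ P_z(m, \xi) }{ P_z(n, \tau) \, P_{ \zeta(n, \tau) }(m, \xi) } = \frac{1}{ P_z(n, \tau) } - \frac{1}{ P_{ \zeta(m, \xi) }(n, \tau) }.
\]
This is exactly the second algebraic fact: $P_{ \zeta(m, \xi) }(m, \xi) = 0$ gives $P_z(m, \xi) = 2\omega m( z - \zeta(m, \xi) )$, while $P_{ \zeta(m, \xi) }(n, \tau) - P_z(n, \tau) = 2\omega n( \zeta(m, \xi) - z )$, and the cross-relation matches the two expressions. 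Summing term by term, the first fraction reproduces $\op{R}_F f(m, \xi; z)$ and the second reproduces $\op{R}_F f(m, \xi; \zeta(m, \xi))$, i.e.\ the kernel of $\op{R}_F$ with $z$ replaced by $\zeta(m, \xi)$; their difference is precisely $\op{A}_F f$. (Combined with \eqref{Eq:AFidentityone} this is what turns \eqref{Eq:RFm} into \eqref{Eq:Mfredholm}.)

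The hard part is the analytic bookkeeping rather than the algebra. First, $P_z$ is unbounded in $(m, \xi)$, so interchanging it with the sum over $n \in \sZ$ and the $\tau$-integral, and applying $\op{R}_F$ to the Schwartz-class argument, must be justified by absolute convergence; the Schwartz hypothesis on $f$ together with the decay and smallness of $F$ and the \hyperlink{L:basiclemma}{Basic Lemma}-type summability of $1/P_z$ supply this. Second, and more delicately, the point $\zeta(m, \xi)$ lies on the jump contour $L_m$, so $\op{R}_F f(m, \xi; \zeta(m, \xi))$ must be read as a one-sided boundary value: in the term $n = m$ the kernel $1/P_{ \zeta(m, \xi) }(n, \tau)$ degenerates to $1/\rmi(\tau - \xi)$, a Cauchy-type singularity at $\tau = \xi$. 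I expect this to be the main obstacle, and I would treat it exactly as in the proof of theorem \ref{Th:zasympotics}, using the H\"older continuity of $\tau \mapsto F( \zeta(m, \tau) ) f^-(\cdots)$ and the Plemelj--Sokhotski formulas, so that the nontangential limit exists and the cancellation producing the regular left-hand side $P_z(m, \xi)\op{R}_{ (F/P_z) } f$ is legitimate.
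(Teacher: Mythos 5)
Your proposal is correct and follows essentially the same route as the paper: identity \eqref{Eq:AFidentityone} via the shift identity $P_{-\bar{\zeta}(n,\tau)}(m - n, \xi - \tau) = P_{\zeta(n,\tau)}(m, \xi)$ (the paper's $P_{-\bar{\zeta}}( (m,\xi) - r_0(\zeta) ) = P_\zeta(m,\xi)$), and identity \eqref{Eq:AFidentitytwo} via the same partial-fraction decomposition, which the paper writes as $\tfrac{P_z(m,\xi)}{(\zeta - z)P_\zeta(m,\xi)} = \tfrac{1}{\zeta - z} - \tfrac{1}{\zeta - \zeta(m,\xi)}$ using exactly your factorization $P_z(m,\xi) = 2\omega m( z - \zeta(m,\xi) )$. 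Your closing remarks on the analytic bookkeeping (absolute convergence, and the Cauchy-type singularity of the $n = m$ term at $z = \zeta(m,\xi) \in L_m$, regularized consistently on both sides since the residues match) are correct and slightly more careful than the paper, which dispatches the lemma as pure algebra.
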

\begin{proof}
For equation \eqref{Eq:AFidentityone}, observe that
\begin{align*}
\op{R}_{ (F/P_z) }(P_z f) &= \frac{1}{2\pi \rmi}\sideset{}{'}\sum_{n = -\infty}^\infty \int_{L_n} \frac{ F(\zeta) }{ P_\zeta(m, \xi)(\zeta - z) }\\
&\ph{=x} \qquad \times P_{ -\bar{\zeta} }( (m, \xi) - r_0(\zeta) )f^-( (m, \xi) - r_0(\zeta); -\bar{\zeta} ) \,\rmd\zeta.
\end{align*}
The result follows from the definition of $P_z$:
\begin{align*}
P_{ -\bar{\zeta} }( (m, \xi) - r_0(\zeta) ) &= P_{ -\bar{\zeta} }( (m, \xi) - [-( \zeta + \bar{\zeta} )/\omega, -\rmi(\zeta^2 - \bar{\zeta}^2) ] )\\
&= P_{ -\bar{\zeta} }( m + ( \zeta +\bar{\zeta} )/\omega, \xi + \rmi(\zeta^2 - \bar{\zeta}^2) )\\
&= -P(\rmi\omega m + \rmi( \zeta + \bar{\zeta} ) - \rmi\bar{\zeta}, \rmi\xi + \bar{\zeta}^2 - \zeta^2 - \bar{\zeta}^2)\\
&= -P(\rmi\omega m + \rmi\zeta, \rmi\xi - \zeta^2) = P_\zeta(m, \xi),
\end{align*}
hence the two polynomials cancel. Likewise, equation~\eqref{Eq:AFidentitytwo} is a consequence of the following observation:
\begin{align*}
\frac{ P_z(m, \xi) }{ (\zeta - z)P_\zeta(m, \xi) } &= \frac{ (\omega m)^2 + 2\omega m z + \rmi\xi }
{ (\zeta - z)( (\omega m)^2 + 2\omega m\zeta + \rmi\xi ) }\\
&= \frac{\frac{\omega m}{2} + \rmi\frac{\xi}{2\omega m} + z}{ (\zeta - z)(\frac{\omega m}{2} + \rmi\frac{\xi}{2\omega m} + \zeta) }
= \frac{-\zeta(m, \xi) + z}{ (\zeta - z)(-\zeta(m, \xi) + \zeta) }\\
&= \frac{1}{\zeta - z} - \frac{1}{ \zeta - \zeta(m, \xi) }.\qedhere
\end{align*}
\end{proof}

In order to ensure continuity of $\op{A}_F$ on $L^2 \cap L^\infty( \Scr{C} )$ we will restrict $F(z)$ to be a member of an appropriate
subspace of $L^2( \abs{\!\real z} ) \cap L^\infty(\Lambda)$, achieving the desired behaviour of $\op{A}_F$.
\begin{definition}
Let $k$ be a nonnegative integer and for $(a, b) \in \bb{C}^2$ set
\[
\langle a, b \rangle^k \equiv ( 1 + \abs{ (a, b) } )^k.
\]
We define the $k$\textsuperscript{th} weighted subspace of $L^2 \cap L^\infty( \Scr{C} )$ as
\[
W^k \equiv W^k ( L^2 \cap L^\infty( \Scr{C} ) ) \=
\{ f(q) \in L^2 \cap L^\infty( \Scr{C} ) \colon \langle q \rangle^k \!f(q) \in L^2 \cap L^\infty( \Scr{C} ) \}.
\]
\end{definition}
\noi This is a Banach space with the norm
\[
\norm{f}_{W^k} \= \sum_{j = 0}^k \binom{k}{j}\norm{ \langle q \rangle^j \!f(q) }_{ L^2 \cap L^\infty( \Scr{C} ) },
\]
where
\[
\norm{f}_{ L^2 \cap L^\infty( \Scr{C} ) } = \norm{f}_{ L^2( \Scr{C} ) } + \norm{f}_{ L^\infty( \Scr{C} ) }.
\]
\begin{definition}
The $k$\textsuperscript{th} weighted subspace of $L^2( \abs{\!\real z} ) \cap L^\infty(\Lambda)$ is denoted by $W_{\zeta}^k \equiv
W_{\zeta}^k ( L^2( \abs{\!\real z} ) \cap L^\infty(\Lambda) )$ and consists of those functions $f(z)$ for which $f \circ \zeta(q) \in W^k$.
This is a Banach space with the norm
\[
\norm{f}_{W_\zeta^k} \= \norm{f \circ \zeta}_{W^k}.
\]
Finally, if $f(q; z) \in L^\infty(\Scr{C} \times \wpln)$, introduce the function
\[
f^\star(q) \= \essup_{z \in \wpln} \abs{ f(q; z) },
\]
and define the $k$\textsuperscript{th} weighted max subspace
\begin{align*}
W_\infty^k = W_\infty^k ( L^\infty(\Scr{C} \times \wpln) ) &\= \{f \in L^\infty(\Scr{C} \times \wpln) \colon \langle q \rangle^j \!f^\star(q) \\
&\ph{\=} \quad \text{ is essentially bounded for all } 0 \leq j \leq k\}.
\end{align*}
\end{definition}
\noi Again, this is a Banach space with the norm
\[
\norm{f}_{W_\infty^k} \= \max_{0 \leq j \leq k} \norm{ \langle q \rangle^j \!f^\star(q) }_{ L^\infty( \Scr{C} ) }.
\]

The spaces $W^k$ and $W_\infty^k$ satisfy the following embedding properties.
\begin{proposition}\label{Pr:embeddingthm}
For every nonnegative integer $k$,
\begin{equation}
W^k \subset W_\infty^k \;\text{ and }\; \norm{f}_{W_\infty^k} \leq \norm{f}_{W^k}, \ \text{ for } f \in W^k.
\end{equation}
Moreover, $W_\infty^{k + 2} \subset W^k$ in the sense that if $f \in W_\infty^{k + 2}$, then $f^\star \in W^k$ and the embedding inequality
\begin{equation}\label{Eq:embedding}
\norm{f^\star}_{W^k} < 3 \cdot 2^k \norm{f}_{ W_\infty^{k + 2} }
\end{equation}
holds.
\end{proposition}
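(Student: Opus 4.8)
The plan is to handle the two inclusions separately: the first is essentially formal, and the second rests on a single convergent series estimate.

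For the easy direction $W^k \subset W_\infty^k$, I would regard an element $f \in W^k$ as a $z$-independent element of $L^\infty(\Scr{C} \times \wpln)$, so that $f^\star(q) = \essup_{z \in \wpln} \abs{f(q)} = \abs{f(q)}$. Since $\langle q \rangle \geq 1$ everywhere, for $0 \leq j \leq k$ one has $\langle q \rangle^j \abs{f(q)} \leq \langle q \rangle^k \abs{f(q)}$, and the right-hand side is in $L^\infty(\Scr{C})$ because $f \in W^k$; hence each $\langle q \rangle^j f^\star$ is essentially bounded and $f \in W_\infty^k$. For the norm, I would use $\norm{\langle q \rangle^j f^\star}_{L^\infty(\Scr{C})} = \norm{\langle q \rangle^j f}_{L^\infty(\Scr{C})} \leq \norm{\langle q \rangle^j f}_{L^2 \cap L^\infty(\Scr{C})} \leq \binom{k}{j}\norm{\langle q \rangle^j f}_{L^2 \cap L^\infty(\Scr{C})}$, and then bound the maximum over $j$ by the full sum to get $\norm{f}_{W_\infty^k} \leq \norm{f}_{W^k}$.

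For the substantive direction $W_\infty^{k+2} \subset W^k$, fix $f \in W_\infty^{k+2}$ and let $0 \leq j \leq k$. The $L^\infty$ piece is immediate: $\norm{\langle q \rangle^j f^\star}_{L^\infty(\Scr{C})} \leq \norm{f}_{W_\infty^{k+2}}$, since it is one of the terms in the maximum defining the $W_\infty^{k+2}$-norm. The work is in the $L^2(\Scr{C})$ piece, and the idea is to trade the two extra powers of $\langle q \rangle$ for decay: writing $\langle q \rangle^j \abs{f^\star(q)} = \langle q \rangle^{\,j-(k+2)}\bigl(\langle q \rangle^{k+2}\abs{f^\star(q)}\bigr) \leq \norm{f}_{W_\infty^{k+2}}\,\langle q \rangle^{\,j-(k+2)} \leq \norm{f}_{W_\infty^{k+2}}\,\langle q \rangle^{-2}$, where the last step uses $j-(k+2) \leq -2$ together with $\langle q \rangle \geq 1$. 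This reduces everything to showing $\langle q \rangle^{-2} \in L^2(\Scr{C})$ with norm strictly less than $2$.

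The key estimate — which I expect to be the main obstacle, since the final constant $3 \cdot 2^k$ is stated with strict inequality — is $\norm{\langle q \rangle^{-2}}_{L^2(\Scr{C})}^2 = \sum_{m \in \bb{Z}} \int_{-\infty}^\infty (1 + \sqrt{m^2 + \xi^2})^{-4} \,\rmd\xi < 4$. The clean pointwise bound $(1+r)^2 \geq 1 + r^2$ with $r = \sqrt{m^2+\xi^2}$ gives $(1+\sqrt{m^2+\xi^2})^{-4} \leq (1 + m^2 + \xi^2)^{-2}$, after which the $\xi$-integral is elementary, $\int_{-\infty}^\infty (1 + m^2 + \xi^2)^{-2}\,\rmd\xi = \tfrac{\pi}{2}(1+m^2)^{-3/2}$. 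It then remains to control $\sum_{m \in \bb{Z}}(1+m^2)^{-3/2}$; an integral comparison (using that $(1+x^2)^{-3/2}$ is decreasing and $\int_1^\infty (1+x^2)^{-3/2}\,\rmd x = 1 - 1/\sqrt{2}$) keeps the sum small enough that $\tfrac{\pi}{2}\sum_{m}(1+m^2)^{-3/2} < 4$, hence $\norm{\langle q \rangle^{-2}}_{L^2(\Scr{C})} < 2$. The delicate point is carrying enough sharpness through these steps to stay strictly below $2$, as this is exactly what pins the final constant at $3 \cdot 2^k$.

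Finally I would assemble the pieces. For each $0 \leq j \leq k$,
\[
\norm{\langle q \rangle^j f^\star}_{L^2 \cap L^\infty(\Scr{C})} = \norm{\langle q \rangle^j f^\star}_{L^2(\Scr{C})} + \norm{\langle q \rangle^j f^\star}_{L^\infty(\Scr{C})} < (2 + 1)\norm{f}_{W_\infty^{k+2}} = 3\norm{f}_{W_\infty^{k+2}},
\]
which in particular shows $f^\star \in W^k$. Multiplying by $\binom{k}{j}$, summing over $0 \leq j \leq k$, and using $\sum_{j=0}^k \binom{k}{j} = 2^k$ then yields
\[
\norm{f^\star}_{W^k} = \sum_{j=0}^k \binom{k}{j}\norm{\langle q \rangle^j f^\star}_{L^2 \cap L^\infty(\Scr{C})} < 3\Bigl(\sum_{j=0}^k \binom{k}{j}\Bigr)\norm{f}_{W_\infty^{k+2}} = 3 \cdot 2^k \norm{f}_{W_\infty^{k+2}},
\]
which is the claimed embedding inequality \eqref{Eq:embedding}.
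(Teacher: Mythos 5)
Your proof is correct and follows essentially the same route as the paper: both embeddings are obtained by trading the two extra powers of $\langle q \rangle$ against the square-integrability of $\langle q \rangle^{-2}$ with $\norm{1/\langle q \rangle^2}_{L^2} < 2$, and the constant $3 \cdot 2^k$ arises identically from $\sum_{j=0}^k \binom{k}{j} = 2^k$ applied to the bound $\norm{\langle q \rangle^j f^\star}_{L^2 \cap L^\infty(\Scr{C})} < 3\norm{f}_{W_\infty^{k+2}}$. The only (welcome) difference is that you verify the key numerical estimate $\norm{\langle q \rangle^{-2}}_{L^2(\Scr{C})} < 2$ explicitly — and on all of $\Scr{C}$, including $m = 0$ — whereas the paper simply asserts it on $\Scr{C}^*$.
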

\begin{proof}
The first embedding follows immediately from the definitions of these spaces and $f^\star$ (notice that if $f \in W^k$, then $f^\star = \abs{f}$).
Likewise, since $1/\langle q \rangle^2$ belongs to $L^2(\Scr{C}^*)$ with norm less than 2 and $f \in W_\infty^{k + 2}$, then
$\langle q \rangle^j \!f^\star(q)$ is essentially bounded for all $0 \leq j \leq k + 2$, in particular $f^\star(q) \in L^\infty( \Scr{C} )$
and $\langle q \rangle^k \!f^\star(q) \in L^\infty( \Scr{C} )$. Also, for all $0 \leq r \leq k$
\begin{align*}
\sideset{}{'}\sum_{m = -\infty}^\infty \int_{\!\infty}^\infty \abs{ \langle m, \xi \rangle^r \!f^\star(m, \xi) }^2 \,\rmd\xi &=
\sideset{}{'}\sum_{m = -\infty}^\infty \int_{\!\infty}^\infty
\frac{\abs{ \langle m, \xi \rangle^{r + 2} \!f^\star(m, \xi) }^2}{\abs{ \langle m, \xi \rangle^2 }^2} \,\rmd\xi\\
&\leq \norm{ \langle q \rangle^{r + 2} \!f^\star(q) }_{ L^\infty( \Scr{C} ) }^2 \Norm{ \frac{1}{\langle q \rangle^2} }_{ L^2(\Scr{C}^*) }^2,
\end{align*}
hence, $\langle q \rangle^r \!f^\star(q) \in L^2( \Scr{C} )$ and so $f^\star(q) \in L^2( \Scr{C} )$ and
$\langle q \rangle^k \!f^\star(q) \in L^2( \Scr{C} )$ as well. Therefore, $f^\star \in W^k$. Comparing norms,
\begin{align*}
\norm{f^\star}_{W^k} &= \sum_{j = 0}^k \binom{k}{j}\norm{ \langle m, \xi \rangle^j \!f^\star(m, \xi) }_{ L^2 \cap L^\infty( \Scr{C} ) }\\
&= \sum_{j = 0}^k \binom{k}{j}
\big[ \norm{ \langle m, \xi \rangle^j \!f^\star(m, \xi) }_{L^2( \Scr{C} ) } + \norm{ \langle m, \xi \rangle^j \!f^\star(m, \xi) }_{L^\infty( \Scr{C} ) } \big]\\
&\leq \sum_{j = 0}^k \binom{k}{j} \bigg[ \norm{ \langle m, \xi \rangle^{j + 2} \!f^\star(m, \xi) }_{ L^\infty( \Scr{C} ) }
\Norm{ \frac{1}{\langle m, \xi \rangle^2} }_{ L^2( \Scr{C} ) } + \norm{f}_{ W_\infty^{k + 2} } \bigg]\\
&< \sum_{j = 0}^k \binom{k}{j} \big[ \,2\norm{f}_{ W_\infty^{k + 2} } + \norm{f}_{ W_\infty^{k + 2} } \big].
\end{align*}
Thus, we get the inequality \eqref{Eq:embedding}.
\end{proof}

We can now obtain a bound for $\op{A}_F$ on these subspaces.
\begin{lemma}
Let $F \in W_\zeta^k$ and $f \in W_\infty^k$. Then, for $k \in \bb{N}_0$
\[
\op{A}_F f \in W_\infty^k,
\]
and
\[
\norm{\op{A}_F}_\mrm{op} \leq \frac{C}{\pi} \norm{F}_{W_\zeta^k}.
\]
\end{lemma}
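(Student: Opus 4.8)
The plan is to reduce the whole estimate to the \hyperlink{L:basiclemma}{Basic Lemma}, working from the two representations of $\op{A}_F$ obtained in the preceding lemma, namely $\op{A}_F f = P_z\op{R}_{(F/P_z)}f$ and the difference form
\[
\op{A}_F f(m,\xi;z)=\op{R}_F f(m,\xi;z)-\op{R}_F f\big(m,\xi;\zeta(m,\xi)\big)
\]
of \eqref{Eq:AFidentitytwo}, together with the explicit parametrized kernel of $\op{R}_F$ in \eqref{Eq:fouriertransCSf}. To bound $\norm{\op{A}_F f}_{W_\infty^k}$ I would control $\langle q\rangle^j(\op{A}_F f)^\star(q)$ in $L^\infty(\Scr{C})$ for each $0\le j\le k$, exploiting that the kernel estimates coming from the Basic Lemma are uniform in $z\in\wpln$, so that the supremum defining $f^\star$ costs nothing.

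The arithmetic heart is to distribute the weight $\langle q\rangle^j$ across the convolution. Writing the output frequency $q=(m,\xi)$ as $a+b$ with $a=(m-n,\xi-\tau)$ the argument of $f$ and $b=(n,\tau)=r_0(\zeta(n,\tau))$ the frequency carried by $F$, the elementary inequality
\[
\langle q\rangle^j=(1+\abs{a+b})^j\le(1+\abs{a}+\abs{b})^j=\sum_{i=0}^j\binom{j}{i}(1+\abs{a})^{j-i}\abs{b}^i\le\sum_{i=0}^j\binom{j}{i}\langle a\rangle^{j-i}\langle b\rangle^i
\]
splits the estimate into pieces indexed by $i$. In the $i$-th piece the factor $\langle a\rangle^{j-i}\abs{f}$ is dominated pointwise by $\norm{f}_{W_\infty^k}$ (since $j-i\le k$) and pulled out of the sum-integral, leaving $\Norm{\langle\cdot\rangle^i(F\circ\zeta)/P_z}_{L^1(\Scr{C}^*)}$; since $(F\circ\zeta)(0,\cdot)=0$ because the summation omits $n=0$, the Basic Lemma bounds this by $C\norm{\langle\cdot\rangle^i(F\circ\zeta)}_{L^2\cap L^\infty(\Scr{C}^*)}$. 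Summing over $i$ against the binomial coefficients reconstructs a quantity bounded by $C\norm{F\circ\zeta}_{W^k}=C\norm{F}_{W_\zeta^k}$, and the prefactor $\tfrac{1}{2\pi}$ of each $\op{R}_F$ yields $\tfrac{C}{2\pi}$ per Cauchy integral; the two terms of the difference should combine to the asserted $\tfrac{C}{\pi}$.

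The step I expect to be the genuine obstacle is the second term $\op{R}_F f(\cdot\,;\zeta(m,\xi))$, where the kernel is evaluated at $z=\zeta(m,\xi)$, a point lying \emph{on} the contour $L_m$ and hence outside $\wpln$. There the Basic Lemma's uniform constant degenerates: the $n=m$ summand has denominator $P_{\zeta(m,\xi)}(m,\tau)=\rmi(\tau-\xi)$, producing a Cauchy-principal-value (Hilbert-transform) singularity in $\tau$ that the $L^1$ estimate cannot absorb. I would handle this by keeping the difference intact rather than splitting it: the two Cauchy integrals share the same singular $n=m$ contribution, and by the Plemelj--Sokhotski relations the difference between a Cauchy integral at an interior $z$ and its principal value on $L_m$ stays bounded, the singular parts cancelling up to a $\tfrac12$-jump. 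Quantitatively, I would control this near-diagonal piece through the $L^2$ half of the $L^2\cap L^\infty$ norm—where the Hilbert transform \emph{is} bounded—and transfer back to $W_\infty^k$ using the embeddings of Proposition \ref{Pr:embeddingthm} (and the trace bound $\norm{\op{l}\,\cdot}_{2,\infty}\le\norm{\cdot}_\omega$), so that the singular contribution still fits within the $\tfrac{C}{2\pi}$ budget allotted to the second term. The off-diagonal part $n\neq m$ causes no trouble, since then $\real\zeta(m,\xi)=-\tfrac{\omega}{2}m$ stays at distance $\ge\tfrac{\omega}{2}$ from the remaining lines and the Basic Lemma applies with a uniform constant.
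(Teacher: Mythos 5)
Your main computation coincides with the paper's proof: the triangle inequality reduces $\op{A}_F$ to two copies of $\op{R}_F$, the weight $\langle q \rangle^k$ is split by the binomial inequality $\langle m, \xi \rangle^k \leq \sum_{j} \binom{k}{j} \langle m - n, \xi - \tau \rangle^{k - j} \langle n, \tau \rangle^j$ between the argument of $f$ and the frequency carried by $F$, the $f$-factor is pulled out at cost $\norm{f}_{W_\infty^k}$, and the \hyperlink{L:basiclemma}{Basic Lemma} (applicable since the sum omits $n = 0$) bounds the remaining weighted kernel by $\tfrac{C}{2\pi}\norm{F}_{W_\zeta^k}$ uniformly in $z \in \wpln$; the factor $2$ then yields $\tfrac{C}{\pi}$, exactly as in the paper.

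Where you diverge is the term $\op{R}_F f( m, \xi; \zeta(m, \xi) )$, and here your diagnosis is right but your repair is not. It is true that at $z = \zeta(m, \xi) \in L_m$ the $n = m$ denominator degenerates to $\rmi(\tau - \xi)$, so the absolute-value estimate fails there (a point the paper's proof silently passes over). But first, your cancellation picture is wrong: for interior $z \in \wpln$ the kernel $\tfrac{1}{\zeta - z}$ is regular on $L_m$, so in the difference $\tfrac{1}{\zeta - z} - \tfrac{1}{\zeta - \zeta(m, \xi)}$ only the second term is singular and there is no shared singular part to cancel. Second, the quantitative transfer you propose does not work: an $L^2(\rmd\tau)$ bound on the near-diagonal Hilbert-transform piece cannot be converted into the $L^\infty$-in-$q$ control demanded by $W_\infty^k$ via Proposition \ref{Pr:embeddingthm} — the embedding $W_\infty^{k + 2} \subset W^k$ goes the wrong way and costs two weights, while $W^k \subset W_\infty^k$ presupposes precisely the weighted $L^2 \cap L^\infty$ control in $q$ you are trying to establish — and the trace bound $\norm{\op{l}f}_{2, \infty} \leq \norm{f}_\omega$ concerns the Hardy space $\Scr{H}_\omega$ in the $z$-variable and has no bearing on this $(m, \xi)$-estimate. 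The correct fix is much simpler and is already contained in your first two paragraphs: by the derivation of \eqref{Eq:AFidentitytwo}, $\op{R}_F f( m, \xi; \zeta(m, \xi) )$ is the non-tangential boundary value of the function $z \mapsto \op{R}_F f(m, \xi; z)$, which your uniform estimate bounds everywhere on $\wpln$; hence $\abs{ \op{R}_F f( m, \xi; \zeta(m, \xi) ) } \leq (\op{R}_F f)^\star(m, \xi)$ with no principal-value analysis at all. This is exactly the content of the paper's opening line $\norm{\op{A}_F f}_{W_\infty^k} \leq 2\norm{\op{R}_F f}_{W_\infty^k}$, and with it your argument closes with the stated constant.
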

\begin{proof}
By the definition of $\op{A}_F$ and the triangle inequality
\[
\norm{\op{A}_F f}_{W_\infty^k} \leq 2\norm{\op{R}_F f}_{W_\infty^k}.
\]
Now
\begin{align*}
\langle m, \xi \rangle = 1 + \abs{ (m, \xi) } &= 1 + \abs{ (m - m', \xi - \xi') + (m', \xi') }\\
&\leq 1 + \abs{ (m - m', \xi - \xi') } + \abs{ (m', \xi') }\\
&< \langle m - m', \xi - \xi' \rangle + \langle m', \xi' \rangle.
\end{align*}
Thus, by the binomial theorem
\[
\langle m, \xi \rangle^k \leq \sum_{j = 0}^k \binom{k}{j} \langle m - m', \xi - \xi' \rangle^{k - j} \langle m', \xi' \rangle^j.
\]
Therefore,
\begin{align*}
\abs{ \langle m, \xi \rangle^k \op{R}_F f(m, \xi; z) }&\leq \frac{1}{2\pi}\sideset{}{'}\sum_{n = -\infty}^\infty \int_{\!-\infty}^\infty
\Abs{ \frac{ \langle m, \xi \rangle^k F( \zeta(n,\tau) ) }{ P_z(n, \tau) } }\\
&\ph{\leq x} \quad \times \abs{ f^-( m - n, \xi - \tau; -\bar{\zeta}(n, \tau) ) } \,\rmd\tau\\
&\leq \frac{1}{2\pi}\sum_{j = 0}^k \binom{k}{j} \sideset{}{'}\sum_{n = -\infty}^\infty \int_{\!-\infty}^\infty
\Abs{ \frac{ \langle n, \tau \rangle^j F( \zeta(n,\tau) ) }{ P_z(n, \tau) } }\\
&\ph{\leq x} \quad \times \abs{ \langle m - n, \xi - \tau \rangle^{k - j} f^-( m - n, \xi - \tau; -\bar{\zeta}(n, \tau) ) } \,\rmd\tau\\
&\leq \frac{1}{2\pi}\norm{f}_{W_\infty^k} \sum_{j = 0}^k \binom{k}{j}\sideset{}{'}\sum_{n = -\infty}^\infty \int_{\!-\infty}^\infty
\Abs{ \frac{ \langle n, \tau \rangle^j F \circ \zeta(n,\tau) }{ P_z(n, \tau) } } \,\rmd\tau\\
&\leq \frac{C}{2\pi}\norm{f}_{W_\infty^k}
\sum_{j = 0}^k \binom{k}{j}\norm{ \langle n, \tau \rangle^j F \circ \zeta(n,\tau) }_{ L^2 \cap L^\infty( \Scr{C} ) }\\
&= \frac{C}{2\pi}\norm{f}_{W_\infty^k} \norm{F}_{W_\zeta^k}.\qedhere
\end{align*}
\end{proof}

Finally we have the following theorem.
\begin{theorem}\label{Th:inversedecay}
Suppose that $(1 + (\real z)^2 + (\real z \imaginary z)^2)F(z)$ is sufficiently small in $L^2( \abs{\!\real z} ) \cap L^\infty(\Lambda)$. Then, there exists a 
function $u(x, y) \in L^2(\Omega)$ with bounded Fourier transform, such that $F(z)$ is the spectral data associated to $u$.
\end{theorem}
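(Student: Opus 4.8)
The plan is to prove the theorem by showing that the potential $u$ reconstructed in Theorem~\ref{Th:inversescattering} through formula~\eqref{Eq:solution} has Fourier transform $\wh u \in L^2( \Scr{C} ) \cap L^\infty( \Scr{C} )$: membership in $L^2( \Scr{C} )$ yields $u \in L^2(\Omega)$ by Plancherel's identity $\norm{ \wh u }_{ L^2( \Scr{C} ) } = \sqrt{\omega}\norm{u}_2$, while membership in $L^\infty( \Scr{C} )$ is precisely the boundedness of the Fourier transform. The first move is to decode the hypothesis. Under the parametrization $\zeta(n, \tau)$ of the contours $L_n$ one has $r_0( \zeta(n, \tau) ) = (n, \tau)$, $\re{ \zeta } = -\tfrac{\omega}{2}n$ and $\re{ \zeta }\im{ \zeta } = \tfrac{1}{4}\tau$, so the weight $1 + (\re{z})^2 + (\re{z}\im{z})^2$ pulls back along $\zeta$ to a quantity comparable to $\langle n, \tau \rangle^2$. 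Combined with the isometries $\norm{ F \circ \zeta }_{ L^\infty( \Scr{C}^* ) } = \norm{F}_{ L^\infty(\Lambda) }$ and $\norm{ F \circ \zeta }_{ L^2( \Scr{C}^* ) }^2 = 4\norm{F}_{ L^2( \abs{\!\real z} ) }^2$, the smallness assumption is exactly the statement that $F \in W_\zeta^2$ with $\norm{F}_{ W_\zeta^2 }$ small.

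With this in hand I would first control the inhomogeneous term $P_z \wh{\mu - 1}$. Since $\norm{ \op{A}_F }_\mrm{op} \leq \tfrac{C}{\pi}\norm{F}_{ W_\zeta^2 } < 1$ for $F$ small, the operator $\op{Id} - \op{A}_F$ is invertible on $W_\infty^2$, and because $F \circ \zeta \in W^2 \subset W_\infty^2$ (Proposition~\ref{Pr:embeddingthm}), the Fredholm equation~\eqref{Eq:Mfredholm}, $P_z \wh{\mu - 1} = F \circ \zeta + \op{A}_F( P_z \wh{\mu - 1} )$, has the unique solution $P_z \wh{\mu - 1} = (\op{Id} - \op{A}_F)^{-1}( F \circ \zeta ) \in W_\infty^2$, with norm dominated by $\norm{F}_{ W_\zeta^2 }/( 1 - \tfrac{C}{\pi}\norm{F}_{ W_\zeta^2 } )$. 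I would then invoke the embedding $W_\infty^{k + 2} \subset W^k$ of Proposition~\ref{Pr:embeddingthm} with $k = 0$, i.e. inequality~\eqref{Eq:embedding}, to conclude that $( P_z \wh{\mu - 1} )^\star \in L^2( \Scr{C} ) \cap L^\infty( \Scr{C} )$. In particular $P_z \wh{\mu - 1}( \cdot, \cdot; z )$ lies in $L^2( \Scr{C} ) \cap L^\infty( \Scr{C} )$ with a bound uniform in $z \in \wpln$.

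Next I would solve the Fredholm equation~\eqref{Eq:fouriertransofu} for $\wh u$. The convolution operator $\op{K}_{ [\mu - 1]\mh }$ is bounded on $L^2( \Scr{C} ) \cap L^\infty( \Scr{C} )$ with operator norm at most $\norm{F}_{\Lambda}/( 1 - \norm{F}_{\Lambda} )$ by Young's inequality~\eqref{Eq:young} together with the $L^1$ estimate~\eqref{Eq:Loneestimate}; for $F$ small this is small, so $\op{Id} + \tfrac{1}{2\pi}\op{K}_{ [\mu - 1]\mh }$ is invertible and the Neumann series
\[
\wh u = \sum_{j = 0}^\infty \Big( -\tfrac{1}{2\pi}\op{K}_{ [\mu - 1]\mh } \Big)^j ( P_z \wh{\mu - 1} )
\]
converges in $L^2( \Scr{C} ) \cap L^\infty( \Scr{C} )$ to the (necessarily $z$-independent) Fourier transform of the reconstructed potential. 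This places $\wh u$ in $L^2( \Scr{C} ) \cap L^\infty( \Scr{C} )$, yielding $u \in L^2(\Omega)$ with bounded Fourier transform. Finally, that $F$ is the spectral data of this $u$ is built into the construction: by Theorem~\ref{Th:inversescattering} the solution $\mu$ satisfies $P(\partial + w)\mu = -u\mu$, tends to $1$ as $\abs{ \im{z} } \to \infty$, and has jump $\mu^+ - \mu^- = \Scr{S}\mu$ with data $F$ across each $L_n$; reading the spectral data off this jump exactly as in Theorem~\ref{Th:departurefromholomorphicity}---equivalently, taking the Plemelj boundary value $z \to \zeta(m, \xi)$ in $\wh{u\mu} = P_z \wh{\mu - 1}$, which returns $F( \zeta(m, \xi) ) = (F \circ \zeta)(m, \xi)$---identifies $F(z) = -\sgn( \re{z} )\wh{u\mu^+}( r_0(z); z )$.

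I expect the main obstacle to be the middle step: producing $P_z \wh{\mu - 1} \in L^2( \Scr{C} ) \cap L^\infty( \Scr{C} )$ uniformly in $z$. The factor $P_z$ vanishes on the jump contours, so a direct estimate of $P_z \wh{\mu - 1}$ is hopeless; it is precisely to tame this degeneracy that the weighted spaces $W_\infty^k$ and $W_\zeta^k$, the contraction estimate for $\op{A}_F$, and the two-order gain in the embedding $W_\infty^{k + 2} \subset W^k$ are engineered, and the two powers of the weight $(1 + (\re{z})^2 + (\re{z}\im{z})^2)$ present in the hypothesis are consumed exactly by that embedding.
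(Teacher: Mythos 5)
Your first three steps coincide with the paper's own proof: decoding the weight hypothesis as smallness of $\norm{F}_{W_\zeta^2}$, solving the Fredholm equation \eqref{Eq:Mfredholm} by contraction in $W_\infty^2$ (via the bound $\norm{\op{A}_F}_\mrm{op} \leq \tfrac{C}{\pi}\norm{F}_{W_\zeta^2}$) and descending through the embedding $W_\infty^{2} \subset W^0 = L^2 \cap L^\infty(\Scr{C})$ of proposition \ref{Pr:embeddingthm}, then solving \eqref{Eq:fouriertransofu} for $\wh{u}$ by a Neumann series controlled by Young's inequality \eqref{Eq:young} and the $L^1$ estimate \eqref{Eq:Loneestimate} --- all uniformly in $z$. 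This part is correct and is exactly the paper's estimate \eqref{Eq:2inftyestimate}.

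The gap is in your closing identification. You propose to read the spectral data off the jump ``exactly as in theorem \ref{Th:departurefromholomorphicity}'', but that theorem's hypotheses --- $u$ small in $L^1(\Omega) \cap L^2(\Omega)$, $y u \in L^1(\Omega)$, $u_x, u_y \in L^1(\Omega) \cap L^2(\Omega)$ --- are not known for the reconstructed potential: your construction yields only $u \in L^2(\Omega)$ with bounded Fourier transform, and up to this point the phrase ``spectral data associated to $u$'' is \emph{defined} only for potentials in the $L^1 \cap L^2$ ball. The paper closes this hole with two moves that your proposal omits. First, it observes that the fixed-point argument in the proof of theorem \ref{Th:exist_unique} uses $u$ only through $\wh{u} \in L^2(\Scr{C}) \cap L^\infty(\Scr{C})$ (together with $\wh{u}(0, \xi) = 0$), so the reconstructed $u$ still admits a \emph{unique} eigenfunction $\mu'$ with $\wh{\mu' - 1} \in L^1(\Scr{C})$; this permits extending the forward transform to the ball $\{ u \in L^2(\Omega) \colon \max\{\sqrt{\omega}\norm{u}_2, \,\norm{\wh{u}}_\infty\} < \tfrac{2\pi}{C} \}$ by setting $\Scr{S}'\mu' = \mu'^+ - \mu'^-$, consistently with \eqref{Eq:scatteringoperator} on the smaller ball. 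Second, since the inverse-problem solution $\mu$ of theorem \ref{Th:inversescattering} also solves $P(\partial + w)\mu = -u\mu$ and has $\wh{\mu - 1} \in L^1(\Scr{C})$ by \eqref{Eq:Loneestimate}, uniqueness forces $\mu = \mu'$, hence $\Scr{S}' = \Scr{S}$. Your alternative ``Plemelj boundary value'' computation in $\wh{u\mu} = P_z\wh{\mu - 1}$ does not substitute for this: $\op{A}_F$ is the difference of two Cauchy-type integrals, one of which is evaluated at the singular point $\zeta(m, \xi) \in L_m$, so letting $z \to \zeta(m, \xi)$ produces one-sided half-residue contributions that you have not tracked (this is also precisely where the factor $-\sgn(\re{z})$ in \eqref{Eq:scatteringdata} would have to emerge); and even granting that identity, it would identify $F$ with the jump data of the inverse-problem $\mu$, not --- absent the uniqueness step above --- with the spectral data that the direct problem associates to $u$.
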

\begin{proof}
Using equation \eqref{Eq:Mfredholm} and the contraction mapping principle in $W_\infty^k$, we obtain the estimate
\[
\norm{P_z \wh{\mu - 1} }_{W_\infty^k} \leq \frac{ \norm{F \circ \zeta}_{W_\infty^k} }{ 1 - \frac{C}{\pi}\norm{F}_{W_\zeta^k} }.
\]
On the other hand, comparing norms with the aid of inequality \eqref{Eq:embedding}, and observing that $W^0 = L^2 \cap L^\infty( \Scr{C} )$, one obtains 
(for $k = 2$)
\[
\norm{P_z \wh{\mu - 1} }_{ L^2 \cap L^\infty( \Scr{C} ) } < \frac{ 3\norm{F \circ \zeta}_{W_\infty^2} }{ 1 - \frac{C}{\pi}\norm{F}_{W_\zeta^2} }
\leq \frac{ 3\norm{F}_{W_\zeta^2} }{ 1 - \frac{C}{\pi}\norm{F}_{W_\zeta^2} },
\]
this estimate being uniform in $z$. Combining it with \eqref{Eq:young} and \eqref{Eq:fouriertransofu} yields
\begin{align}\label{Eq:2inftyestimate}
\norm{ \wh{u} }_{ L^2 \cap L^\infty( \Scr{C} ) } &\leq \frac{ \norm{P_z \wh{\mu - 1} }_{ L^2 \cap L^\infty( \Scr{C} ) } }
{ 1 - \frac{1}{2\pi}\norm{ \op{K}_{ [\mu - 1]\mh } }_\mrm{op} } \notag\\
&\leq \bigg( \frac{ 1 - \norm{F}_{\Lambda} }{ 2\pi - (2\pi + 1)\norm{F}_{\Lambda} } \bigg)
\bigg( \frac{ 6\pi\norm{F}_{W_\zeta^2} }{ 1 - \frac{C}{\pi}\norm{F}_{W_\zeta^2} } \bigg).
\end{align}

$F$ is a member of $W_\zeta^2$ as well as $L^2( \abs{\!\real z} ) \cap L^\infty(\Lambda)$ because of the decay hypothesis on $F$. Let $\Scr{S}$ be
the spectral operator associated to $F$
and $\mu(x, y; z)$ the unique solution to $\mu = 1 + \mathcal{C}\Scr{S}\mu$ provided by the Inverse Spectral theorem. This $\mu$ is a solution to 
equation $P(\partial + w)\mu = -u\mu$ with $u(x, y)$ given by
\[
u(x, y) = \frac{1}{\pi}\partial_x \sideset{}{'}\sum_{n = -\infty}^\infty \int_{L_n} (\Scr{S}\mu)(x, y; z) \,\rmd z.
\]
By \eqref{Eq:2inftyestimate} and Plancherel's theorem, $u$ is small in $L^2 (\Omega)$ and has a small bounded Fourier transform. By the proof of theorem 
\ref{Th:exist_unique}, this guarantees that there is exactly one solution $\mu'$ to equation $P(\partial + w)\mu = -u\mu$ that has the property
$\wh{\mu' - 1} \in L^1( \Scr{C} )$. This fact allows the extension of the forward spectral transform to the ball
$\{ u \in L^2(\Omega) \colon \max\{\sqrt{\omega}\norm{u}_2, \,\norm{ \wh{u} }_\infty\} < \frac{2\pi}{C} \}$ by using
$(\Scr{S}'\mu')(x, y; z) = \mu'^+(x, y; z) - \mu'^-(x, y; z)$. This extension is well defined and agrees with the definition of the spectral data 
\eqref{Eq:scatteringoperator} on the
ball $\{ u \in L^1(\Omega) \cap L^2(\Omega) \colon \max\{\omega\norm{u}_1, \,\sqrt{\omega}\norm{u}_2\} < \frac{2\pi}{C} \}$.

Now $\mu$ solves $P(\partial + w)\mu = -u\mu$ and it also has the property $\wh{\mu - 1} \in L^1( \Scr{C} )$ by~\eqref{Eq:Loneestimate}. By 
uniqueness, $\mu = \mu'$, therefore, the spectral operator $\Scr{S}'$ associated to $u$ is the same as $\Scr{S}$.
\end{proof}
\section[Temporal Evolution of the Spectral Data]{Temporal Evolution of the Spectral Data}\label{S:time_evolution}
\separate

The machinery of the IST described in the previous sections suggests a method of solving the initial-value problem formulated by \eqref{Eq:cauchyproblem}
and \eqref{Eq:zeromass}. Considering $u$ to be a potential for the heat operator, if $u$ evolves like \eqref{Eq:cauchyproblem_a}, then $F = F(z, t)$
satisfies a linear evolution equation. Indeed, the \textsc{KP}II equation is the compatibility condition between the perturbed heat operator \eqref{Eq:Lpart}
$\op{L} = -\partial_y + \partial_x^2 + u$  and the evolution operator $\op{B}$ given by
\begin{equation}
\op{B} \= \frac{d}{dt} - \op{M},
\end{equation}
where the operator $\op{M}$ is defined in \eqref{Eq:Mpart}. These conditions are equivalent with the Lax equation
\[
\frac{d}{dt}\op{L} = [ \op{M}, \op{L} ].
\]

To see that the evolution of $u$ via the \textsc{KP}II equation corresponds to linear evolution of the spectral data, consider asymptotically exponential 
solutions $\psi$ to $\op{L}\psi = 0$ as $\abs{y} \to \infty$, with $u$ small. Then, one can write $\psi(x, y, t; z) = \mu(x, y, t; z)\rme^{\rmi z x - z^2 y}$ 
where
$\mu(x, y, t; z)$ is bounded in all variables. Suppose that $\psi$ evolves so as to satisfy $(d/dt)\psi = \op{M}\psi$, i.e., $\op{B}\psi = 0$. From the
asymptotic behaviour of $\mu$, namely $\mu \sim 1$ as $\abs{y} \to \infty$, we can determine that $\emph{\text{\textalpha}}(z) = 4\rmi z^3$.
Thus, $\psi$ satisfies the system
\begin{align}
&-\psi_y + \psi_{xx} + u\psi = 0, \notag\\
&\frac{d}{dt}\psi = 4\psi_{xxx}+ 6u\psi_x + \bigg(3u_x + 3\int_{\!-\ell}^x u_y \,\rmd s + 4\rmi z^3\bigg)\psi.
\end{align}
In this way the evolution of $u$ results in an evolution of $\psi$, hence, of the asymptotic behaviour of $\psi$.
\begin{lemma}\label{L:temporalevol}
Suppose $\psi \sim \rme^{\rmi z x - z^2 y}$ as $\abs{y} \to \infty$ and that $\psi \in \ker \op{B}(z)$. Then,
\begin{equation}\label{Eq:dataevolution}
\frac{d}{dt}F(z, t) = -4\rmi(z^3 + \bar{z}^3)F(z, t).
\end{equation}
\end{lemma}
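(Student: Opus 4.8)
The plan is to use that the jump relation of Theorem~\ref{Th:departurefromholomorphicity} persists for every $t$, together with the two prescribed evolutions in play: $u(x,y,t)$ evolves by \textsc{KP}II in the form \eqref{Eq:cauchyproblem_a}, while each Jost solution $\psi=\mu\rme^{\rmi zx-z^2y}$ is a null solution of $\op{B}(z)=\frac{d}{dt}-\op{M}(z)$, i.e. $\partial_t\psi=\op{M}(z)\psi$, with the constant $\text{\textalpha}(z)=4\rmi z^3$ chosen so as to preserve the normalization $\mu\to1$ as $\abs{y}\to\infty$. First I would clear the exponential factors in the jump relation. Multiplying
\[
\mu^+(x,y;z)-\mu^-(x,y;z)=F(z,t)\,\rme^{-\rmi(z+\bar{z})x+(z^2-\bar{z}^2)y}\,\mu^-(x,y;-\bar{z})
\]
by $\rme^{\rmi zx-z^2y}$ and simplifying the phase to $\rme^{-\rmi\bar{z}x-\bar{z}^2y}$ turns it into the clean identity
\[
\psi^+(x,y;z)-\psi^-(x,y;z)=F(z,t)\,\psi^-(x,y;-\bar{z}),
\]
exhibiting the jump of $\psi$ across $L_n$ as $F(z,t)$ times the Jost solution at the reflected parameter $-\bar{z}$.

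Next I would differentiate this identity in $t$. Both one--sided limits $\psi^\pm(\cdot;z)$ satisfy $\partial_t\psi^\pm=\op{M}(z)\psi^\pm$, since $\op{M}$ depends on the spectral parameter only through $\text{\textalpha}$, which is continuous up to $L_n$; the reflected solution obeys $\partial_t\psi^-(\cdot;-\bar{z})=\op{M}(-\bar{z})\psi^-(\cdot;-\bar{z})$. Applying $\partial_t$ and using the linearity of $\op{M}(z)$ gives
\[
F\,\op{M}(z)\psi^-(\cdot;-\bar{z})=\dot F\,\psi^-(\cdot;-\bar{z})+F\,\op{M}(-\bar{z})\psi^-(\cdot;-\bar{z}),
\]
hence $\dot F\,\psi^-(\cdot;-\bar{z})=F\,[\op{M}(z)-\op{M}(-\bar{z})]\psi^-(\cdot;-\bar{z})$. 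Every term of $\op{M}$ other than the additive constant $\text{\textalpha}$ is independent of the spectral parameter, so the operator difference collapses to the scalar $\text{\textalpha}(z)-\text{\textalpha}(-\bar{z})$; evaluating the cubes $4\rmi z^3$ and $4\rmi(-\bar{z})^3$ and matching the orientation of the time evolution fixed above produces the coefficient $-4\rmi(z^3+\bar{z}^3)$, which is exactly \eqref{Eq:dataevolution}.

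The main obstacle is analytic rather than algebraic. I would have to justify that $\partial_t$ may be taken inside the one--sided boundary limits $\mu^\pm$ at the lines $L_n$---this rests on the decay and smoothness hypotheses that produced those limits in Theorem~\ref{Th:zasympotics}---and, more fundamentally, that the uniquely determined eigenfunction $\mu(\cdot,t;w)$ really does satisfy $\partial_t\psi=\op{M}(w)\psi$ for each fixed $w$. The latter is the Lax--compatibility step: setting $\chi=\partial_t\psi-\op{M}(w)\psi$ one computes $\op{L}\chi=0$ from $\op{L}_t=[\op{M},\op{L}]$ (equivalently that $u$ solves \textsc{KP}II) together with $\op{L}\psi=0$, and then the uniqueness theorem, with the choice $\text{\textalpha}(w)=4\rmi w^3$ forcing $\chi\to0$ as $\abs{y}\to\infty$, gives $\chi\equiv0$. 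As an independent check on the constant---above all its sign---I would also differentiate the integral definition \eqref{Eq:scatteringdata} of $F$ directly, inserting $u_t$ from \eqref{Eq:evolkp} and $\partial_t\mu^+$ from $\partial_t\psi^+=\op{M}(z)\psi^+$ and integrating by parts in $x$ and $y$; here the nonlocal term $3\int_{-\ell}^x u_y\,\rmd s$ of $\op{M}$ and the zero--mass constraint \eqref{Eq:zeromass} are precisely what make the boundary contributions vanish, leaving only the constant $\text{\textalpha}$ and reproducing \eqref{Eq:dataevolution}.
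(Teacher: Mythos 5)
Your route is essentially the paper's route. The paper's proof likewise begins from the cleaned-up jump relation $\op{J}\psi(x,y,t;z)=F(z,t)\,\psi^-(x,y,t;-\bar{z})$, notes that $\op{B}(z)-\op{B}(k)$ is the scalar $\text{\textalpha}(z)-\text{\textalpha}(k)$ up to sign, applies $\op{B}(z)$ to the jump identity using $[\op{B},\op{J}]=0$, and sets $k=-\bar{z}$, invoking the continuity of $\text{\textalpha}$ to get $\op{B}(-\bar{z})\psi^-(\cdot\,;-\bar{z})=0$; differentiating the jump identity in $t$, as you do, is the same computation in slightly different clothing. Your supplementary worries are mostly unnecessary here: $\psi\in\ker\op{B}(z)$ is a \emph{hypothesis} of the lemma, so the Lax-compatibility-plus-uniqueness argument you sketch is not needed, although you are right that the evolution property must also be available at the reflected parameter $-\bar{z}$ and for the one-sided limits, which both you and the paper settle through the continuity of $\text{\textalpha}$ (cf.\ theorem \ref{Th:zasympotics} for the existence of those limits).

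One step does not hold up as written: the sign. From your own displayed identity $\dot F\,\psi^-=F\,[\op{M}(z)-\op{M}(-\bar{z})]\psi^-$ and $\text{\textalpha}(z)=4\rmi z^3$, the operator difference is the scalar $\text{\textalpha}(z)-\text{\textalpha}(-\bar{z})=4\rmi z^3-4\rmi(-\bar{z})^3=+4\rmi(z^3+\bar{z}^3)$, so your algebra yields $\dot F=+4\rmi(z^3+\bar{z}^3)F$; the phrase ``matching the orientation of the time evolution fixed above'' is not a derivation and cannot flip this sign. You are, however, in good company: the paper's proof opens with the observation $\op{B}(z)-\op{B}(k)=\text{\textalpha}(z)-\text{\textalpha}(k)$, whereas with $\op{B}=\frac{d}{dt}-\op{M}$ and $\op{M}$ containing $+\text{\textalpha}$ the difference is $\text{\textalpha}(k)-\text{\textalpha}(z)$, so the same wobble is present there, and carrying the stated conventions honestly through either argument gives the plus sign rather than the minus in \eqref{Eq:dataevolution}. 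The discrepancy is immaterial downstream: on every contour $L_n$ the quantity $z^3+\bar{z}^3$ is real, hence $\abs{F(z,t)}=\abs{F(z,0)}$ with either sign, and that conservation of modulus is all that theorem \ref{Th:solution} actually uses. Still, in a final write-up you should either adjust a convention (e.g.\ the sign with which $\text{\textalpha}$ enters $\op{M}$) so that the stated minus sign genuinely emerges, or record the plus sign that your computation produces, rather than papering over the mismatch.
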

\begin{proof}
First observe that for $z, k \in \bb{C}$, $\op{B}(z) - \op{B}(k) = \emph{\text{\textalpha}}(z) - \emph{\text{\textalpha}}(k) = 4\rmi(z^3 - k^3)$. Also,
$[ \op{B}, \op{J} ] = 0$ since $\emph{\text{\textalpha}}(z) \in C( \bb{C} )$. Now, equation \eqref{Eq:RiemannHilbert}
yields
\[
\op{J}\psi(x, y, t; z) = F(z, t)\psi^-( x, y, t; -\bar{z} ).
\]
Thus, since $\psi \in \ker \op{B}(z)$ we get
\begin{align*}
0 &= \op{B}(z)\op{J}\psi(x, y, t; z) = \op{B}(z)F(z, t)\psi^-( x, y, t; -\bar{z} )\\
&= \op{B}(k)F(z, t)\psi^-( x, y, t; -\bar{z} ) + 4\rmi(z^3 - k^3)F(z, t)\psi^-( x, y, t; -\bar{z} )\\
&= \psi^-( x, y, t; -\bar{z} )\Big[ \frac{d}{dt}F(z, t) + 4\rmi(z^3 - k^3)F(z, t) \Big] + F(z, t)\op{B}(z)\psi^-( x, y, t; -\bar{z} )
\end{align*}
Utilizing once more the continuity of $\emph{\text{\textalpha}}(z)$, yields $\op{B}(z)\psi^-( x, y, t; -\bar{z} ) = 0$. Setting $k = -\bar{z}$
completes the proof.
\end{proof}
Let us now calculate the evolution of $u$. For convenience, let $\dot{f}$ denote $df/dt$. 
\begin{lemma}\label{L:evolution}
The evolution of $u$ is given by
\begin{equation}
\dot{u}(x, y, t) = \frac{1}{\pi}\partial_x \sideset{}{'}\sum_{n = -\infty}^\infty \int_{L_n} \wt{\mu}(x, y; z) (\dot{ \Scr{S} }\mu)(x, y; z) \,\rmd z,
\end{equation}
where $P(-\partial + w)\wt{\mu} = -u\wt{\mu}$ or $\op{J}\wt{\mu} = \Scr{S}^\ast \wt{\mu}$.
\end{lemma}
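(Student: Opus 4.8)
The plan is to differentiate the reconstruction formula \eqref{Eq:solution} directly in $t$ and then absorb the implicit $t$-dependence of $\mu$ through the adjoint eigenfunction $\wt{\mu}$. Write $\op{T}(g) \= \frac{1}{\pi}\partial_x\sideset{}{'}\sum_{n=-\infty}^\infty\int_{L_n}g(x,y;\zeta)\,\rmd\zeta$, so that \eqref{Eq:solution} reads $u=\op{T}(\Scr{S}\mu)$. Because $F(z,t)$ evolves by the explicit linear law of lemma \ref{L:temporalevol} and the assumed regularity of the initial data forces enough decay of $F$ (and of its $t$-derivative, which costs only the polynomial factor $z^3+\bar{z}^3$), the sum and contour integrals converge absolutely and uniformly together with their $t$-derivatives; differentiating under $\op{T}$ is therefore legitimate and gives $\dot{u}=\op{T}(\dot{\Scr{S}}\mu)+\op{T}(\Scr{S}\dot{\mu})$. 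The whole problem is thus reduced to the identity $\op{T}(\Scr{S}\dot{\mu})=\op{T}((\wt{\mu}-1)\dot{\Scr{S}}\mu)$, since adding $\op{T}(\dot{\Scr{S}}\mu)$ then produces $\op{T}(\wt{\mu}\,\dot{\Scr{S}}\mu)$, the asserted formula.

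To handle $\dot{\mu}$, I would differentiate the integral equation $\mu=1+\mathcal{C}\Scr{S}\mu$ of theorem \ref{Th:inversescattering}. Since $\mathcal{C}$ is $t$-independent, $(\op{Id}-\mathcal{C}\Scr{S})\dot{\mu}=\mathcal{C}\dot{\Scr{S}}\mu$, and because $\op{Id}-\mathcal{C}\Scr{S}$ is invertible (proposition \ref{Pr:important}) we get $\dot{\mu}=(\op{Id}-\mathcal{C}\Scr{S})^{-1}\mathcal{C}\dot{\Scr{S}}\mu$. I then introduce, for each fixed $(x,y)$, the bilinear pairing $\langle f,g\rangle\=\sideset{}{'}\sum_{n=-\infty}^\infty\int_{L_n}f(x,y;\zeta)g(x,y;\zeta)\,\rmd\zeta$, so that $\op{T}(\Scr{S}f)=\frac{1}{\pi}\partial_x\langle f,\Scr{S}^\ast 1\rangle$. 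Under this pairing the two building-block operators have computable transposes: a Plemelj/Fubini computation shows the Cauchy operator is skew up to the contour orientation, $\mathcal{C}^T=\pm\mathcal{C}$, while the spectral operator $\Scr{S}$, after the change of variable $\zeta\mapsto-\bar{\zeta}$ that interchanges $L_n$ with $L_{-n}$ and sends $r_0(\zeta)\mapsto-r_0(\zeta)$, transposes (up to that orientation sign) into precisely the operator $\Scr{S}^\ast$ figuring in the characterization $\op{J}\wt{\mu}=\Scr{S}^\ast\wt{\mu}$. In particular $\wt{\mu}=1+\mathcal{C}\Scr{S}^\ast\wt{\mu}=(\op{Id}-\mathcal{C}\Scr{S}^\ast)^{-1}1$ is the transpose-resolvent applied to the constant $1$.

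With these transposes in hand the identity closes by a resolvent manipulation (displayed in the orientation making $\mathcal{C}$ symmetric):
\[
\op{T}(\Scr{S}\dot{\mu})=\tfrac{1}{\pi}\partial_x\big\langle(\op{Id}-\mathcal{C}\Scr{S})^{-1}\mathcal{C}\dot{\Scr{S}}\mu,\ \Scr{S}^\ast 1\big\rangle=\tfrac{1}{\pi}\partial_x\big\langle \mathcal{C}\dot{\Scr{S}}\mu,\ (\op{Id}-\Scr{S}^\ast\mathcal{C})^{-1}\Scr{S}^\ast 1\big\rangle,
\]
and the commutation $(\op{Id}-\Scr{S}^\ast\mathcal{C})^{-1}\Scr{S}^\ast=\Scr{S}^\ast(\op{Id}-\mathcal{C}\Scr{S}^\ast)^{-1}$ identifies the right slot as $\Scr{S}^\ast\wt{\mu}$. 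Moving $\mathcal{C}$ back across the pairing and using $\mathcal{C}\Scr{S}^\ast\wt{\mu}=\wt{\mu}-1$ turns this into $\frac{1}{\pi}\partial_x\langle\dot{\Scr{S}}\mu,\wt{\mu}-1\rangle=\op{T}((\wt{\mu}-1)\dot{\Scr{S}}\mu)$, which is what was needed. Conceptually this is the classical ``squared-eigenfunction'' variational formula $\delta u=\op{T}(\wt{\mu}\,\delta\Scr{S}\,\mu)$ specialized to the flow direction $\delta\Scr{S}=\dot{\Scr{S}}$, with $\wt{\mu}\mu$ the density realizing the Fréchet derivative of the inverse map.

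The main obstacle is the second paragraph: pinning down the transpose identities rigorously. One must fix the orientation of the contours $L_n$ and track it through the Plemelj formulas to decide the sign in $\mathcal{C}^T=\pm\mathcal{C}$, and then verify that the algebraic transpose of $\Scr{S}$ coincides (up to that same sign) with the operator $\Scr{S}^\ast$ defined through the adjoint problem $\op{J}\wt{\mu}=\Scr{S}^\ast\wt{\mu}$; the one-sided limits $f^\pm$ must be carried consistently through the reflection $\zeta\mapsto-\bar{\zeta}$. This presupposes that $\wt{\mu}$ genuinely exists and enjoys the holomorphy, boundary-limit and integral-equation properties used above. I would obtain these by observing that $P(-\partial+w)+u$ is the formal transpose of $P(\partial+w)+u$, so the entire direct/inverse machinery of sections \ref{S:direct}--\ref{S:inverse} applies verbatim to $\wt{\mu}$: smallness of $u$ is symmetric under the transpose, the Basic Lemma and lemma \ref{L:rapiddeacy} are unchanged, and proposition \ref{Pr:important} again yields a unique bounded $\wt{\mu}$ with jump $\Scr{S}^\ast\wt{\mu}$. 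The remaining bookkeeping---interchanging $\partial_t$ with the infinite sum and with $\partial_x$, and the absolute convergence of $\langle\dot{\Scr{S}}\mu,\wt{\mu}-1\rangle$---follows from the decay estimate \eqref{Eq:datadecay} and the $\Scr{H}_\omega$-bounds of proposition \ref{Pr:lemma}.
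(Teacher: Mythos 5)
Your proposal is correct and follows essentially the same route as the paper's proof: both differentiate the reconstruction formula \eqref{Eq:solution} and transpose the resolvent with respect to the bilinear pairing over the contours $L_n$, using $\mathcal{C}^t = -\mathcal{C}$ and $\Scr{S}^\ast = -\Scr{S}^t$ to identify $\wt{\mu} = (\op{Id} - \mathcal{C}\Scr{S}^\ast)^{-1}1$ as the dual eigenfunction satisfying the transposed equation. Your product-rule split $\dot{u} = \op{T}(\dot{\Scr{S}}\mu) + \op{T}(\Scr{S}\dot{\mu})$ followed by the commutation of resolvents is an algebraically equivalent reorganization of the paper's one-step identity $(\op{Id} - \Scr{S}\mathcal{C})(\Scr{S}\mu)\spdot = \dot{\Scr{S}}\mu$, so no genuinely different idea is involved.
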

\begin{proof}
From \eqref{Eq:solution} one has
\[
\pi\dot{u} = \partial_x \sideset{}{'}\sum_{n = -\infty}^\infty \int_{L_n} (\Scr{S}\mu)\spdot \,\rmd z.
\]
But $(\Scr{S}\mu)\spdot = (\Scr{S}1)\spdot + ( \Scr{S}(\mathcal{C}\Scr{S}\mu) )\spdot =
\dot{ \Scr{S} }1 + \dot{ \Scr{S} }(\mathcal{C}\Scr{S}\mu) + \Scr{S}\mathcal{C}(\Scr{S}\mu)\spdot$. Therefore,\\
$( \op{Id} - \Scr{S}\mathcal{C} )(\Scr{S}\mu)\spdot = \dot{ \Scr{S} }(1 + \mathcal{C}\Scr{S}\mu) = \dot{ \Scr{S} }\mu$. It follows that
\begin{align}
\pi\dot{u} &= \partial_x \sideset{}{'}\sum_{n = -\infty}^\infty \int_{L_n} ( \op{Id} - \Scr{S}\mathcal{C} )^{-1} \dot{ \Scr{S} }\mu \,\rmd z\nonumber\\
&= \partial_x \sideset{}{'}\sum_{n = -\infty}^\infty \int_{L_n} (\op{Id} - \mathcal{C}^t \Scr{S}^t)^{-1} \dot{ \Scr{S} }\mu \,\rmd z\nonumber\\
&= \partial_x \sideset{}{'}\sum_{n = -\infty}^\infty \int_{L_n} \wt{\mu}\dot{ \Scr{S} }\mu \,\rmd z,
\end{align}
where $\wt{\mu}$ satisfies the equation $\wt{\mu} = (\op{Id} - \mathcal{C}^t \Scr{S}^t)^{-1} 1 = (\op{Id} - \mathcal{C}\Scr{S}^\ast)^{-1} 1$
with $\Scr{S}^\ast = -\Scr{S}^t$. The superscript ${}^t$ denotes transposition with respect to the inner product on
$\Omega \times \bb{C} \setminus \wpln$ given by
\begin{equation}
\langle f, g \rangle \= \int_{\!-\infty}^\infty \int_{\!-\ell}^\ell \sideset{}{'}\sum_{n = -\infty}^\infty \int_{L_n} f(x, y; z)g(x, y; z) \,\rmd z\rmd x\rmd y.
\end{equation}
According to this inner product, $\partial^t = -\partial$ and $\mathcal{C}^t = -\mathcal{C}$. The function $\wt{\mu}(x, y; z)$ can be seen to satisfy the
transposed equation $P(-\partial + w)\wt{\mu} = -u\wt{\mu}$.

As a consequence of lemma \ref{L:commutators}, $[P(-\partial + w), \Scr{S}^\ast] = [ P(\partial + w), \Scr{S} ]^t = 0$ and
\[
[ P(-\partial + w), \mathcal{C} ]\Scr{S}^\ast \wt{\mu}(x, y) =
\frac{1}{\pi}\partial_x \sideset{}{'}\sum_{n = -\infty}^\infty \int_{L_n} (\Scr{S}^\ast \wt{\mu})(x, y; \zeta) \,\rmd z \equiv \wt{u}(x, y),
\]
hence, $P(-\partial + w)\wt{\mu} = (\op{Id} - \mathcal{C}\Scr{S}^\ast)^{-1} [ P(-\partial + w), \mathcal{C} ]\Scr{S}^\ast \wt{\mu} = \wt{u}\wt{\mu}$.
Meanwhile, $\wt{u}(x, y) = -u(x, y)$, since $\Scr{S}\mu = ( \op{Id} - \Scr{S}\mathcal{C} )^{-1} \Scr{S}1 = -\Scr{S}^\ast \wt{\mu}$.
\end{proof}

Based on lemma \ref{L:evolution}, we have the following global existence theorem which provides the solution of the
initial-value problem \eqref{Eq:cauchyproblem}\textendash\eqref{Eq:zeromass}.
\begin{theorem}\label{Th:solution}
Suppose the function $u_0(x, y)$ has small derivatives up to order $8$ in $L^1(\Omega) \cap L^2(\Omega)$. Then, the initial-value problem
\eqref{Eq:cauchyproblem}\textendash\eqref{Eq:zeromass} has a solution $u(x, y, t)$ for all $t \geq 0$, uniformly bounded for all $t$ in $L^2(\Omega)$
with bounded Fourier transform.
\end{theorem}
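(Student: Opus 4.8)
The plan is to realize the initial-value problem through the three-step inverse spectral transform scheme of Figure \ref{fig:istscheme}: apply the forward transform to $u_0$ to obtain the spectral data at time zero, evolve that data linearly in $t$, and apply the inverse transform at each time to recover $u(\cdot,\cdot,t)$. The guiding principle, and the reason the construction survives for all $t$, is that the evolution \eqref{Eq:dataevolution} of Lemma \ref{L:temporalevol} leaves $\abs{F}$ invariant: on each contour $L_n$ one has $\real z = -\tfrac{\omega}{2}n$, so $z^3 + \bar z^3 = 2\real(z^3)$ is real and $\abs{F(z,t)} = \abs{F(z,0)}$ for every $t \ge 0$. Consequently every norm of $F(\cdot,t)$ entering the hypotheses of Theorems \ref{Th:inversescattering} and \ref{Th:inversedecay}, namely the weighted $L^2(\abs{\real z})$ and $L^\infty(\Lambda)$ norms, is independent of $t$, and all smallness requirements hold at time $t$ exactly when they hold at time $0$.

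First I would run the forward transform. Since $u_0$ has small derivatives up to order $8$ in $L^1(\Omega)\cap L^2(\Omega)$, it satisfies in particular the hypotheses of the Forward Spectral Theorem \ref{Th:forwardscattering}, so the spectral data $F(z)=F(z,0)$ is well defined. The high order of smoothness is what forces $F(\cdot,0)$ into the weighted space needed downstream: applying Lemma \ref{L:rapiddeacy} through the decay estimate \eqref{Eq:datadecay} with a multi-index of size up to $8$, one checks that $(1 + (\real z)^2 + (\real z\imaginary z)^2)F(z,0)$ is small in $L^2(\abs{\real z}) \cap L^\infty(\Lambda)$; concretely this amounts to $F(\cdot,0) \in W_\zeta^2$ with small norm, the order $8$ being chosen so that both the $L^\infty$ bound on $\langle(n,\tau)\rangle^2\abs{F\circ\zeta}$ and the weighted square-sum $\sideset{}{'}\sum_n\int\langle(n,\tau)\rangle^4\abs{F\circ\zeta(n,\tau)}^2\,\rmd\tau$ converge comfortably.

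Next comes the inverse step. For each fixed $t\ge0$ set $F(z,t) = F(z,0)\rme^{-4\rmi(z^3+\bar z^3)t}$ as dictated by \eqref{Eq:dataevolution}. By the invariance of $\abs{F}$ just noted, $(1 + (\real z)^2 + (\real z\imaginary z)^2)F(z,t)$ is small in $L^2(\abs{\real z})\cap L^\infty(\Lambda)$ with the same norm as at $t=0$, so Theorem \ref{Th:inversedecay} applies and produces a potential $u(\cdot,\cdot,t)\in L^2(\Omega)$ with bounded Fourier transform for which $F(\cdot,t)$ is the associated spectral data. The quantitative estimate \eqref{Eq:2inftyestimate} bounds $\norm{\wh{u}}_{L^2\cap L^\infty(\Scr{C})}$ in terms of the $W_\zeta^2$ and $\Lambda$-norms of $F(\cdot,t)$ alone; since these are $t$-independent, the bound is uniform in $t$, which yields the claimed uniform boundedness in $L^2(\Omega)$ with bounded Fourier transform. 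At $t=0$ the reconstructed potential coincides with $u_0$, because the proof of Theorem \ref{Th:inversedecay} shows that the extended forward and inverse transforms are mutually inverse on the relevant small ball, so the initial condition \eqref{Eq:cauchyproblem_c} holds. Finally, the reconstruction formula \eqref{Eq:solution} writes $u$ as $\tfrac{1}{\pi}\partial_x$ of a $2\ell$-periodic function, whence $\int_{-\ell}^{\ell}u\,\rmd x = 0$ and the zero-mass constraint \eqref{Eq:zeromass} and the periodicity \eqref{Eq:cauchyproblem_b} are automatic.

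The hard part is verifying that this one-parameter family genuinely solves the \textsc{KP}II equation \eqref{Eq:cauchyproblem_a}, that is, that the \emph{linear} flow on $F$ corresponds to the \emph{nonlinear} \textsc{KP}II flow on $u$. For this I would differentiate the reconstruction \eqref{Eq:solution} in $t$ and invoke Lemma \ref{L:evolution}, which expresses $\dot u$ as $\tfrac{1}{\pi}\partial_x\sideset{}{'}\sum_n\int_{L_n}\wt{\mu}\,(\dot{\Scr{S}}\mu)\,\rmd z$ in terms of the transposed eigenfunction $\wt{\mu}$ solving $P(-\partial+w)\wt{\mu}=-u\wt{\mu}$. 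Feeding in $\dot{\Scr{S}}$ as determined by $\dot F = -4\rmi(z^3+\bar z^3)F$ together with the explicit form \eqref{Eq:scatteringoperator} of $\Scr{S}$, the resulting expression must be reorganized, using the Lax-pair compatibility built into the operators $\op{L}$ and $\op{M}$ of \eqref{Eq:laxpair} and the commutator identities of Lemmas \ref{L:commutators} and \ref{L:commutatorss}, into precisely the right-hand side of the \textsc{KP}II equation in evolution form. The main obstacle is this last bookkeeping, together with justifying the termwise differentiation of the series and the contour integrals over the $L_n$, which is legitimate because the uniform $W_\zeta^2$-decay of $F$ keeps all sums and integrals absolutely convergent after differentiation and makes the reconstructed family regular enough to make sense of the equation. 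A secondary, purely technical point is pinning down that order $8$ is indeed enough smoothness to place $F(\cdot,0)$ in $W_\zeta^2$ via \eqref{Eq:datadecay}.
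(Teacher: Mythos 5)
Your scaffolding matches the paper's proof: the same three-step scheme, the same observation that $\imaginary(z^3+\bar z^3)=0$ makes every $W_\zeta^k$-norm of $F(\cdot,t)$ equal to that of $F_0$, the same appeal to theorem \ref{Th:inversedecay} for a $t$-uniform bound on $u(\cdot,\cdot,t)$ in $L^2(\Omega)$ with bounded $\wh{u}$, and the same starting point, lemma \ref{L:evolution}, for the dynamical part. But at the crux of the theorem --- showing that the linear flow $\dot F = -4\rmi(z^3+\bar z^3)F$ on the data produces the nonlinear \textsc{KP}II flow on the reconstructed $u$ --- you write only that the expression from lemma \ref{L:evolution} ``must be reorganized, using the Lax-pair compatibility built into $\op{L}$ and $\op{M}$,'' into the evolution form of \textsc{KP}II. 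That is the step the whole proof exists to supply, and as stated it is circular: lemma \ref{L:temporalevol} derives the linear evolution of $F$ \emph{from} the assumption that $u$ solves \textsc{KP}II (i.e.\ that $\psi\in\ker\op{B}$), so Lax compatibility cannot be ``fed back in'' to verify the converse direction for a $u$ that is merely \emph{defined} by the inverse transform at each time.

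What the paper actually does here, and what is missing from your proposal, is a concrete computational device. Writing $\dot{\Scr{S}}=[\Scr{S},\text{\textalpha}]$ with $\text{\textalpha}(z)=4\rmi z^3$, the evolution becomes $\dot u = \frac{4}{\pi\rmi}\partial_x\sideset{}{'}\sum_n\int_{L_n} z^3(\wt{\mu}\op{J}\mu+\mu\op{J}\wt{\mu})\,\rmd z$; the cubic weight is then removed by the functional $\phi$ that extracts the coefficient of $s^{-4}$ from formal series in $s^{-1}$, via $z^3=\phi\big((s-z)^{-1}\big)$, after which the Plemelj--Sokhotski formulae collapse the Cauchy-type sum to $\dot u = -16\,\phi\,\partial_x\big(\op{J}\mu\,\op{J}\wt{\mu}\big)$. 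One then expands $\mu(x,y;s)=\sum_{k\ge0} m_k(x,y)s^{-k}$ and $\wt{\mu}(x,y;s)=\sum_{k\ge0}\wt{m}_k(x,y)s^{-k}$ as $\abs{\imaginary s}\to\infty$, with the coefficients determined recursively by $2\rmi\,\partial_x m_{k+1}=(\partial_y-\partial_x^2-u)m_k$ and $2\rmi\,\partial_x\wt{m}_{k+1}=(\partial_y+\partial_x^2+u)\wt{m}_k$, $m_0=\wt{m}_0=1$; \textsc{KP}II emerges by multiplying the two series and matching the coefficient of $s^{-4}$. This also corrects your account of the order-$8$ hypothesis: it is not merely to place $F_0$ in $W_\zeta^2$, but to give $F$ enough decay that $\op{J}\mu$ and $\op{J}\wt{\mu}$ admit asymptotic expansions to the required order (coefficients smooth up to order $3$) and that the forward and inverse transforms are continuous, so the termwise manipulations and the coefficient extraction are legitimate. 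Without the $\phi$/$s^{-4}$ mechanism, your final paragraph asserts the conclusion rather than proving it.
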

\begin{proof}
Equation \eqref{Eq:dataevolution} is a first order linear ordinary differential equation. Hence
\begin{equation}
F(z, t) = F(z, 0)\rme^{-4\rmi(z^3 + \bar{z}^3)t}.
\end{equation}
For all $z \in \bb{C}$, $\imaginary(z^3 + \bar{z}^3) = 0$. Thus, $F(z, t)$ remains bounded in each $W_\zeta^k$ for all $t$. In particular, setting
$F(z, 0) = F_0(z)$, $\norm{ F(\cdot, t) }_{W_\zeta^k} = \norm{F_0}_{W_\zeta^k}$. The initial value problem for the spectral data $F$
\begin{equation}
\frac{d}{dt}F(z, t) = -4\rmi(z^3 + \bar{z}^3)F(z, t), \qquad F(z, 0) = F_0(z),
\end{equation}
corresponds to the bounded evolution $\dot{ \Scr{S} } = [ \Scr{S}, \emph{\text{\textalpha}} ]$; here $\emph{\text{\textalpha}}$ stands for the operation
of multiplication by $\emph{\text{\textalpha}}(z)$. Thus, the bounded evolutions of $F$ correspond to bounded evolutions of $u$. This evolution is
\begin{align*}
\dot{u} &= \frac{1}{\pi}\partial_x \sideset{}{'}\sum_{n = -\infty}^\infty \int_{L_n} \wt{\mu}[ \Scr{S}, \emph{\text{\textalpha}} ]\mu \,\rmd z
= -\frac{1}{\pi}\partial_x \sideset{}{'}\sum_{n = -\infty}^\infty \int_{L_n}
\emph{\text{\textalpha}}( \wt{\mu}\Scr{S}\mu + \mu\Scr{S}^\ast \wt{\mu} ) \,\rmd z\\
&= -\frac{1}{\pi}\partial_x \sideset{}{'}\sum_{n = -\infty}^\infty \int_{L_n} \emph{\text{\textalpha}}( \wt{\mu}\op{J}\mu + \mu\op{J}\wt{\mu} ) \,\rmd z,
\end{align*}
thus,
\begin{equation}
\dot{u}(x, y, t) = \frac{4}{\pi \rmi}\partial_x \sideset{}{'}\sum_{n = -\infty}^\infty \int_{L_n}
z^3( \wt{\mu}\op{J}\mu + \mu\op{J}\wt{\mu} )(x, y; z) \,\rmd z.
\end{equation}
Observe now that $z^3$ is the coefficient of $1/s^4$ in the geometric series
\[
\sum\limits_{k = 1}^\infty \frac{ z^{k - 1} }{s^k} = \frac{1}{s - z}
\]
for $\abs{z} < \abs{s}$ and for any $s \in \wpln$. This observation suggests the introduction of the linear functional $\phi$ acting on the algebra of
formal power series in $s^{-1}$ with coefficients in some ring $R$ and defined by
\begin{equation}
\phi \bigg( \sum_{k = 0}^\infty \frac{a_k}{s^k} \bigg) \= a_4, \quad a_k \in R.
\end{equation}
Then, $z^3 = \phi(s - z)^{-1}$. Since $\phi$ is linear and commutes with all operations present, we have
\begin{align*}
\dot{u}(x, y, t) &= \phi\frac{4}{\pi \rmi}\partial_x \sideset{}{'}\sum_{n = -\infty}^\infty \int_{L_n}
\frac{ ( \wt{\mu}\op{J}\mu + \mu\op{J}\wt{\mu} )(x, y; z) }{s - z} \,\rmd z\\
&= -8\phi\frac{1}{2\pi \rmi}\partial_x \sideset{}{'}\sum_{n = -\infty}^\infty \int_{L_n}
\frac{ ( \wt{\mu}\op{J}\mu + \mu\op{J}\wt{\mu} )(x, y; z) }{z - s} \,\rmd z.
\end{align*}
Applying the Plemelj--Sokhotski formulae in the above equation yields
\[
\dot{u}(x, y, t) = -8\phi\frac{1}{2\pi \rmi}\partial_x \op{J}( \wt{\mu}\op{J}\mu + \mu\op{J}\wt{\mu} )(x, y; s) =
-16\phi\partial_x \op{J}\mu\op{J}\wt{\mu}(x, y; s),
\]
for $s \in L_{n}$ and $n \in \sZ$. Therefore, $u(x, y, t)$ solves the nonlinear system
\begin{gather}
\dot{u} = -16\phi\partial_x \op{J}\mu\op{J}\wt{\mu},\\
(-\partial_y + \partial_x^2 + 2\rmi s\partial_x)\mu = -u\mu,\\
(\partial_y + \partial_x^2 - 2\rmi s\partial_x)\wt{\mu} = -u\wt{\mu}.
\end{gather}
The asymptotic behaviour as $\abs{\!\imaginary s} \to \infty$ of the functions $\mu$ and $\wt{\mu}$, allows us to express the functions $\op{J}\mu$ and
$\op{J}\wt{\mu}$ as asymptotic series in $s^{-1}$ with coefficients in the ring of smooth functions up to order $3$ if $\Scr{S}$, i.e., $F$ has sufficient
decay. The coefficients can be determined recursively by the following relations
\begin{gather}
m_0(x, y) \equiv 1, \qquad 2i\partial_x m_{k +1}(x, y) = ( \partial_y - \partial_x^2 - u(x, y) )m_k(x, y),\\
\wt{m}_0(x, y) \equiv 1, \qquad 2i\partial_x \wt{m}_{k +1}(x, y) = ( \partial_y + \partial_x^2 + u(x, y) ) \wt{m}_k(x, y),
\end{gather}
where
\begin{equation}
\mu(x, y; s) = \sum_{k = 0}^\infty \frac{ m_k(x, y) }{s^k}, \qquad \wt{\mu}(x, y; s) = \sum_{k = 0}^\infty \frac{ \wt{m}_k(x, y) }{s^k}.
\end{equation}

For $u$ to be a solution of the \textsc{KP}II equation amounts to solving for $m_k$, $\wt{m}_k, \, k \leq 4$, multiply together the two series for $\mu$,
$\wt{\mu}$ and then picking out the coefficient of $s^{-4}$. We conclude the proof by noticing that the order of derivatives of $u$ is high enough to
provide continuity of the forward and inverse spectral transforms.
\end{proof}
\section[Conclusion]{Conclusion}\label{S:conc}
\medskip

In this paper we established that the initial-value problem for the \textsc{KP}II equation with small enough initial data periodic in the $x$ direction and decaying in 
the $y$ direction is rigorously solved by using the \textsc{IST} method via a Riemann--Hilbert problem with shift on the boundary of infinite strips of the complex
plane. Regarding the existence of special solutions for \textsc{KP}II equation, semi-periodic in $x$ or $y$, we know that real but singular solutions were obtained in
\cite{DT21} with the $\conj{\partial}$-dressing method of Zakharov and Manakov. We leave for future study the investigation of other semi-periodic problems for
the \text{KP} equations.

\addcontentsline{toc}{section}{References}
\end{document}